\crefname{subsection}{Subsection}{Subsections}
\Crefname{subsection}{Subsection}{Subsections}
\newtheorem{theorem*}{Theorem}
\newtheorem{theorem}{Theorem}[section]
\newtheorem{corollary}[theorem]{Corollary}	
\newtheorem{lemma}[theorem]{Lemma}
\newtheorem{definition}[theorem]{Definition}
\theoremstyle{definition}
\newtheorem{remark}[theorem]{Remark}
\newcommand{\I}{\text{$\mathrm{I}$}}
\newcommand{\II}{\text{$\mathrm{II}$}}
\newcommand{\Ps}{\mathcal{P}}
\newcommand{\Ord}{\mathsf{Ord}}
\newcommand{\M}{\text{$\mathcal{M}$}}
\newcommand{\N}{\text{$\mathcal{N}$}}
\newcommand{\A}{\text{$\mathcal{A}$}}
\newcommand{\KP}{\text{$\mathsf{KP}$}}
\newcommand{\CA}{\text{--}\mathsf{CA}_0}
\newcommand{\DC}{\mathsf{DC}}
\newcommand{\wo}{\mathsf{wo}}
\newcommand{\RCA}{\mathsf{RCA}_0}
\newcommand{\ACA}{\mathsf{ACA}_0}
\newcommand{\AC}{\mathsf{AC}}
\newcommand{\Det}{\mathsf{Det}}
\newcommand{\Rfc}{\text{--}\mathsf{Ref}}
\newcommand{\ZF}{\text{$\mathsf{ZF}$}}
\newcommand{\Z}{\text{$\mathsf{Z}$}}
\newcommand{\QSI}{\mathrm{S}^-_{\I}}
\newcommand{\QSII}{\mathrm{S}^-_{\II}}
\newcommand{\SI}{\mathrm{S}_{\I}}
\newcommand{\C}{\mathcal{C}}
\newcommand\blfootnote[1]{%
  \begingroup
  \renewcommand\thefootnote{}\footnote{#1}%
  \addtocounter{footnote}{-1}%
  \endgroup
}
\begin{document}
\title{The Limits of Determinacy in Higher-Order Arithmetic}
\author[$^1$]{J. P. Aguilera}
\author[$^{2}$]{T. Kouptchinsky}
\affil[$^{1,2}$]{Institute of Discrete Mathematics and Geometry, Vienna University of Technology. Wiedner Hauptstrasse 8-10, 1040 Vienna, Austria}	
\affil[$^{2}$]{Corresponding author: \texttt{thibaut.kouptchinsky@tuwien.ac.at}}
\maketitle
\begin{abstract}
We prove level-by-level upper and lower bounds on the strength of determinacy for finite differences of sets in the hyperarithmetical hierarchy in terms of subsystems of finite- and transfinite-order arithmetic, extending the Montalb\'an-Shore theorem to each of the levels of the Borel hierarchy beyond $\Pi^0_3$. 
We also prove equivalences between reflection principles for higher-order arithmetic and quantified determinacy axioms, answering two questions of Pacheco and Yokoyama.
\blfootnote{MSC (2020): 03B30 (Primary), 03D60, 03E60, 03F35.}  \blfootnote{Keywords: Reverse mathematics, determinacy, admissible set, Kripke-Platek set theory, comprehension axiom.}
\end{abstract}
\setcounter{tocdepth}{1}
\tableofcontents

\section{Introduction}

Every Gale-Stewart game with Borel payoff is determined according to Martin's celebrated Borel Determinacy theorem \cite{BorelDetMartin}. We play these games as follows: two players, $\I$ and $\II$, alternate infinitely many turns playing digits $x_0$, $x_1$, $x_2$, and so on. After infinitely many turns, an infinite sequence $\langle x_0, x_1, x_2, \hdots \rangle \in \mathbb{N}^{\mathbb{N}}$ is produced. A set $A\subset\mathbb{N}^\mathbb{N}$ fixed in advance serves as the set of rules for the game. Given a play $x \in \mathbb{N}^{\mathbb{N}}$, we say that Player $\I$ wins if and only if $x \in A$, and otherwise $\II$ wins the game. According to Martin's theorem, the game is determined whenever $A$ is Borel.

An interesting aspect of Martin's theorem is that its proof requires the necessary use of all the axioms of $\mathsf{ZFC}$. This includes \textsc{replacement} and \textsc{power set}. Indeed, in what is perhaps the earliest result in Reverse Mathematics, Friedman \cite{Sigma5Det} proved that $\Sigma^0_5$--determinacy is not provable in Second-Order Arithmetic ($\Z_2$). Martin (unpublished) later improved this result to show that  $\Sigma^0_4$--determinacy is not provable in $\Z_2$. This is best as possible in the sense that $\Z_2$ does prove $\Sigma^0_3$--determinacy (see Welch \cite{Pi03CA>>Pi03Det}). The work of Montalb\'an and Shore \cite{MS} later revealed the limits of determinacy provable in $\Z_2$. Below, we denote by $(\Sigma^0_3)_n$ the class of sets obtained via $n$ nested differences of $\Sigma^0_3$ sets (see \S \ref{SectPreliminaries} for a precise definition).
\begin{theorem}[Montalb\'an and Shore]
Let $n\in\mathbb{N}$. Then, 
\begin{enumerate}
\item $\Pi^1_{n+2}\text{--}\mathsf{CA}_0 \vdash (\Sigma^0_3)_n\text{--}\Det$; but 
\item $\Delta^1_{n+2}\text{--}\mathsf{CA}_0\not\vdash (\Sigma^0_3)_n\text{--}\Det$.
\end{enumerate}
\end{theorem}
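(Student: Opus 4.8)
The plan is to route both directions through an intermediate \emph{reflection principle} $\Phi_n$ that is equivalent, over a suitable weak base theory (say $\Pi^1_{n+1}\CA$), to $(\Sigma^0_3)_n\text{--}\Det$. Informally, $\Phi_n$ asserts that for every set $X$ there is an ordinal $\alpha$, coded by a well-ordering available in the model, such that $L_\alpha[X]$ is $\Sigma_{n+1}$-elementary in $L[X]$ — equivalently, a relativized form of ``the least $\Sigma_{n+1}$-stable ordinal exists''; for $n=1$ this is, up to details, Welch's characterization \cite{Pi03CA>>Pi03Det} of $\Sigma^0_3\text{--}\Det$ via arithmetical quasi-inductive definitions. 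Granting the equivalence, item (1) reduces to $\Pi^1_{n+2}\CA \vdash \Phi_n$ and item (2) reduces to $\Delta^1_{n+2}\CA \nvdash \Phi_n$, and each of these has become a self-contained statement about subsystems.

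For the equivalence and the upper bound I would proceed in two steps. \textbf{Step 1:} prove $\Phi_n \leftrightarrow (\Sigma^0_3)_n\text{--}\Det$ over $\Pi^1_{n+1}\CA$. The non-trivial direction, $\Phi_n \Rightarrow (\Sigma^0_3)_n\text{--}\Det$, is a Martin-style \emph{unraveling} \cite{BorelDetMartin}: given a $(\Sigma^0_3)_n$ game on a tree $T$, use the $\Sigma_{n+1}$-stable levels $L_\alpha[X]$ supplied by $\Phi_n$ as the auxiliary moves in Martin's covering construction, performing one covering per nested difference on top of the three coverings intrinsic to $\Sigma^0_3$; the pullback of the payoff set becomes closed on the auxiliary tree, and closed determinacy (provable already in $\Pi^1_{n+1}\CA$) together with the transfer of strategies along honest coverings yields determinacy of the original game. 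The converse direction is the standard ``determinacy implies well-foundedness'' argument: one designs an explicit $(\Sigma^0_3)_n$ game in which any winning strategy for the relevant player produces the $\Sigma_{n+1}$-stable $L$-level. \textbf{Step 2:} prove $\Pi^1_{n+2}\CA \vdash \Phi_n$. This is a Skolem-hull argument: using $\Sigma^1_{n+2}$-comprehension one forms, relative to any $X$, a countable $\Sigma^1_{n+2}$-correct model, and condensation collapses it to the required $\Sigma_{n+1}$-stable segment $L_\alpha[X]$. Composing the two steps gives (1).

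For the lower bound I would exhibit a $\beta$-model of $\Delta^1_{n+2}\CA$ in which $\Phi_n$ fails, namely $\Ps(\omega) \cap L_\sigma$ where $\sigma$ is, up to the fine-structural bookkeeping, the \emph{least} $\Sigma_{n+1}$-stable ordinal (the least $\sigma$ with $L_\sigma \prec_{\Sigma_{n+1}} L$); the known analysis of these systems shows that this is a model of $\Delta^1_{n+2}\CA$. Inside it there is no ordinal $\alpha$ with $L_\alpha \prec_{\Sigma_{n+1}} L_\sigma$, since such an $\alpha$ would be $\Sigma_{n+1}$-stable in $L$ below $\sigma$; hence $\Phi_n$ already fails at $X = \emptyset$. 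Therefore $\Delta^1_{n+2}\CA \nvdash \Phi_n$, and since $\Delta^1_{n+2}\CA \vdash \Pi^1_{n+1}\CA$, the equivalence of Step 1 yields $\Delta^1_{n+2}\CA \nvdash (\Sigma^0_3)_n\text{--}\Det$, which is (2).

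The principal obstacle, I expect, is Step 1 of the upper bound: formalizing the iterated Martin covering inside the weak base theory, and above all verifying that the transfinite recursion that produces the auxiliary trees has length exactly matching the number $n$ of nested differences, with no hidden extra comprehension. Keeping the coverings ``honest'' and pushing winning strategies up and pulling them down through all $n$ layers, in a theory as weak as $\Pi^1_{n+1}\CA$, is the delicate technical heart. A secondary point needing care is pinning down the precise constructible level that serves as the $\Delta^1_{n+2}\CA$-model and checking that it genuinely satisfies $\Delta^1_{n+2}$-comprehension while refuting $\Phi_n$.
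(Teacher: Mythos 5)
Your argument has a fatal gap at its linchpin: the equivalence $\Phi_n \leftrightarrow (\Sigma^0_3)_n\text{--}\Det$ over $\Pi^1_{n+1}\CA$ is never proved, and your own counterexample model shows it cannot be right. Let $\sigma$ be the least $\Sigma_{n+1}$-stable ordinal. Then $\sigma$ is in particular $\Sigma_1$-stable, and for each $\xi<\sigma$ the statement ``$\exists \beta>\xi\,(L_\beta \models \mathsf{ZF}^-)$'' is a true $\Sigma_1$ assertion with parameter $\xi$ (in $L$ there are arbitrarily large countable $\beta$ with $L_\beta \prec L_{\omega_1} \models \mathsf{ZF}^-$), so it reflects into $L_\sigma$: $\sigma$ is a limit of $\beta$ with $L_\beta \models \mathsf{ZF}^-$, hence $L_\beta \models \KP+\Sigma_{n+1}$--\textsc{Separation}. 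Each such $L_\beta$ is wellfounded, so by the Montalb\'an--Shore upper bound applied inside it, every $(\Sigma^0_3)_n$ game coded in $L_\beta$ has a strategy in $L_\beta$ that is genuinely winning ($\beta$-model correctness). Hence boldface $(\Sigma^0_3)_n\text{--}\Det$ \emph{holds} in $\Ps(\omega)\cap L_\sigma$. So either $\Phi_n$ fails there, as you claim, and this very model separates $\Phi_n$ from determinacy, refuting your Step 1; or $\Phi_n$ holds there and your refutation of $\Phi_n$ in a model of $\Delta^1_{n+2}\CA$ collapses. Either way item (2) does not follow. (For $n\geq 1$ your minimality argument has an independent gap: an $\alpha$ with $L_\alpha \prec_{\Sigma_{n+1}} L_\beta$ for all $\beta$ coded in the model need not be $\Sigma_{n+1}$-elementary in $L_\sigma$, since $\Pi_n$ truth is not downward absolute between levels, so it need not be $\Sigma_{n+1}$-stable in $L$ at all.) The actual proof of (2), restated set-theoretically as Theorem \ref{TheoremMSIntro}, works in the much smaller minimal $\beta$-model of $\Delta^1_{n+2}\CA$ (the reals of the least $L_\alpha \models \KP+\Sigma_{n+1}$--\textsc{Collection}$+\Delta_{n+1}$--\textsc{Separation}), and the failure of determinacy there is not obtained by any soft stability reflection: one must construct the Friedman-style game in which both players play complete theories of $L$-levels and any winning strategy would compute $Th(L_{\alpha_n})$, as in \S\ref{SectionLowerBound} and \cite{MS}; nothing in your proposal replaces that construction.

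The upper-bound route is also unavailable in $\Z_2$. Martin's unravelling requires the auxiliary moves to be quasistrategies, i.e.\ subsets of the original game tree --- power-set-sized objects --- and requires determinacy of open games on such trees, which already needs ``$\Ps(T)$ exists'' plus $\Sigma_1$--\textsc{Separation}; none of this is expressible, let alone provable, in $\Pi^1_{n+1}\CA$. This is precisely the asymmetry the present paper exploits: unravelling becomes usable only once ``$\Ps^{\gamma}(\mathbb{N})$ exists'' is assumed, which is why the separation index drops in Theorem \ref{TheoremMainIntro} compared with Theorem \ref{TheoremMSIntro}. In second-order arithmetic the Montalb\'an--Shore proof of (1) instead adapts Martin's $\Pi^0_3$/difference argument directly, via the relations $P^s(T)$ of \S\ref{SectionUpperBound} and Appendix \ref{SectAppendixUpperBound}, the whole content being the complexity bookkeeping showing that $\Sigma_{n+1}$--\textsc{Separation} (equivalently $\Pi^1_{n+2}$-comprehension) suffices. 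Moreover, ``one covering per nested difference'' misconstrues the method: coverings lower the Borel rank of the payoff, they do not absorb differences; even with unlimited power sets one unravels $(\Pi^0_{1+\gamma+2})_m$ only down to $(\Pi^0_3)_m$ and must still run the difference analysis with $\Sigma_m$--\textsc{Separation}.
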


In addition to the theorem of Montalb\'an and Shore, there has been a great deal of work carried out on the reverse mathematical properties of determinacy principles in second-order arithmetic; we refer the reader to \cites{AgMSD,AgWe} for recent work, and to the introductions of these articles for an overview of the subject. Much work has also been carried out on determinacy principles that transcend $\mathsf{ZFC}$ in strength; see e.g., Larson \cite{History} for an overview. However, not much work has been done on the reverse mathematics of determinacy axioms in higher-order arithmetic, or in subsystems of $\mathsf{ZFC}$ beyond $\Z_2$. Hachtman \cite{Z3Det} and Schweber \cite{Schw15} have considered the strength of determinacy principles for games on reals, and
Hachtman \cite{Hachtman} has obtained a characterization of $\Sigma^0_\alpha$--determinacy in terms of a reflection principle for admissible sets. Hachtman \cite{Hachtman} also mentions the bounds on determinacy principles provable in $\mathsf{ZFC}^-$, attributing them to Martin, Friedman, Montalb\'an, and Shore. These coarse bounds can indeed be proved by modifications of their arguments. However, we shall see in this article that some unexpected subtleties arise when considering the level-by-level analysis. To explain these, let us restate the Montalb\'an and Shore theorem in the language of set theory.

\begin{theorem}[Montalb\'an and Shore]
Let $n\in\mathbb{N}$. Then,  \label{TheoremMSIntro}
\begin{enumerate}
\item $\mathsf{KP} + \Sigma_{n+1}${--}\textsc{Separation} $\vdash (\Sigma^0_3)_n \text{--}\Det$; but 
\item $\mathsf{KP} + \Sigma_{n+1}$--\textsc{Collection} $ + \Delta_{n+1}{}$--\textsc{Separation} $\not\vdash(\Sigma^0_3)_n\text{--}\Det$.
\end{enumerate}
\end{theorem}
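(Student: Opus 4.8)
The plan is to deduce both clauses from the second-order formulation stated above, via the (by now standard) translation between these set theories over $\mathsf{KP}$ and the comprehension schemes $\Pi^1_{n+2}\text{--}\mathsf{CA}_0$ and $\Delta^1_{n+2}\text{--}\mathsf{CA}_0$. What makes the transfer possible is the low logical complexity of determinacy: $(\Sigma^0_3)_n\text{--}\Det$ reads ``for every code $c$ for a set in the class $(\Sigma^0_3)_n$ there is a $\sigma$ winning the game with payoff $A_c$'', a $\Pi^1_3$ statement about reals, and the inner clause ``$\sigma$ is winning for $A_c$'' is $\Pi^1_1(\sigma,c)$, hence absolute between any $\beta$-model and the ambient universe. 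In particular, $(\Sigma^0_3)_n\text{--}\Det$ holds in a transitive model of $\mathsf{KP}$ if and only if it holds in the $\omega$-model formed by its reals.

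For clause (1) I would argue in $\mathsf{KP}+\Sigma_{n+1}${--}\textsc{Separation}: given a payoff code $c$, form the transitive collapse of a $\Sigma_{n+1}$-elementary hull of $c$ together with enough ordinals, the role of $\Sigma_{n+1}${--}\textsc{Separation} being precisely to collect this hull as a set. The translation is arranged so that such a structure furnishes (inside its own $L$ if necessary) a countable $\beta$-model $\mathcal{N}\ni c$ of $\Pi^1_{n+2}\text{--}\mathsf{CA}_0$; by clause (1) of the second-order theorem $\mathcal{N}$ satisfies the corresponding instance of $(\Sigma^0_3)_n\text{--}\Det$, and as $\mathcal{N}$ is a $\beta$-model the strategy it exhibits is genuinely winning. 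Since $c$ was arbitrary, every $(\Sigma^0_3)_n$ game is determined. One can also bypass the second-order theorem and re-run the Montalb\'an--Shore upper-bound construction directly: the tower of relativised admissible sets from which their argument reads off a winning strategy for a given code is exactly what $\Sigma_{n+1}${--}\textsc{Separation} provides.

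For clause (2) it suffices to produce one model of $\mathsf{KP} + \Sigma_{n+1}$--\textsc{Collection} $+ \Delta_{n+1}{}$--\textsc{Separation} in which $(\Sigma^0_3)_n\text{--}\Det$ fails. The Montalb\'an--Shore counterexample witnessing clause (2) of the second-order theorem may be taken to be a countable coded $\beta$-model of the form $\mathcal{P}(\omega)\cap L_\beta$, with $\beta$ chosen least subject to an appropriate failure of $\Sigma_{n+1}$-reflection. At such a ``gap'' ordinal one has, by the usual analysis, $L_\beta\models\mathsf{KP}+\Sigma_{n+1}${--}\textsc{Collection} (from minimality of $\beta$) together with $\Delta_{n+1}${--}\textsc{Separation} (using the $\Sigma_{n+1}$-definable well-ordering of $L_\beta$), whereas $\Sigma_{n+1}${--}\textsc{Separation} fails there. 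Because all reals of $L_\beta$ already belong to $\mathcal{P}(\omega)\cap L_\beta$, and that model contains no winning strategy for the Montalb\'an--Shore hard game, $L_\beta\models\neg(\Sigma^0_3)_n\text{--}\Det$ by the absoluteness noted above.

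The delicate point — and the one requiring real care — is the bookkeeping of the translation: one must verify that $\Sigma_{n+1}${--}\textsc{Separation} genuinely manufactures models of $\Pi^1_{n+2}\text{--}\mathsf{CA}_0$ rather than one level short, and, dually, that the Montalb\'an--Shore counterexample ordinal lies precisely in the window where $\Sigma_{n+1}${--}\textsc{Collection} and $\Delta_{n+1}${--}\textsc{Separation} hold but $\Sigma_{n+1}${--}\textsc{Separation} does not. The boundary cases deserve separate attention — in particular $n=0$, where $\Sigma_1$-elementarity does not suffice to carry $\Pi_2$ properties of the universe down to the hull and the reflecting structure must be chosen more carefully. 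No new game-theoretic content is needed: the determinacy input is inherited wholesale from the second-order theorem (or, equivalently, from Montalb\'an and Shore's construction).
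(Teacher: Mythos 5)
Your overall plan---quoting the second-order Montalb\'an--Shore theorem and transferring it through the standard arithmetic/set-theory translation, using that ``$\sigma$ is winning for $A_c$'' is $\Pi^1_1$ and hence absolute for $\beta$-models---is exactly how the paper treats this statement (it is not reproved there; it is cited together with Simpson VII.5), and your clause (2) is essentially the standard argument: the least $L_\beta$ satisfying $\KP+\Sigma_{n+1}$--\textsc{Collection}$+\Delta_{n+1}$--\textsc{Separation} has as its reals the minimum $\beta$-model of $\Delta^1_{n+2}\CA$, Montalb\'an--Shore's game has no winning strategy there, and transitivity/$\beta$-correctness turns this into $L_\beta\models\neg(\Sigma^0_3)_n\text{--}\Det$.

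Clause (1), however, contains a genuine gap. The mechanism you propose---that $\KP+\Sigma_{n+1}$--\textsc{Separation} proves every real $c$ lies in a \emph{countable coded $\beta$-model} of $\Pi^1_{n+2}\CA$---is not available: $\Pi^1_{n+2}\CA$ is the exact second-order counterpart of $\KP+\Sigma_{n+1}$--\textsc{Separation} under the very translation you invoke (they interpret one another and are equiconsistent), so your claim would make the set theory prove the existence of a $\beta$-model, hence the consistency, of a theory that interprets it, contradicting G\"odel's second incompleteness theorem. Concretely, the hull-and-collapse step fails at precisely the point you flag: a transitive collapse of a $\Sigma_{n+1}$-elementary substructure of the universe (or an $L_\alpha\prec_{\Sigma_{n+1}}L$) only satisfies $\Delta_{n+1}$--\textsc{Separation} and $\Sigma_{n+1}$--\textsc{Collection}, one notch short of $\Sigma_{n+1}$--\textsc{Separation}, because the instances of the separation scheme are of too high complexity to be pulled down by $\Sigma_{n+1}$-elementarity; compare Lemma \ref{boundingPrinciple} and the proof of Corollary \ref{CorollaryKPGammanimpliesDet}, where exactly this phenomenon produces models of $\KP^\gamma_{n+1}$, i.e.\ on the second-order side $\beta$-models of $\Delta^1_{n+2}\CA$---and by part (2) of the very theorem you are proving, such models need not satisfy $(\Sigma^0_3)_n\text{--}\Det$, so you cannot extract the strategy from them. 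The correct transfer needs no set-sized $\beta$-model at all: over $\KP$, $\Pi^1_1$ predicates of reals are $\Delta_1$ (wellfoundedness versus existence of a rank function), hence $\Pi^1_{k+2}$ predicates are $\Pi_{k+1}$, so $\Sigma_{n+1}$--\textsc{Separation} already shows that the class of \emph{all} reals is a model of $\Pi^1_{n+2}\CA$; running the Montalb\'an--Shore proof inside this class model literally yields $(\Sigma^0_3)_n\text{--}\Det$, since the model contains every real and every play. (Your fallback of re-running their upper-bound construction directly is also viable, but then nothing is ``inherited wholesale'' from the second-order theorem.)
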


Thus, when stating the result in the language of set theory, the degree of \textsc{Separation} required is one index smaller than the level of \textsc{Comprehension} needed when stating the result in the language of $\Z_2$. This is very standard and is merely a consequence of the translation between arithmetic and set theory; see Simpson \cite{Simpson}*{VII.5}. The key point is the fact that the membership relation on sets is wellfounded, and coding hereditarily countable sets by sets of integers requires restricting to wellfounded sets; phrasing the result in the language of set theory eliminates this consideration and yields a setting that can more uniformly be extended to higher-order arithmetic. Henceforth, we have stuck to this formalism in order to state and prove our results.

Based on what we know about determinacy in subsystems of $\mathsf{ZFC}$ from work of Martin and Friedman, one would expect Theorem \ref{TheoremMSIntro} to hold \textit{mutatis mutandi} for the difference hierarchy over $\Sigma^0_{1 + \gamma + 2}$ by adding the axiom ``$\mathcal{P}^{\gamma}(\mathbb{N})$ exists,'' as well as the axiom ``$\gamma$ is wellordered'' to avoid proof-theoretic pathologies. However, what we shall see is that this is not quite true: the indices in the extension of Theorem \ref{TheoremMSIntro} must be lowered yet one more stage:

\begin{theorem}
Suppose $n\in\mathbb{N}$ and $1 \leq \gamma < \omega_1^{\mathsf{CK}}$. Let $\mathsf{KP}^\gamma$ be the theory $\mathsf{KP}$ + $``\mathcal{P}^{\gamma}(\mathbb{N})$ exists'' + $``\gamma$ is wellordered.'' Then,  \label{TheoremMainIntro}
\begin{enumerate}
\item $\mathsf{KP}^\gamma + \Sigma_{n}\text{--}$\textsc{Separation} $\vdash(\Sigma^0_{1 + \gamma + 2})_n\text{--}\Det$; but 
\item $\mathsf{KP}^\gamma +$  {$\Sigma_{n}$}{--}\textsc{Collection} $ + \Delta_{n}\text{--}$\textsc{Separation} $\not\vdash (\Sigma^0_{1 + \gamma + 2})_n\text{--}\Det$.
\end{enumerate}
\end{theorem}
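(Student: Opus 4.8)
The plan is to prove (1) by relativizing the Montalb\'an--Shore construction of a winning strategy to the type-$\gamma$ universe, and (2) by a Gödel-style argument showing that the determinacy principle already secures the consistency of the weaker theory. The new phenomenon in both directions is a ``base change'' trading the infinitary level $1+\gamma+2$ for the type level $\gamma$, together with the arithmetic of indices this produces.

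For (1), fix a $(\Sigma^0_{1+\gamma+2})_n$ set $A$ and work in $\mathsf{KP}^\gamma + \Sigma_n$-\textsc{Separation}. First use ``$\gamma$ is wellordered'' to develop, inside the model, the hierarchy of iterated jumps (equivalently, of Borel codes) along $\gamma$, and use ``$\mathcal{P}^\gamma(\mathbb{N})$ exists'' to collect the relevant codes into a single parameter $p \in \mathcal{P}^\gamma(\mathbb{N})$ relative to which the payoff $A$ takes the shape of a $(\Sigma^0_3)_n$ set. Then run the Montalb\'an--Shore strategy construction (\cite{MS}, building on Welch's analysis \cite{Pi03CA>>Pi03Det}): it reads a winning strategy off a $\subseteq$-increasing tower of admissible sets, each $\Sigma_1$-elementary in the next, whose bottom member must be a sufficiently closed set containing $p$. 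The crucial point is that for $\gamma \geq 1$ this bottom member is already at hand — a definable closure of the set $\mathcal{P}^\gamma(\mathbb{N})$ itself — so the tower still to be built has height $n$ rather than $n+1$; equivalently, a quantifier that is unbounded in the Montalb\'an--Shore recursion becomes bounded to $\mathcal{P}^\gamma(\mathbb{N})$, dropping the complexity of the recursion from $\Sigma_{n+1}$ to $\Sigma_n$. Either way $n$ applications of $\Sigma_n$-\textsc{Separation} suffice, whereas for $\gamma=0$ the bottom level is not free — which is the source of the $n{+}1$ in Theorem~\ref{TheoremMSIntro}.

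For (2), I would adapt the lower-bound half of Theorem~\ref{TheoremMSIntro}: one shows, over $\mathsf{KP}^\gamma$, that $(\Sigma^0_{1+\gamma+2})_n$-$\Det$ implies the existence of a tower of $n$ admissible sets above $\mathcal{P}^\gamma(\mathbb{N})$ — by applying determinacy to a game whose winning strategies can only be extracted from such a tower, or by invoking a Welch/Hachtman-style admissibility characterization of determinacy (\cite{Pi03CA>>Pi03Det}, \cite{Hachtman}) — and then notes that such a tower furnishes a transitive model, hence a proof of consistency, of $\mathsf{KP}^\gamma + \Sigma_n$-\textsc{Collection} $+\ \Delta_n$-\textsc{Separation}. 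By Gödel's second incompleteness theorem this theory cannot prove $(\Sigma^0_{1+\gamma+2})_n$-$\Det$. Concretely, one can instead exhibit such a model directly — a suitable level of $L$ just past the ordinal at which $\mathcal{P}^\gamma(\mathbb{N})$ is computed, sitting atop a tower of exactly $n$ admissibles over that set — and a fixed $(\Sigma^0_{1+\gamma+2})_n$ game that is not determined there because the ``$(n{+}1)$-st admissible'' that would resolve it is missing.

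The step I expect to be the main obstacle is the precise accounting behind the base change: one must verify that the reduction in (1) genuinely costs nothing in \textsc{Separation}, so that the admissible tower over $\mathcal{P}^\gamma(\mathbb{N})$ really has height $n$ and not $n+1$, and, dually, that the model produced for (2) satisfies $\Sigma_n$-\textsc{Collection} and $\Delta_n$-\textsc{Separation} but nothing stronger — it is exactly the gap of one admissible set that separates the two theories, and pinning this down is where the ``unexpected subtlety'' flagged in the introduction really bites. A secondary, more routine, point: since $\gamma$ is only a notation for a recursive ordinal, the axiom ``$\gamma$ is wellordered'' is genuinely needed — to carry out the iterated-jump construction inside the model in (1), and to avoid the ill-founded-``ordinal'' pathologies that would otherwise break the model-theoretic arguments in (2).
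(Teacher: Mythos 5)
There is a genuine gap, and it sits exactly where you yourself flag the ``main obstacle.'' For part (1), your central reduction is not sound: fixing a parameter $p\in\mathcal{P}^\gamma(\mathbb{N})$ cannot lower the Borel rank of the payoff as a function of the play, so a $(\Sigma^0_{1+\gamma+2})_n$ set does not ``take the shape of a $(\Sigma^0_3)_n$ set'' relative to $p$ in the same game on $\mathbb{N}$. What the paper actually does is change the game: it uses Martin's unravelling to pass to a $k$-covering whose moves lie in $\mathcal{P}^\gamma(\mathbb{N})$ and whose lift of the payoff is $(\Pi^0_3)_n$, and then reruns the Martin/Montalb\'an--Shore analysis of the difference hierarchy via the relations $P^s(T)$ for the unravelled tree. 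The index drop from $\Sigma_{n+1}$ to $\Sigma_n$ does not come from ``the bottom of a tower of admissibles being free''; it comes from the covering maps themselves: ``Player I wins $G(T,A)$'' for the unravelled game is equivalent to ``Player I wins the original countable game $G(T^*,A^*)$,'' whose witness is a real, so the base case $P^{\langle\,\rangle}(T)$ becomes $\Delta_0$ (with $\mathbb{R}$, $T$, $A$ as parameters) rather than $\Sigma_1$, and inductively $P^s(T)$ is $\Sigma_{|s|}$ instead of $\Sigma_{|s|+1}$; only then does $\Sigma_n$-\textsc{Separation} suffice. Simply ``running the Montalb\'an--Shore construction relativized'' would still cost $\Sigma_{n+1}$-\textsc{Separation}, and your proposal contains no mechanism that actually pays for the missing quantifier. (Note also that carrying out the unravelling needs $\Sigma_1$-\textsc{Separation} for open determinacy on uncountable trees and the passage to $L(X)$ to secure choice, both of which the paper has to arrange explicitly.)

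For part (2), the route you sketch presupposes precisely the hard step. There is no off-the-shelf ``Welch/Hachtman-style'' characterization at the levels $(\Sigma^0_{1+\gamma+2})_n$ for $n\ge 2$ (Hachtman covers $n=1$, Welch the base level), so ``determinacy yields a tower/model of $\mathsf{KP}^\gamma_n$'' must be proved, and the paper does this by building a Friedman-style game in which both players play completions of $T^\gamma_n=\mathsf{KP}^\gamma_n+V{=}L+{}$``no level of $L$ models $\mathsf{KP}^\gamma_n$'' and by showing the resulting game is undetermined in the $\beta$-model $L_{\alpha^\gamma_n}$. The substance is in making the comparison of the two term-models expressible at complexity exactly $(\Sigma^0_{1+\gamma+2})_n$: the ``dummy intersection'' $D_\gamma$, which can give false positives when the models have different $\mathcal{P}^\xi(\mathbb{N})$'s, the extra conditions $(C_{\I}1),(C_{\II}1)$ forcing least non-standard codes at the top cardinality (so the common wellfounded part models ``$\mathcal{P}^\gamma(\mathbb{N})$ exists''), and the four-case definition of the isomorphism $\A^{\gamma+1}$ --- none of which your outline engages. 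Your ``direct'' alternative also misdescribes the witnessing model: what is needed is the least $L_\alpha\models\mathsf{KP}^\gamma_n$ (i.e.\ $\Sigma_n$-\textsc{Collection} plus $\Delta_n$-\textsc{Separation} over ``$\mathcal{P}^\gamma(\mathbb{N})$ exists''), which is vastly beyond ``a tower of exactly $n$ admissibles over $\mathcal{P}^\gamma(\mathbb{N})$,'' and exhibiting a specific undetermined game there is again the same construction. So while your global architecture (upper bound by strategy construction, lower bound by a game undetermined in a minimal model) matches the paper, both of the new ideas that make the index arithmetic come out as $n$ rather than $n+1$ are missing.
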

According to Theorem \ref{TheoremMainIntro}, the strength of $(\Sigma^0_{1 + \gamma + 2})_n$--determinacy  is strictly lower than the picture from second-order arithmetic would suggest. We emphasize that this is not merely a consequence of syntax, coding, or language: there really is a difference in how much separation can be extracted from determinacy in higher-order arithmetic. 
Our unprovability result (the second half of Theorem \ref{TheoremMainIntro}) is presented in \S \ref{SectionLowerBound}. Our proof involves considering a Friedman-style game similar to the one in \cite{MS} which cannot have any simple winning strategy. There are some differences in our game necessary to incorporate the \textsc{power set} axioms in the statement of Theorem \ref{TheoremMainIntro}. Some of these differences are crucial, as
one runs into serious difficulties when attempting to modify existing proofs in order to exploit determinacy principles to produce models satisfying both \textsc{power set} and \textsc{separation} axioms. Thus, the proof of Theorem \ref{TheoremMainIntro} yields a weaker conclusion than Theorem \ref{TheoremMSIntro}.

In \S \ref{SectionUpperBound}, we see that these difficulties are unavoidable. One can take advantage of the interplay between games on integers and games on objects of higher cardinality (via Martin's \textit{unravelling} technique) to construct winning strategies in a way which has no parallel in the context of subsystems of $\Z_2$. This yields the first part of Theorem \ref{TheoremMainIntro}, whose conclusion is \textit{stronger} than that of Theorem \ref{TheoremMSIntro}.
A hint towards this possibility was present in the work of Hachtman \cite{Hachtman}, who proved that $\Sigma^0_4$--$\Det$ follows from $\Sigma_1$-separation over $\KP$ + ``$\mathbb{R}$ exists'', but not from merely $\KP$ + ``$\mathbb{R}$ exists''. This again is in contrast to the picture within $\Z_2$, where $\Sigma^0_3\text{--}\Det$ is not provable in $\KP + \Sigma_1\textsc{-Separation}$ by Welch \cite{Pi03CA>>Pi03Det}.
Theorem \ref{TheoremMainIntro} yield the picture depicted in Figure \ref{FigureProvIntro} below, which is the analogue of the picture from \cite{MSCons} in the context of higher-order arithmetic. Indeed, we prove in section~\ref{SubsectionBetaModels} the following.

\begin{theorem}
    Assume $\RCA$ and let $2 \leq n < \omega$ and $1\leq \gamma < \omega_1^{\mathsf{ CK }}$. Then, 
    \begin{align*}
        \KP^{\gamma} + \Sigma_{n}\text{--\textsc{separation}} \to \beta((\Sigma^0_{1+\gamma+2})_{n}\text{--}\Det) \to
        (\Sigma^0_{1 + \gamma+ 2})_{n}\text{--}\Det \to
        \beta(\KP^{\gamma}_{n}).
    \end{align*}
    Moreover, no implication is reversible.~\label{generalised_inequalities}
    \end{theorem}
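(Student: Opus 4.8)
The plan is to read the four-term chain off from Theorem~\ref{TheoremMainIntro} together with standard facts about $\beta$-models and Gödel's second incompleteness theorem; the two hard theorems do the real work, and what is left is essentially packaging. For the first implication I would work in $\KP^\gamma + \Sigma_n\text{--\textsc{Separation}}$ (equivalently, inside a $\beta$-model of it): by Theorem~\ref{TheoremMainIntro}(1) every $(\Sigma^0_{1+\gamma+2})_{n}$ set is determined, and one then iterates, over $V$, a closing-off construction — starting from a countable admissible set containing a fixed transfinite jump $\emptyset^{(\gamma+2)}$ of the empty set, and repeatedly passing to the next admissible set that also contains a winning strategy for every game coded so far; the strategies exist by the determinacy just secured, and admissible sets are cofinal in the universe since $n\geq 2$ gives $\Sigma_1\text{--\textsc{Separation}}$. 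The resulting countable model $M$ is an increasing union of transitive pieces, hence is a $\beta$-model, it computes the ordinal $1+\gamma+2$ correctly, and it satisfies $(\Sigma^0_{1+\gamma+2})_{n}\text{--}\Det$ because ``$\sigma$ is a winning strategy'' is arithmetical in $\sigma$, a code for the payoff, and $\emptyset^{(\gamma+2)}$, and so persists downward from $V$ to $M$; this gives $\beta((\Sigma^0_{1+\gamma+2})_{n}\text{--}\Det)$.

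For the second implication I would use that, over $\RCA$, $(\Sigma^0_{1+\gamma+2})_{n}\text{--}\Det$ follows from the determinacy of a single universal $(\Sigma^0_{1+\gamma+2})_{n}$-game — a $\Sigma^1_2$ statement with a hyperarithmetical parameter — and that such statements pass upward from $\beta$-models to the ambient structure: a $\beta$-model is correct about $\Pi^1_1$ facts with parameters it contains, and a $\beta$-model of $(\Sigma^0_{1+\gamma+2})_{n}\text{--}\Det$ satisfies $\mathsf{ATR}_0$ and hence contains $\emptyset^{(\gamma+2)}$, $\gamma$ being computable. So a $\beta$-model of the determinacy principle, which in particular witnesses the universal game's determinacy, reflects that witness up and forces the full principle. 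The third implication is the engine of the unprovability half of Theorem~\ref{TheoremMainIntro}, carried out in \S\ref{SectionLowerBound}: the Friedman-style game constructed there is arranged so that any winning strategy codes a well-founded — hence transitive — model of $\KP^\gamma_{n}$, so $(\Sigma^0_{1+\gamma+2})_{n}\text{--}\Det$, which supplies such a strategy, proves $\beta(\KP^\gamma_{n})$.

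For non-reversibility, the uniform point is that along the chain the stronger side always proves the existence of a model of the weaker one, so equivalence would give a consistent theory proving a model of itself. Explicitly: $\KP^\gamma + \Sigma_n\text{--\textsc{Separation}}$ proves the existence of a transitive model of $\KP + \beta((\Sigma^0_{1+\gamma+2})_{n}\text{--}\Det)$ — one further admissible step above the model built above — hence proves that theory consistent, which it could not do were $\beta((\Sigma^0_{1+\gamma+2})_{n}\text{--}\Det)$ in turn to imply $\KP^\gamma + \Sigma_n\text{--\textsc{Separation}}$; if $(\Sigma^0_{1+\gamma+2})_{n}\text{--}\Det$ implied $\beta((\Sigma^0_{1+\gamma+2})_{n}\text{--}\Det)$ it would prove its own consistency; and if $\beta(\KP^\gamma_{n})$ implied $(\Sigma^0_{1+\gamma+2})_{n}\text{--}\Det$, then, using that the construction of \S\ref{SectionLowerBound} relativizes to produce a transitive model of $\KP^\gamma_{n}$ over any prescribed real and hence iterates, $\beta(\KP^\gamma_{n})$ would prove $\mathrm{Con}(\RCA + \beta(\KP^\gamma_{n}))$. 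Each of these contradicts Gödel's theorem, granting the (metatheoretic) consistency of the theories involved, which Theorem~\ref{TheoremMainIntro}(1) supplies.

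I expect the delicate point to be the first implication: one cannot simply invoke a model of $\KP^\gamma + \Sigma_n\text{--\textsc{Separation}}$, since by Gödel's theorem the theory cannot prove one exists, so the closing-off construction is essential and its output must be a model asked to satisfy only the determinacy statement (together with, say, $\mathsf{ATR}_0$), not the full separation scheme. One must also check that the proof of Theorem~\ref{TheoremMainIntro}(1) — which, unlike its second-order predecessors, passes through Martin's unravelling and the power-set axioms of $\KP^\gamma$ — really delivers determinacy of every $(\Sigma^0_{1+\gamma+2})_{n}$ set in a form robust enough to feed that construction, including its relativization to an arbitrary real as needed for the non-reversibility arguments. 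Aligning the set-theoretic and second-order formalisms throughout the non-reversibility arguments is a further, more routine, source of care.
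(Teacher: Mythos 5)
Your forward implications and the non-reversibility of the middle arrow follow the paper's route (Theorem~\ref{mPi4Det} plus a $\beta$-model asked to satisfy only the determinacy statement; $\Pi^1_1$-correctness of $\beta$-models; G\"odel for $\Det\not\to\beta(\Det)$), modulo two slips worth noting: ``$\sigma$ is a winning strategy'' is $\Pi^1_1$ in $\sigma$ and the Borel code, not arithmetical in $\emptyset^{(\gamma+2)}$ (this is harmless, since $\beta$-models are $\Pi^1_1$-correct), and ``an increasing union of transitive pieces'' is not by itself a $\beta$-model — what saves your construction is that it is a limit of admissibles, which is exactly the fact the paper exploits via Lemma~\ref{Pi13limit}. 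The genuine gaps are in the last implication and, above all, in its non-reversibility. For $(\Sigma^0_{1+\gamma+2})_n\text{--}\Det\to\beta(\KP^{\gamma}_n)$ you cannot simply cite the game of Theorem~\ref{notmPigammaDet}: that game only yields non-determinacy \emph{inside} $L_{\alpha^{\gamma}_n}$, and a winning strategy for it does not ``code a wellfounded model.'' The paper needs the modified game of Theorem~\ref{beta_inequality}\eqref{beta_inequality1} (the theory $\bar T^{\gamma}$ without the minimality clause, the conditions $C1\mathrm{new}$ and the $\infty$-variants), whose determinacy is shown, via the analysis in Remark~\ref{RemarkWS}, to force the existence of a level $L_\alpha\models\KP^{\gamma}_n$.

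The non-reversibility of that last arrow is not a packaging/G\"odel matter, and your argument for it fails. You claim that if $\beta(\KP^{\gamma}_n)\to(\Sigma^0_{1+\gamma+2})_n\text{--}\Det$, then $\RCA+\beta(\KP^{\gamma}_n)$ would prove $\mathrm{Con}(\RCA+\beta(\KP^{\gamma}_n))$ by ``iterating the relativized construction''; but the iteration you describe (chaining coded $\beta$-models of $\KP^{\gamma}_n$, each containing a code of the previous) nowhere uses the assumed implication, so if it went through it would show outright that $\RCA+\beta(\KP^{\gamma}_n)$ proves its own consistency, which is absurd — and indeed it cannot be carried out there: there is no way to form the $\omega$-sequence of models, and a single $\beta$-model of $\KP^{\gamma}_n$ (unlike one of $\KP^{\gamma}+\Sigma_n$--\textsc{separation}) need not satisfy $\Det$ or $\beta(\KP^{\gamma}_n)$, precisely because $\KP^{\gamma}_n\not\vdash(\Sigma^0_{1+\gamma+2})_n\text{--}\Det$. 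The paper blocks the reversal with an explicit countermodel, Theorem~\ref{beta_inequality}\eqref{beta_inequality2}: a second, separately constructed game which is undetermined in $L_{\alpha^{\gamma,*}_n}$ even though that level satisfies $\beta(\KP^{\gamma}_n)$; this construction is real work that your proposal omits. A smaller but real flaw of the same kind occurs in your first non-reversibility argument: ``one further admissible step above'' your $\beta$-model of $\Det$ does not yield a model of $\beta(\Det)$, since for the new reals of that admissible set the required winning strategies are too complex to belong to it; the paper instead obtains models of $\beta(\Det)$ (hence the needed unprovability of $\beta(\beta(\Det))$) by applying Lemma~\ref{Pi13limit} to the $\Pi^1_3$ sentence $\beta(\Det)$ itself.
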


    For comparison, we state the theorem for $\Z_2$ in set-theoretic terms. Note the differences between Theorem \ref{generalised_inequalities} and Theorem \ref{TheoremMSBeta} concerning the index $n$.
    
    \begin{theorem}[Montalb\'an and Shore \cite{MSCons}] \label{TheoremMSBeta}
    Assume $\RCA$. Then for $1 \leq n < \omega$,
    \begin{align*}
        \KP + \Sigma_{n+1}\text{--\textsc{separation}} \to \beta((\Sigma^0_{3})_{n}\text{--}\Det) \to
        (\Sigma^0_{3})_{n}\text{--}\Det \to
        \beta(\KP_{n+1}).
    \end{align*}
    Moreover, no implication is reversible.
    \end{theorem}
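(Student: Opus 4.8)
The plan is to establish the three implications of the chain in turn and then show that none reverses; all of this is to be carried out over $\RCA$, as in the statement, and on the set-theoretic side $\beta(\varphi)$ asserts that every set lies in a countable wellfounded model of $\varphi$. The middle implication $\beta((\Sigma^0_3)_n\text{--}\Det) \to (\Sigma^0_3)_n\text{--}\Det$ is the easy one: given a $(\Sigma^0_3)_n$ game $A$, take a countable wellfounded $M \ni A$ with $M \models (\Sigma^0_3)_n\text{--}\Det$, so that $M$ contains a strategy $\sigma$ it believes wins for one of the players; since $A$ is arithmetic, ``$\sigma$ wins $A$'' is $\Pi^1_1$ in $\sigma$ and $A$, hence expressible as the wellfoundedness of a tree recursive in $\sigma$ and $A$, and wellfoundedness is absolute between wellfounded models, so $\sigma$ genuinely wins and $A$ is determined.

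For $\KP + \Sigma_{n+1}$--\textsc{Separation} $\to \beta((\Sigma^0_3)_n\text{--}\Det)$ I would combine two ingredients. The first is a local reading of the provability half of Theorem~\ref{TheoremMSIntro}: that proof builds winning strategies by a Martin-style recursion carried out inside the constructible hierarchy, and so shows not merely that $\KP + \Sigma_{n+1}$--\textsc{Separation} proves $(\Sigma^0_3)_n\text{--}\Det$, but that every wellfounded model of the bounded-complexity fragment of $\KP + \Sigma_{n+1}$--\textsc{Separation} actually invoked in that argument already satisfies $(\Sigma^0_3)_n\text{--}\Det$. The second is the standard reflection-and-condensation analysis for fragments of $\KP$: from $\Sigma_{n+1}$--\textsc{Separation} one derives $\Sigma_{n+1}$--\textsc{Collection}, and hence enough reflection that, given any set $X$, there is $\alpha$ with $X \in L_\alpha \prec_{\Sigma_{n+1}} L$; applying Löwenheim--Skolem and collapsing then yields a countable wellfounded $M \ni X$ satisfying that fragment, so $M \models (\Sigma^0_3)_n\text{--}\Det$ and $\beta((\Sigma^0_3)_n\text{--}\Det)$ follows. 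The delicate point is verifying that the fragment of $\KP + \Sigma_{n+1}$--\textsc{Separation} needed for determinacy is of low enough complexity to be captured by the reflection that $\Sigma_{n+1}$--\textsc{Separation} actually supplies; this matching is exactly what fixes the index at $n+1$ here (rather than $n+2$) and is the reason the set-theoretic formulation is the natural one.

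The remaining implication $(\Sigma^0_3)_n\text{--}\Det \to \beta(\KP_{n+1})$ is the heart of the theorem, and I expect it to be the main obstacle: one must manufacture wellfounded models of $\KP_{n+1}$ — that is, of $\KP + \Sigma_{n+1}$--\textsc{Collection} $+ \Delta_{n+1}{}$--\textsc{Separation} — out of winning strategies. The approach is to fix a set $X$ and play an auxiliary Gale-Stewart game, close in spirit to the Friedman-style games driving the non-provability half of Theorem~\ref{TheoremMSIntro}: one player builds, move by move, the $\in$-diagram of a model of $\KP_{n+1}$ containing $X$, together with running certificates for its wellfoundedness and for the finitely many instances of \textsc{Collection} and \textsc{Separation} inspected so far, while the opponent may at any point challenge a certificate. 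The payoff is arranged so that the model-builder wins exactly when no challenge ever succeeds; $(\Sigma^0_3)_n\text{--}\Det$ then produces a strategy, and analysis of a play against it shows the resulting structure is wellfounded (so collapses to a transitive set) and models $\KP_{n+1}$ with $X$ inside. The bulk of the work is in designing the game so that its payoff set lands at exactly the level $(\Sigma^0_3)_n$ — no more complex, lest $(\Sigma^0_3)_n\text{--}\Det$ fail to apply, and no less, lest the extracted model be too weak — which is governed by the interaction between the wellfoundedness certificates and the $\Sigma_{n+1}$ schemes.

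For non-reversibility: the middle implication does not reverse, by Gödel's second incompleteness theorem, since $(\Sigma^0_3)_n\text{--}\Det$ is a theorem of $\ZFC$ and hence consistent while $\beta((\Sigma^0_3)_n\text{--}\Det)$ proves its consistency. For the two outer implications one exhibits suitable intermediate models, built as limits of levels. For $\beta((\Sigma^0_3)_n\text{--}\Det) \not\vdash \KP + \Sigma_{n+1}$--\textsc{Separation}: let $\lambda$ be least such that $L_\lambda$ is a limit of ordinals $\alpha$ for which $L_\alpha$ models the fragment isolated in the second step (in particular $L_\alpha \models (\Sigma^0_3)_n\text{--}\Det$); then every set of $L_\lambda$ lies in such an $L_\alpha$, so $L_\lambda \models \beta((\Sigma^0_3)_n\text{--}\Det)$, yet $L_\lambda$ is much too thin to satisfy all instances of $\Sigma_{n+1}$--\textsc{Separation}. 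For $\beta(\KP_{n+1}) \not\vdash (\Sigma^0_3)_n\text{--}\Det$: the non-provability half of Theorem~\ref{TheoremMSIntro} produces a wellfounded model of $\KP_{n+1}$, of the form $L_\kappa[G]$ for a suitable generic $G$, in which $(\Sigma^0_3)_n\text{--}\Det$ fails, and taking $\kappa$ to be a limit of the ordinals $\alpha$ with $L_\alpha[G] \models \KP_{n+1}$ — which the construction permits while keeping the failure of determinacy at level $\kappa$ — makes this model satisfy $\beta(\KP_{n+1})$ as well. This closes the chain.
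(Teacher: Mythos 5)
Your treatment of the middle arrow and of its non-reversal is fine and matches the standard route (correctness of $\beta$-models for $\Pi^1_1$/$\Sigma^1_1$ statements, then G\"odel's second incompleteness theorem), and your $L_\lambda$ idea for the first non-reversal is reasonable in spirit. The first genuine gap is in your proof of $\KP+\Sigma_{n+1}$--\textsc{Separation} $\to\beta((\Sigma^0_3)_n\text{--}\Det)$: you propose to reflect to some $L_\alpha\prec_{\Sigma_{n+1}}L$ containing $X$ and conclude that $L_\alpha$ ``satisfies the fragment actually invoked'' in the determinacy proof, hence satisfies determinacy. But such levels provably satisfy only $\KP_{n+1}$ (that is, $\Delta_{n+1}$--\textsc{Separation} and $\Sigma_{n+1}$--\textsc{Collection}), and $\KP_{n+1}\not\vdash(\Sigma^0_3)_n\text{--}\Det$ is exactly the lower bound half of the Montalb\'an--Shore theorem; moreover, $\beta$-models of the full $\Sigma_{n+1}$--\textsc{Separation} schema cannot be produced inside the theory itself (over $\KP$ the schema follows from its single instance at a universal $\Sigma_{n+1}$ formula, so this would violate G\"odel's second incompleteness theorem). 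So the ``matching of fragments'' you rely on cannot be carried out. The conclusion $L_\alpha\models(\Sigma^0_3)_n\text{--}\Det$ is nevertheless true, but for a different reason, which is what the paper uses for its higher-order analogue (Corollary~\ref{CorollaryKPGammanimpliesDet}, Lemma~\ref{Pi13limit}): transfer determinacy itself, not a determinacy-proving theory --- each instance ``this game has a winning strategy'' is $\Sigma_1$ over $L$ with ``winning'' absolute (alternatively, determinacy is a provable true $\Pi^1_3$ statement and reflects to $L_\delta$ for $\delta$ a limit of admissibles, which already gives a $\beta$-model).

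The second and larger gap concerns the heart of the theorem, $(\Sigma^0_3)_n\text{--}\Det\to\beta(\KP_{n+1})$, and the non-reversal of that arrow. Your game --- one player builds an $\in$-diagram together with ``running certificates for its wellfoundedness'' which the opponent may challenge --- cannot be implemented with a $(\Sigma^0_3)_n$ payoff: wellfoundedness is $\Pi^1_1$-complete and has no arithmetically checkable certificates, and the whole point of the Friedman/Montalb\'an--Shore construction (reproduced in \S\ref{SectionLowerBound} and in Theorem~\ref{beta_inequality}) is that \emph{neither} player can certify wellfoundedness: both players play complete Henkin theories of a suitable base theory, and the $n$ nested $\Sigma^0_3$ conditions are least-element requirements on definable classes (the $W_{\M,k}$ and the $(\star_k)$ hierarchy) arranged so that the player whose term model is illfounded is the first to fail, the winner's wellfounded part then being a model of $\KP_{n+1}$ containing the parameter; extracting the $\beta$-model from a winning strategy further requires playing it against the theory of a suitable $L$-level and invoking $\Pi^1_1\CA$ to form the wellfounded part. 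Finally, for $\beta(\KP_{n+1})\not\to(\Sigma^0_3)_n\text{--}\Det$ you appeal to ``$L_\kappa[G]$ for a suitable generic $G$''; there is no forcing in the lower-bound construction (the witness is a level of $L$), and the claim that one may let $\kappa$ be a limit of ordinals $\alpha$ with $L_\alpha\models\KP_{n+1}$ ``while keeping the failure of determinacy'' is precisely the nontrivial second half of \cite{MSCons} (here Theorem~\ref{beta_inequality}, second part): the original undetermined game is built on a theory asserting that \emph{no} level models $\KP_{n+1}$, which becomes false cofinally at such a limit, so the game and its base theory must be redesigned (the conditions $C1\text{new}*$ and the theory $T^{\gamma,*}_n$ in the paper's version). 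As written, these two components are asserted rather than proved.
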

    Theorem~\ref{generalised_inequalities} leads also to the following.
    \begin{corollary}\label{CorollaryKPGammanimpliesDet}
        In $\RCA$, taking $2 \leq n < \omega$ and $1 \leq \gamma < \omega_{1}^{\mathsf{CK}}$, \begin{align*}
            \KP^{\gamma} + \Sigma_{n}\text{--\textsc{Separation}} \rightarrow \beta(\KP^{\gamma}_{n} + (\Sigma^0_{1+\gamma+2})_{n}\text{--}\Det)
        \end{align*}
    \end{corollary}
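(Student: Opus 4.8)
The plan is to read Corollary~\ref{CorollaryKPGammanimpliesDet} off the first implication of Theorem~\ref{generalised_inequalities}, sharpened by the remark that the $\beta$-model it manufactures out of $\KP^{\gamma}+\Sigma_{n}$--\textsc{Separation} can be taken to satisfy $\KP^{\gamma}_{n}$ as well. Working in $\RCA$ and unwinding the hypothesis to a countable coded $\beta$-model $\mathcal{M}$ of $\KP^{\gamma}+\Sigma_{n}$--\textsc{Separation}, I would argue: (i) $\mathcal{M}\models\KP^{\gamma}_{n}$, since $\Sigma_{n}$--\textsc{Separation} trivially gives $\Delta_{n}$--\textsc{Separation} and, over $\KP^{\gamma}$, also the \textsc{Collection} instances making up $\KP^{\gamma}_{n}$ (recalling that $\Sigma_{n}$--\textsc{Collection} is equivalent to $\Pi_{n-1}$--\textsc{Collection} over $\KP$); and (ii) $\mathcal{M}\models(\Sigma^0_{1+\gamma+2})_{n}$--$\Det$ by Theorem~\ref{TheoremMainIntro}(1) --- that is, the first implication of Theorem~\ref{generalised_inequalities} --- whose proof is first-order and therefore goes through inside $\mathcal{M}$. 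Thus $\mathcal{M}$ is itself a $\beta$-model of $\KP^{\gamma}_{n}+(\Sigma^0_{1+\gamma+2})_{n}$--$\Det$, which is the conclusion.

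The step I expect to be the real obstacle is keeping \emph{both} the \textsc{Collection} axioms of $\KP^{\gamma}_{n}$ and the determinacy scheme true in a single model --- exactly the ``\textsc{power set} together with \textsc{separation}'' tension flagged in the introduction. Should the theory-level inclusion $\KP^{\gamma}+\Sigma_{n}$--\textsc{Separation}$\,\vdash\,\KP^{\gamma}_{n}$ not be directly at hand (the interplay of \textsc{Separation} and \textsc{Collection} over $\KP^{\gamma}$ being delicate without full \textsc{power set}), the fallback is internalization: inside the $\beta$-model of $(\Sigma^0_{1+\gamma+2})_{n}$--$\Det$ produced by the first implication, apply the third implication of Theorem~\ref{generalised_inequalities} --- valid since that theorem is a theorem of $\RCA$ --- to extract a $\beta$-submodel of $\KP^{\gamma}_{n}$, and then lean on the fact that the hypothesis is \emph{strictly} stronger than $(\Sigma^0_{1+\gamma+2})_{n}$--$\Det$ (the non-reversibility clause of Theorem~\ref{generalised_inequalities}) to pump in the extra strength needed to make that submodel satisfy the determinacy scheme as well. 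That surplus of strength is what makes a common model possible at all, since $(\Sigma^0_{1+\gamma+2})_{n}$--$\Det$ on its own cannot prove $\beta(\KP^{\gamma}_{n}+(\Sigma^0_{1+\gamma+2})_{n}$--$\Det)$, by Gödel.

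Finally, as is standard for statements of this shape (cf.\ Simpson~\cite{Simpson}*{VII}), I would check that the set-theoretic metatheorems invoked relativize to countable coded $\beta$-models within $\RCA$. This is bookkeeping: the proofs involved are first-order, the recursive ordinal $\gamma$ and the Borel ranks up to $1+\gamma+2$ are absolute, and the $\Sigma^1_1$-absoluteness of $\beta$-models guarantees that any $\beta$-submodel constructed inside $\mathcal{M}$ is a genuine $\beta$-model in the ambient sense.
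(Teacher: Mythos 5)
Your opening move is where the argument breaks: you ``unwind the hypothesis to a countably coded $\beta$-model $\mathcal{M}$ of $\KP^{\gamma}+\Sigma_{n}$--\textsc{Separation}.'' The hypothesis is the theory $\KP^{\gamma}+\Sigma_{n}$--\textsc{Separation} itself, not the statement $\beta(\KP^{\gamma}+\Sigma_{n}\text{--\textsc{Separation}})$; by G\"odel's second incompleteness theorem no consistent extension of this theory proves the existence of a ($\beta$-)model of itself, so such an $\mathcal{M}$ is simply not available. The whole content of the corollary is to \emph{construct} a $\beta$-model of the strictly weaker theory $\KP^{\gamma}_{n}+(\Sigma^0_{1+\gamma+2})_{n}\text{--}\Det$, and your fallback does not supply one either: ``leaning on non-reversibility to pump in extra strength'' is not a mechanism for producing a single model satisfying both the collection axioms and the determinacy scheme. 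Note also that the model you must produce cannot obtain determinacy by ``running the first-order proof inside,'' since $\KP^{\gamma}_{n}\not\vdash(\Sigma^0_{1+\gamma+2})_{n}\text{--}\Det$ (Theorem \ref{notmPigammaDet}); determinacy has to enter the model from outside.

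The paper's proof does exactly this in one step: working in $\KP^{\gamma}+\Sigma_{n}$--\textsc{Separation} (and passing to $L$, which still satisfies this theory by Lemma \ref{ShoenfieldAbsloutnessGamma}), one picks a level $L_{\alpha}\prec_{\Sigma_{n+1}}L$ with $\omega_{\gamma}<\alpha$. Such a level satisfies $\KP^{\gamma}_{n+1}$ (hence $\KP^{\gamma}_{n}$) by the admissibility lemmas, it is wellfounded and so a $\beta$-model, and since $L\models(\Sigma^0_{1+\gamma+2})_{n}\text{--}\Det$ by Theorem \ref{mPi4Det} and the existence of a winning strategy for a given game is $\Sigma_1$, determinacy reflects down to $L_{\alpha}$ by $\Sigma_1$-elementarity. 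This also exposes a second soft spot in your step (i): the inference from $\Sigma_{n}$--\textsc{Separation} to the $\Sigma_{n}$--\textsc{Collection} part of $\KP^{\gamma}_{n}$ is not available over $\KP^{\gamma}$ for arbitrary models; the paper only secures \textsc{Collection} after restricting to $V=L$ (see the proof of Theorem \ref{mPi4Det} and Lemmas \ref{charKPL}--\ref{betterSepL}), which is another reason the argument must go through levels of the constructible hierarchy rather than an abstract model of the hypothesis.
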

    \begin{proof}
        We have $\KP^{\gamma} + \Sigma_{n}$-\textsc{Separation}. Let $L_{\alpha} \prec_{\Sigma_{n + 1}} L$ with $\omega_{\gamma} < \alpha$.  Then $L_{\alpha} \models \KP^{\gamma}_{n+1}$ and by $\Sigma_1$--elementarity, we have  $L_{\alpha} \models (\Sigma^0_{1+\gamma+2})_{n}\text{--}\Det$.  
    \end{proof}
The argument of Corollary \ref{CorollaryKPGammanimpliesDet} also works with $\beta(\KP^{\gamma} + \Sigma_{n}\text{--\textsc{Separation}})$ instead of $\KP^{\gamma} + \Sigma_{n}\text{--\textsc{Separation}}$, so we obtain the following picture:

\begin{figure}[h]
\begin{center}
\begin{tikzcd}
\KP^{\gamma} + \Sigma_{n+1}\text{--\textsc{Separation}}      \arrow[ddr]   &  \\
&\\
 &  \beta(\KP^{\gamma}_{n+1} + (\Sigma^0_{1+\gamma+2})_{n+1}\text{--}\Det)   \arrow[d]   \\
 &  \beta((\Sigma^0_{1+\gamma+2})_{n+1}\text{--}\Det)    \arrow[d]   \\
 \beta(\KP^{\gamma} + \Sigma_{n+1}\text{--\textsc{Separation}} )     \arrow[uur]    &    (\Sigma^0_{1+\gamma+2})_{n+1}\text{--}\Det   \arrow[d] \\
 &  \beta(\KP^{\gamma}_{n+1})   \arrow[d] \\                                                
 &  \beta(\KP^{\gamma}+ \Sigma_n\text{--\textsc{Separation}})                                          
\end{tikzcd}

\end{center}
\caption{Provability relations established in this article, for $1\leq \gamma \leq \omega_1^{\mathsf{CK}}$ and $1 \leq n < \omega$; none of the arrows can be reversed. Here, $\KP^\gamma_m$ denotes the theory $\KP$ augmented with the axioms ``$\mathcal{P}^\gamma(\mathbb{N})$ exists'' and ``$\gamma$ is wellordered,'' as well as with the schemata of $\Delta_m$--\textsc{separation} and $\Sigma_m$--\textsc{collection}.}
\label{FigureProvIntro}
\end{figure}
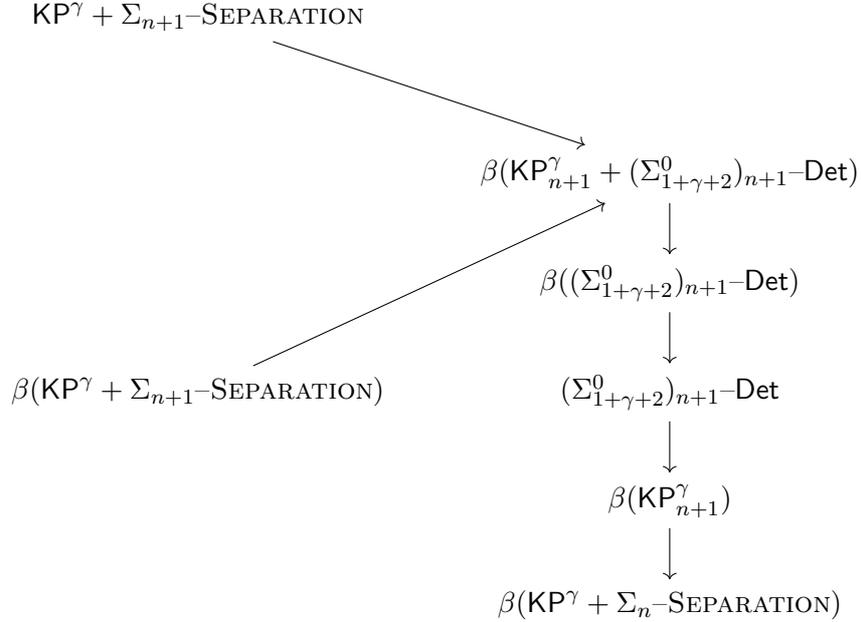

\begin{remark}
    It will be clear that our proof, while conceived for the lightface hierarchy, relativizes to any parameter. Thus, with the implications towards $\beta$--principles, we intend that the general pictures can be relativized to any parameter, so that the $\beta$-model can be constructed over it. In \S \ref{SectionReflection} and its results, we will explicitly use the boldface hierarchy because we will need the existence of $\beta$--models over any parameter.
\end{remark}

As a consequence of Theorem \ref{TheoremMainIntro}, we shall prove the equivalence between determinacy for arbitrary Boolean combinations of sets and reflection principles in higher-order arithmetic, answering two questions of Pacheco and Yokoyama \cite{pacheco}:
 
\begin{theorem}
Let $1 \leq \gamma < \omega_1^{\mathsf{CK}}$. Then the following are equivalent:
\begin{enumerate}
\item $\forall \delta < \gamma\ \forall m \, (\mathbf{{\Sigma}}^0_{1 + \delta + 2})_m$--determinacy, and
\item $\Pi^1_3\text{--}\mathrm{Ref}(\{\Z_{2 + \delta} : \delta < \gamma\})$.
\end{enumerate}

\end{theorem}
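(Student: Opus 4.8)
The strategy is to match the two principles using the level-by-level bounds of Theorems~\ref{TheoremMainIntro}, \ref{generalised_inequalities} and~\ref{TheoremMSBeta}, the standard interpretation of finite- and transfinite-order arithmetic in set theory (the extension of Simpson~\cite{Simpson}*{VII.5} to the higher-order setting used earlier in this article), and one elementary absoluteness observation: membership in a $(\mathbf{\Sigma}^0_{1+\delta+2})_m$ set is $\Delta^1_1$ in the relevant parameters, so ``$\sigma$ is a winning strategy for a given player'' is a $\Pi^1_1$ statement in those parameters, hence absolute to every $\beta$-model containing them. I read $\Pi^1_3\text{--}\mathrm{Ref}(\mathcal{T})$ as the principle of \cite{pacheco}: for each $T\in\mathcal{T}$ and each $\Pi^1_3$ sentence $\varphi$, if $\varphi$ holds then $\varphi$ holds in a countable coded well-founded model of $T$ (equivalently, by the interpretation and the absoluteness facts, that for every real $X$, every $\delta<\gamma$ and every $m$ there is a countable coded well-founded model of $\KP^{\delta}_{m}$ containing $X$ that reflects the true $\Pi^1_3$-statements about its parameters). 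Since $\gamma<\omega_1^{\mathsf{CK}}$, all the theories, interpretations and codings below depend recursively on a notation for $\delta<\gamma$, so the whole argument is uniform in $\delta$.

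For (1)$\Rightarrow$(2): fix $\delta<\gamma$ and $m$. I claim $(\mathbf{\Sigma}^0_{1+\delta+2})_m\text{--}\Det$ yields, over every real $X$, a countable well-founded model $\mathcal{M}\ni X$ of $\KP^{\delta}_{m}$ (for $\delta\geq1$ this is the third implication of Theorem~\ref{generalised_inequalities}, relativized to $X$ as in the Remark; for $\delta=0$ it is the analogous implication of Theorem~\ref{TheoremMSBeta}, yielding $\KP_{m+1}$, which suffices). The construction in the proof in fact produces such an $\mathcal{M}$ that reflects the ambient $\Pi^1_3$-diagram about $X$ --- the refinement of bare $\beta$-model existence that is implicit in Hachtman's reflection-theoretic characterization \cite{Hachtman} of $\Sigma^0_\alpha$-determinacy --- so $\mathcal{M}$ satisfies every true $\Pi^1_3$ sentence with $X$ among its parameters. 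Translating back along the interpretation --- every finite fragment of $\Z_{2+\delta}$ is interpreted in $\KP^{\delta}$ together with $\Sigma_k$-separation for some $k$, and $\KP^{\delta}_{m}$ (resp.\ $\KP_{m+1}$) proves $\Sigma_k$-separation once $m$ is large --- one reads off from $\mathcal{M}$, for $m$ large enough, a countable well-founded model of that fragment of $\Z_{2+\delta}$ which contains $X$ and satisfies the prescribed $\Pi^1_3$ sentence. As $X$, the fragment and the $\Pi^1_3$ sentence were arbitrary and everything is uniform in $\delta<\gamma$, this is $\Pi^1_3\text{--}\mathrm{Ref}(\{\Z_{2+\delta}:\delta<\gamma\})$.

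For (2)$\Rightarrow$(1): fix $\delta<\gamma$ and $m$, and let $A$ be any code for a $(\mathbf{\Sigma}^0_{1+\delta+2})_m$ set together with its real parameters. First, $\Z_{2+\delta}\vdash(\mathbf{\Sigma}^0_{1+\delta+2})_m\text{--}\Det$: indeed $\Z_{2+\delta}$ interprets $\KP^{\delta}+\Sigma_m$-separation (full comprehension at the finite levels supplies the separation, and ``$\mathcal{P}^{\delta}(\mathbb{N})$ exists'' and ``$\delta$ is wellordered'' are immediate), and by Theorem~\ref{TheoremMainIntro}(1) in its boldface form (Remark) the latter theory proves $(\mathbf{\Sigma}^0_{1+\delta+2})_m\text{--}\Det$, whose translation is equivalent to the determinacy statement itself since it speaks only of reals and strategies. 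Now apply (2) to some fragment of $\Z_{2+\delta}$ proving $(\mathbf{\Sigma}^0_{1+\delta+2})_m\text{--}\Det$, with a true $\Pi^1_3$ sentence and with $X:=A$: it furnishes a countable well-founded model $\mathcal{M}\ni A$ of that fragment, so $\mathcal{M}\models$``$A$ is determined'', hence $\mathcal{M}$ contains a strategy $\sigma$ with $\mathcal{M}\models$``$\sigma$ wins $A$ for one of the players''. By the absoluteness observation this $\Pi^1_1(A,\sigma)$ statement holds outright, so $\sigma$ really is winning; thus $A$ is determined. Since $A$, $m$ and $\delta<\gamma$ were arbitrary, (1) follows.

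I expect the main obstacle to be the interpretation layer rather than the determinacy proper: one must carry Simpson's arithmetic/set-theory dictionary through the transfinite-order hierarchy --- in particular, control how much separation a given finite fragment of $\Z_{2+\delta}$ costs when interpreted over $\KP^{\delta}$ --- and verify that this interpretation carries well-founded models to well-founded models and is transparent both to the analytical hierarchy and to the $\beta$-model reflection principles on either side, so that Theorems~\ref{TheoremMainIntro}, \ref{generalised_inequalities} and~\ref{TheoremMSBeta} genuinely translate into statements about $\{\Z_{2+\delta}:\delta<\gamma\}$. Matching the resulting principle exactly to the formulation of $\Pi^1_3$-reflection in \cite{pacheco}, and checking the uniformity in $\delta<\gamma$, is then bookkeeping; the substantive mathematics is already contained in Theorem~\ref{TheoremMainIntro} and in \S\ref{SectionLowerBound}--\S\ref{SectionUpperBound}.
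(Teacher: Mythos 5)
Your overall skeleton --- the upper-bound theorem supplies provability of the determinacy statements, determinacy supplies $\beta$-models (via Theorem~\ref{generalised_inequalities}), and correctness of $\beta$-models closes the circle --- matches the paper, but there is a genuine gap at the centre of your (1)$\Rightarrow$(2). The reflection principle here (and in Pacheco--Yokoyama) is the \emph{syntactic} schema: $\Pi^1_3\Rfc(T)$ says that every $\Pi^1_3$ sentence \emph{provable} in $T$ is true, formalized with a provability predicate and a partial truth predicate. You instead read it as a model-reflection principle (``every true $\Pi^1_3$ sentence holds in a countable coded wellfounded model of $T$ containing any prescribed real'') and assert the equivalence of the two readings in passing; that equivalence is essentially the content of the theorem, not something one may posit at the outset. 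Worse, the step you use to realize your reading --- that the models obtained from determinacy ``reflect the ambient $\Pi^1_3$-diagram about $X$'', i.e.\ satisfy every true $\Pi^1_3$ sentence with parameters from the model --- neither follows from Theorem~\ref{generalised_inequalities} nor is it ``implicit in Hachtman'', and it fails for $\beta$-models in general: a $\beta$-model is $\Pi^1_1$/$\Sigma^1_1$-correct, hence satisfies a true $\Sigma^1_3$ sentence whose outermost witness it contains, but a true $\Pi^1_3$ sentence $\forall X\,\exists Y\,\psi$ can fail in it because the witness $Y$ need not belong to the model. The paper needs nothing of the sort: assuming $\mathrm{Pr}_{\KP^{\gamma}_{n}}(\lceil\phi\rceil)$ for a false $\Pi^1_3$ sentence $\phi$, it takes a witness $X_0$ of the true $\Sigma^1_3$ sentence $\lnot\phi$, places $X_0$ in a $\beta$-model $\M\models\KP^{\gamma}_n$ (Lemma~\ref{preliminary to thm}.\ref{preliminary to thm2}), and uses only correctness for true $\Pi^1_2$ statements with parameter $X_0$ to get $\M\models\lnot\phi$, contradicting $\M\models\phi$. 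So the load-bearing claim of your (1)$\Rightarrow$(2) is both unjustified and unnecessary once the principle is read as defined; your (2)$\Rightarrow$(1), by contrast, is in spirit the paper's argument (and under the official definition it is even shorter: determinacy is a $\Pi^1_3$ theorem of the theory, hence true by reflection).

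A second, smaller omission: since $\Rfc$ involves an internal provability predicate and the equivalence is established over $\RCA$, one must prove \emph{inside} $\RCA$, uniformly in possibly nonstandard $n$, that the theories in question prove $(\Pi^0_{1+\gamma+2})_n\text{--}\Det$; this is Lemma~\ref{preliminary to thm}.\ref{preliminary to thm1}, obtained by exhibiting a primitive recursive proof-constructing function and checking by $\Sigma_1$-induction that its outputs are proofs, and likewise the implication from determinacy to $\beta(\KP^{\gamma}_n)$ must be re-derived internally for nonstandard $n$ (Lemma~\ref{preliminary to thm}.\ref{preliminary to thm2}, re-running the \S\ref{SectionLowerBound} game using Hachtman's result and $\Pi^1_1\CA$). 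Your appeal to the external Theorems~\ref{TheoremMainIntro} and~\ref{generalised_inequalities}, with the remainder dismissed as bookkeeping, skips exactly this layer, whereas the interpretation of $\Z_{2+\delta}$ over the set-theoretic systems that you single out as the main obstacle is the routine part: a proof from $\Z_{2+\delta}$ uses only finitely many separation instances, all available in $\KP^{\delta}_m$ for $m$ large.
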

This is done in \S \ref{SectionReflection}. The results of \S \ref{SectionLowerBound} are based on the contents of the second-listed author's Master's thesis \cite{KouptchinskyMSc}, written under the supervision of the first author. 

\paragraph{Acknowledgements}
The authors would like to thank Keita Yokoyama and Leonardo Pacheco for fruitful discussions and for bringing the questions in \cite{pacheco} to their attention. This work was partially supported by FWF grant P36837.

\section{Preliminaries}
\label{SectPreliminaries}
In this section, we collect various definitions and conventions which will be used throughout the article.  
We will assume some familiarity with subsystems of second-order arithmetic, e.g., as in Simpson \cite{Simpson} and with Kripke-Platek set theory, e.g., as presented by Barwise \cite{Barwise}.
\subsection{The difference hierarchy}

While it will be clear that our results can be relativized to any real parameter, our use of transfinite levels of the Borel hierarchy requires us to introduce the \textit{effective} Borel hierarchy. 

\begin{definition}[Effective Borel Hierarchy ($\mathsf{ATR}_0$)]
    A code for an effective $\Sigma^0_{\alpha}$ set ($\alpha < \omega_1^{\mathsf{CK}}$) in a topological space $(X, \tau)$, where $\tau$ is a computable set of index (elements of $\omega$) for basic open sets, is a well-founded computable tree $T \subseteq \omega^{<\omega}$ satisfying the following:  \begin{enumerate}
        \item The leaf nodes represent basic open sets;
        \item A node $\sigma$ represents the (possibly infinite) union of the complements of every set represented by $\sigma^{\smallfrown}n$ for $n < \omega$;
        \item The ordinal rank of the Kleene-Brouwer ordering of $T$ is at most $\alpha$.
    \end{enumerate}
    The assumption of $\mathsf{ATR}_0$ guarantees the existence of a unique function interpreting this tree.
\end{definition}

In the following, we will generally not explain these coding techniques, assuming the reader is familiar with them. We will however emphasize when we use crucially this effective version of the Borel Hierarchy.

\begin{definition}[Hierarchy of differences (for $\Pi^0_{\alpha}$ sets)]
    In any topological space $X$, given ordinal numbers $\gamma, \alpha < \omega_1^{\mathsf{CK}}$, we say that a set $A$ is 
    ${(\Pi_{\alpha}^0)}_{\gamma}$ if there are $\Pi^0_{\alpha}$ sets $A_0, A_1, \dots, A_{\gamma} = \emptyset$ such that
    \begin{align*}
        x \in A \leftrightarrow \text{the smallest $\delta \leq \gamma$ such that $x \not\in A_{\delta}$ is odd}.
    \end{align*}
    We extend the odd/even partition of the natural numbers naturally by considering each limit ordinal as even, their successor as odd, and so on.
    We say that the sequence $\{ A_{\delta} : \delta \leq \gamma \}$ represents $A$ (as a ${(\Pi^0_{\alpha})}_{\gamma}$ set).\label{Pidif}
\end{definition}

We can take the sequence $\{ A_{\delta} : \delta \leq \gamma \}$ to be decreasing without loss of generality, in the sense that $A_{\delta_2} \subseteq A_{\delta_1}$ for all $\delta_1 \leq \delta_2 \leq \gamma $ (see figure~\ref{examplediff} for an example.)

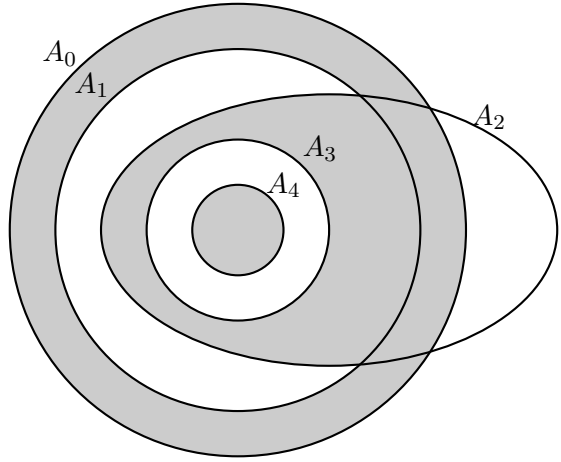
\begin{figure}[h]
    \centering
    \begin{tikzpicture}[scale = 0.6]
        \fill[black!20, even odd rule] (0,0) circle (5cm) (0,0) circle (4cm) (0,0) circle (3cm) (0,0) circle(2cm) (0,0) circle(1cm);
        
        \begin{scope}
            \clip (0,0) circle(4cm);
            \fill[black!20, even odd rule] (0,0) circle (2cm) (2,0) ellipse (5cm and 3cm);
        \end{scope}
        \begin{scope}
            \clip (0,0) circle(4cm);
            \fill[white, even odd rule] (0,0) circle (4cm) (2,0) ellipse (5cm and 3cm);
        \end{scope}
        
        \draw[thick] circle(1cm);
        \node at (1,1) {$A_4$};
        \draw[thick] circle(2cm);
        \node at (1.8,1.8) {$A_3$};
        \draw[thick] (2,0) ellipse (5cm and 3cm);
        \node at (5.5,2.5) {$A_2$};
        \draw[thick] (0,0) circle(4cm);
        \node at (-3.2,3.2) {$A_1$};
        \draw[thick] (0,0) circle(5cm);
        \node at (-3.9,3.9) {$A_0$};
        
    \end{tikzpicture}
    \caption{A ${(\Pi^0_{\alpha})}_5$ set, where $A_2$ plays the same role as $A_2 \cap A_1$.}\label{examplediff} 
\end{figure}

It is easy to see that $\bigcup_{n<\omega} {(\Pi_{\alpha}^0)}_n$ is exactly the set of the boolean combinations 
of $\Pi_{\alpha}^0$ sets. By a result of Kuratowski (\cite{Kuratowski}), we have \begin{align*}
    \Delta_{\alpha + 1}^0 = \bigcup_{\beta < \omega_1} {(\Pi_{\alpha}^0)}_\beta,
\end{align*}
where the right-hand side involves the extension of the difference hierarchy to the transfinite. 
Here, $\beta$ differences of $\Pi^0_{\alpha}$ sets involve infinite intersections of $\Pi^0_{\alpha}$ sets. Kuratowski's equality requires the fact that $\Pi$ classes are closed under infinite intersections, for if we had defined instead a hierarchy of differences for $\Sigma$ sets, we would have obtained $(\Sigma^0_{\alpha})_{\omega} = \Pi^0_{\alpha + 1}$. Thus, we need to change the definition for the hierarchy of differences of $\Sigma$ sets. 

\begin{definition}[Hierarchy of differences (for $\Sigma^0_{\alpha}$ sets)]
    In any topological space $X$, given ordinal numbers $\gamma, \alpha < \omega_1^{\mathsf{CK}}$, we say that a set $A$ is 
    ${(\Sigma_{\alpha}^0)}_{\gamma}$ if there are $\Sigma^0_{\alpha}$ sets $A_0, A_1, \dots, A_{\gamma} = X$ such that
    \begin{align*}
        x \in A \leftrightarrow \text{the smallest $\delta \leq \gamma$ such that $x \in A_{\delta}$ is odd}.
    \end{align*}
    We extend the odd/even partition of the natural numbers naturally by considering each limit ordinal as even, their successor as odd, and so on.
    We say that the sequence $\{ A_{\delta} : \delta \leq \gamma \}$ represents $A$ (as a ${(\Sigma^0_{\alpha})}_{\gamma}$ set).\label{Sigmadif}
\end{definition}
Here we can assume that the sequence is nested upwards. 
For determinacy, we can prove that the existence of winning strategies for $(\Pi^0_{\alpha})_{\beta}$ payoff sets is equivalent to the existence of winning strategies with $(\Sigma^0_{\alpha})_{\beta}$ payoff sets, for any ordinal numbers $\alpha$ and $\beta$. For the same reason, as far as it concerns determinacy, it does not matter if we change ``odd'' to ``even'' in the definition. What is more, in the finite case, both definition are interchangeable.

An example we want to highlight is that at our level of complexity, determinacy in the Baire space, $\omega^{\omega}$, is equivalent to determinacy in the Cantor space, $2^{\omega}$. This is illustrated by the following proposition (see e.g., \cites{MS, CantorDet}). We omit the proof. 

\begin{lemma}
Let $\Gamma$ be a pointclass closed under recursive substitutions, unions with $\Sigma^0_2$ sets, and intersections with $\Pi^0_2$ sets. Then, we have $\Gamma$--$\Det^* \leftrightarrow \Gamma$--$\Det$. 
In other words, determinacy for the payoff sets in class $\Gamma$ is independent of whether games take place in Baire space or in Cantor space.  
\end{lemma}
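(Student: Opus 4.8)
The plan is to prove both directions of the equivalence $\Gamma\text{--}\Det^* \leftrightarrow \Gamma\text{--}\Det$ by reducing games in one space to games in the other in a payoff-respecting way. The direction $\Gamma\text{--}\Det \to \Gamma\text{--}\Det^*$ is the easy one: Cantor space $2^\omega$ is a closed subspace of Baire space $\omega^\omega$, so given a game on $2^\omega$ with payoff $A \in \Gamma$, I would consider the game on $\omega^\omega$ in which any player who plays a digit $\geq 2$ immediately loses, with payoff $A'$ equal to $A$ together with the plays in which Player $\I$ is the first to be forced to ``win by default'' because her opponent left $\{0,1\}$. Concretely $A' = A \cup \{x : \II \text{ is first to play a digit} \geq 2\}$, intersected appropriately so that $\I$ leaving $\{0,1\}$ puts $x \notin A'$. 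The set ``$\II$ is first to leave $\{0,1\}$'' is $\Sigma^0_1$ (indeed clopen in the relevant sense), so $A'$ is obtained from $A$ by a recursive substitution followed by a union with a $\Sigma^0_2$ set and an intersection with a $\Pi^0_2$ set; hence $A' \in \Gamma$ by the closure hypotheses. A winning strategy for the $\omega^\omega$-game never leaves $\{0,1\}$ against a cooperative opponent and restricts to a winning strategy in the original $2^\omega$-game.

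The substantive direction is $\Gamma\text{--}\Det^* \to \Gamma\text{--}\Det$: simulating a game on $\omega^\omega$ by a game on $2^\omega$. Here I would use the standard unary-style coding: a natural number $n$ is played by the string $1^n 0$, so an infinite play $x \in \omega^\omega$ corresponds to the $2^\omega$-element $\bar x = 1^{x_0}0\,1^{x_1}0\,1^{x_2}0\cdots$ provided the play is ``honest,'' i.e. contains infinitely many $0$'s. The two complications are (i) parsing — the turn structure of the $2^\omega$-game has to be arranged so that the bits belonging to $x_{2k}$ are all played by $\I$ and the bits of $x_{2k+1}$ all by $\II$, which one handles by a bookkeeping scheme in which the player ``owning'' the current block keeps the move until she plays a $0$; and (ii) dishonest plays — a player might play $1^\omega$ and never terminate a block. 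One assigns such a play as a loss to the offending player: the new payoff is $B = \{ y \in 2^\omega : y \text{ is honest and } \bar{\cdot}^{-1}(y) \in A\} \cup \{y : \I \text{ plays the first non-terminating block on an odd... }\}$, arranged so that $\I$'s failure to terminate makes $y \notin B$ and $\II$'s failure makes $y \in B$. The set of honest plays is $\Pi^0_2$ (``for every block position there is a later $0$''), the decoding map on honest plays is recursive, and the ``who cheats first'' set is $\Sigma^0_2$; so $B$ is built from $A$ by a recursive substitution, an intersection with a $\Pi^0_2$ set, and a union with a $\Sigma^0_2$ set, whence $B \in \Gamma$. A winning strategy in the $B$-game on $2^\omega$ then transfers back: given an opponent move $n$ in the $\omega^\omega$-game, feed the block $1^n0$ to the $2^\omega$-strategy bit by bit, read off the opponent-simulating... sorry, read off the blocks the $2^\omega$-strategy produces, and these decode to a winning reply in the $\omega^\omega$-game (the strategy can never choose to cheat, since cheating loses).

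The step I expect to be the main obstacle is verifying, carefully but without drama, that the auxiliary payoff sets $A'$ and $B$ really do land in $\Gamma$ — that is, checking that the turn-allocation bookkeeping can be done so that ``$\I$ is the first cheater'' is genuinely $\Sigma^0_2$ and ``the play is honest'' is genuinely $\Pi^0_2$ in the relevant topological space, and that the decoding map is a recursive substitution in the sense demanded by the closure hypothesis on $\Gamma$. Once the coding is set up so that these complexity bounds hold, the transfer of strategies in both directions is routine: a strategy never has an incentive to ``cheat'' (leave $2^\omega$ or fail to terminate a block), so it automatically induces a strategy of the same quality in the companion game. Since the statement says the proof is omitted, I would at most indicate the coding and the three closure operations and point to \cite{MS} and \cite{CantorDet} for the details.
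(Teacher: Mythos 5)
Your proposal is correct and follows essentially the same route as the sources the paper defers to for this omitted proof (\cite{MS}, \cite{CantorDet}): code Cantor games into Baire space by penalizing the first player to leave $\{0,1\}$, and code Baire games into Cantor space by unary blocks $1^n0$, with the honesty set ($\Pi^0_2$), the ``who cheats first'' sets ($\Sigma^0_2$), and the block-decoding map absorbed exactly by the stated closure properties of $\Gamma$. One small correction: in a Cantor-space game the players must strictly alternate bits, so a player cannot literally ``keep the move'' until she plays a $0$; the standard implementation has the non-owning player's bits during a block simply ignored as dummy moves, which changes nothing in your complexity accounting or strategy transfer.
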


\subsection{Admissible sets}

In the scope of identifying subsystems of arithmetic with theories in the language of set theory $L_{\mathrm{Set}}$, we will use admissible sets, whose theory~\cite{Barwise} extensively develops. 

\begin{definition}[$\mathsf{KP}$]
\emph{Kripke-Platek} $L_{\mathrm{Set}}$-theory, $\mathsf{KP}$, is axiomatized by: 
\begin{enumerate}
\item The following basic axioms:
    \begin{multicols}{2}
        \textsc{Infinity},
        \\\textsc{Extensionality},
        \\\textsc{Pair},
        \\\textsc{Union};
    \end{multicols}
    \item The full scheme of \textsc{foundation}
    \begin{align*}
        \exists x \ \phi(x) \rightarrow \exists x \ (\phi(x) \land \forall y \in x \ \lnot \phi(y));
    \end{align*}
    \item And the schemes of
    \textsc{$\Delta_0$--separation} and \textsc{$\Delta_0 $--collection}, the latter being 
    \begin{align*}
        \forall x \in u \ \exists y\ \phi(x,y) \rightarrow \exists v \ \forall x \in u \
        \exists y \in v \ \phi(x,y),
    \end{align*}
    for all $\Delta_0$ formulae in which $v$ does not occur free.
\end{enumerate}
\end{definition}
Note that the schemes of \textsc{separation} and \textsc{collection} deal with arbitrary sets, and a priori not e.g., only with countable ones.
 
\begin{definition}[Admissible set]
    An \emph{admissible} set is a model of $\mathsf{KP}$ of the form
    \begin{align*}
        (A, \in_A),
    \end{align*}
    where $A$ is transitive and $\in_A\, =\, \in\cap A\times A$ is membership.
\end{definition}

Any admissible set actually satisfies $\Sigma_1$--\textsc{collection} and $\Delta_1$--\textsc{separation}. We also point out the difference between \textsc{replacement} and \textsc{collection}. It can be shown that $\Sigma_1$--\textsc{replacement} is also a theorem of $\mathsf{KP}$. We will be particularly interested in admissible initial segments of G\"odel's hierarchy $L$. An ordinal $\alpha$ is \textit{admissible} if $L_\alpha$ is admissible.

We then have a generalized notion of admissible sets which gives us a hierarchy of set theories. 

\begin{definition}[$n$--admissibility]
    For any $1 \leq n < \omega$ we say that a set $A$ is $n$--admissible if\begin{enumerate}
        \item $A$ is admissible,
        \item $(A, \in_A)$ is a model of \textsc{$\Delta_{n}$--separation} and \textsc{$\Sigma_{n}$--collection}.
    \end{enumerate}
    We say that an ordinal $\alpha$ is $n$--admissible if $L_{\alpha}$ is $n$--admissible.
\end{definition}

We now present some characterizations and consequences of $n$--admissibility and $\Sigma_n{-}$admissibility. These are well known, but we recall them for the reader's convenience (see for example~\cites{Devlin, Jensen}):

\begin{lemma}
    Let $\omega < \alpha$ be an infinite ordinal and $1 \leq n$, a natural number, the following assertions on $L_{\alpha}$ are equivalent:
    \begin{enumerate}
        \item It is a model of $\KP +$ \textsc{$\Sigma_{n-1}$--separation} $+$ \textsc{$\Delta_{n-1}$--collection} and;
        \item It satisfies $\Sigma_n$ bounding: \begin{align*}
        \forall \delta < \alpha \quad 
            (L_{\alpha} \models &\forall \gamma < \delta \ \exists y \ \phi(\gamma,y)) 
            \\ &\rightarrow \exists \lambda < \alpha \ 
            L_{\alpha} \models (\forall \gamma < \delta\ \exists y \in L_{\lambda} \ 
            \phi(\gamma, y)),
        \end{align*}
        where $\phi$ is $\Pi_{n-1}$ with parameters from $L_{\alpha}$;
        \item For any function $f$ with domain some $\delta < \alpha$ which is $\Sigma_n$ 
        (equivalently $\Pi_{n-1}$) over $L_{\alpha}$, 
        $f[\gamma] \in L_{\alpha}$ for every $\gamma < \delta$.
    \end{enumerate}
    Thus, we have the classical bounded quantifier-elimination rule: For any $\Pi_{n-1}$ formula 
    $\phi$, ``$\forall x \in t\ \exists y\ \phi$'' is equivalent to a $\Sigma_n$ formula. Moreover any such $L_{\alpha}$ is consequently a model of \textsc{$\Sigma_{n}$--collection}. 
    
    We then call $L_{\alpha}$ or $\alpha$, $\Sigma_n$--admissible.\label{charKPL}
\end{lemma}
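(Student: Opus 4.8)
The plan is to run the cycle $(1)\Rightarrow(2)\Rightarrow(3)\Rightarrow(1)$, proving the lemma by induction on $n$ so that the statement at level $n-1$ may be invoked freely, and reading the two concluding remarks off $\Sigma_n$--bounding at the end. Three standard facts about $L$ carry most of the load. First, $L_\alpha$ has a $\Sigma_1$--definable canonical well--ordering $<_{L_\alpha}$ whose restriction to each $L_\beta$ with $\beta<\alpha$ lies in $L_\alpha$; together with $\KP$'s $\Sigma_1$--replacement this reduces any instance of separation or collection over an arbitrary set $u\in L_\alpha$ to the case where the index set is an ordinal $\delta<\alpha$, by transporting along a $<_{L_\alpha}$--definable injection $u\hookrightarrow\delta$ that belongs to $L_\alpha$. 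Second, $\Delta_m$--collection implies $\Sigma_m$--collection over $\KP$ (a $\Sigma_m$ formula has the shape $\exists\vec w\,\theta$ with $\theta\in\Pi_{m-1}\subseteq\Delta_m$), so condition (1) already carries $\Sigma_{n-1}$--collection. Third, granted $\Sigma_{n-1}$--collection, a bounded existential quantifier in front of a $\Pi_{n-1}$ formula can be absorbed so as to keep the formula $\Pi_{n-1}$, and dually a bounded universal quantifier in front of a $\Sigma_{n-1}$ formula keeps it $\Sigma_{n-1}$.

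The soft part of the cycle is $(2)\Rightarrow(3)\Rightarrow(1)$, together with the two remarks. From $\Sigma_n$--bounding one recovers $\KP$ for free in a limit $L_\alpha$ (basic axioms, foundation and the $\Delta_0$ schemata hold outright; the successor case is degenerate, as both (1) and (2) fail there); one gets $\Sigma_{n-1}$--separation by first noting that $\Sigma_n$--bounding entails $\Sigma_{n-1}$--bounding, hence by the induction hypothesis $\Sigma_{n-2}$--separation, and then deciding a given $\Sigma_{n-1}$ instance over $u$ by a $\Sigma_n$--bounding instance whose witnesses, once confined to a fixed $L_\lambda$, reduce the question to a $\Pi_{n-2}$ one to which $\Sigma_{n-2}$--separation applies; and $\Delta_{n-1}$--collection follows because a $\Delta_{n-1}$ formula is $\Pi_{n-1}$, so after transport to an ordinal domain the collection instance is literally an instance of $\Sigma_n$--bounding by the third fact. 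For $(2)\Rightarrow(3)$: write the graph of $f$ as $\exists w\,\psi(\gamma,y,w)$ with $\psi\in\Pi_{n-1}$ and $\mathrm{dom}(f)=\delta<\alpha$, apply $\Sigma_n$--bounding to $\forall\gamma<\delta\,\exists\langle y,w\rangle\,\psi$ to confine all pairs to a single $L_\lambda$, and note that for $\gamma\le\delta$ the relation $\langle\beta,y\rangle\in f\restriction\gamma$ is then $\Pi_{n-1}$ over the set $\gamma\times L_\lambda$ by the third fact, so $\Sigma_{n-1}$--separation produces $f[\gamma]$. Finally, for $\phi\in\Pi_{n-1}$ the formula ``$\forall x\in t\,\exists y\,\phi$'' is equivalent by $\Sigma_n$--bounding to ``$\exists\lambda\,\forall x\in t\,\exists y\in L_\lambda\,\phi$,'' whose matrix is $\Pi_{n-1}$, so the whole formula is $\Sigma_n$; and $\Sigma_n$--collection over a set reduces to $\Sigma_n$--bounding over an ordinal by the first fact. (The missing implication $(3)\Rightarrow(2)$ of the full equivalence is also soft: the map sending $\gamma$ to the least $\lambda$ with $\exists y\in L_\lambda\,\phi(\gamma,y)$ has, by the third fact, a graph of the form $\Pi_{n-1}\wedge\Sigma_{n-1}$, hence is a $\Sigma_n$ function of ordinal domain, which (3) bounds.)

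The remaining implication $(1)\Rightarrow(2)$ is the crux and the step I expect to be the main obstacle. For $n=1$ it is classical: a bounding instance $\forall\gamma<\delta\,\exists y\,\phi(\gamma,y)$ has $\phi\in\Delta_0$, so $\Delta_0$--collection yields $v$ with $\forall\gamma<\delta\,\exists y\in v\,\phi$, and $v\in L_\lambda$ implies $v\subseteq L_\lambda$. For $n\ge2$, however, the index in the hypothesis is one below that of the conclusion, and the soft manipulations stall: a hypothetical $\Sigma_n$--definable map cofinal from some $\delta<\alpha$ into $\alpha$ comes with a set parameter, and the natural attempt to diagonalize against it and contradict $\Sigma_{n-1}$--separation fails because ``$x\in L_{h(\gamma)}$'' is only $\Sigma_n$, not $\Sigma_{n-1}$; equivalently, building the least--witness function above as a set would require $\Sigma_n$--separation, which we do not have. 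What makes $(1)\Rightarrow(2)$ go through is exactly the ingredient (1) adds beyond $(n-1)$--admissibility, namely $\Sigma_{n-1}$--\emph{separation}: by the fine structure of $L$ this is equivalent to the $\Sigma_{n-1}$ projectum of $L_\alpha$ being $\alpha$ itself (no $\Sigma_{n-1}$--definable map compresses $L_\alpha$ into a proper initial segment), and running Jensen's $\Sigma_{n-1}$--uniformization and master--code analysis under this hypothesis rules out any $\Sigma_n$--definable map cofinal in $\alpha$ from a smaller ordinal, which is $\Sigma_n$--bounding. I would either carry out this master--code argument in detail or, as the article does for other standard facts, simply cite \cites{Devlin, Jensen}.
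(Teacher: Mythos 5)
The soft two thirds of your cycle -- $(2)\Rightarrow(3)\Rightarrow(1)$, the bounded-quantifier elimination, and the derivation of $\Sigma_n$--collection from bounding -- are fine, and since the paper offers no proof of this lemma at all (it is quoted as folklore with a pointer to Devlin and Jensen), the only substantive content of your proposal is the step you yourself flag as the crux, $(1)\Rightarrow(2)$. That is exactly where the proposal fails: the fine-structural fact you invoke -- that $\Sigma_{n-1}$--separation, i.e.\ ``the $\Sigma_{n-1}$--projectum of $L_\alpha$ is $\alpha$,'' rules out a $\Sigma_n$--definable map cofinal in $\alpha$ from a smaller ordinal -- is not true. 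Take $n=2$ and let $\alpha$ be least with $L_\alpha \models \KP + \Sigma_1$--\textsc{separation}. The $\Sigma_1$--stable levels $S=\{\beta<\alpha : L_\beta \prec_{\Sigma_1} L_\alpha\}$ are closed and unbounded in $\alpha$, and minimality forces $\mathrm{otp}(S)=\omega$: if the supremum $\alpha'$ of the first $\omega$ elements of $S$ were below $\alpha$, then $L_{\alpha'}$ would again be admissible (witnesses for $\Sigma_1$ facts with parameters in some $L_\beta$, $\beta\in S\cap\alpha'$, already lie in $L_\beta$) and would satisfy $\Sigma_1$--separation (the trace of a $\Sigma_1$ set on $u\in L_\beta$ is definable over $L_\beta$), contradicting leastness. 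Hence $i\mapsto$ (the $i$-th element of $S$) is a total $\Sigma_2(L_\alpha)$--definable function from $\omega$ cofinal in $\alpha$, so $\Sigma_2$--collection fails; and by packaging into the witness $y$ the finite list of earlier stables together with, for each $\gamma<\beta$ not on the list, a concrete triple $(e,p,w)$ certifying a true $\Sigma_1$ fact that fails in $L_\gamma$, one obtains a $\Pi_1$ matrix $\varphi(i,y)$, total on $\omega$, whose witnesses are cofinal in $\alpha$. So clause (2) fails at this $\alpha$ although clause (1) holds: your claimed implication is false, and no master-code or uniformization argument can prove it.

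Consequently the plan cannot be completed as stated, and the fallback ``cite Devlin/Jensen'' does not help either: what those sources (and the Montalb\'an--Shore preliminaries) actually give is the equivalence of the bounding clauses (2) and (3) (with (3) read as $f[\delta]$ bounded; as literally written, with $f[\gamma]$ for $\gamma<\delta$, it is trivially true when $\delta=\omega$) with $\KP+\Sigma_n$--\textsc{collection}, from which $\Sigma_{n-1}$--separation, $\Delta_n$--separation and the Skolem-function facts follow. The separation clause (1) is a consequence of, but for $n\ge 2$ strictly weaker than, the bounding clauses; the example above is the witness. Your soft arguments do suffice to prove the corrected equivalence in which (1) is replaced by ``$\KP+\Sigma_n$--\textsc{collection}'' (equivalently $\Delta_n$--collection, by your Fact 2); the version with $\Sigma_{n-1}$--separation cannot be salvaged, and any later use of the lemma in the form $(1)\Rightarrow(2)$ needs to be re-examined.
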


We say that a structure of the language of set theory \M\ \textit{has $\Sigma_n$ Skolem functions} if it has the following property: there is a function $h$ definable over the structure and which associates a witness to all pairs $(\psi,a)$ of $\Pi_{n-1}$ formula $\psi$ and parameter $a \in \M$ such that $\M \models \exists x \psi(x,a)$. In other words, a $\Sigma_n$ Skolem function of \M\ is a partial function $h \colon \omega \times M \to M$, $h \in M$ such that for all set $A$ that is $\Sigma_n$--definable with some parameters $\bar{p}$, there is an $i$ such that $h(i,\bar{p}) \in A$. The following lemmata are stated in \cite{MS}, some of them being easily adapted to our needs:

\begin{lemma}
    If $\alpha$ is $\Sigma_n$--admissible, then $L_{\alpha}$ has a parameter-free 
    $\Sigma_{n+1}$ Skolem function. \label{noParamSkolem}
\end{lemma}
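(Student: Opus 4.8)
The plan is to write down an explicit parameter-free partial function that, for each $\Pi_n$ formula, selects its $<_L$-least witness over $L_\alpha$, and then to verify — this being the only real work — that its graph is $\Sigma_{n+1}$, using $\Sigma_n$-admissibility in the form of Lemma~\ref{charKPL}. Two standard facts will be used without proof. First, the canonical wellordering $<_L$ of $L$ is definable over $L$ without parameters, and its restriction to any admissible level $L_\alpha$ is $\Delta_1^{L_\alpha}$ (again without parameters); moreover its initial segments are set-sized — if $y\in L_\alpha$ appears first in $L_{\beta+1}$ then $\{z: z <_L y\}\subseteq L_{\beta+1}\in L_\alpha$, hence $\{z: z<_L y\}\in L_\alpha$ by $\Delta_1$-separation — and the segment function $y\mapsto\{z: z<_L y\}$ is $\Delta_1^{L_\alpha}$ (see Devlin or Jensen). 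Second, for each $m$ there is a parameter-free partial truth predicate $\mathrm{Sat}_{\Pi_m}(i,x,a)$ over $L_\alpha$, itself $\Pi_m$, correct on the $i$-th $\Pi_m$ formula (and dually $\mathrm{Sat}_{\Sigma_m}$, which is $\Sigma_m$).

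Fix an effective enumeration $(\psi_i)_{i<\omega}$ of the $\Pi_n$ formulas with two free variables and set
\[
  h(i,a)=\text{the $<_L$-least $x$ with } L_\alpha\models\psi_i(x,a).
\]
Since $<_L$ wellorders $L_\alpha$ and the full scheme of \textsc{foundation} is available, $h(i,a)$ is defined precisely when $L_\alpha\models\exists x\,\psi_i(x,a)$; thus $h$ is a Skolem function for the $\Pi_n$ formulas, and via pairing for all $\Sigma_{n+1}$ formulas. It remains to bound the complexity of its graph: $y=h(i,a)$ holds iff
\[
  \mathrm{Sat}_{\Pi_n}(i,y,a)\ \wedge\ \forall z\,\bigl(z<_L y\to\lnot\,\mathrm{Sat}_{\Pi_n}(i,z,a)\bigr).
\]

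Here lies the key manoeuvre: the second conjunct must not be read as an unbounded quantifier, but rewritten as
\[
  \exists w\,\bigl(\,w=\{z: z<_L y\}\ \wedge\ \forall z\in w\,\lnot\,\mathrm{Sat}_{\Pi_n}(i,z,a)\,\bigr).
\]
The clause $w=\{z: z<_L y\}$ is $\Delta_1$ by the first fact above; and $\forall z\in w\,\lnot\mathrm{Sat}_{\Pi_n}(i,z,a)$ is a genuinely bounded universal quantifier applied to the $\Sigma_n$ formula $\lnot\mathrm{Sat}_{\Pi_n}$, so by the bounded-quantifier elimination of Lemma~\ref{charKPL} — available because $\alpha$ is $\Sigma_n$-admissible, hence satisfies $\Sigma_n$-\textsc{collection} — it is equivalent over $L_\alpha$ to a $\Sigma_n$ formula. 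Hence the second conjunct is $\exists w\,(\Delta_1\wedge\Sigma_n)$, i.e.\ $\Sigma_n$; together with the $\Pi_n$ first conjunct, $y=h(i,a)$ is $\Pi_n\wedge\Sigma_n$, equivalently $\Sigma_{n+1}$, and manifestly parameter-free. It is exactly the leastness clause that forces the use of the hypothesis: without $\Sigma_n$-\textsc{collection} one only gets a $\Pi_{n+1}$ bound and loses a level, so this is precisely where admissibility enters.

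Finally, to see $h$ gives a $\Sigma_{n+1}$ Skolem function in the sense of picking members of $\Sigma_{n+1}$-definable sets: given a nonempty $A=\{x:\exists u\,\pi(x,u,\bar p)\}$ with $\pi$ a $\Pi_n$ formula, the formula $\pi((v)_0,(v)_1,\bar p)$ in the single variable $v$ is equivalent to a $\Pi_n$ formula, say $\psi_{j(i)}$ with parameter $\bar p$, where $j$ is a recursive function of the code $i$ of the defining formula of $A$; then $h(j(i),\bar p)$ is defined and its first projection lies in $A$. So $(i,\bar p)\mapsto((h(j(i),\bar p)))_0$ — the composition of the $\Sigma_{n+1}$ function $h$ with the $\Delta_0$ projection, hence again $\Sigma_{n+1}$ and parameter-free — is as required. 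The only real obstacle throughout is the complexity bookkeeping in the displayed computation; the set-theoretic content (the $\Delta_1$ wellordering with set-sized segments, partial truth, pairing) is entirely standard.
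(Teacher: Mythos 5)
Your proof is correct: the paper itself gives no argument for this lemma (it is imported from Montalb\'an--Shore and the standard fine-structure references), and your construction --- the parameter-free $<_L$-least-witness function via a $\Pi_n$ partial satisfaction predicate, with the leastness clause bounded by the $\Delta_1$ initial-segment function $y\mapsto\{z: z<_L y\}$ and then reduced to $\Sigma_n$ using $\Sigma_n$--\textsc{collection} from Lemma~\ref{charKPL} --- is exactly the standard route, and it places the use of $\Sigma_n$--admissibility precisely where it is needed. Nothing further is required.
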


\begin{lemma}
    If $L_{\alpha}$ is $\Sigma_n$--admissible, then it satisfies $\Delta_n$ \textsc{separation}, 
    that is, for any $u \in L_{\alpha}$ and $\Sigma_n$ formulae $\phi(z)$ and $\psi(z)$ 
    such that $L_{\alpha} \models \forall z \ (\phi(z) \leftrightarrow \lnot\psi(z))$, $\{z \in u \mid \phi(z)\} \in L_{\alpha}$. Moreover, it satisfies $\Sigma_{n}$-\textsc{collection} and thus $L_{\alpha_n}$ is $n$--admissible.\label{betterSepL}
\end{lemma}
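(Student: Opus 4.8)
The plan is to run the standard Levy-hierarchy reduction. I would first recall that, by Lemma~\ref{charKPL}, a $\Sigma_n$--admissible $L_\alpha$ already satisfies $\KP$, the scheme of $\Sigma_{n-1}$--\textsc{separation}, and $\Sigma_n$--\textsc{collection} (hence also $\Sigma_{n-1}$--\textsc{collection}); so $\Sigma_n$--\textsc{collection} is free, and the only real work is $\Delta_n$--\textsc{separation}, after which ``$n$--admissible'' is immediate. So fix $u \in L_\alpha$ and $\Sigma_n$ formulas $\phi, \psi$ with $L_\alpha \models \forall z\,(\phi(z) \leftrightarrow \lnot\psi(z))$. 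My strategy is to use $\Sigma_n$--\textsc{collection} to confine, for each $z \in u$, a witness to $\phi(z)$ or to $\psi(z)$ inside a single set $v \in L_\alpha$, and then to observe that over this bounded set $\{z \in u : \phi(z)\}$ has a $\Sigma_{n-1}$ definition, to which $\Sigma_{n-1}$--\textsc{separation} applies.

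Concretely, I would write $\phi(z) \equiv \exists a\, \phi_0(z,a)$ and $\psi(z) \equiv \exists a\, \psi_0(z,a)$ with $\phi_0, \psi_0 \in \Pi_{n-1}$ (absorbing the leading existential block into one variable via $\Delta_0$ pairing). Since $\Pi_{n-1}$ is closed under disjunction, $\theta(z,w) \equiv \phi_0(z,w) \vee \psi_0(z,w)$ is $\Pi_{n-1}$, and the hypothesis yields $L_\alpha \models \forall z \in u\, \exists w\, \theta(z,w)$; applying $\Sigma_n$--\textsc{collection} to the $\Sigma_n$ formula $\exists w\, \theta$ produces $v \in L_\alpha$ with $L_\alpha \models \forall z \in u\, \exists w \in v\, \theta(z,w)$. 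The crux is then the identity, valid in $L_\alpha$,
\[
\{z \in u : \phi(z)\} \;=\; \{z \in u : \forall w \in v\, \lnot\psi_0(z,w)\}:
\]
if $z \in u$ and $\phi(z)$ then $\lnot\psi(z)$, so $\lnot\psi_0(z,w)$ for every $w$, a fortiori for $w \in v$; conversely, if $z \in u$ and $\lnot\psi_0(z,w)$ for all $w \in v$, then the $w \in v$ satisfying $\theta(z,w)$ supplied above must satisfy $\phi_0(z,w)$, so $\phi(z)$. This trading of the unbounded quantifier of $\phi$ for one bounded to $v$ is the single place the mutual-complementation hypothesis is genuinely used.

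To finish, since $\lnot\psi_0$ is $\Sigma_{n-1}$, the bounded universal quantifier in front of it can be contracted --- using $\Sigma_{n-1}$--\textsc{collection} --- to a $\Sigma_{n-1}$ formula, so the right-hand set above is in $L_\alpha$ by $\Sigma_{n-1}$--\textsc{separation}. (Alternatively I could form $B := \{\langle z,w\rangle \in u \times v : \phi_0(z,w)\}$, which is in $L_\alpha$ by $\Pi_{n-1}$--\textsc{separation} --- a consequence of $\Sigma_{n-1}$--\textsc{separation} and $\Delta_0$--\textsc{separation} --- and note $\{z \in u : \phi(z)\} = \{z \in u : \exists w \in v\, \langle z,w\rangle \in B\} \in L_\alpha$ by $\Delta_0$--\textsc{separation}.) This gives $\Delta_n$--\textsc{separation}, and with $\Sigma_n$--\textsc{collection} it gives $n$--admissibility of $L_\alpha$. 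I do not expect a genuine obstacle: the content is routine Levy-hierarchy bookkeeping, and the only point requiring care is the displayed identity together with the check that its right-hand side falls within the \textsc{separation} at hand. For $n=1$ the lemma is exactly the classical fact, recalled earlier, that admissible sets satisfy $\Delta_1$--\textsc{separation}, and the argument is its uniform lift.
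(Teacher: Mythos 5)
Your argument is correct and is precisely the standard reduction the paper relies on: the paper gives no proof of this lemma, citing it as well known (Devlin, Jensen, Montalb\'an--Shore), and your route --- using $\Sigma_n$--\textsc{collection} to bound witnesses for $\phi_0\vee\psi_0$ over $u$, then the displayed identity together with $\Sigma_{n-1}$--\textsc{separation} (or the $u\times v$ variant via $\Pi_{n-1}$-- and $\Delta_0$--\textsc{separation}) --- is the classical proof, uniformly lifting the $\Delta_1$ case. The only remark worth adding is that the contraction of the bounded universal quantifier in front of the $\Sigma_{n-1}$ matrix is exactly the bounded quantifier-elimination rule already recorded in Lemma~\ref{charKPL}, applied one level down, so no ingredient beyond what that lemma provides is needed.
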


\begin{lemma}
    If $\alpha$ is $(k+1)$--admissible, then there are unboundedly many $\gamma < \alpha$ such that $L_{\gamma} \preceq_k L_{\alpha}$.\label{charSepL}
\end{lemma}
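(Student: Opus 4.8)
The plan is to run the standard reflection / elementary‑chain argument, invoking $(k+1)$‑admissibility exactly at the point where the witnesses demanded at each stage must be kept bounded below $\alpha$. Fix an arbitrary $\beta_0<\alpha$; I will produce a $\gamma$ with $\beta_0<\gamma<\alpha$ and $L_\gamma\preceq_k L_\alpha$, which gives unboundedness. Recursively define an increasing sequence $\beta_0<\beta_1<\beta_2<\cdots$ of ordinals below $\alpha$: given $\beta_n$, let $\beta_{n+1}>\beta_n$ be an ordinal $<\alpha$ such that for every $\Sigma_k$‑formula $\exists v\,\theta(v,\vec w)$ (with $\theta\in\Pi_{k-1}$) and every finite tuple $\vec a$ from $L_{\beta_n}$ with $L_\alpha\models\exists v\,\theta(v,\vec a)$ there is already a witness $v\in L_{\beta_{n+1}}$ with $L_\alpha\models\theta(v,\vec a)$. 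Finally set $\gamma=\sup_n\beta_n$; since $\beta_0<\beta_1\le\gamma$, this $\gamma$ can be taken as large as desired below $\alpha$.

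The key step, and the one using the hypothesis, is that such a $\beta_{n+1}$ exists below $\alpha$. Recasting the index set $\omega\times L_{\beta_n}^{<\omega}\in L_\alpha$ as an ordinal $\delta<\alpha$ via a bijection in $L_\alpha$, and using the usual partial‑satisfaction predicates for $L_\alpha$ — so that ``$L_\alpha\models\theta(v,\vec a)$'' is $\Pi_{k-1}$ over $L_\alpha$ for $\theta\in\Pi_{k-1}$ and ``$L_\alpha\models\exists v\,\theta(v,\vec a)$'' is $\Sigma_k$ over $L_\alpha$ — I can express ``for every index $\xi<\delta$ there is a $v$ that is a witness whenever one exists'' by a formula $\forall\xi<\delta\,\exists v\,\phi(\xi,v)$ with $\phi$ of the form $(\Sigma_k)\to(\Pi_{k-1})$, hence $\Pi_k$, over $L_\alpha$. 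Since $\alpha$ is $(k+1)$‑admissible, Lemma~\ref{charKPL} (with $n=k+1$) says $L_\alpha$ satisfies $\Sigma_{k+1}$‑bounding for $\Pi_k$‑matrices, so there is $\lambda<\alpha$ gathering all the required witnesses into $L_\lambda$; take $\beta_{n+1}=\max(\beta_n+1,\lambda)$. For $\gamma<\alpha$, note that the recursion $n\mapsto\beta_n$ is definable over $L_\alpha$ at a complexity handled by $\Sigma_{k+1}$‑\textsc{collection}, so its range $\{\beta_n:n<\omega\}$ is an element of $L_\alpha$ and therefore bounded below $\alpha$.

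It then remains to verify $L_\gamma\preceq_k L_\alpha$, which is the routine Tarski–Vaught check, carried out by induction on $j\le k$: $L_\gamma\preceq_j L_\alpha$, equivalently, $L_\gamma$ and $L_\alpha$ agree on $\Sigma_j$‑ and $\Pi_j$‑formulas with parameters from $L_\gamma$. The base case $j=0$ is $\Delta_0$‑absoluteness between transitive sets. For the inductive step, given $\exists v\,\theta(v,\vec b)$ with $\theta\in\Pi_j\subseteq\Pi_{k-1}$ and $\vec b\in L_\gamma$: if it holds in $L_\gamma$ then a witness lies in $L_\gamma$ and the inductive hypothesis transfers $\theta$ upward to $L_\alpha$; if it holds in $L_\alpha$ then $\vec b\in L_{\beta_n}$ for some $n$, so by construction a witness lies in $L_{\beta_{n+1}}\subseteq L_\gamma$, and the inductive hypothesis transfers $\theta$ downward.

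I expect the main obstacle to be precisely the boundedness of $\beta_{n+1}$ below $\alpha$: this is a genuine $\Sigma_{k+1}$‑reflection phenomenon, so mere $k$‑admissibility — which supplies only $\Sigma_k$‑\textsc{collection} and $\Delta_k$‑\textsc{separation} — would not obviously suffice, and one really wants the extra half level packaged in $(k+1)$‑admissibility, equivalently $\Sigma_{k+1}$‑admissibility, carrying $\Sigma_{k+1}$‑\textsc{collection} and the $\Sigma_{k+1}$‑bounding of Lemma~\ref{charKPL}. I would avoid the alternative ``$\Sigma_k$‑Skolem hull plus condensation'' route, since for $\alpha$ not a cardinal it is not immediate that $\Sigma_k$‑elementarity transfers through the transitive collapse; the chain argument sidesteps this by producing a genuine level $L_\gamma$ of $L$ directly. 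Everything else is bookkeeping with partial truth definitions and elementary chains, essentially as in \cite{MS}.
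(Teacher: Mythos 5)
The paper itself does not prove this lemma; it is quoted as standard from \cite{MS} (with \cite{Devlin} and \cite{Jensen} as background), and your elementary-chain argument is exactly the standard proof of it: close off under $\Sigma_k$-witnesses $\omega$ many times, using $\Sigma_{k+1}$-bounding (Lemma~\ref{charKPL} with $n=k+1$, which indeed applies, since $\Delta_{k+1}$--\textsc{separation} trivially contains $\Sigma_k$--\textsc{separation} and $\Sigma_{k+1}$--\textsc{collection} contains $\Delta_k$--\textsc{collection}) to keep each stage below $\alpha$, and then run the Tarski--Vaught check on the sup. That part of your write-up, including the complexity count $(\Sigma_k)\to(\Pi_{k-1})\in\Pi_k$ and the verification of $\preceq_k$, is fine.

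The one step that is asserted rather than proved is the final boundedness claim, and as stated it is not quite right: with $\beta_{n+1}$ chosen as the \emph{least} ordinal absorbing the witnesses, the one-step graph contains the clause ``no smaller ordinal works,'' which is $\Pi_{k+1}$, so the step relation is only $\Sigma_{k+1}\wedge\Pi_{k+1}$, and pushing this through the usual recursion-theorem formula for $n\mapsto\beta_n$ lands at $\Sigma_{k+2}$, where you no longer have \textsc{collection}. Since $\alpha$ may have external cofinality $\omega$, the boundedness of $\sup_n\beta_n$ really does need a definability argument at level $\Sigma_{k+1}$, not just the remark that some canonical choice exists. The standard repair is already in the paper's toolkit: $(k+1)$-admissibility implies $\Sigma_k$-admissibility, so by Lemma~\ref{noParamSkolem} $L_\alpha$ has a $\Sigma_{k+1}$-definable $\Sigma_{k+1}$ Skolem function $h$; selecting the bounding ordinal $\lambda$ for the ($\Sigma_{k+1}$, upward-persistent) relation ``$L_\lambda$ absorbs all $\Sigma_k$-witnesses over $L_{\beta_n}$'' via $h$ (rather than by minimality) makes the step function total with genuinely $\Sigma_{k+1}$ graph, whence the recursion $n\mapsto\beta_n$ is $\Sigma_{k+1}$ and $\Sigma_{k+1}$--\textsc{collection}/\textsc{replacement} on the domain $\omega\in L_\alpha$ bounds its range below $\alpha$. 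Two smaller remarks: you can apply \textsc{collection} directly to the parameter set $\omega\times L_{\beta_n}^{<\omega}\in L_\alpha$ and skip the recasting as an ordinal (which itself quietly uses the surjection of $\beta_n$ onto $L_{\beta_n}$ definable over $L_{\beta_n}$); and the uniform $\Sigma_m$-satisfaction predicates you invoke are indeed available over admissible levels of $L$, so that ingredient is unproblematic.
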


\begin{lemma}
    For any ordinals $\gamma < \beta$, if $L_{\gamma} \preceq_n L_{\beta}$ and $\beta$ is $({n-1})$--admissible, then 
    $\gamma$ is $n$--admissible. \label{boundingPrinciple}
\end{lemma}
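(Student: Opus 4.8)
Here is how I would approach Lemma~\ref{boundingPrinciple} (it is part of the basic fine structure of $L$; see~\cites{Devlin, Jensen}). By Lemmas~\ref{charKPL} and~\ref{betterSepL}, it suffices to show that $L_\gamma$ models $\KP$ together with $\Sigma_n$-bounding for $\Pi_{n-1}$ formulas, since that makes $\gamma$ $\Sigma_n$-admissible and hence $n$-admissible. I would argue by induction on $n$ (reading ``$0$-admissible'' as ``admissible'' in the case $n=1$). To see $L_\gamma\models\KP$ in any case: since $n\geq 1$ we have $L_\gamma\preceq_1 L_\beta$, and $L_\beta\models\KP$ because $\beta$ is admissible; Extensionality and Foundation hold in $L_\gamma$ by transitivity, each of the remaining basic axioms and each instance of $\Delta_0$-Separation can be written with an outermost existential set-witness over a $\Delta_0$ matrix and is therefore $\Sigma_1$, so it reflects down from $L_\beta$ by $\Sigma_1$-elementarity, and for $\Delta_0$-Collection one notes that if an instance's ($\Sigma_1$) hypothesis holds in $L_\gamma$ it reflects up to $L_\beta$, where $\KP$ supplies a collecting set, and ``there is such a set'' is again $\Sigma_1$ and reflects back down. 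This settles $n=1$, and along the way shows $\omega<\gamma$ and that $\gamma$ is a limit.

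For the inductive step, let $n\geq 2$. Since $L_\gamma\preceq_{n-1}L_\beta$ and $\beta$ is $(n-2)$-admissible, the induction hypothesis gives that $\gamma$ is $(n-1)$-admissible; thus \emph{both} $L_\gamma$ and $L_\beta$ are $(n-1)$-admissible, so over each of them the bounded-quantifier-elimination of Lemma~\ref{charKPL} holds at level $n-1$ and $\Sigma_{n-1}$-collection is available. A routine check then shows that over any $(n-1)$-admissible $L_\alpha$ the statement
\[
\exists z\;\bigl(z\text{ is a level of }L\ \wedge\ \forall\xi<\delta\;\exists y\in z\;\phi(\xi,y,p)\bigr)\qquad(\phi\in\Pi_{n-1})
\]
is equivalent to a single $\Sigma_n$ formula $\Xi(\delta,p)$: for a set $z$ the formula ``$\exists y\in z\;\phi$'' is $\Pi_{n-1}$ there (its negation ``$\forall y\in z\;\exists u\;\tau$'' with $\tau\in\Pi_{n-2}$ is $\Sigma_{n-1}$ by the quantifier-elimination rule), hence ``$\forall\xi<\delta\;\exists y\in z\;\phi$'' is $\Pi_{n-1}$, ``$z$ is a level of $L$'' is $\Pi_1\subseteq\Pi_{n-1}$ over $\KP$, and prefixing ``$\exists z$'' gives $\Sigma_n$. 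Now take $\delta<\gamma$, $p\in L_\gamma$, $\phi\in\Pi_{n-1}$ with $L_\gamma\models\forall\xi<\delta\;\exists y\;\phi(\xi,y,p)$. For each $\xi<\delta$, ``$\exists y\;\phi(\xi,y,p)$'' is $\Sigma_n$ and holds in $L_\gamma$, hence in $L_\beta$ by $\preceq_n$; moreover a witness $y\in L_\gamma$ for $L_\gamma\models\phi(\xi,y,p)$ remains a witness in $L_\beta$ by $\Pi_{n-1}$-absoluteness, and $y\in L_\gamma\in L_\beta$ since $\gamma<\beta$. So $L_\beta$ satisfies the displayed statement with $z=L_\gamma$, i.e.\ $L_\beta\models\Xi(\delta,p)$; as $\Xi$ is $\Sigma_n$ with parameters in $L_\gamma$ and $L_\gamma\preceq_n L_\beta$, we get $L_\gamma\models\Xi(\delta,p)$, i.e.\ some $\lambda<\gamma$ satisfies $L_\gamma\models\forall\xi<\delta\;\exists y\in L_\lambda\;\phi(\xi,y,p)$. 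This is $\Sigma_n$-bounding for the given instance, so $\gamma$ is $\Sigma_n$-admissible, hence $n$-admissible by Lemma~\ref{betterSepL}.

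The crux — and the only place where the full $\Sigma_n$-elementarity, rather than merely $\Sigma_{n-1}$, is used — is this last reflection: one must verify that the conclusion of bounding, ``some level $L_\lambda$ contains a $\phi$-witness for every $\xi<\delta$,'' genuinely has $\Sigma_n$ complexity over $(n-1)$-admissible structures and is given by the \emph{same} $\Sigma_n$ formula over $L_\gamma$ and over $L_\beta$, so that $\preceq_n$ can transport it down from $L_\beta$ — where $\lambda=\gamma$ visibly works precisely because $\gamma<\beta$ — to $L_\gamma$. Everything else is bookkeeping with the fine-structure hierarchy of Lemmas~\ref{charKPL}--\ref{betterSepL}; alternatively the same conclusion can be reached by a Skolem-hull-and-condensation argument based on Lemma~\ref{noParamSkolem}, but I find the direct reflection cleaner.
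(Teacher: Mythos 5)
The paper does not prove Lemma~\ref{boundingPrinciple} at all: it is recalled as a standard fine-structural fact with pointers to \cites{Devlin, Jensen} and \cite{MS}, so there is no in-paper argument to compare yours against. Judged on its own, your proof is correct and is essentially the standard direct argument: establish $\KP$ in $L_\gamma$ by $\Sigma_1$-reflection for $n=1$, then induct, using the fact that $(n-1)$-admissibility of both structures makes the bounded-quantifier-elimination of Lemma~\ref{charKPL} available uniformly, so that the bounding statement ``some level of $L$ contains witnesses for all $\xi<\delta$'' is a single $\Sigma_n$ formula that $L_\beta$ satisfies with $z=L_\gamma$ and that $\preceq_n$ pulls down to $L_\gamma$; Lemma~\ref{betterSepL} then converts $\Sigma_n$-admissibility into $n$-admissibility. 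Two small points deserve a sentence each if you write this up. First, the hypothesis of a $\Delta_0$-collection instance, $\forall x\in u\,\exists y\,\phi$, is not literally $\Sigma_1$; your reflection of it up to $L_\beta$ should be done pointwise for each $x\in u$ (each $\exists y\,\phi(x,y,p)$ is $\Sigma_1$ with parameters in $L_\gamma$), after which the existence of a collecting set is genuinely $\Sigma_1$ and reflects down as you say. Second, when you invoke the level-$(n-1)$ quantifier-elimination over $(n-1)$-admissible structures, note explicitly that $(n-1)$-admissibility ($\Delta_{n-1}$-separation plus $\Sigma_{n-1}$-collection) trivially includes the hypotheses of Lemma~\ref{charKPL} at level $n-1$ ($\Sigma_{n-2}$-separation and $\Delta_{n-2}$-collection), so the same equivalent formulas work in $L_\gamma$ and $L_\beta$; this is exactly the uniformity your ``crux'' paragraph needs. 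The alternative Skolem-hull-and-condensation route via Lemma~\ref{noParamSkolem} you mention is closer to how the cited references and \cite{MS} treat such facts, but your reflection argument is complete as it stands.
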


\begin{lemma}
    Let $\kappa < \omega_1^{\mathsf{CK}}, \alpha_n$ be the first $n$--admissible ordinal satisfying  ``$\omega_{\kappa}$ exists'', then every element of $L_{\alpha_n}$ is $\Sigma_{n+1}$--definable over $L_{\alpha_n}$ with $\omega_{\kappa}$ as parameter.
\end{lemma}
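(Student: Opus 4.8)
The plan is the standard Skolem‑hull‑and‑condensation argument, carried out with $\omega_\kappa$ kept as a parameter. Since $\alpha_n$ is $n$‑admissible, the schemes of $\Delta_n$‑separation and $\Sigma_n$‑collection available in $L_{\alpha_n}$ subsume those of $\Sigma_{n-1}$‑separation and $\Delta_{n-1}$‑collection, so by \cref{charKPL} the ordinal $\alpha_n$ is $\Sigma_n$‑admissible; hence \cref{noParamSkolem} provides a parameter‑free $\Sigma_{n+1}$ Skolem function $h$ for $L_{\alpha_n}$. Let $X$ be the closure of $\{\omega_\kappa\}$ under $h$ — equivalently, the collection of elements of $L_{\alpha_n}$ that are $\Sigma_{n+1}$‑definable over $L_{\alpha_n}$ from the single parameter $\omega_\kappa$. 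The choice of $h$ makes $X$ a $\Sigma_{n+1}$‑elementary substructure of $L_{\alpha_n}$, and $X \models V = L$, so by the condensation lemma its transitive collapse is $L_\beta$ for a unique $\beta \le \alpha_n$; write $\pi\colon X \to L_\beta$ for the collapsing isomorphism and $j = \pi^{-1}$ for the associated $\Sigma_{n+1}$‑elementary embedding $L_\beta \to L_{\alpha_n}$.

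The heart of the argument is to verify that $L_\beta$ is again $n$‑admissible. The axioms of $\KP$ are preserved because they are of low enough complexity (within $\Sigma_2 \cup \Pi_2$, hence inside the scope of the $\Sigma_{n+1}$‑elementarity since $n \ge 1$). For $\Delta_n$‑separation and $\Sigma_n$‑collection one exploits that $X$ is closed under $h$: given an instance over $L_\beta$ with parameters $u$, $\bar p$, push these forward to $L_{\alpha_n}$ via $j$ — the relevant hypothesis, being $\Pi_{n+1}$, transfers — and let $L_{\alpha_n}$, which satisfies $\Delta_n$‑separation and $\Sigma_n$‑collection, supply the separated set (respectively a bound); the $<_L$‑least such witness is $\Sigma_{n+1}$‑definable over $L_{\alpha_n}$ from $j(u), j(\bar p) \in X$, hence lies in $X$, so $\pi$ of it is the required object inside $L_\beta$. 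This is the familiar fine‑structural bookkeeping (cf. \cite{MS}, \cite{Jensen}, \cite{Devlin}) and is where essentially all the care is needed: one must be precise about which bounded quantifiers can be absorbed and at what L\'evy level, using the collection schemes already verified.

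Next, $L_\beta$ still satisfies ``$\omega_\kappa$ exists''. Whatever its precise definition, $\omega_\kappa$ is an ordinal that is $\Sigma_2$‑definable over $L_{\alpha_n}$, and parameter‑free so since $\kappa < \omega_1^{\mathsf{CK}}$ is recursive and hence itself parameter‑free definable; thus ``$\omega_\kappa$ exists'' is a parameter‑free $\Sigma_2$ sentence, preserved by the $\Sigma_{n+1}$‑elementary collapse. Consequently $L_\beta$ is an $n$‑admissible model of ``$\omega_\kappa$ exists'' with $\beta \le \alpha_n$, and the minimality of $\alpha_n$ forces $\beta = \alpha_n$. Moreover the $\Sigma_2$ formula defining $\omega_\kappa$ holds of $\omega_\kappa$ in $L_{\alpha_n}$, hence (by $\Sigma_{n+1}$‑elementarity, $n\ge 1$) in $X$, hence of $\pi(\omega_\kappa)$ in $L_\beta = L_{\alpha_n}$; since that formula defines a unique object in $L_{\alpha_n}$, we get $\pi(\omega_\kappa) = \omega_\kappa$. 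Finally, any $y \in L_{\alpha_n} = L_\beta$ is $\pi(x)$ for some $x \in X$, which by its very membership in $X$ is $\Sigma_{n+1}$‑definable over $L_{\alpha_n}$ from $\omega_\kappa$; transporting that definition through the isomorphism $\pi$ and using $\pi(\omega_\kappa) = \omega_\kappa$ shows that $y$ is $\Sigma_{n+1}$‑definable over $L_{\alpha_n}$ from $\omega_\kappa$, which is the claim.

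The main obstacle is precisely the middle step: showing that the transitive collapse $L_\beta$ of the hull is $n$‑admissible. One has to line up the L\'evy complexities of the instances of $\Sigma_n$‑collection and $\Delta_n$‑separation with the $\Sigma_{n+1}$‑elementarity of $j$ and the closure of $X$ under the $\Sigma_{n+1}$ Skolem function $h$, keeping careful track of which bounded quantifiers are absorbed and of the fact that $\Sigma_{n+1}$‑elementary embeddings are automatically $\Pi_{n+1}$‑elementary. Once the condensation‑and‑minimality scheme is in place, everything else is routine.
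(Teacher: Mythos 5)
The paper gives no proof of this lemma (it is quoted as standard, following Montalb\'an--Shore, Devlin, Jensen), and your argument is precisely the proof those sources use: close $\{\omega_\kappa\}$ under a parameter-free $\Sigma_{n+1}$ Skolem function, condense the hull to some $L_\beta$, check that $L_\beta$ is an $n$-admissible model of ``$\omega_\kappa$ exists,'' invoke minimality of $\alpha_n$ to get $\beta=\alpha_n$ and $\pi(\omega_\kappa)=\omega_\kappa$, and transport the definitions; this is correct. Two cosmetic points only: you need not transfer the $\KP$ axioms by counting quantifiers (full \textsc{foundation} and the basic axioms hold automatically in the wellfounded collapse $L_\beta$, while $\Delta_0$--\textsc{separation}/\textsc{collection} are subsumed by the $\Delta_n$/$\Sigma_n$ schemes you verify), and the $\Sigma_2$ bound on ``$\omega_\kappa$ exists'' requires absorbing a bounded universal quantifier via $\Sigma_1$--\textsc{collection}, which is available in the admissible structures at hand.
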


The following lemma is a consequence of the so-called \textit{acceptability} of the constructible hierarchy. We define $\KP^{\gamma}_{\infty} = \bigcup_{n \in \mathbb{N}} \KP^{\gamma}_n$.
\begin{lemma}
    Let $\kappa < \omega_1^{\mathsf{CK}}$ and $\beta_{\kappa}$ be the smallest ordinal $\beta$ such that $L_{\beta} \models \KP^{\kappa}_{\infty}$, then for all $\gamma < \beta_{\kappa}$, we have
    \[L_{\gamma+1} \models |L_\gamma| \leq | \mathcal{P}^{\kappa_{\gamma}}(\omega)| \text{``}\leq \omega_{\kappa} \text{''}.\]
    That is, in $L_{\gamma+1}$ there is an injection from $L_{\gamma}$ to $\omega_{\kappa_{\gamma}}$, where $\kappa_{\gamma}$ is the largest ordinal $\alpha \leq \kappa$ such that $L_{\gamma} \models$ ``$\omega_{\alpha}$ exists.''
     \label{cardinalityKPL}
\end{lemma}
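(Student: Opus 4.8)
The plan is to prove this by induction on $\gamma < \beta_\kappa$, extracting it from the \emph{acceptability} of the constructible hierarchy $\langle L_\alpha : \alpha \in \Ord \rangle$ in the sense of Jensen (as in \cite{Devlin} or \cite{Jensen}). The case in which $L_\gamma$ is finite (so $\kappa_\gamma$ is degenerate) is trivial, so assume $L_\gamma$ infinite. First I would make two reductions. Since the canonical $\Sigma_1$-definable wellordering $<_L{\restriction}L_\gamma$ of $L_\gamma$ is an element of $L_{\gamma+1}$, it is enough to produce, inside $L_{\gamma+1}$, an injection of $\omega_{\kappa_\gamma}^{L_\gamma}$ into \emph{some} wellordering of $L_\gamma$; that is, it suffices to show $L_{\gamma+1} \models |L_\gamma| \leq \omega_{\kappa_\gamma}^{L_\gamma}$. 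The passage from this to the displayed inequality --- in which $\mathcal{P}^{\kappa_\gamma}(\omega)$ and ``$\leq \omega_\kappa$'' appear --- is then immediate from $L \models \mathsf{GCH}$: inside $L_{\gamma+1}$ one has $\omega_{\kappa_\gamma} \leq |\mathcal{P}^{\kappa_\gamma}(\omega)| = \beth_{\kappa_\gamma} = \aleph_{\kappa_\gamma} \leq \aleph_\kappa$ by condensation. (It is also worth recording that, over levels of $L$, the theory $\KP^\kappa_\infty$ is equivalent, via the bounded-quantifier elimination of \cref{charKPL}, to $\KP$ together with the full schemes of \textsc{separation} and \textsc{collection}, the axiom ``$\mathcal{P}^\kappa(\omega)$ exists'', and ``$\kappa$ is wellordered''; so $\beta_\kappa$ is the least ordinal whose $L$-level models this theory.)

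For the successor step, $\gamma = \delta + 1$, I would use that the satisfaction relation of the \emph{set} structure $L_\delta$ is uniformly $\Delta_1$ over $L_{\delta+1} = L_\gamma$ with parameter $L_\delta$; hence the map sending a pair consisting of a first-order $\in$-formula and a finite tuple from $L_\delta$ to the subset of $L_\delta$ that it defines is $\Delta_1$ over $L_\gamma$, and surjects onto $L_{\delta+1} = L_\gamma$. Composing this with the injection $L_\delta \hookrightarrow \omega_{\kappa_\delta}^{L_\delta}$ supplied by the induction hypothesis --- which is an element of $L_{\delta+1} = L_\gamma \subseteq L_{\gamma+1}$ --- yields in $L_{\gamma+1}$ an injection of $L_\gamma$ into $\omega \times \omega_{\kappa_\delta}^{L_\delta}$, and hence into $\omega_{\kappa_\delta}^{L_\delta}$; finally $\omega_{\kappa_\delta}^{L_\delta} \leq \omega_{\kappa_\gamma}^{L_\gamma}$ because $\kappa_\delta \leq \kappa_\gamma$ and, for $M \subseteq M'$ and $\alpha \leq \alpha'$, one has $\omega^M_\alpha \leq \omega^{M'}_{\alpha'}$ (a superset has no more cardinals).

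The heart of the argument is the limit step, $\gamma$ a limit ordinal. Here acceptability gives that $L_{\gamma+1}$ collapses $L_\gamma$ to its $\Sigma_1$-projectum $\rho$: the canonical $\Sigma_1$ Skolem function of $L_\gamma$ is $\Sigma_1$ over $L_\gamma$, hence lies in $L_{\gamma+1}$ (compare the parameter-free Skolem functions of \cref{noParamSkolem}), and it realizes $L_\gamma$ as the $\Sigma_1$-hull of $\rho$ together with a suitable parameter, so that $L_{\gamma+1} \models |L_\gamma| = |\rho|$. It then remains to check $\rho \leq \omega_{\kappa_\gamma}^{L_\gamma}$. I expect this to run as follows: $\rho$ is a cardinal of $L_\gamma$, so if $\rho > \omega_{\kappa_\gamma}^{L_\gamma}$ then $(\omega_{\kappa_\gamma}^{L_\gamma})^{+}$ exists in $L_\gamma$, i.e.\ $L_\gamma \models$``$\omega_{\kappa_\gamma + 1}$ exists'', which by the maximality of $\kappa_\gamma$ is possible only when $\kappa_\gamma = \kappa$; but then $L_\gamma$ contains $\mathcal{P}^\kappa(\omega)$ (which appears below $\omega_\kappa^{L_\gamma}$) and, no $\Sigma_1(L_\gamma)$-subset of $\omega_\kappa^{L_\gamma}$ being new, $L_\gamma$ amenably codes its bounded levels and satisfies the full schemes; via \cref{charKPL} this makes $L_\gamma$ a model of $\KP^\kappa_\infty$, contradicting the minimality of $\beta_\kappa$.

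The main obstacle is precisely this last step --- bounding the $\Sigma_1$-projectum of $L_\gamma$ in the limit case. It is where the hypothesis $\gamma < \beta_\kappa$ is genuinely used (it is what prevents $L_\gamma$ from being ``too fat'' before stage $\gamma$), and it will require careful bookkeeping to reconcile cardinal computations in $L_\gamma$, in $L_{\gamma+1}$, and in $L$, together with the identifications $\omega_{\kappa_\gamma} = |\mathcal{P}^{\kappa_\gamma}(\omega)| = \aleph_{\kappa_\gamma}$ coming from $\mathsf{GCH}$. By contrast, in the second-order setting of Montalb\'an and Shore the analogous accounting involves only reals, and it is exactly this extra stage of collapse available in higher-order arithmetic that accounts for the shift of index one sees in \cref{TheoremMainIntro} relative to \cref{TheoremMSIntro}.
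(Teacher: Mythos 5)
Your successor step and the general reduction via $<_L$ and $\mathsf{GCH}$ are fine and essentially match the paper's, but the limit case — which you yourself flag as the heart of the matter — does not go through as sketched, and the gap is not just bookkeeping. The intermediate claim you need, namely $\rho \leq \omega_{\kappa_\gamma}^{L_\gamma}$ for the \emph{$\Sigma_1$}-projectum $\rho$ of $L_\gamma$, is simply false for many limit $\gamma < \beta_\kappa$: whenever $L_\gamma \models \KP + \Sigma_1$--\textsc{separation} (equivalently, $\gamma$ is $\Sigma_2$-admissible in the sense of Lemma~\ref{charKPL}), every boldface $\Sigma_1(L_\gamma)$ subset of an ordinal below $\gamma$ already lies in $L_\gamma$, so $\rho = \gamma$; and by Lemmas~\ref{charSepL} and~\ref{boundingPrinciple} there are plenty of such $\gamma$ strictly below $\beta_\kappa$ lying above the stage where $\mathcal{P}^{\kappa_\gamma}(\omega)$ is constructed, so that $\omega_{\kappa_\gamma}^{L_\gamma} < \gamma = \rho$. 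Two consequences: first, your step ``$\rho$ is a cardinal of $L_\gamma$, so $(\omega_{\kappa_\gamma}^{L_\gamma})^{+}$ exists in $L_\gamma$'' breaks down when $\rho = \gamma$ (then $\rho \notin L_\gamma$ and no new cardinal of $L_\gamma$ is produced); second, your attempted contradiction — inferring that $L_\gamma$ ``satisfies the full schemes'' and hence models $\KP^\kappa_\infty$ from the mere absence of new $\Sigma_1$ subsets of $\omega_\kappa^{L_\gamma}$ — is unjustified, since $\Sigma_1$-separation is compatible with the failure of $\Sigma_2$--\textsc{collection} or $\Delta_2$--\textsc{separation}, which is exactly what happens below $\beta_\kappa$. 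What the lemma really requires is a collapse definable over $L_\gamma$ at \emph{some} finite level, i.e.\ a bound on the $\Sigma_n$-projectum (or $\Sigma_\omega$-projectum) for the least $n$ at which $n$-admissibility fails, and establishing that bound is essentially the content of the lemma rather than something acceptability hands you at the $\Sigma_1$ level.

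For comparison, the paper avoids projecta altogether and runs a double induction, on $\kappa$ and transfinitely on $\gamma$. After disposing of the case $\gamma \leq \beta_{\kappa'}$ for some $\kappa' < \kappa$ by the induction on $\kappa$, the limit case uses only that $\gamma < \beta_\kappa$ forces $L_\gamma$ to fail $k$-admissibility for some finite $k$: this yields a $\Sigma_k(L_\gamma)$-definable map $f$ on some $\delta < \gamma$ with $f[\delta]$ cofinal in $L_\gamma$, and one then glues the surjections $g_{f(\zeta)} \colon \mathcal{P}^{\kappa_{f(\zeta)}}(\omega)^{L_{f(\zeta)}} \to L_{f(\zeta)}$ supplied by the induction hypothesis (using choice in $L_{\gamma+1}$, and the fact that $\mathcal{P}^{\kappa_\gamma}(\omega)^{L_\gamma}$ codes all the relevant domains) into a surjection onto $L_\gamma$ definable over $L_\gamma$. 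If you want to salvage your fine-structural route, you would have to work with the projectum at the level of the first admissibility failure and prove the bound there — at which point you are reproducing the paper's argument in different clothing.
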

\begin{proof}
    We proceed by induction on $\kappa$, starting with $\kappa = -1$, where our claim is that $L_{\gamma}$ is finite in $L_{\gamma +1}$ and $\beta_{-1} = \omega$ (where we remove \textsc{infinity} from $\ZF^-$). This case is obviously true. Note also that $\Ps^0(\omega) \coloneqq \omega$ (then we ask for \textsc{infinity} to hold). 

    Let $\kappa < \omega_1^{\mathsf{CK}}$. We first make two remarks: the first is that $L_{\gamma}$ always satisfies the Axiom of Choice, so the existence of an injection in one direction is equivalent to the existence of a surjection in the opposite direction. The second is that $\Ps^{\kappa}(\omega)$ admits a trivial coding of pairs, i.e., pairs of elements of $P^{\kappa}(\omega)$ can be coded by single elements of $\Ps^{\kappa}(\omega)$.
    
    To continue with the induction, suppose first that there is $\kappa' < \kappa$ such that $\gamma \leq \beta_{\kappa'}$, then the conclusion is immediate from our inductive hypothesis. Now we proceed by transfinite induction. If $\gamma$ is a successor, the conclusion follows from the countable number of formulae and the bound given by induction on the cardinality of the parameter space, $L_{\gamma}$. If $\gamma$ is a limit ordinal less than $\beta_{\kappa}$, then it is not $k$--admissible for some $k < \omega$. It means that, for some $k <\omega$ and some $\delta < \gamma$, there is a $\Sigma_k$--definable map $f$ over $L_{\gamma}$ such that $f[\delta]$ is unbounded in $L_{\gamma}$. We can now define a surjective function from $\Ps^{\kappa_{\gamma}}(\omega)^{L_{\gamma}}$ to $L_{\gamma}$ by combining the maps $g_{\delta} \colon \Ps^{\kappa_{\delta}}(\omega)^{L_{\delta}} \to L_{\delta}$ and $g_{f(\zeta)} \colon \Ps^{\kappa_{f(\zeta)}}(\omega)^{L_{f(\zeta)}} \to L_{f(\zeta)}$ given by the induction hypothesis and the observation that $\Ps^{\kappa_{\gamma}}(\omega)^{L_{\gamma}}$ code all these power set domains. Note that the maps $g_{f(\zeta)}$ can be produced using the Axiom of Choice in $L_{\gamma+1}$.
\end{proof}

\subsection{On some choice schemes}

We recall the schema of $\Sigma_{m}$--Dependent Choice, stated as \begin{align*}
    \forall X \ \exists Y \ \eta(X, Y) \rightarrow \exists Z (= (Z_i)_{i < \omega}) \ \forall (i < \omega) \ \eta(i, (Z_j)_{j < i}, Z_i).
\end{align*}

Strong $\Sigma_{m}$--$\DC$ is a stronger version of dependent choice, namely. It asserts the existence of a sequence $Z = {(Z_i)}_{i \in \omega}$, such that\begin{align*}
    &\forall Y\ (\eta(i,(Z_j)_{j < i}, Y) \rightarrow \eta((Z_j)_{j < i}, Z_i)),
\end{align*}
where $\eta$ is $\Sigma_{m}$. 

\section{Unprovability of Determinacy Principles in Higher-Order Arithmetic} \label{SectionLowerBound} 

\subsection{Unprovability of determinacy in higher-order arithmetic}\label{SubsectionHOA_LB}

In this section, we present our unprovability results for theories of determinacy in higher-order arithmetic. We will see that the principles of $(\Sigma^0_{1 + \gamma + 2})_n\text{--}\Det$ are not provable in suitably chosen systems of higher-order arithmetic, defined as subsystems of $\mathsf{ZFC}$ in the language of set theory. We introduced the notation $\KP^{\gamma}_n$, where the subscript $1 \leq n $ denotes the 
amount of \textsc{separation} and \textsc{collection} and the superscript $1 \leq \gamma$ denotes the order of the available objects --``$\Ps^{\gamma}(\mathbb{N})$ exists''--. 
For a different study and axiomatization of higher and third-order arithmetic in particular, in the fashion of a many-sorted language in continuation of $\mathcal{L}_2$, one can consult Hachtman~\cite{Z3Det}.
The results in this section extend Theorem \ref{TheoremMSIntro}. Below, we use $\wo(\gamma)$ for the assertion that $\gamma$ is a wellordering of $\mathbb{N}$.

\begin{theorem}
Consider $2 \leq n < \omega$, $1 \leq \gamma < \omega_1^{\mathsf{CK}}$ and let \label{notmPigammaDet} 
\begin{align*}
        \KP^{\gamma}_n \coloneqq \KP + \wo(\gamma) + \text{``$\Ps^{\gamma}(\mathbb{N})$ exists'' } + \textsc{$\Sigma_{n}$--collection} + \textsc{$\Delta_{n}$--separation}.
    \end{align*}
    Then, 
    \begin{align*}
    \KP^{\gamma}_n \not\vdash (\Sigma^0_{1 + \gamma + 2})_{n}\text{--}\Det.
    \end{align*}
\end{theorem}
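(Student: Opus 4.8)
The plan is to run a Friedman-style unprovability argument, in the manner of \cite{Sigma5Det} and \cite{MS}, adapted so as to accommodate the iterated power sets demanded by $\KP^\gamma_n$; the modifications needed to handle \textsc{power set} are precisely what make the argument --- and hence the conclusion --- subtler than in the second-order case. Assume toward a contradiction that $\KP^\gamma_n \vdash (\Sigma^0_{1 + \gamma + 2})_n\text{--}\Det$. Since $1 \leq \gamma < \omega_1^{\mathsf{CK}}$, the theory $\KP^\gamma_n$ has a countable constructible model --- this follows from L\"owenheim--Skolem together with the thinness of initial segments recorded in \cref{cardinalityKPL} --- so let $\beta$ be the least ordinal with $L_\beta \models \KP^\gamma_n$. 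Then $L_\beta$ is $n$--admissible and carries the tower $\Ps^0(\mathbb{N}), \dots, \Ps^\gamma(\mathbb{N})$; moreover, by the structure theory recalled in \cref{SectPreliminaries} (in particular \cref{noParamSkolem} and the lemmata following it), every element of $L_\beta$ is $\Sigma_{n+1}$--definable from $\Ps^\gamma(\mathbb{N})$, so that $L_\beta$ is the minimal wellfounded model of $\KP^\gamma_n$. By hypothesis $L_\beta \models (\Sigma^0_{1 + \gamma + 2})_n\text{--}\Det$, and we shall contradict this by producing, inside $L_\beta$, a game on $\omega$ with payoff in $(\Sigma^0_{1 + \gamma + 2})_n$ --- equivalently for determinacy, in $(\Pi^0_{1 + \gamma + 2})_n$ --- which $L_\beta$ fails to determine.

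The game $G$ is a ``certified model'' game of Friedman type. Player $\I$ is charged with playing out a code for a countable structure $N$ purporting to be a wellfounded model of $\KP^\gamma_n$, and in particular with playing codes for the tower $\Ps^0(\mathbb{N})^N, \Ps^1(\mathbb{N})^N, \dots, \Ps^\gamma(\mathbb{N})^N$ and for the membership data linking consecutive levels, while Player $\II$ is permitted to mount successive challenges: exhibiting an allegedly $\in$--descending chain, naming an allegedly false instance of a $\KP^\gamma_n$ axiom, or pointing to an error in one of the power-set codes. The design turns on a tight complexity budget. Checking the power-set tower level by level climbs $\gamma$ further stages of the effective Borel hierarchy above the base cost of $2$ already present in the Montalb\'an--Shore game for $\Sigma^0_3$, which puts the payoff at $\Pi^0_{1 + \gamma + 2}$; and the difference-hierarchy depth $n$ is precisely the number of challenge-and-correction alternations the players are allowed, which is what is needed to track the $n$ iterated $\Sigma_n$--Skolem-hull and $\Sigma_n$--reflection steps by which such a structure is analyzed. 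As $\gamma$ is only recursive, the effective Borel coding of \cref{SectPreliminaries} is essential here. This is also where one sees why the index cannot be raised to $n+1$ as in \cite{MS}: demanding in addition that $N$ satisfy an extra level of \textsc{separation} would force the difference-hierarchy depth up by one, so the game --- and therefore the unprovability it yields --- is genuinely weaker by one stage.

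With $G$ in place, the argument splits on which player $L_\beta$ believes has a winning strategy. First, $L_\beta \models$ ``$\II$ has no winning strategy'': given $\tau \in L_\beta$, Player $\I$ defeats $\tau$ by playing \emph{honestly}, describing a genuine initial segment or Skolem hull $L_\delta$ ($\delta < \beta$) taken large enough to see $\tau$ and sufficiently $\Sigma$--elementary in $L_\beta$ (such $\delta$ are unbounded in $\beta$ by \cref{charSepL}), together with the true power-set tower of $L_\delta$; the elementarity, the bounded-quantifier elimination of \cref{charKPL}, and the Skolem functions of \cref{noParamSkolem} guarantee that $\I$ survives all of $\II$'s challenges, so $\tau$ is not winning. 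Hence, by determinacy in $L_\beta$, $L_\beta \models$ ``$\I$ has a winning strategy $\sigma$''. Finally, one extracts a contradiction from $\sigma \in L_\beta$: running $\sigma$ against the canonical ``honest'' sequence of challenges --- whose prescriptions are $\Sigma_n$ over $L_\beta$ --- yields, according to the shape of the resulting run, either a wellfounded $L_\delta \models \KP^\gamma_n$ with $\delta < \beta$ (contradicting the minimality of $\beta$) or a $\Sigma_n$--definable map from some $\delta < \beta$ cofinal into $\beta$ (contradicting the $\Sigma_n$--collection available in $L_\beta$). Either way a contradiction ensues, so $L_\beta \not\models (\Sigma^0_{1 + \gamma + 2})_n\text{--}\Det$, and therefore $\KP^\gamma_n \not\vdash (\Sigma^0_{1 + \gamma + 2})_n\text{--}\Det$.

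The principal obstacle is the joint calibration of (i) the game $G$, so that ``$\Ps^\gamma(\mathbb{N})$ exists'' is certified within a single play while the payoff stays exactly at the level $(\Sigma^0_{1 + \gamma + 2})_n$ and no higher, and (ii) the two case analyses, so that both $\I$'s honest play against $\tau$ and the contradiction read off from $\sigma$ use no more than the $n$ levels of \textsc{separation} and \textsc{collection} that $\KP^\gamma_n$ affords. This is exactly the difficulty the introduction signals as having no parallel in $\Z_2$: incorporating the \textsc{power set} axioms obstructs a direct transplant of the Montalb\'an--Shore construction and is precisely what forces the index in the theorem down one stage below what second-order arithmetic would predict.
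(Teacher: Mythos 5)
Your high-level target is the same as the paper's (the witness is the least $L_\beta\models\KP^\gamma_n$, i.e.\ $L_{\alpha^\gamma_n}$), but the game you sketch is not the one the paper builds, and as described it cannot carry the argument. Your game is asymmetric: Player $\I$ plays a code for a structure claimed to be a wellfounded model of $\KP^\gamma_n$, and Player $\II$ mounts challenges. With that design your first case fails: to defeat a $\tau\in L_\beta$ you have $\I$ ``play honestly'' some $L_\delta$ with $\delta<\beta$ that is $\Sigma$-elementary enough and contains $\tau$ — but by the minimality of $\beta$ no such $L_\delta$ satisfies $\KP^\gamma_n$, so honest play violates the very conditions $\I$ is supposed to certify, and a challenger who names the failing axiom instance wins. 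If instead the game does not require $\I$'s structure to satisfy full $\KP^\gamma_n$, then your closing dichotomy collapses: a structure that merely survives one particular challenger's challenges need not be wellfounded (wellfoundedness is $\Pi^1_1$ and cannot be certified by a payoff at level $1+\gamma+2$ of the Borel hierarchy), so ``running $\sigma$ against the canonical honest challenges'' does not yield a genuine $L_\delta\models\KP^\gamma_n$ with $\delta<\beta$, nor a $\Sigma_n$-definable cofinal map. This is precisely why the Friedman–Montalb\'an–Shore format has \emph{both} players play complete Henkin theories extending $T^\gamma_n$ (which contains the clause that no $L_\alpha$ models $\KP^\gamma_n$), with the winner decided by conditions comparing the two term models, and why the conclusion is only that the game is undetermined \emph{inside} the minimal model, obtained via the copying argument plus Tarski undefinability and $\beta$-model absoluteness (Lemma~\ref{gamedef} and the proof following it), not via a contradiction extracted from a single strategy in the real world.

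The second gap is that the part you defer as ``joint calibration'' is in fact the mathematical content of the theorem. The paper must devise comparison conditions of complexity exactly $\Sigma^0_{1+\gamma+2}$: the dummy-intersection relations $D_\gamma$ for matching elements of the iterated power sets, the four-case definition of $\A^{\gamma+1}$ coding the common wellfounded part (the case split according to how $\Ps^\gamma(\mathbb{N})_{\M_\I}$ and $\Ps^\gamma(\mathbb{N})_{\M_{\II}}$ compare is exactly the new difficulty absent from \cite{MS}), the least-element conditions $(C_{\I}1),(C_{\II}1)$ needed so that the wellfounded part models $\KP^\gamma$, and the classes $W_{\M,k}$ with the $(\star_k)$ induction, all reorganized into an $n$-fold difference. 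Your proposal asserts the complexity budget (``$\gamma$ extra stages, $n$ alternations'') without exhibiting any of these conditions or verifying their complexity, and your challenge-based conditions (descending chains, false axiom instances, power-set errors) do not obviously admit such a stratification. So while the intended conclusion and the choice of witness model are right, the proof as written has no working game underneath it.
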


Theorem \ref{notmPigammaDet} is stated for $2 \leq n$. This assumption is necessary for the proof, which does not work for the case $n = 1$. However, the case $n = 1$ of Theorem \ref{notmPigammaDet} has been proved by Hachtman \cite{Hachtman}, using different methods.

\begin{remark}
    Martin (see \cite{Martin}) has previously shown that the determinacy of $\Sigma^0_{1+\gamma+3}$ games is not provable in $\mathsf{ZC}^- + \textsc{$\Sigma_1$ replacement} + $ ``$\mathcal{P}^\gamma(\mathbb{N})$ exists'' (here, $\mathsf{ZC}^-$ is $\mathsf{ZFC}$ deprived of the \textsc{power set} and \textsc{replacement} axioms). Theorem \ref{notmPigammaDet} is a refinement of this result. 
    \end{remark}

The rest of \S \ref{SubsectionHOA_LB} is devoted to the proof of Theorem \ref{notmPigammaDet}.
The starting point for our argument is the proof of Theorem{~}\ref{TheoremMSIntro} in~\cite{MS}. 
As we go along, we indicate the differences between our proof and theirs.

We will build up a game in $2^{< \omega}$ with a $(\Sigma^0_{1+ \gamma + 2})_{n}$ winning 
condition. That is, a play $x$ is a win for $\I$ if and only if it belongs to a set $X$ being $(\Sigma^0_{1+ \gamma + 2})_{n}$ (here we consider the effective version of the Borel hierarchy). 
We will describe this winning set with $n$ different $\Sigma^0_{1 + \gamma + 2}$ conditions $\phi_{i = 0, \dots, n-1}$ which will lead to the victory of Player \I\ 
if and only if the smallest $i$ such that $\phi_{\I}$ does not hold is even. This will correspond to a failure of Player \II\ to respect certain properties. Otherwise, Player \II\ wins. We add the final condition $\phi_{n}(m) \equiv m \neq m$ at the end to settle the game in case none of the preceding conditions fail.
In the game, Player \I\ and Player \II\ must play consistent and complete theories $T_{\I}$ and $T_{\II}$ in the language of set theory which extends $T^{\gamma}_n$. This is done as follows: fixing some enumeration $\{\psi_i:i\in\mathbb{N}\}$ of all formulas in the language of set theory, we think of a player as ``accepting'' a formula $\psi_i$ if his or her play during turn $i$ is $1$; and we think of the player as ``rejecting'' the formula if the play is $0$.

In a first time we will define a $(\Sigma^0_{1+ \gamma + 2})_{2n+2}$ game $G'^{\gamma}_n$ with a payoff set defined by a sequence of conditions 
\begin{align*}
    (C_{\II}0), (C_{\I}0), (C_{\II}1), (C_{\I}1), \ldots, (C_{\II}(1+k)), (C_{\I}(1+k)), \ldots, (C_{\II}n), (C_{\I}n).
\end{align*}
At even stages, we will introduce conditions denoted by $C_{\II}i$, for $i\leq n$: these are conditions that  \II\ must ensure hold in order to avoid losing. 
Similarly, conditions $C_{\I}i$ will appear in odd stages, and these must be satisfied by \I\ in order to avoid losing. By definition, the first player not to satisfy one of his or her conditions will lose the game. The model that will witness the failure of $(\Sigma^0_{1 + \gamma + 2})_{n}$ determinacy is the unique wellfounded one 
of the theory \begin{align*}
    T^{\gamma}_n \coloneqq \KP^{\gamma}_n + V = L + \forall \alpha \in\Ord (L_{\alpha} \text{ is not a model of } \mathsf{KP^{\gamma}_n}).
\end{align*}
The later model is  $L_{\alpha_n^{\gamma}}$, where $\alpha_n^{\gamma}$ is the smallest ordinal $\alpha$ such that 
$L_{\alpha}$ is a model of $\KP^{\gamma}_n$. The objective of our game is to satisfy the following lemma.

\begin{lemma}
    The game $G'^{\gamma}_n$ satisfies: \begin{enumerate}
        \item If Player \I\ plays the theory of $L_{\alpha_n^{\gamma}}$, she wins;
        \item If Player \I\ does not play the theory of $L_{\alpha_n^{\gamma}}$, but Player \II\ does, then Player \II\ wins. 
    \end{enumerate}\label{gamedef}
\end{lemma}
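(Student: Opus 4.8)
The plan is to design the game $G'^{\gamma}_n$ so that the conditions $(C_{\II}i)$ and $(C_{\I}i)$ successively force each player's putative theory to agree more and more with $\mathrm{Th}(L_{\alpha_n^\gamma})$, in the spirit of the Montalbán–Shore game. I would set up the play as in the preamble: at turn $i$ each player announces acceptance/rejection of the formula $\psi_i$, so a full run of length $\omega$ yields two (candidate-)complete theories $T_{\I}, T_{\II}$ in $L_{\mathrm{Set}}$. The first family of ``basic'' conditions (arising already at stage $0$) should assert that $T_{\I}$ (resp.\ $T_{\II}$) is a consistent, complete, deductively closed theory extending $T^\gamma_n$; by the arithmetized completeness theorem these basic properties are themselves low in the Borel hierarchy (roughly $\Pi^0_2$, hence absorbable into a $\Sigma^0_{1+\gamma+2}$ difference), and failure of them is what the earliest $C$-conditions detect. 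Then the later conditions $C_{\II}i$ (for $i=1,\dots,n$) should demand that Player $\II$'s theory correctly ``reflect'' $n$ nested layers of $\Sigma_n$-definable structure — concretely, that $\II$'s theory, via the Henkin/term model it describes, be wellfounded up to higher and higher ranks, matching the minimality clause ``$\forall\alpha\in\Ord\ (L_\alpha\not\models\KP^\gamma_n)$'' in $T^\gamma_n$. The conditions $C_{\I}i$ impose the symmetric requirements on Player $\I$. The key bookkeeping point is that checking ``the term model of $T$ is wellfounded past rank $\xi$'' for $\xi$ ranging through a $\Sigma_n$ cofinal set is exactly a $(\Sigma^0_{1+\gamma+2})$-type condition once $\Ps^\gamma(\mathbb N)$ is available, and stacking the $2n+2$ such conditions (two per level, alternating between the players) gives a $(\Sigma^0_{1+\gamma+2})_{2n+2}$ payoff — which is then collapsed to $(\Sigma^0_{1+\gamma+2})_n$ in the later reduction from $G'^\gamma_n$ to the actual game.

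For part (1), suppose Player $\I$ plays $\mathrm{Th}(L_{\alpha_n^\gamma})$. Since $L_{\alpha_n^\gamma}\models T^\gamma_n$, every basic condition $C_{\I}i$ about consistency, completeness and extending $T^\gamma_n$ holds. For the wellfoundedness/reflection conditions, I would argue that because $L_{\alpha_n^\gamma}$ is \emph{genuinely} wellfounded and is the \emph{least} model of $\KP^\gamma_n$, its theory passes every $C_{\I}i$: the term model built from a complete extension of $T^\gamma_n + V=L$ that is correct about all $\Sigma_n$ facts is forced to be (an initial segment isomorphic to) $L_{\alpha_n^\gamma}$ itself, so no finite stage of the difference can turn against $\I$. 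Hence the least $i$ with $\phi_{\I,i}$ failing is either nonexistent or forced to be even (because the first failure, if any, must be one of $\II$'s conditions, which are the even-indexed ones), and $\I$ wins.

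For part (2), suppose $\I$ does not play $\mathrm{Th}(L_{\alpha_n^\gamma})$ but $\II$ does. The symmetric analysis shows all of $\II$'s conditions $C_{\II}i$ hold. It remains to find some $C_{\I}i$ that $\I$ fails: here one uses that $\II$'s correctly-played theory, via $\Sigma_n$-elementarity (Lemmas~\ref{charKPL}, \ref{charSepL}) and the minimality of $\alpha_n^\gamma$, is able to ``catch'' any deviation of $\I$'s theory — if $\I$'s theory is inconsistent or incomplete the very first condition catches it; if it is consistent and complete but $\neq\mathrm{Th}(L_{\alpha_n^\gamma})$, then either its term model is illfounded (caught by a wellfoundedness condition $C_{\I}i$) or it is wellfounded but $\models\KP^\gamma_n$ at some \emph{smaller} ordinal, contradicting the minimality axiom in $T^\gamma_n$ that $\I$ is forced to accept, or it is wellfounded $=L_\beta$ with $\beta>\alpha_n^\gamma$, in which case it wrongly satisfies $\exists\alpha\ L_\alpha\models\KP^\gamma_n$ and again some $C_{\I}i$ fails. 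Either way the least failing index is odd, so $\II$ wins.

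The main obstacle — and the place where $n\geq 2$ is genuinely used and the argument diverges from $\cite{MS}$ — is arranging the wellfoundedness-type conditions at the right complexity \emph{in the presence of the power-set axioms}: naively, comparing term models of $\KP^\gamma_n$ involves quantifying over $\Ps^\gamma(\mathbb N)$-sized objects, which threatens to push the payoff above $\Sigma^0_{1+\gamma+2}$. The fix is to phrase each $C$-condition as a statement about \emph{codes} for the relevant initial segments of $L$ (using acceptability, Lemma~\ref{cardinalityKPL}, so that $L_\gamma$ is coded by an element of $\Ps^{\kappa_\gamma}(\mathbb N)$) rather than about the sets themselves, keeping each individual condition $\Sigma^0_{1+\gamma+2}$ and the whole difference of the correct finite length. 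Verifying that this encoding is faithful and that the induced conditions still separate ``$\I$ plays $\mathrm{Th}(L_{\alpha_n^\gamma})$'' from ``$\I$ doesn't but $\II$ does'' is the technical heart of the construction; I expect this is exactly what the author will spell out next.
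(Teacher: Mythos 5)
There is a genuine gap, and it lies exactly where you defer to ``what the author will spell out next.'' Your design rests on the claim that a condition of the form ``the term model of $T$ is wellfounded up to rank $\xi$'' (for $\xi$ ranging over a cofinal family) is expressible as a $\Sigma^0_{1+\gamma+2}$ condition on the play once $\Ps^\gamma(\mathbb{N})$ is available. That claim is false: wellfoundedness of (an initial portion of) a countable term model coded by the play is a genuine $\Pi^1_1$ assertion, quantifying over descending sequences, and no coding via $\Ps^{\kappa_\gamma}(\mathbb{N})$ brings it down into the arithmetic/hyperarithmetic levels used for the payoff. The actual construction never asserts wellfoundedness in any condition. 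Instead, all conditions are \emph{comparative} and of least-element form: $(C_{::}0)$ and $(C_{::}1)$ compare the two played models through the ``dummy intersection'' $D_{\gamma+1}$ (Definition~\ref{dummy}) and demand $<_L$-least elements of the difference classes $\C^\gamma_{\I}\setminus\C^\gamma_{\II}$ (and vice versa), which is what lets one define the common wellfounded part $\A^{\gamma+1}$ (Definition~\ref{DefinitionCalA}) at complexity $\Sigma^0_{1+\gamma+1}$; the later conditions $(C_{::}(1+k))$ demand that the classes $W^{\beta_1,\beta_2}_{\M_{::},k}$ of ordinals witnessing $S_{k-1}$-discrepancies over $\A^{\gamma+1}$ be empty or have a least element. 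Each of these is checkable at level $\Sigma^0_{1+\gamma+2}$ precisely because it speaks about definable classes of codes in the two plays, not about wellfoundedness; your proposal, whose conditions concern each theory in isolation, cannot be implemented at the required complexity.

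Consequently the two verifications you give for items (1) and (2) are assertions rather than proofs. In the paper, the fact that the wellfounded player never fails and that a deviating player must fail is the content of an inductive chain (Lemmas~\ref{star1}, \ref{star_kCons}, \ref{domino}): if both players satisfy all conditions through stage $1+k$, then the ordinal $\alpha$ of the common wellfounded part satisfies $L_\alpha\models\KP^\gamma_{k}$, so running the induction to $k=n$ contradicts the minimality clause of $T^\gamma_n$ as seen inside the wellfounded model $\M\models Th(L_{\alpha^\gamma_n})$ (since $\alpha\in\M$); hence some condition fails, and the analysis shows that a first failure at stage $1+k$, with all earlier conditions holding, can only be committed by the player whose model is illfounded. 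That is what makes the true-theory player win in case (1) and lose-proof in case (2) (where a player who is not playing $Th(L_{\alpha^\gamma_n})$ either fails $C_{::}0$ outright or presents an illfounded model and is eventually caught). None of this mechanism — neither the comparison apparatus $D_{\gamma+1}$, $\A^{\gamma+1}$ with its four cases, nor the $(\star_k)$ induction — appears in your proposal, and without it the statement of the lemma is not established.
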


We will show that we can reorganize the conditions of $G'^{\gamma}_n$ so that to devise a $(\Sigma^0_{1+ \gamma + 2})_{n}$ game $G^{\gamma}_n$ which preserves the properties of the lemma. Assuming we showed the existence of such a game,  let us prove theorem~\ref{notmPigammaDet}.
\begin{proof}[Proof of theorem~\ref{notmPigammaDet}]
    Consider the game $G^{\gamma}_n$. This game also satisfies the conclusion of Lemma~\ref{gamedef}. First notice that Player $\II$ cannot have a winning strategy, for Player $\I$ can always win by playing the theory of $L_{\alpha^{\gamma}_n}$. Suppose Player $\I$ has a winning strategy $\sigma$. Then, the theory of $L_{\alpha^{\gamma}_n}$ is computable with oracle $\sigma$ for suppose $\II$ stick to the strategy consisting of copying the moves of Player \I\ (which is a computable strategy); then, ignorant about Player $\II$ actual strategy, Player $\I$ can only ensure a win by providing the exact sequence encoding the theory of $L_{\alpha^{\gamma}_n}$. 
    It follows that $\sigma$ cannot belong to $L_{\alpha^{\gamma}_n}$ by Tarski's theorem on the undefinability of truth. We claim that
\begin{align*}
L_{\alpha^{\gamma}_n} \models \text{``$G^{\gamma}_n$ is not determined.''}
\end{align*}
Otherwise, letting $\sigma \in L_{\alpha^{\gamma}_n}$ be a strategy which is winning in $L_{\alpha^{\gamma}_n}$, we have that $\sigma$ is also a winning strategy in reality, as $L_{\alpha^{\gamma}_n}$ is a $\beta$-model.
Therefore, $L_{\alpha^{\gamma}_n}$ is a model of $\KP^{\gamma}_n + V = L$ in which some $(\Sigma^0_{1 + \gamma + 2})_n$ game is not determined, completing the proof.
\end{proof}

Note than even if the use of the effective hierarchy is crucial to allow the transfer of determinacy from in to out of the constructible universe, it will be clear that we can relativize theorem~\ref{notmPigammaDet} to any real.

Let us now devise the game under discussion and prove the key lemma~\ref{gamedef}.

We ask that the players play a definably Henkin theory in the sense that for every formula of the form $\phi(x)$, there is some formula $\theta(y)$ such that 
\[\exists x\, \phi(x) \in T_{\I} \to \exists!y\, \theta(y) \wedge \exists x\, (\theta(x)\wedge \phi(x)) \in T_{\I}.\]
If this condition is satisfied, then $T_{\I}$ has a unique minimal model $\M_{\I}$ up to isomorphism, which is determined by all parameter-free definable elements of any model of $T_{\I}$.
We demand the same of $T_{\II}$, and denote by $\M_{\II}$ the corresponding model, if it exists. 

We start to define the first conditions $C_{\I}0$ and $C_{\II}0$. Furthermore, we phrase these and the subsequent conditions in terms of the models $\M_{\I}$ and $\M_{\II}$ by abusing notation, rather than the theories $T_{\I}$ and $T_{\II}$. We use the subscript notation $\M_{::}$ when we do not want to specify which model we are talking about.
 \begin{align*}
    &(C_{\I}0) : \qquad \M_{\I} \models T^{\gamma}_n \land \M_{\I} \text{ is an $\omega$--model } 
    \land \Ps^{\gamma+1}_{\M_{\I}}(\mathbb{N}) \not\subseteq \Ps^{\gamma + 1}_{\M_{\II}}(\mathbb{N}).   
    \\&(C_{\II}0) : \qquad \M_{\II} \models T^{\gamma}_n \land \M_{\II} \text{ is an $\omega$--model } 
    \land \Ps^{\gamma+1}_{\M_{\II}}(\mathbb{N}) \not\subseteq \Ps^{\gamma + 1}_{\M_{\I}}(\mathbb{N}).  
\end{align*}
In our conditions we abuse notations when we write equalities or inclusions between sets of elements of different models, by which we mean we can inject one into the other with an appropriate monomorphism (which we will provide).

To express this condition, we need the following definition. For a code $t \in \mathbb{N}$ of an element from $\M_{\I}$, we will often abbreviate ``$\M_{\I} \models \forall x \in t, \ x \in \Ps^{\gamma}(\mathbb{N})$'' by ``$t \in \Ps^{\gamma + 1}_{\M_{\I}}(\mathbb{N})$'' and \textit{mutatis mutandis} for $\M_{\II}$ and the ordinals $\alpha \leq \gamma$. Since we require the models played to have the same set of natural numbers as in the real world, we will write $\mathbb{N}$ instead of $\omega_{\M_{\I}}$ or $\omega_{\M_{\II}}$. 

The key aspect of the construction is devising rules of the appropriate complexity which allow us to compare the models $\M_\I$ and $\M_{\II}$.
In principle, with both models being term-models, we need to rely on an isomorphism to compare elements from one model to another. Here, we can do it using two facts that are entailed by satisfying the first condition above: first that both models have to be $\omega$--models, allowing quantification on first order variable to design the corresponding elements of $\omega_{\M_{::}}$; second that $\M_{::}$ are minimal segment of the constructible hierarchy satisfying $\KP^{\gamma}_n$ and thus, each of their lower levels is to be coded by a subset of $\Ps_{\M_{::}}^{\gamma}(\mathbb{N})$. We shall define now a relation on natural numbers that will code the isomorphism relation between both model, which we denote by $\simeq$, under the right hypotheses. \begin{definition}
    We define $D_{\gamma}$, with domains $\Ps^{\gamma}_{\M_{\I}}(\mathbb{N}) \times \Ps^{\gamma}_{\M_{\II}}(\mathbb{N})$, the ``dummy intersection'' of the $\gamma$-th power sets, by induction on $\alpha$:
\begin{align*}
    D_1(t_1,t_2) &\leftrightarrow (t_1,t_2) \in \mathcal{P}(\mathbb{N})_{\M_{\I}}\times \mathcal{P}(\mathbb{N})_{\M_{\II}} \\
    &\qquad \wedge \forall n \ (\M_{\I} \models n \in t_1 \leftrightarrow \M_{\II} \models n \in t_2) \\
     D_{1 + \alpha + 1}(t_1,t_2) &\leftrightarrow D_{1+\alpha}(t_1,t_2) \lor \bigg[
     (t_1,t_2) \in \mathcal{P}(\mathbb{N})^{1+\alpha+1}_{\M_{\I}}\times \mathcal{P}(\mathbb{N})^{1+\alpha+1}_{\M_{\II}}\\
     &\qquad \wedge \forall (z_1,z_2) \ \Big(D_{1 + \alpha}(z_1,z_2) \rightarrow (\M_{\I} \models z_1 \in t_1 \leftrightarrow \M_{\II} \models z_2 \in t_2)\Big)\bigg] \\
    D_{\delta}(t_1,t_2) &\leftrightarrow \bigvee_{\alpha < \delta} D_{\alpha}(t_1,t_2).
\end{align*} 
If $\gamma = \alpha + 1$ is successor, $D_\gamma(t_1,t_2)$, is a $\Pi^0_{1 + \alpha}$ condition, and otherwise it is a $\Sigma^0_{\gamma}$ condition.\label{dummy}
\end{definition} 

The idea is that $D_\gamma(t_1,t_2)$ expresses that the terms $t_1$ and $t_2$ code the same element of $\mathcal{P}^\gamma(\mathbb{N})$. However, this is not quite the case. For instance, one of the models might contain real numbers not in the other. In this case $D_2$ already might yield ``false positives'' when applied to elements of the model that only differ in reals that do not belong to the common part of $\M_\I$ and $\M_{\II}$. The next lemma asserts that this is the only potential problem with $D_\gamma$.

\begin{lemma}\label{LemmaDummyInt}
    The following properties hold \begin{enumerate}
        \item\label{LemmaDummyInt1} For all $\xi\leq\gamma$, if $x \in \Ps^{\xi + 1}(\omega)_{\M_{\I}}$ and $y \in \Ps^{\xi + 1}(\omega)_{\M_{\II}}$, then we have $x \simeq y \rightarrow D_{\xi +1}(x,y)$;
        \item\label{LemmaDummyInt2} Suppose that $\xi \leq \gamma$, $a \in \Ord^{\M_\I}$, $b\in \Ord^{\M_{\II}}$ are such that  $\Ps^{\xi}(\omega)_{L_a} \simeq z_{\II}$ for some $z_{\II} \in \M_{\II}$ and  $\Ps^{\xi}(\omega)_{L_b}\simeq z_\I$ for some $z_{\I} \in \M_\I$, so that $z_{\I} \simeq z_{\II}$.
        Then for all $x \in \Ps^{\xi + 1}(\omega)_{L_{a+1}}$ and all $y \in \Ps^{\xi + 1}(\omega)_{L_{b+1}}$, we have $x \simeq y \leftrightarrow D_{\gamma +1}(x,y)$.
        \item\label{LemmaDummyInt3} Let $\xi \leq \gamma$ be so that $\Ps^{\xi}(\omega)_{\M_{\I}} \simeq \Ps^{\xi}(\omega)_{\M_{\II}}$ then for all $x \in \Ps^{\xi + 1}(\omega)_{\M_{\I}}$ and all $y \in \Ps^{\xi + 1}(\omega)_{\M_{\II}}$ we have $x \simeq y \leftrightarrow D_{\gamma +1}(x,y)$.
    \end{enumerate}
    
     \label{Dummy_intersect}
\end{lemma}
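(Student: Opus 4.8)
The plan is to prove all three items by a single transfinite induction on $\xi \le \gamma$, with item \ref{LemmaDummyInt1} as the backbone and items \ref{LemmaDummyInt2} and \ref{LemmaDummyInt3} obtained by feeding the extra hypotheses into the same argument. Two facts drive everything: $\M_{\I}$ and $\M_{\II}$ are $\omega$--models, so that natural numbers — and hence reals lying in the common part of the two models — may be compared literally; and both satisfy $V=L$, so that every element of $\Ps^{\xi+1}(\omega)_{\M_{::}}$ is a subset of $\Ps^{\xi}(\omega)$ appearing at some stage of the relevant constructible hierarchy, which is precisely the structure mirrored by the recursion of Definition~\ref{dummy}.

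The base case $\xi=0$ is immediate: for $x\in\Ps^{1}(\omega)_{\M_{\I}}$ and $y\in\Ps^{1}(\omega)_{\M_{\II}}$, $x\simeq y$ holds exactly when $x$ and $y$ have the same natural-number members, which is the definition of $D_1(x,y)$; and since $\Ps^{0}(\omega)=\omega$ is common to both models, the hypotheses of items \ref{LemmaDummyInt2} and \ref{LemmaDummyInt3} hold automatically there. For the inductive step of item \ref{LemmaDummyInt1}, assume the claim below $\xi$ and take $x\in\Ps^{\xi+1}(\omega)_{\M_{\I}}$, $y\in\Ps^{\xi+1}(\omega)_{\M_{\II}}$ with $x\simeq y$. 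Unwinding the clause defining $D_{\xi+1}$, we must check that whenever $D_{\xi}(z_1,z_2)$ holds, $\M_{\I}\models z_1\in x$ is equivalent to $\M_{\II}\models z_2\in y$. If $\M_{\I}\models z_1\in x$, the isomorphism realising $x\simeq y$ yields some $z_2^{\ast}\in\Ps^{\xi}(\omega)_{\M_{\II}}$ with $\M_{\II}\models z_2^{\ast}\in y$ and $z_1\simeq z_2^{\ast}$, whence $D_{\xi}(z_1,z_2^{\ast})$ by the inductive hypothesis; it then remains to see that $z_2$ and $z_2^{\ast}$, both being $D_{\xi}$--related to $z_1$, are indistinguishable for membership in $y$. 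This is the point anticipated by the informal remark before the lemma: two such terms can differ only in objects having no $D_{\eta}$--partner in $\M_{\I}$ for $\eta<\xi$ — objects outside the common part — and $y$, being an isomorphic image of an element of $\M_{\I}$, contains no such object. This last claim is itself established by a subsidiary induction on $\xi$ paralleling Definition~\ref{dummy}, the base case being \textsc{extensionality} in $\M_{\II}$ (two reals with the same naturals coincide) and the successor step carrying the previous comparison through one more power-set level; the case $\M_{\II}\models z_2\in y$ is symmetric.

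For items \ref{LemmaDummyInt2} and \ref{LemmaDummyInt3} the additional hypotheses are exactly what removes the ``non-common'' objects responsible for the false positives, so that the subsidiary comparison above becomes a genuine equivalence. Under the hypothesis of item \ref{LemmaDummyInt3} one obtains $D_{\xi}(z_1,z_2)\leftrightarrow z_1\simeq z_2$ for all $z_1\in\Ps^{\xi}(\omega)_{\M_{\I}}$ and $z_2\in\Ps^{\xi}(\omega)_{\M_{\II}}$ — the backward direction being item \ref{LemmaDummyInt1} — and substituting this into the clause defining $D_{\xi+1}$ turns it into the back-and-forth characterisation of isomorphism between sets of type $\xi+1$, giving $x\simeq y\leftrightarrow D_{\xi+1}(x,y)$; on objects of type $\xi+1$ the hypothesis forces $D_{\xi+1},D_{\xi+2},\dots,D_{\gamma+1}$ all to coincide, so the stated form with $D_{\gamma+1}$ follows. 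Item \ref{LemmaDummyInt2} is the same statement with the agreement between the two $\xi$-th power sets localised to the initial segments $L_a\subseteq\M_{\I}$ and $L_b\subseteq\M_{\II}$: from $z_{\I}\simeq z_{\II}$ together with the representations of $\Ps^{\xi}(\omega)$ inside $L_a$ and $L_b$ one reads off $\Ps^{\xi}(\omega)_{L_a}\simeq\Ps^{\xi}(\omega)_{L_b}$, and one then runs the argument of item \ref{LemmaDummyInt3} a stage at a time — a subset of $\Ps^{\xi}(\omega)$ appearing in $L_{a+1}$ is defined over $(L_a,\in)$, the level-$\xi$ agreement transports its definition to an isomorphic subset of $L_{b+1}$, and the subsidiary comparison certifies that the match is seen by $D_{\gamma+1}$, and conversely. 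Item \ref{LemmaDummyInt3} also follows from item \ref{LemmaDummyInt2} upon taking $a$ and $b$ large enough that $\Ps^{\xi}(\omega)$ has stabilised and that the given $x,y$ have appeared.

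The crux, and the only place demanding real care, is the subsidiary comparison inside item \ref{LemmaDummyInt1}: one must see that, although $D_{\xi}$ identifies strictly more pairs than $\simeq$, the surplus identifications involve only objects outside the common part of $\M_{\I}$ and $\M_{\II}$, and that membership in an isomorphic image of an $\M_{\I}$--set is blind to such objects. Tracking which objects are common, and confirming that the apparently asymmetric roles of $\M_{\I}$ and $\M_{\II}$ in Definition~\ref{dummy} are harmless, is the delicate bookkeeping; everything else is a mechanical unwinding of the recursion defining $D$ in tandem with the constructible hierarchies of the two models.
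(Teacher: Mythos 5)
Your single-induction plan stands or falls with the step you yourself call the crux, and that step fails. In the inductive step for item (1) you must verify the clause defining $D_{\xi+1}(x,y)$: for \emph{every} pair with $D_{\xi}(z_1,z_2)$ and $\M_{\I}\models z_1\in x$, that $\M_{\II}\models z_2\in y$. You produce the genuine copy $z_2^{\ast}\in y$ of $z_1$ and then assert that $z_2$ and $z_2^{\ast}$, both $D_{\xi}$-related to $z_1$, are ``indistinguishable for membership in $y$'' because they differ only in non-common objects and $y$ contains only common objects. That justification yields the opposite of what you need. Take $\xi=2$ and suppose $\M_{\II}$ contains a real $r$ with no copy in $\M_{\I}$ (nothing in the unconditional item (1) forbids this). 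Let $u$ be a common real which is not a natural number, $z_1=\{u\}\in x$, $z_2^{\ast}=\{u'\}$ its copy in $\M_{\II}$, and $z_2=\{u',r\}$. Since $r$ has no $D_1$-partner in $\M_{\I}$, $D_2(z_1,z_2)$ holds --- this is exactly the false-positive phenomenon the paper warns about --- yet $z_2$ is isomorphic to no element of $\M_{\I}$, so precisely \emph{because} every member of $y\simeq x$ has a copy in $\M_{\I}$ we get $z_2\notin y$ while $z_2^{\ast}\in y$: membership in $y$ does distinguish them, and the universal clause for $D_3(x,y)$ fails. So the ``subsidiary comparison'' (and the subsidiary induction you sketch for it, whose base case is extensionality) cannot be carried out in the stated generality; the interaction of lower-level false positives with the universal quantifier in the definition of $D$ is a genuine obstruction to an unconditional, forward back-and-forth proof of (1) for $\xi\geq 2$, not a bookkeeping matter.

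This is also where your route parts company with the paper's. The paper gives no argument for (1) beyond calling it straightforward, and puts the content into (2), proved by \emph{contraposition} under the hypothesis that the level-$\xi$ power sets of $L_a$ and $L_b$ are common to both models: from $x\not\simeq y$ it extracts a genuinely common discrepancy witness $t_1\simeq t_2$ with $\M_{\I}\models t_1\in x$ and $\M_{\II}\models t_2\notin y$, and uses $D_{\xi}(t_1,t_2)$ to \emph{refute} the universal clause, concluding $\lnot D_{\xi+1}(x,y)$; at no point does it have to verify that clause in the presence of uncontrolled lower-level false positives. Your treatments of (2) and (3) can probably be repaired along those lines, since their hypotheses are exactly what excludes the padding objects above (commonality of the relevant lower power sets makes $D_{\xi}$ agree with $\simeq$ on the domains that matter). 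But as written, your proof of the backbone item (1) does not go through, and your derivations of (2) and (3) lean on it; the example above also shows that any fix must either import the commonality hypotheses into the inductive step or restrict where the implication in (1) is applied, as the paper's later uses of the lemma in fact do.
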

\begin{proof}
We prove \eqref{LemmaDummyInt1} and \eqref{LemmaDummyInt2} by induction; \eqref{LemmaDummyInt3} follows. The proof of \eqref{LemmaDummyInt1} is straightforward. For \eqref{LemmaDummyInt2} suppose first $\exists x \in \Ps^{\xi+1}(\omega)_{\M_{\I}} \ \exists y \in \Ps^{\xi+1}(\omega)_{\M_{\II}}$ such that $x \simeq y$, then  we have $D_{\xi+1}(x,y)$ by \eqref{LemmaDummyInt1}.
Suppose now that $x \not\simeq y$. Observe that $x$ contains only elements in $\mathcal{P}^\xi(\mathbb{N})_{L_a^{\M_\I}}$ and $y$ contains only elements in $\mathcal{P}^\xi(\mathbb{N})_{L_b^{\M_{\II}}}$, by the construction of $L$. 
Since by hypothesis $\Ps^{\xi}(\omega)_{L_a} \simeq z_{\II}$ for some $z_{\II} \in \M_{\II}$ and  $\Ps^{\xi}(\omega)_{L_b}\simeq z_\I$ for some $z_{\I} \in \M_\I$, the fact that $x \not\simeq y$ implies there is $t_1 \in \Ps^{\xi}(\omega)_{\M_{\I}}$, let us say in $x$ and $t_2 \in \Ps^{\xi}(\omega)_{\M_{\II}}$ with $t_1 \simeq t_2$ so that 
\begin{align*}
\M_{\I} \models t_1 \in x \land \M_{\II} \models t_2 \not\in y.
    \end{align*}
    The fact that $t_1$ must have an isomorphic copy in the other model--and \textit{vice versa} where applicable--follow indeed from the assumption $z_{\I} \simeq z_{\II}$.
    But then $D_{\xi}(t_1,t_2)$, hence $\lnot D_{\xi+1}(x,y)$ by \eqref{LemmaDummyInt1}, which concludes the proof of \eqref{LemmaDummyInt2}.
\end{proof}

\begin{lemma}
Conditions $(C_{\I}0)$ and $(C_{\II}0)$ are $\Sigma^0_{1 + \gamma + 2}$.\label{C0}
\end{lemma}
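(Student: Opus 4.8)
The plan is to treat $(C_{\I}0)$ as the conjunction of its three clauses and bound each separately; $(C_{\II}0)$ is then obtained by the symmetric argument, interchanging the roles of $\M_{\I}$ and $\M_{\II}$. Throughout, all pointclass estimates are understood in the effective Borel hierarchy, which — as emphasized above — is what makes the transfinite levels of the relations $D_{\bullet}$, and of the conditions themselves, genuine pointclasses.

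The two ``model-theoretic'' clauses are cheap. ``$\M_{\I}\models T^{\gamma}_n$'' is, as usual for this kind of game, to be read as the statement that the bits played by $\I$ code a consistent, complete (deductively closed), definably Henkin theory $T_{\I}$ extending $T^{\gamma}_n$: membership ``$\psi\in T_{\I}$'' of a single sentence is $\Delta^0_1$ in the play, so consistency, completeness and deductive closure are $\Pi^0_1$--$\Pi^0_2$, the inclusion $T^{\gamma}_n\subseteq T_{\I}$ is $\Pi^0_1$ (the axioms of $T^{\gamma}_n$ form a computable set once $\gamma$ is fixed), and the definably Henkin clause ``for every $\phi$ there is $\theta$ with $\exists x\,\phi(x)\in T_{\I}\to(\exists!y\,\theta(y)\in T_{\I}\wedge\exists x\,(\theta(x)\wedge\phi(x))\in T_{\I})$'' is $\Pi^0_2$. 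Likewise ``$\M_{\I}$ is an $\omega$--model'' is $\Pi^0_2$, since its negation ``there is a term $t$ with $T_{\I}\vdash t\in\omega$ and $T_{\I}\vdash t\neq\underline m$ for every $m$'' is $\Sigma^0_2$. As $\gamma\geq1$ we have $1+\gamma+2\geq4$, so both clauses lie inside $\Pi^0_3\subseteq\Sigma^0_{1+\gamma+2}$.

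The work is in the last clause, ``$\Ps^{\gamma+1}_{\M_{\I}}(\mathbb N)\not\subseteq\Ps^{\gamma+1}_{\M_{\II}}(\mathbb N)$''. Unwinding the abuse of notation with $D_{\gamma+1}$ as the syntactic proxy for the isomorphism comparison (legitimated, where it matters, by item~\eqref{LemmaDummyInt1} of \cref{LemmaDummyInt}), this clause reads
\[
\exists t_1\,\Big(t_1\in\Ps^{\gamma+1}_{\M_{\I}}(\mathbb N)\ \wedge\ \forall t_2\,\big(t_2\in\Ps^{\gamma+1}_{\M_{\II}}(\mathbb N)\to\neg D_{\gamma+1}(t_1,t_2)\big)\Big).
\]
Each guard ``$t\in\Ps^{\xi+1}_{\M_{::}}(\mathbb N)$'' abbreviates ``$\M_{::}\models\forall x\in t,\ x\in\Ps^{\xi}(\mathbb N)$'', a single sentence with a term parameter, hence $\Delta^0_1$ in the play. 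By \cref{dummy} applied at the index $\gamma+1$ — a successor with predecessor $\gamma$ — the relation $D_{\gamma+1}$ is $\Pi^0_{1+\gamma}$, so $\neg D_{\gamma+1}$ is $\Sigma^0_{1+\gamma}$. Thus the inner matrix $\forall t_2(\Delta^0_1\to\Sigma^0_{1+\gamma})$ is a universal number quantifier over a $\Sigma^0_{1+\gamma}$ formula (the $\Delta^0_1$ guard being absorbed), hence $\Pi^0_{1+\gamma+1}$; prefixing $\exists t_1$ (again absorbing the $\Delta^0_1$ guard) yields a $\Sigma^0_{1+\gamma+2}$ formula.

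Putting the pieces together, $(C_{\I}0)$ is the conjunction of a $\Sigma^0_{1+\gamma+2}$ clause with two clauses of complexity $\leq\Pi^0_3\subseteq\Sigma^0_{1+\gamma+2}$, hence is $\Sigma^0_{1+\gamma+2}$; $(C_{\II}0)$ is identical after interchanging $\M_{\I}\leftrightarrow\M_{\II}$. I expect the only real obstacle to be the bookkeeping in the last clause: one must check that unwinding the abuse of notation really produces the displayed $\exists t_1\forall t_2$ form, that the ``domain guards'' $t\in\Ps^{\xi+1}_{\M_{::}}(\mathbb N)$ are genuinely $\Delta^0_1$ and so cost nothing, and — for transfinite $\gamma$ — that the two number quantifiers $\forall t_2$ and $\exists t_1$ each raise the complexity by exactly one level in the effective Borel hierarchy, which at limit stages relies on the relevant closure and normal-form properties of $\Sigma^0_\lambda$ and $\Pi^0_\lambda$ rather than on the naive finite-level arithmetic. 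For $\gamma$ finite this last point is immediate and the count lands precisely on $\Sigma^0_{1+\gamma+2}$, the exact level appearing in \cref{notmPigammaDet}.
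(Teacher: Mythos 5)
Your proof is correct and follows essentially the same route as the paper's: decompose $(C_{\I}0)$ into its three conjuncts, dispose of the theory and $\omega$--model clauses at low finite levels, and handle the last clause via the $\exists t_1\forall t_2\,\neg D_{\gamma+1}$ form together with the complexity of $D_{\gamma+1}$ from Definition~\ref{dummy}, landing on $\Sigma^0_{1+\gamma+2}$. The only differences are cosmetic: you make the quantifier count and the $\Delta^0_1$ guards explicit (and treat the definably Henkin requirement as an explicit $\Pi^0_2$ clause, where the paper instead appeals to constructibility), which is a harmless elaboration of the same argument.
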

\begin{proof}
    Without loss of generality, we prove the claim for $(C_{\I}0)$. Saying that the set of sentences played by I is a complete and consistent extension of $T^{\gamma}_n$ is $\Pi^0_1$. Constructibility ensures the definably Henkin character of our theories.

    The second conjunct expresses that for every term $t$ such that $\M_{\I} \models t \in \omega$ there exists a natural number $n \in \mathbb{N}$ such that $\M_{\I} \models t = \sum_n 1$; thus it is $\Sigma^0_2$.
    
    The last conjunct asserts the existence of a term $t_1$ satisfying $\M_{\I} \models t_1 \subseteq \Ps^{\gamma}(\mathbb{N})$ but such that $\forall t_2 (\M_{\II} \models t_2 \subseteq \Ps^{\gamma}(\mathbb{N}) \rightarrow \lnot D_{\gamma + 1}(t_1, t_2))$, whose conclusion implies that $t_1 \not\simeq t_2$ by Lemma~\ref{Dummy_intersect}. The latter being $\Sigma_{1 + \gamma + 2}$, this concludes the proof.
\end{proof}

From now on, we will suppose that exactly one of the term-models is wellfounded: since the only wellfounded $\omega$--model of $T^{\gamma}_n$ is $L_{\alpha_n^{\gamma}}$, not both can be, provided they satisfy $C_{::}0$; moreover, in order to show that the game has the properties we shall need, it suffices to consider only plays in which one of the two models is wellfounded, which we call $\M$ and the other, illfounded one, $\N$. 

We now want to accurately identify the maximal common segment $\A^{\gamma + 1}$ of $\M$ and $\N$, that is, by the observation here above, the wellfounded part of $\N$. 

\begin{definition}
We define \begin{align}
    \A_1^{\gamma+1}(w_1, w_2) \leftrightarrow \ 
    &\exists (t_1, t_2) \in \Ps^{\gamma +1}_{\M_{\I}}(\mathbb{N}) \times \Ps^{\gamma + 1}_{\M_{\II}}(\mathbb{N}),\nonumber \\ 
    &\exists \beta_1, \beta_2, \ t_1 = L_{\beta_1}^{\M_{\I}}, \ t_2 = L_{\beta_2}^{\M_{\II}}, \text{and} \nonumber\\
    &(\M_{\I} \models t_1 \text{ codes } w_1 \land \M_{\II} \models t_2 \text{ codes } w_2 \land (t_1,t_2) \in D_{\gamma + 1}), \nonumber 
\end{align} a $\Sigma_{1 + \gamma + 1}$ condition. \label{DefA1}
\end{definition}

However, the dummy intersection $D_{\gamma}$ is not always suitable for our purpose. Indeed, as we already pointed out, it can misinterpret two high-order objects as isomorphic if they disagree only about objects (e.g., reals) not in the common segment of the models. 

The following conditions will endeavour to address this issue as well as to force each player to support evidence that they didn't produce illfounded sequences within their models. As we state further conditions, we will require the witnesses taken by our inquiries to be of higher and higher definable complexity. Along the way, we want to show that these conditions lead to $L_{\alpha}$, the wellfounded part of $\N$, being a model of $\KP^{\gamma}_i$ for $i \leq n$, which sentence is to reflect inside $\M_{::}$. This way not all conditions can be soundly satisfied by both player, since it can't simultaneously happen that \begin{align*}
    \M_{::} \models (L_{\alpha} \models \KP^{\gamma}_n) \quad \text{and} \quad \M_{::} \models T^{\gamma}_n, 
\end{align*}
by the minimality requirement of $T^{\gamma}_n$.
For this to work, we need to ensure in particular that $L_{\alpha}$, the common wellfounded part, is a model of ``$\Ps^{\gamma}(\mathbb{N})$ exists.''
We will make use of the following folklore lemma.

\begin{lemma}
    Let $\N$ be an $\omega$--model of $V = L$ and suppose $\N$ is illfounded with $\mathrm{wfo}(\N) = \alpha$ and that $\kappa \in L_{\alpha}$ is the largest cardinal of $L_{\alpha}$. Say $X \in \N$ is a non-standard code if $X \subseteq \kappa$ codes a linear order of $\kappa$ so that $\N$ has an isomorphism from $X$ onto some non-standard ordinal of $\N$. Then, \begin{align*}
        \mathcal{\I}^{\kappa} \coloneqq \{ X \in \N \setminus L_{\alpha} \mid X \text{ is a non-standard code } \}
    \end{align*}
    is non-empty, and has no $<^{\N}_{L}$--least element. \label{overspill}
\end{lemma}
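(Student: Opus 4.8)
The plan is to reduce the statement to the standard fact that an ill-founded model of $\KP$ has no least non-standard ordinal, i.e.\ $\mathrm{wfo}(\N)\notin\N$; I will use throughout that $\N\models\KP$, as is the case in all applications. First I would record the properties of the canonical global well-order $<^{\N}_{L}$, which orders sets by the constructible stage at which they first appear: since $\mathrm{wfp}(\N)=L_{\alpha}$, every set outside $L_{\alpha}$ appears at a stage which is not a standard ordinal, so every element of $L_{\alpha}$ is $<^{\N}_{L}$-below every set not in $L_{\alpha}$. Two facts will be used repeatedly. (i) \emph{Order-type absoluteness}: if $X\subseteq\kappa$ codes a well-order of $\kappa$ whose order type, computed in $\N$, is non-standard, then $X\notin L_{\alpha}$; otherwise the transitive $L_{\alpha}$ would recognize $X$ as coding a well-order (``well-founded'' being $\Pi_{1}$ and downward absolute) and would compute a genuine isomorphism from it onto some $\nu<\alpha$, which composed with the isomorphism witnessed in $\N$ would force the non-standard type to equal $\nu$. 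Hence $\mathcal{I}^{\kappa}$ is precisely the class of all non-standard codes. (ii) For each $\nu$ with $\kappa\le\nu<\alpha$, $L_{\alpha}$ contains a subset of $\kappa$ coding a well-order of $\kappa$ of type $\nu$: since $L_{\alpha}\models V=L+\KP$ and $\kappa$ is its largest cardinal, $L_{\alpha}\models|\nu|=\kappa=|\kappa\times\kappa|$, so one transports the membership order of $\nu$ to $\kappa$ and codes it by a subset of $\kappa$, all inside $L_{\alpha}$.

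For non-emptiness, the crucial point is that $(\kappa^{+})^{\N}$, if it exists in $\N$, is non-standard and distinct from the ordinal $\alpha$: if it were a standard ordinal it would be an element of $L_{\alpha}$, and downward absoluteness of ``is a cardinal'' would put a cardinal above $\kappa$ into $L_{\alpha}$, contradicting the choice of $\kappa$; and if $(\kappa^{+})^{\N}=\alpha$, then $\N\models V=L$ forces $\Ps(\kappa)^{\N}\subseteq L_{\alpha}$, hence $\Ps(\kappa)^{\N}=\Ps(\kappa)^{L_{\alpha}}$, and composing an $L_{\alpha}$-surjection $\kappa\twoheadrightarrow\Ps(\kappa)^{L_{\alpha}}$ (which exists because $L_{\alpha}\models|\Ps(\kappa)|=\kappa$) with an $\N$-bijection $\Ps(\kappa)^{\N}\leftrightarrow\alpha$ would collapse $\alpha=(\kappa^{+})^{\N}$ in $\N$. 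It follows that $\N$ has a non-standard ordinal $\mu$ with $\N\models|\mu|=\kappa$: any non-standard ordinal works if $(\kappa^{+})^{\N}$ does not exist (then $\N$ has no cardinal above $\kappa$), and otherwise any non-standard $\mu<(\kappa^{+})^{\N}$ works, such $\mu$ existing because $(\kappa^{+})^{\N}$, being non-standard but not equal to $\alpha$, cannot be the least non-standard ordinal. Taking $X$ to be the $<^{\N}_{L}$-least subset of $\kappa$ coding a well-order of $\kappa$ of type $\mu$ (which exists since $\N\models|\mu|=\kappa=|\kappa\times\kappa|$) then gives, by order-type absoluteness, an element of $\mathcal{I}^{\kappa}$.

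For the non-existence of a $<^{\N}_{L}$-least element --- the heart of the matter --- I would argue by contradiction. If $X_{0}$ were the $<^{\N}_{L}$-least element of $\mathcal{I}^{\kappa}$, consider, inside $\N$, the ordinal $\sigma=\sup\{\mathrm{ot}(Y):Y\subseteq\kappa\text{ codes a well-order of }\kappa\text{ and }Y<^{\N}_{L}X_{0}\}$; this supremum is legitimate because $\{Y:Y<^{\N}_{L}X_{0}\}$ is a set (as $\N\models V=L$) and $Y\mapsto\mathrm{ot}(Y)$ is $\Delta_{1}$, so the displayed collection is a set by $\Sigma_{1}$-replacement in $\KP$. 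By minimality of $X_{0}$ together with (i), no witness $Y$ can have non-standard order type (it would then be a member of $\mathcal{I}^{\kappa}$ lying $<^{\N}_{L}$-below $X_{0}$), so $\sigma\le\alpha$; and by (ii), together with the fact that every element of $L_{\alpha}$ is $<^{\N}_{L}$-below $X_{0}\notin L_{\alpha}$, we get $\nu<\sigma$ for every $\nu<\alpha$, so $\sigma\ge\alpha$. Thus $\alpha=\sigma$ is an ordinal of $\N$, i.e.\ $\N$ has a least non-standard ordinal and $\mathrm{wfp}(\N)=L_{\alpha}\in\N$.

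The main obstacle is exactly this last conclusion: it must be ruled out by appealing to the fact that the well-founded part of an ill-founded model of $\KP$ is never an element of the model (equivalently, there is no least non-standard ordinal) --- the ``overspill'' alluded to in the name of the lemma, a part of the standard theory of well-founded parts of admissible-set-like structures (see e.g.\ \cite{Barwise}). Finally, I would remark that the entire argument relativizes verbatim to any real parameter $a$, with $L[a]$ in place of $L$ throughout, which is the form in which the lemma will be applied.
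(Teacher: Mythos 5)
The paper gives no proof of this lemma (it is quoted as folklore), so I can only judge your argument on its own terms, and it has a genuine gap at your preliminary claim (i), on which both halves of the proof rest. From $\Pi_1$ downward absoluteness you only get that $L_{\alpha}$ \emph{believes} $X$ is wellfounded; the further step that $L_{\alpha}$ ``would compute a genuine isomorphism from it onto some $\nu<\alpha$'' is Axiom $\beta$ for $L_{\alpha}$, and this does not follow from admissibility --- and admissibility (via the truncation lemma) is all that the hypothesis $\mathrm{wfo}(\N)=\alpha$ gives you; the paper itself stresses that Axiom $\beta$ needs $\Sigma_1$--\textsc{separation}. Concretely, an admissible $L_{\alpha}$ may contain pseudo-wellorderings: linear orders it believes to be wellorders which are truly illfounded (for $\alpha=\omega_1^{\mathsf{CK}}$, $\kappa=\omega$, the Harrison order). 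Such an $X\in L_{\alpha}$ can perfectly well receive a non-standard order type in $\N$ (for instance whenever $\N$ satisfies ``every wellorder is isomorphic to an ordinal,'' as the models in the intended application do, since $\Delta_2$--separation already yields $\Sigma_1$--separation), so ``non-standard code'' does not imply ``$\notin L_{\alpha}$,'' and $\mathcal{I}^{\kappa}$ is not the class of all non-standard codes.

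The clearest symptom is the case $\kappa=\omega$: any $X$ with $\N\models X\subseteq\omega$ has only standard $\in^{\N}$-members, hence lies in the wellfounded part $L_{\alpha}$; so the witness you construct for non-emptiness (the $<^{\N}_{L}$-least code of type $\mu$) lies in $L_{\alpha}$ and cannot belong to $\mathcal{I}^{\kappa}$, no matter how $\mu$ is chosen --- your appeal to ``order-type absoluteness'' is exactly what breaks. The same gap undermines the main argument: the bound $\sigma\le\alpha$ uses (i) to rule out $Y\in L_{\alpha}$, $Y<^{\N}_{L}X_0$, of non-standard $\N$-type; without (i) such $Y$ may exist, $\sigma$ can be non-standard, and no contradiction with the non-existence of a least non-standard ordinal is reached. (There are also smaller, patchable issues: $Y\mapsto\mathrm{ot}(Y)$ is only a partial $\Sigma_1$ map, so forming the supremum needs either Axiom $\beta$ in $\N$ together with a bounding argument, not bare $\Sigma_1$-replacement over a $\Pi_1$-defined domain.) A correct argument must work directly with codes outside $L_{\alpha}$ (equivalently, with the non-standard codes that $\N$ constructs above every standard stage of its $L$-hierarchy) rather than through the equivalence (i); as written, the proof does not establish the lemma.
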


Thus, we will require that for \begin{align*}
    \kappa = \omega, \omega_1, \cdots, \omega_{\alpha}, \cdots < \omega_{\gamma} =_{\KP^{\gamma} + V=L} |\Ps^{\gamma}(\mathbb{N})|,     
\end{align*}
the set of non-standard codes of cardinality $\kappa$, $\mathcal{\I}^{\kappa}$, is either empty or has a $<^{\N}_{L}$--least element. If so, $\omega_{\gamma}$ is a fortiori the greatest cardinal of $L_{\alpha}$. This is equivalent to asking the same condition of the non-standard codes that are elements of $\Ps^{\gamma}(\mathbb{N})$. Below, we let $\C^\gamma_\I$ be the set of elements of $\mathcal{P}^\gamma(\mathbb{N})_{\M_\I}$ which code ordinals of $\M_\I$, and we define $\C^\gamma_{\II}$ similarly. For elements of $\C^\gamma_{\I}$, we write $y <_{\C^\gamma_{\I}} x$ if $y$ codes a smaller ordinal in $\M_\I$, and similar for $\C^\gamma_{\II}$.
\begin{lemma}\label{LemmaEquivPC}
Suppose that $(C_\I0)$ and $(C_{\II}0)$ hold. Then, the following are equivalent:
\begin{enumerate}
\item $\Ps^{\gamma}(\mathbb{N})_{\M_{\I}} \subseteq \Ps^{\gamma}(\mathbb{N})_{\M_{\II}}$,
\item all elements of $\C^\gamma_{\I}$ code ordinals coded in $\C^\gamma_{\II}$.
\end{enumerate}
A similar result holds after swapping the roles of $\I$ and $\II$ above.
\end{lemma}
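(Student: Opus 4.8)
The plan is to reduce the comparison of the $\gamma$-th power sets of $\M_\I$ and $\M_\II$ to a comparison of the ordinals that the two models can internally \emph{code} by a single member of $\Ps^\gamma(\mathbb{N})$, exploiting that (by $(C_\I0)$ and $(C_{\II}0)$) both are $\omega$--models of $T^\gamma_n$, hence of $V=L$ together with ``$\Ps^\gamma(\mathbb{N})$ exists'', so that the acceptability of the constructible hierarchy recorded in Lemma~\ref{cardinalityKPL} is available inside each of them. The upshot of acceptability is that every element of $\Ps^\gamma(\mathbb{N})$ appears in $L_{\omega_\gamma}$, that $<_L$ enumerates $\Ps^\gamma(\mathbb{N})$ in order type $\omega_\gamma$, and that, using the trivial pairing on $\Ps^\gamma(\mathbb{N})$, an ordinal $\xi<\omega_\gamma$ together with a formula code can be packaged into a single element of $\Ps^\gamma(\mathbb{N})$; in particular the members of $\C^\gamma_{::}$ code precisely the ordinals below $\omega_\gamma^{\M_{::}}$, and from a code for the $<_L$-rank of $X$ one recovers $X$ by an absolute formula.

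The implication $(1)\Rightarrow(2)$ is the easy one. If $\Ps^\gamma(\mathbb{N})_{\M_{\I}}\subseteq\Ps^\gamma(\mathbb{N})_{\M_{\II}}$ then in particular every $c\in\C^\gamma_{\I}$ has an $\simeq$--image $c'$ in $\M_{\II}$, and one must only check that $c'$ still codes an ordinal of $\M_{\II}$, i.e. that $\M_{\II}$ does not see a descending chain through the linear order coded by $c'$ that $\M_{\I}$ did not see through $c$. Here I would use that ``this code is well-founded'' is a generalized $\Pi^1_1$ property and that the inclusion of $\Ps^\gamma(\mathbb{N})$'s propagates down to the lower power sets and to the $\omega$--sequences of their elements (again codeable, via pairing, as single members of $\Ps^\gamma(\mathbb{N})$), so that $\M_{\II}$ has no more such chains than $\M_{\I}$ — together with Lemma~\ref{Dummy_intersect} to know that $c\simeq c'$ really identifies the two coded structures. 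Hence $c'\in\C^\gamma_{\II}$ and codes the matching ordinal.

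For the substantive direction $(2)\Rightarrow(1)$ I would argue by contraposition. Suppose $X\in\Ps^\gamma(\mathbb{N})_{\M_{\I}}$ has no $\simeq$--image in $\M_{\II}$, and take $X$ to be $<_L^{\M_{\I}}$--least such; let $\xi<\omega_\gamma^{\M_{\I}}$ be its $<_L$--rank and let $c_X\in\C^\gamma_{\I}$ code $\xi$. If $(2)$ held, there would be $c'\in\C^\gamma_{\II}$ with $c_X\simeq c'$, coding an ordinal $\xi'<\omega_\gamma^{\M_{\II}}$ which, by Lemma~\ref{Dummy_intersect}, matches $\xi$ under the isomorphism of the common parts. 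Now run an induction on $\eta<\xi$: by minimality of $X$, the $<_L^{\M_{\I}}$--enumeration $\langle Z_\eta:\eta<\xi\rangle$ of $\Ps^\gamma(\mathbb{N})_{\M_{\I}}$ below $X$ consists of elements with $\simeq$--images in $\M_{\II}$, and since both models are $\omega$--models of $V=L$ computing $L$ and $<_L$ identically on shared data, one shows that these images are exactly the first $\xi$ members of $\Ps^\gamma(\mathbb{N})_{\M_{\II}}$ in $<_L^{\M_{\II}}$; then the member of $\Ps^\gamma(\mathbb{N})_{\M_{\II}}$ at $<_L$--rank $\xi'$ — which exists because $\xi'<\omega_\gamma^{\M_{\II}}$ — is, by absoluteness of the defining formula, an $\simeq$--image of $X$, contradicting the choice of $X$. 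The swapped statement follows by interchanging $\I$ and $\II$.

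The main obstacle is exactly the bookkeeping around well-foundedness and the maximal common segment: at this stage neither $\M_\I$ nor $\M_\II$ is assumed well-founded, so ``$c_X$ codes the ordinal $\xi$'' and its transfer along $\simeq$ must be controlled by downward $\Pi^1_1$--absoluteness between $\omega$--models, by Lemma~\ref{Dummy_intersect} (to keep $D_{\gamma+1}$ honest about the coded structures), and by Lemma~\ref{overspill} together with the conjuncts of $(C_{::}0)$ (to rule out $\M_{\II}$ having ``extra'' small objects below $\omega_\gamma$ that are not reflected back, which is what would break the inductive matching of the two $<_L$--enumerations). One also has to verify that the ordinal-by-formula coding of members of $\Ps^\gamma(\mathbb{N})$ is genuinely available in $\KP^\gamma_n$ — this is precisely what acceptability (Lemma~\ref{cardinalityKPL}) and the trivial pairing on $\Ps^\gamma(\mathbb{N})$ supply — and to treat the successor and limit cases of $\gamma$ uniformly, where the $\Pi^0_{1+\alpha}$ versus $\Sigma^0_\gamma$ split in Definition~\ref{dummy} for $D_\gamma$ must be kept track of.
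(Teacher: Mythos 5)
Your proposal is correct and follows essentially the same route as the paper: the heart of both arguments is that, since $\M_{\I}$ and $\M_{\II}$ are $\omega$--models of $V=L$ with no initial segment of $\KP^{\gamma}_n$-strength (so acceptability as in Lemma~\ref{cardinalityKPL} applies), every element of $\Ps^{\gamma}(\mathbb{N})_{\M_{::}}$ is definable from the ordinal coded by an element of $\C^{\gamma}_{::}$ (its $<_L$-rank), and such definitions transfer between the two models by absoluteness of the $L$-construction, while the converse direction is essentially immediate. Your contrapositive with a $<_L$-minimal counterexample and the matching of the $<_L$-enumerations is just a more explicit packaging of this same mechanism, which the paper states in one line.
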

\proof
Since $\M_\I$ and $\M_{\II}$ are models of $V = L$ with no initial segments satisfying $\KP^{\gamma+1}$, it follows that every element of $\Ps^{\gamma}(\mathbb{N})_{\M_{\I}}$ is definable from an element of  $\C^\gamma_{\I}$ and all its $<_{\C^\gamma_{\I}}$-equivalent copies.
The converse is immediate.
\endproof

\begin{definition}
    \begin{align*}
       (C_{\I}1) : \Ps^{\gamma}(\mathbb{N})_{\M_{\I}} \not\subseteq \Ps^{\gamma}(\mathbb{N})_{\M_{\II}} \rightarrow \C^\gamma_{\I} \setminus \C^\gamma_{\II}& \text{ has a $<_{\C^\gamma_{\I}}$--minimal element}.  
        \\(C_{\II}1) :  \Ps^{\gamma}(\mathbb{N})_{\M_{\II}} \not\subseteq \Ps^{\gamma}(\mathbb{N})_{\M_{\I}}
         \rightarrow \C^\gamma_{\II} \setminus \C^\gamma_{\I}& \text{ has a $<_{\C^\gamma_{\II}}$--minimal element}.
    \end{align*}
\end{definition}

Conditions $(C_{\I}1)$ and $(C_{\II}1)$ tell us that descending sequences through $\N$ should be constituted out of objects from the highest cardinality available. It is not immediately clear that $(C_{\I}1)$ and $(C_{\II}1)$ are indeed expressible in a $\Sigma^1_{1+\gamma+2}$ way; indeed, our way of formalizing these conditions will differ slightly from their intended meaning, but only in cases in which this makes no difference. This is done in Lemma \ref{LemmaC1Formalization}.

Before doing so, we mention that 
in light of Lemma~\ref{overspill}, the conditions imply that the wellfounded part of the illfounded model satisfies $\KP^{\gamma}$.

\begin{lemma}\label{LemmaKPGammaFollowsFromC1}
Suppose that $(C_{\I}0)$, $(C_{\II}0)$, $(C_{\I}1)$ and $(C_{\II}1)$ hold. Then, $L_\alpha$ satisfies ``$\Ps^{\gamma}(\mathbb{N})$ exists'' and indeed $\KP^{\gamma}$.
\end{lemma}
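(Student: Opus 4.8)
The plan is to show that $L_\alpha$, the common wellfounded part of $\M$ and $\N$, first contains a copy of $\mathcal{P}^\gamma(\mathbb{N})$ and then verifies the $\KP$ axioms. For the first part, I would argue by contradiction: suppose $\mathcal{P}^\gamma(\mathbb{N})^{L_\alpha}$ is not all of $\Ps^\gamma_{\M}(\mathbb{N})$ (equivalently, by $(C_::0)$ and minimality of the theories, that $\omega_\gamma$ as computed in $\M$ is not an ordinal of $L_\alpha$). Recall from $(C_\I0)$, $(C_{\II}0)$ that $\Ps^{\gamma+1}_{\M_\I}(\mathbb{N})\not\subseteq\Ps^{\gamma+1}_{\M_{\II}}(\mathbb{N})$ and symmetrically, so in particular, by Lemma~\ref{LemmaEquivPC} applied in both directions, there are elements of $\C^\gamma_\I$ coding ordinals not coded in $\C^\gamma_{\II}$ and vice versa; combined with $(C_\I1)$ and $(C_{\II}1)$, each of $\C^\gamma_\I\setminus\C^\gamma_{\II}$ and $\C^\gamma_{\II}\setminus\C^\gamma_\I$ has a $<_{\C^\gamma_{::}}$--minimal element. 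I would then invoke Lemma~\ref{overspill} (with $\kappa=\omega_\gamma$ in the illfounded model $\N$): if $\omega_\gamma$ were not in the wellfounded part below the least non-standard ordinal, the set $\mathcal{I}^{\omega_\gamma}$ of non-standard codes of cardinality $\omega_\gamma$ would be non-empty with no $<^\N_L$--least element — but a $<_{\C^\gamma}$--minimal element of $\C^\gamma\setminus(\text{other model})$ in the illfounded model exactly furnishes such a least non-standard code (after translating the ``is coded in the other model'' clause into ``lies in $L_\alpha$'' via the common-part analysis of Lemma~\ref{LemmaDummyInt} and Definition~\ref{DefA1}). This contradiction forces $\omega_\gamma\in L_\alpha$, hence $L_\alpha\models$ ``$\Ps^\gamma(\mathbb{N})$ exists''.

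Next, I would check that $L_\alpha\models\KP$. Since $L_\alpha$ is an initial segment of $L$ it automatically satisfies \textsc{Extensionality}, \textsc{Pair}, \textsc{Union}, \textsc{foundation}, and $\Delta_0$--\textsc{separation}; \textsc{Infinity} holds because $\omega\in L_\alpha$ (indeed $\omega_\gamma\in L_\alpha$ and $\gamma\geq1$). The only axiom needing work is $\Delta_0$--\textsc{collection}, equivalently that $\alpha$ is an admissible ordinal. Here I would use that $\alpha$ is the wellfounded ordinal of an illfounded $\omega$--model $\N$ of $V=L$: a standard overspill/truncation argument (the wellfounded part of an $\omega$--model of $\KP$, or even just of a sufficiently rich fragment, is admissible) gives admissibility of $\alpha$. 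Concretely, a $\Sigma_1$--definable-over-$L_\alpha$ function $f\colon\delta\to L_\alpha$ with $\delta<\alpha$ that is unbounded would, by $\Sigma_1$--reflection into $\N$ and the fact that $\N$ thinks every such function is bounded, have its range bounded by some ordinal of $\N$ that must then lie below the first non-standard ordinal, i.e.\ in $L_\alpha$ — contradiction. This yields $L_\alpha\models\KP$, and combined with the previous paragraph, $L_\alpha\models\KP^\gamma$.

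The main obstacle I anticipate is the careful bookkeeping in the first part: the conditions $(C_{::}1)$ are literally stated about $\C^\gamma_{::}$ and the relation ``codes an ordinal coded in the other model,'' whereas Lemma~\ref{overspill} is stated about non-standard codes of a fixed cardinality $\kappa$ in a single illfounded model. Bridging these requires knowing that (a) the common part of $\M$ and $\N$ at level $\mathcal{P}^\gamma(\mathbb{N})$ is correctly identified by $D_{\gamma+1}$ / $\A^{\gamma+1}_1$ (Lemma~\ref{LemmaDummyInt} and Definition~\ref{DefA1}), so that ``coded in the other model'' really does coincide with ``in $L_\alpha$'' on the relevant objects, and (b) one must handle the asymmetry of which model is wellfounded — the minimal element supplied by $(C_{::}1)$ lives in whichever of $\M_\I,\M_{\II}$ equals $\N$, and one must check it genuinely contradicts the ``no $<^\N_L$--least element'' conclusion of Lemma~\ref{overspill} rather than some spurious near-miss. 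The clause in the excerpt that the formalization of $(C_{::}1)$ ``will differ slightly from their intended meaning, but only in cases in which this makes no difference'' (to be pinned down in Lemma~\ref{LemmaC1Formalization}) is exactly what one leans on here, so I would cite that and keep the argument at the level of: minimal non-standard code $\Rightarrow$ contradiction with Lemma~\ref{overspill} $\Rightarrow$ $\omega_\gamma\in L_\alpha$.
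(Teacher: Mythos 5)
Your overall strategy is the same as the paper's: play the overspill Lemma~\ref{overspill} off against the ``least element or empty'' requirement encoded by $(C_{\I}1)$, $(C_{\II}1)$ to force $\omega_\gamma$ (hence $\Ps^{\gamma}(\mathbb{N})$) into $L_\alpha$, and obtain $\KP$ itself from the standard truncation fact that the wellfounded part of an $\omega$-model of $\KP$ (here $\N$) is admissible; the paper leaves that second half implicit, and your sketch of it is fine.

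There is, however, a concrete misstep in your instantiation of Lemma~\ref{overspill}. That lemma speaks only of a $\kappa \in L_\alpha$ which is \emph{the largest cardinal of $L_\alpha$}; under your contradiction hypothesis $\omega_\gamma \notin L_\alpha$, so you may not take $\kappa = \omega_\gamma$, and the set $\mathcal{I}^{\omega_\gamma}$ of ``non-standard codes of cardinality $\omega_\gamma$'' is not something the lemma addresses (its would-be members are subsets of a non-standard ordinal of $\N$, not of a cardinal of the wellfounded part). The correct instantiation, which is the one the paper makes, is: if $\omega_\gamma \notin L_\alpha$, the largest cardinal of $L_\alpha$ is some $\omega_\beta$ with $\beta < \gamma$; Lemma~\ref{overspill} applied to that $\kappa = \omega_\beta$ gives $\mathcal{I}^{\omega_\beta}$ non-empty with no $<^{\N}_{L}$-least element, whereas $(C_{::}1)$ --- through the equivalence stated just before its definition (the requirement on the $\mathcal{I}^{\kappa}$ for cardinals $\kappa < \omega_\gamma$ can be expressed via codes lying in $\Ps^{\gamma}(\mathbb{N})$, cf.\ Lemma~\ref{LemmaEquivPC}) --- supplies exactly such a least element in the illfounded model, a contradiction; hence no $\omega_\beta$ with $\beta<\gamma$ is the largest cardinal of $L_\alpha$ and $\omega_\gamma \in L_\alpha$. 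This is a local repair rather than a change of route, but as written your appeal to the lemma does not go through. (Two further points you gloss, as does the paper: one must know $L_\alpha$ has a largest cardinal at all when $\omega_\gamma\notin L_\alpha$, which follows from admissibility of $L_\alpha$, since otherwise the $\N$-set of its cardinals, having only standard members, would lie in $L_\alpha$ and be cofinal in $\alpha$ with order type $\le\gamma$; and $(C_{::}1)$ is phrased with minimality in the coded-ordinal order while Lemma~\ref{overspill} concerns $<^{\N}_{L}$-minimality, a mismatch the paper treats as an equivalence.)
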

\proof 
Recall that in $L_\alpha$ the cardinality of $\Ps^{\beta}(\mathbb{N})$ is $\omega_{\beta}$ for all $\beta$ and by $(C_{\I}1)$, $(C_{\II}1)$. By Lemma~\ref{overspill} applied in $\N$, no $\omega_{\beta}$ with $\beta < \gamma$ can be the largest cardinal of $L_{\alpha}$, so $L_{\alpha} \models$ ``$\omega_{\gamma}$ exists'' and thus $L_{\alpha} \models$ ``$\Ps^{\gamma}(\mathbb{N})$ exists.''
\endproof

We now turn to the formalization of $(C_{\I}1)$ and $(C_{\II}1)$. For this, recall that the rules of the game are such that the first player to violate any of the conditions loses.

\begin{lemma}\label{LemmaC1Formalization}
There exist formulas $\varphi_{\I}, \varphi_{\II} \in \Sigma^0_{1+\gamma+2}$ which express conditions $(C_{\I}1)$, $(C_{\II}1)$ in all plays which satisfy $(C_{\I}0)$ and $(C_{\II}0)$ and in which one of $\M_{\I}$ or $\M_{\II}$ is wellfounded.
\end{lemma}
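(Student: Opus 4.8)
The plan is to strip $(C_{\I}1)$ down to a statement about the codes of ordinals and then to control quantifier complexity. First I would invoke Lemma~\ref{LemmaEquivPC}: under $(C_{\I}0)$ and $(C_{\II}0)$ the antecedent $\Ps^{\gamma}(\mathbb{N})_{\MI}\not\subseteq\Ps^{\gamma}(\mathbb{N})_{\MII}$ of $(C_{\I}1)$ is equivalent to the assertion that some element of $\C^\gamma_\I$ codes an ordinal not coded by any element of $\C^\gamma_{\II}$, so the whole condition becomes a statement about the linear orders $\C^\gamma_\I$, $\C^\gamma_{\II}$ and the matching relation $\simeq$ between their elements. Since $\simeq$ is not arithmetical I would replace it throughout by the dummy intersection $D_{\gamma+1}$ of Definition~\ref{dummy}; by Lemma~\ref{Dummy_intersect}, together with the use of canonical ($L_\beta$-based) codes as in Definition~\ref{DefA1}, this substitution is faithful on codes of ordinals in the plays under consideration, since the false positives of $D_{\gamma+1}$ can only concern objects outside the common segment (reals, say), whose relevant lower part is precisely the common segment. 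As in Lemma~\ref{C0}, the predicates ``$x\in\C^\gamma_\I$'', ``$x\in\C^\gamma_{\II}$'' and $D_{\gamma+1}$ are then arithmetical of bounded level (using that both models are $\omega$--models and that $D_{\gamma+1}$ is $\Pi^0_{1+\gamma}$), and the antecedent of $(C_{\I}1)$ comes out $\Sigma^0_{1+\gamma+2}$.

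The crux is that the implication $(C_{\I}1)$ as a whole must be packaged into a $\Sigma^0_{1+\gamma+2}$ formula. By Lemma~\ref{LemmaEquivPC} its antecedent is equivalent to ``$\C^\gamma_\I\setminus\C^\gamma_{\II}\neq\emptyset$'', so $(C_{\I}1)$ is equivalent, by pure logic, to ``$\C^\gamma_\I\setminus\C^\gamma_{\II}$ is empty or has a $<_{\C^\gamma_\I}$--least element.'' A literal transcription of this is $\Sigma^0_{1+\gamma+3}$: it is an $\exists x$ over a matrix containing a universal quantifier over $<_{\C^\gamma_\I}$--predecessors, each tested for membership in $\C^\gamma_{\II}$ by means of $D_{\gamma+1}$, which is one alternation too many. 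Here I would use that, by $(C_{\I}0)$, $(C_{\II}0)$ and the running assumption, \emph{exactly one} of $\MI,\MII$ is wellfounded, together with the overspill phenomenon of Lemma~\ref{overspill}: whichever of $\C^\gamma_\I,\C^\gamma_{\II}$ comes from the wellfounded model is a genuine wellorder, and the non-standard ordinals of the illfounded model have no $<$--least element. This lets me rewrite ``empty or has a $<_{\C^\gamma_\I}$--least element'' so that the putative least element is indexed against the wellorder coming from the wellfounded model (or, when $\MI$ is the wellfounded one, against $\C^\gamma_\I$ itself), which bounds and thereby absorbs the universal quantifier over predecessors and brings the formula down to $\Sigma^0_{1+\gamma+2}$. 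The resulting $\varphi_{\I}$ need no longer be equivalent to $(C_{\I}1)$ on \emph{all} plays, only on those satisfying $(C_{\I}0),(C_{\II}0)$ with one of $\MI,\MII$ wellfounded; this is exactly the ``slight difference from the intended meaning, but only in cases in which this makes no difference'' alluded to before the lemma.

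Finally I would obtain $\varphi_{\II}$ by the symmetric construction, verify $\varphi_{\I}\leftrightarrow(C_{\I}1)$ and $\varphi_{\II}\leftrightarrow(C_{\II}1)$ by running through the handful of cases according to which of $\MI,\MII$ is wellfounded and whether the relevant difference set is empty, bounded, or cofinal, and carry out a line-by-line complexity count to confirm membership in $\Sigma^0_{1+\gamma+2}$. I expect the main obstacle to be precisely this interplay: finding a reformulation of ``$\C^\gamma_\I\setminus\C^\gamma_{\II}$ has a $<_{\C^\gamma_\I}$--least element'' that is faithful on the relevant plays yet cheap enough in quantifier complexity, together with the bookkeeping needed to check that replacing $\simeq$ by $D_{\gamma+1}$ never changes the verdict.
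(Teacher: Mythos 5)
Your reduction of $(C_{\I}1)$ to a statement about ordinal codes via Lemma~\ref{LemmaEquivPC}, and the replacement of $\simeq$ by the dummy relation, is the right starting point and matches the paper. But the crux of your argument — the complexity repair — contains a genuine gap. First, the obstacle you identify is an artifact of using the wrong comparison relation: the elements of $\C^\gamma_{\I}$ and $\C^\gamma_{\II}$ are elements of $\Ps^{\gamma}(\mathbb{N})$, so the appropriate dummy relation is $D_{\gamma}$ (that is, $D_{\alpha+1}$ when $\gamma=\alpha+1$), which is $\Pi^0_{1+\alpha}$ in the successor case and $\Sigma^0_{\gamma}$ in the limit case, not $D_{\gamma+1}$, which is $\Pi^0_{1+\gamma}$. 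With $D_{\gamma}$ the literal transcription ``there is $y\in\C^\gamma_{\I}$ with no $D_{\gamma}$-match in $\C^\gamma_{\II}$ such that every $<_{\C^\gamma_{\I}}$-predecessor of $y$ has a $D_{\gamma}$-match'' already comes out $\Sigma^0_{1+\gamma+2}$ (indeed $\Sigma^0_{1+\alpha+3}$ in the successor case), so no further trick is needed; this is exactly what the paper does in \eqref{eqDefC1}.

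Second, the repair you propose instead — ``indexing the putative least element against the wellorder coming from the wellfounded model'' so as to ``absorb'' the universal quantifier — cannot be implemented. The payoff condition must be a single arithmetical formula of the play fixed in advance; which of $\MI,\MII$ is wellfounded is a $\Pi^1_1$ property of the play (indeed it is precisely what the game is trying to detect), so the formula can neither branch on it nor refer to ``the wellorder of the wellfounded model,'' and you do not explain any mechanism by which wellfoundedness would lower the syntactic complexity of ``$\C^\gamma_{\I}\setminus\C^\gamma_{\II}$ has a $<_{\C^\gamma_{\I}}$-least element.'' In the paper, the wellfoundedness hypothesis enters only in the \emph{verification} that a witness of the formula really yields $(C_{\I}1)$ on the relevant plays, not in the complexity count. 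Relatedly, your blanket claim that $D$ is faithful on all ordinal codes is not justified and is false in general: false positives can occur between ordinal codes of the two models. The paper only needs (and only proves) that the $<_L$-minimal witness $y$ of $\C^\gamma_{\I}\setminus\C^\gamma_{\II}$ admits no false $D_{\gamma}$-match, and this uses the minimality of $y$, the matched cardinals $\kappa_{\I}\simeq\kappa_{\II}$, and acceptability of $L$ (G\"odel's GCH argument); that argument, which you gloss over, is the substantive correctness step of the lemma.
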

\begin{proof}
We define $\varphi_{\I}\in \Sigma^0_{1+\gamma+2}$; $\varphi_{\II}\in \Sigma^0_{1+\gamma+2}$ is defined similarly. The formula $\varphi_{\I}$ is defined using $D_{\gamma}$ and written as an implication $A \rightarrow B$, where the premise $A$ asserts 
\[\Ps^{\gamma}(\mathbb{N})_{\M_{\I}} \not\subseteq \Ps^{\gamma}(\mathbb{N})_{\M_{\II}}.\] 
By the proof of Lemma~\ref{C0}, this formula is $\Sigma^0_{1 + \gamma + 1}$. The conclusion $B$ is stated as
\begin{align}
        &\exists y \in  \C^\gamma_{\I} \ \Big[ \forall x \in \C^\gamma_{\II} \ \lnot D_{\alpha + 1}(y,x) \land \forall t_1 \in  \C^\gamma_{\I} \ \big(t_1 <_{\C^\gamma_{\I}} y \rightarrow \exists t_2\in \C^\gamma_{\II}\, D_{\alpha + 1}(t_1,t_2)\big)\Big], \text{or}\nonumber\\
        &\exists y \in  \C^\gamma_{\I} \ \Big[ \forall x \in \C^\gamma_{\II} \ \lnot D_{\gamma}(y,x) \land \forall t_1 \in  \C^\gamma_{\I} \ \big(t_1 <_{\C^\gamma_{\I}} y \rightarrow \exists t_2\in \C^\gamma_{\II}\, D_{\gamma}(t_1,t_2)\big)\Big], \label{eqDefC1}
    \end{align} 
    depending on whether $\gamma$ is a successor ($\gamma = \alpha + 1$) or a limit ordinal. 
In both cases, by our analysis of the complexity of $D_{\gamma}$, $B$ turns out to be $\Sigma^0_{1 + \gamma + 2}$ (indeed $\Sigma^0_{1 + \alpha + 3}$ in the successor case).

Let us claim that this formula is indeed equivalent to $(C_{\I}1)$. Suppose that $(C_{\I}1)$ holds and $\Ps^{\gamma}(\mathbb{N})_{\M_{\I}} \not\subseteq \Ps^{\gamma}(\mathbb{N})_{\M_{\II}},$ so there is a $<_{\C^\gamma_{\I}}$-minimal element in $\C^\gamma_{\I} \setminus \C^\gamma_{\II}$ by Lemma \ref{LemmaEquivPC}, say $y \in \M_\I$. We assume without loss of generality that $y$ is chosen $<_L^{\M_\I}$-minimal (within its $\C^\gamma_{\I}$-equivalence class).
We claim that $y$ witnesses \eqref{eqDefC1}. 
For this, first notice that 
\[\forall t_1\in \C^\gamma_{\I}\, \big( t_1<_{\C^\gamma_{\I}} y \to \exists t_2\in \C^\gamma_{\II} D_\gamma(t_1,t_2)\big).\]
Indeed, by definition of $y$, for any such $t_1$ there exists a $t_2 \in C^{\gamma}_{\II}$ with $t_1 \simeq t_2$, hence the conclusion follows from lemma \ref{LemmaDummyInt}\eqref{LemmaDummyInt1}. Suppose towards a contradiction that $D_\gamma(y,x)$ holds for some $x \in \C^\gamma_{\II}$, that is, $D_{\gamma}$ would not be able to identify $y$. 

Below, for this proof only, we let $|t|$ denote the ordinal denoted by some $t \in \C^\gamma_{\I}$ or $t\in \C^\gamma_{\II}$.
Let $\beta <\gamma$ be least such that $y \in \mathcal{P}^{\beta+1}(\mathbb{N})$ and let $\kappa_{\I}$ be such that ${L_{|y|}^{\M_\I}} \models \kappa_{\I} = \omega_\beta$. By the minimality of $y$ and $\beta$, we have $\kappa_{\I} <^{\M_\I} |y|$ and thus there is $\kappa_{\II} \in \M_{\II}$ with $\kappa_{\I} \simeq \kappa_{\II}$. Thus, we have 
\[y \subset \mathcal{P}^\beta(\mathbb{N})_{L_{|y|}^{\M_\I}} \subseteq L^{\M_{\I}}_{\kappa_{\I}} \simeq L^{\M_{\II}}_{\kappa_{\II}} \in \M_{\II},\]
in particular $y \subset \mathcal{P}^\beta(\mathbb{N})_{L_{|y|}^{\M_\I}}$ follows from acceptability of $L$, that is Gödel's proof of GCH.
This means that $x$ must coincide with $y$ on that domain, with respect to the real isomorphism relation.
Indeed, let $x' := x \cap \mathcal{P}^\beta(\mathbb{N})_{L_{\kappa_{\II} +1}^{\M_\II}}$, we surely still have $D_\gamma(y,x')$ and thus as in Lemma \ref{LemmaDummyInt}\eqref{LemmaDummyInt2} we obtain $y\simeq x'$, which is a contradiction.

Suppose now that $\Ps^{\gamma}(\mathbb{N})_{\M_{\I}} \not\subseteq \Ps^{\gamma}(\mathbb{N})_{\M_{\II}}$ and that $y$ witnesses \eqref{eqDefC1}. We claim that $(C_{\I}1)$ holds. 
Recall that by hypothesis one of $\M_\I$ or $\M_{\II}$ is wellfounded. Clearly $(C_{\I}1)$ holds if $\M_\I$ is wellfounded. Similarly, if $\M_{\II}$ is wellfounded, then $y$ must be equal to $\mathsf{Ord}\cap \M_{\II}$, contradicting $(C_{\II}0)$. This proves the lemma.
\end{proof}

We can now return to the task of defining $\A^{\gamma + 1}$, which --in contrast to the situation in \cite{MS}-- must be split into four cases for technical reasons.

\begin{definition}\label{DefinitionCalA}
    The isomorphism between the greatest common part of $\M$ and $\N$, (i.e., $L_{\alpha}$) is coded by $\A^{\gamma + 1}$ which we define according to the following cases.
    \begin{enumerate}
    \item If $H_1 \coloneqq$``$\Ps^{\gamma}(\mathbb{N})_{\M_{\I}} = \Ps^{\gamma}(\mathbb{N})_{\M_{\II}}$'' holds--a $\Pi^0_{1 + \gamma + 1}$ condition--, we set $\A^{\gamma + 1} = \A_1^{\gamma + 1}$;
    \item If $H_2 \coloneqq$``$\Ps^{\gamma}(\mathbb{N})_{\M_{\I}} \subsetneq \Ps^{\gamma}(\mathbb{N})_{\M_{\II}}$'' holds--a $\Delta^0_{1 + \gamma + 2}$ condition--, then we know by condition $(C_{\II}1)$ that there exists a $<_{\C^{\gamma}_{\II}}$--least element of $\C^{\gamma}_{\II} \setminus \C^{\gamma}_{\I}$ coding an ordinal $\delta_{\II}$. Then we define $\A^{\gamma + 1}$ as:
    \begin{align*}
        \A^{\gamma+1}_2(w_1, w_2) \leftrightarrow \ 
        &\exists (t_1, t_2) \in [\Ps^{\gamma + 1}_{\M_{\I}}(\mathbb{N}) \times (\Ps^{\gamma + 1}_{\M_{\II}}(\mathbb{N}) \cap L^{\M_{\II}}_{\delta_{\II} + 1})] \\ 
        &\exists \beta_1, \beta_2, \ t_1 = L_{\beta_1}^{\M_{\I}}, \ t_2 = L_{\beta_2}^{\M_{\II}}, \text{and} \nonumber\\
        &(\M_{\I} \models t_1 \text{ codes } w_1 \land \M_{\II} \models t_2 \text{ codes } w_2 \land (t_1,t_2) \in D_{\gamma + 1});
    \end{align*}
    \item If $H_3 \coloneqq$``$\Ps^{\gamma}(\mathbb{N})_{\M_{\II}} \subsetneq \Ps^{\gamma}(\mathbb{N})_{\M_{\I}}$'' holds, then we define $\A^{\gamma + 1} = \A_3^{\gamma + 1}$ \emph{mutatis mutandis};
    \item If $H_4 \coloneqq$``$\Ps^{\gamma}(\mathbb{N})_{\M_{\I}} \not\subseteq \Ps^{\gamma}(\mathbb{N})_{\M_{\II}} \land \Ps^{\gamma}(\mathbb{N})_{\M_{\II}} \not\subseteq \Ps^{\gamma}(\mathbb{N})_{\M_{\I}}$'' holds--a $\Sigma^0_{1 + \gamma + 1}$ condition--, then we have minimal $\delta_{\I}$ and $\delta_{\II}$ as in the respective two preceding cases. Then we define $\A^{\gamma + 1}$ as \begin{align*}
        \A^{\gamma+1}_4(w_1, w_2) \leftrightarrow \ 
        &\exists (t_1, t_2) \in [(\Ps^{\gamma + 1}_{\M_{\I}}(\mathbb{N}) \cap L^{\M_{\II}}_{\delta_{\I} + 1}) \times (\Ps^{\gamma + 1}_{\M_{\II}}(\mathbb{N}) \cap L^{\M_{\II}}_{\delta_{\II} +1})] \\ 
        &\exists \beta_1, \beta_2, \ t_1 = L_{\beta_1}^{\M_{\I}}, \ t_2 = L_{\beta_2}^{\M_{\II}}, \text{and} \nonumber\\
        &(\M_{\I} \models t_1 \text{ codes } w_1 \land \M_{\II} \models t_2 \text{ codes } w_2 \land (t_1,t_2) \in D_{\gamma + 1}).
    \end{align*}
\end{enumerate}
    \label{PageFourCases}
\end{definition}

\begin{figure}
    \centering
    \begin{tikzpicture}
        \draw[thick] (0,-1) -- (0,3) node[above] {$\mathcal M = L_{\alpha_n^{\gamma}}$};
        \draw (-.1,0) -- (.1,0) node (node) {};
        \node[left] at ($(node) - (.2,0)$) {$\A^{\gamma + 1} \cong L_\alpha$};
        \draw (-.1,-0.5) -- (.1,-0.5) {};
        \node[right] at ($(node) - (0,0.5)$) {$\omega_{\gamma}^{\M} \cong \omega_{\gamma}^{\N}$};
        \draw[thick] (0,0) arc (180:90:2.5) node[right] {$\mathcal N$};
    \end{tikzpicture}
    \caption{A typical situation in the game of $\KP^{\gamma}_n$, for the case $H_1$.}
    \label{typicalGame}
\end{figure}
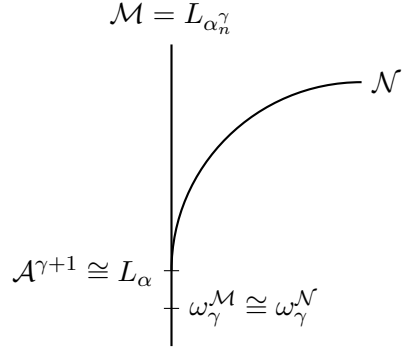

Figure~\ref{typicalGame} depicts the situation we have so far, with $\A^{\gamma + 1}$ coding the wellfounded part of $\N$, $L_{\alpha}$ for some $\mathrm{wfo}(\N) =: \alpha$.

The proofs of lemma~\ref{Dummy_intersect} and lemma~\ref{LemmaC1Formalization} show that this definition really describe the isomorphism relation on the common wellfounded part. That is, they showed the following lemma.
\begin{lemma}
    In a play satisfying $(C_{\I}0), (C_{\II}0)$ and $(C_{\I}1), (C_{\II}1)$ and in which one of $\M_{\I}$ or $\M_{\II}$ is wellfounded. Let $L_{\alpha}$ be the wellfounded part of $\N$ and $i \in \{1,2,3,4\}$ be such that $H_i$ holds, then $\forall w_1,w_2, \ w_1 \simeq w_2 \in L_{\alpha} \leftrightarrow \A_i(w_1,w_2)$.
\end{lemma}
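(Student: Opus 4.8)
The plan is to establish the two directions of the biconditional separately, in each case reducing to facts already isolated in the proofs of Lemma~\ref{Dummy_intersect} and Lemma~\ref{LemmaC1Formalization}. Fix a play satisfying the hypotheses, let $\M\in\{\M_{\I},\M_{\II}\}$ be the wellfounded term-model and $\N$ the illfounded one, and put $L_\alpha=\mathrm{wfo}(\N)$. By Lemma~\ref{LemmaKPGammaFollowsFromC1}, $L_\alpha\models\KP^\gamma$ and $\omega_\gamma$ is its largest cardinal; hence every $L_\beta$ with $\beta<\alpha$ has cardinality $\leq\omega_\gamma$ in $L_\alpha$, so it is coded by a member of $\Ps^{\gamma+1}(\mathbb{N})$ appearing at a level $<\alpha$ of $L_\alpha$, and therefore appearing inside both $\M_{\I}$ and $\M_{\II}$. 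Likewise, since $L_\alpha$ is too short to contain $\omega_{\gamma+1}$ (acceptability of $L$, cf.\ Lemma~\ref{cardinalityKPL}), every ordinal $<\alpha$ is the length of a member of $\Ps^\gamma(\mathbb{N})$ lying in $L_\alpha$. In each direction one unwinds the definition of $\A_i$ from Definition~\ref{DefinitionCalA} and calls on the appropriate clause of Lemma~\ref{Dummy_intersect}.

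The observation that neutralizes the four-way case split is the following: whenever the ordinal $\delta_{\I}$ (respectively $\delta_{\II}$) is defined via $(C_{\I}1)$ (resp.\ $(C_{\II}1)$) by Lemma~\ref{LemmaEquivPC}, one has $\delta_{\I}\geq\alpha$ (resp.\ $\delta_{\II}\geq\alpha$); indeed an ordinal $<\alpha$ is the length of a member of $\Ps^\gamma(\mathbb{N})$ lying in $L_\alpha$, hence of members of $\Ps^\gamma(\mathbb{N})$ in both models, so it cannot lie in $\C^\gamma_{\I}\setminus\C^\gamma_{\II}$, contradicting the minimality defining $\delta_{\I}$. Moreover, exactly as extracted in the proof of Lemma~\ref{LemmaC1Formalization} from Lemma~\ref{overspill} applied inside $\N$ (using $(C_{\I}1)$ and $(C_{\II}1)$), below $\delta_{\I}$ (resp.\ $\delta_{\II}$) the two models still agree on all of their members of $\Ps^\gamma(\mathbb{N})$; consequently any structure lying in $L^{\M_{\II}}_{\delta_{\II}+1}$ has all its members in the common part, so that the pair it forms with a matching structure in $\M_{\I}$ meets the hypotheses of Lemma~\ref{Dummy_intersect}\eqref{LemmaDummyInt2}, and symmetrically. (On the side of whichever model has the smaller $\Ps^\gamma(\mathbb{N})$ every member is already common, which is why no truncation is imposed there in the definition of $\A_2$ and $\A_3$.)

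For soundness, suppose $\A_i(w_1,w_2)$ and extract the witnesses $t_1=L^{\M_{\I}}_{\beta_1}$ coding $w_1$ and $t_2=L^{\M_{\II}}_{\beta_2}$ coding $w_2$, with $(t_1,t_2)\in D_{\gamma+1}$ together with, in the cases $H_2$, $H_3$, $H_4$, the relevant truncation placing $t_1$ and/or $t_2$ below level $\delta_{\I}+1$, $\delta_{\II}+1$. In case $H_1$ one applies Lemma~\ref{Dummy_intersect}\eqref{LemmaDummyInt3} with $\xi=\gamma$, using $\Ps^\gamma(\mathbb{N})_{\M_{\I}}=\Ps^\gamma(\mathbb{N})_{\M_{\II}}$, to conclude $t_1\simeq t_2$; in the remaining cases the truncation places us, by the previous paragraph, in the setting of Lemma~\ref{Dummy_intersect}\eqref{LemmaDummyInt2}, which again yields $t_1\simeq t_2$. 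Since one of $\M_{\I},\M_{\II}$ is the illfounded $\N$, the relation $t_1\simeq t_2$ forces the corresponding level to lie in $\mathrm{wfo}(\N)=\alpha$, so the common transitive collapse of $t_1$ and $t_2$ is some $L_\beta$ with $\beta<\alpha$; this isomorphism carries the marked elements, so $w_1$ and $w_2$ have a common collapse $w\in L_\beta\subseteq L_\alpha$, that is, $w_1\simeq w_2\in L_\alpha$.

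Conversely, if $w_1\simeq w_2$ with common collapse $w\in L_\alpha$, pick $\beta<\alpha$ with $w\in L_\beta$. By the first paragraph, $L_\beta$ together with a mark for $w$ is coded by some $t\in\Ps^{\gamma+1}(\mathbb{N})$ appearing at a level $<\alpha$, realized in $\M_{\I}$ as $t_1=L^{\M_{\I}}_{\beta_1}$ coding $w_1$ and in $\M_{\II}$ as $t_2=L^{\M_{\II}}_{\beta_2}$ coding $w_2$, with $\beta_1,\beta_2<\alpha$ and $t_1\simeq t_2$; then $D_{\gamma+1}(t_1,t_2)$ holds by Lemma~\ref{Dummy_intersect}\eqref{LemmaDummyInt1}, and since $\beta_1,\beta_2<\alpha\leq\delta_{\I},\delta_{\II}$ the truncations demanded in the cases $H_2$--$H_4$ are automatically satisfied, so $\A_i(w_1,w_2)$. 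The one genuinely delicate point --- and the reason Definition~\ref{DefinitionCalA} must be split into cases at all --- is precisely the pair of properties of the truncation ordinals used above: that $\delta_{\I},\delta_{\II}$ lie above $\alpha$ (needed for the converse) yet below the first level at which the two models diverge on $\Ps^\gamma(\mathbb{N})$ (needed for soundness, to avoid the false positives of $D_\gamma$). Both are exactly what the proof of Lemma~\ref{LemmaC1Formalization} wrings out of Lemma~\ref{overspill} and of the minimality built into $(C_{\I}1)$ and $(C_{\II}1)$, which is why those conditions appear among the hypotheses.
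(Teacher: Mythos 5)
Your proof is correct and takes essentially the same route as the paper, which handles this lemma in a single line by asserting that it follows from the proofs of Lemma~\ref{Dummy_intersect} and Lemma~\ref{LemmaC1Formalization}; your write-up just makes the ingredients explicit (clauses \eqref{LemmaDummyInt1}--\eqref{LemmaDummyInt3} of Lemma~\ref{Dummy_intersect}, the minimality information wrung out of $(C_{\I}1)$, $(C_{\II}1)$ via Lemma~\ref{overspill} and Lemma~\ref{LemmaEquivPC}, and acceptability via Lemma~\ref{cardinalityKPL}). In particular, your observation that the truncation ordinals $\delta_{\I},\delta_{\II}$ sit at or above $\alpha$ while remaining below the first level where the two $\Ps^{\gamma}(\mathbb{N})$'s diverge is exactly the content the paper leaves implicit, so nothing needs to be added.
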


The remaining conditions $C_{::}k$ (for $2 \leq k \leq n-1$) will always involve $\A^{\gamma + 1}$. Below, we define conditions $C_{::}k$ involving $\A^{\gamma + 1}$. Strictly speaking, $C_{::}k$ will really be a conjunction of four conditions of the form ``$H_i \rightarrow C_{::}k(\A^{\gamma + 1}_i)$'', with $i \in \{1,2,3,4\}$, distinguishing among the four cases involved in the definition of $\A^{\gamma + 1}$, which we denote by $H_i$. 
It will be useful to think of $\A^{\gamma + 1}$ as a $\Sigma^0_{1 + \gamma + 1}$ set. Although it is not clear that we can do so, we shall see below (see Lemma \ref{ComplexityC2}) that this is possible.

The remaining conditions in the game follow the dynamic of the proof of \cite{MS}. Here, they must be stated by cases in terms of our new definition of $\A^{\gamma + 1}$, which requires extra work and \textit{only makes sense if conditions $(C_{\I}0)$ and $(C_{\II}0)$, $(C_{\I}1)$, and $(C_{\II}1)$ all hold}, and we must verify that these can be stated with the right complexity, making use of our definition of $\mathcal{A}^{\gamma+1}$.

We intend to define each further conditions according to the following principles. \textit{If all the previous conditions hold}, they assert that the models have no infinite descending sequence of a certain kind. We will look for $\Sigma_n$ definable non-standard codes to constitute the descending sequence under discussion. 
If instead they are no such illfounded collection of non-standard codes, we will be able to infer properties on the models. This allows us to pursue our inquiry for the illfounded model in order to satisfy lemma~\ref{gamedef}.

We start our chase of the illfounded structure by analysing the collection of $\Sigma_1$ formulas that hold in one model and not in the other.
To this aim, we define the following classes.
For $i = 1, 2, 3, 4$, we put
\begin{align*}
    (W_{\M_{\I},1})_i =  \Big\{ \beta \in Ord^{\M_{\I}} \mid \exists (x_1,x_2) \in \A^{\gamma + 1}_i,& \phi \in \Delta_0, 
    \big[(\exists z \in L^{\M_{\I}}_{\beta} \ \M_{\I} \models \phi(z,x_1)) \land \\&(\M_{\II} \models \lnot\exists y \phi(y,x_2))\big]  \Big\}.
\end{align*}
The class $W_{\M_{\II},1}$ is defined \emph{mutatis mutandis}. While we write $W_{\M_{::},1}$, to abuse notation, we actually mean $(W_{\M_{::},1})_i$ depending on the case we fall into.
The next conditions are then 
\begin{align*}
    &(C_{\I}2) : \qquad W_{\M_{\I},1} \text{ has a least element or is empty}.  
    \\&(C_{\II}2) : \qquad W_{\M_{\II},1} \text{ has a least element or is empty}.
\end{align*}

\begin{lemma}\label{ComplexityC2}
The condition $(C_{::}2)$ is $\Sigma^0_{1 + \gamma + 2}$.
\end{lemma}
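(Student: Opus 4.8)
The plan is to read the complexity of \((C_{::}2)\) directly off its definition. By Definition~\ref{DefinitionCalA}, \((C_{::}2)\) is the conjunction over \(i\in\{1,2,3,4\}\) of the implications ``\(H_i\to(W_{\M_{::},1})_i\text{ has a least element or is empty}\)'', so it suffices to bound each such implication by \(\Sigma^0_{1+\gamma+2}\) and to invoke closure of \(\Sigma^0_{1+\gamma+2}\) under finite conjunction. The first thing to note is that every assertion of the form ``\(\M_{::}\models\psi(\bar a)\)'' for a \emph{fixed} formula \(\psi\) and code-parameters \(\bar a\) is \(\Delta^0_1\): by \((C_\I0),(C_{\II}0)\) the models are \(\omega\)-models presented as term models by the complete, definably Henkin theories \(T_\I,T_{\II}\), so such a statement merely asks whether a certain sentence lies in the played theory. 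Hence the only contributions to the arithmetical complexity of \(D_\gamma,D_{\gamma+1}\) come from the genuine number quantifiers in Definition~\ref{dummy}, whence \(D_{\gamma+1}\in\Pi^0_{1+\gamma}\), while \(D_\gamma\) is \(\Pi^0_{1+\alpha}\) (if \(\gamma=\alpha+1\)) or \(\Sigma^0_\gamma\) (if \(\gamma\) is a limit); in either case \(D_\gamma,D_{\gamma+1}\in\Sigma^0_{1+\gamma+1}\), and in fact \(\exists e_1\exists e_2\,D_\gamma(e_1,e_2)\in\Sigma^0_{1+\gamma}\).

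The crux is to exhibit, for each \(i\), a \(\Sigma^0_{1+\gamma+1}\) formula defining \(\A^{\gamma+1}_i\) on the plays of interest, i.e. those satisfying \((C_\I0),(C_{\II}0),(C_\I1),(C_{\II}1)\) with one of \(\M_\I,\M_{\II}\) wellfounded. For \(i=1\) this is Definition~\ref{DefA1}: the matrix is a conjunction of \(\Delta^0_1\) clauses with the \(\Pi^0_{1+\gamma}\) clause \(D_{\gamma+1}(t_1,t_2)\), and prefixing the existential block over \(t_1,t_2,\beta_1,\beta_2\) leaves it \(\Sigma^0_{1+\gamma+1}\). For \(i=2,3,4\) the definition also refers to the cutoff ordinal \(\delta_{\II}\), the \(<_{\C^\gamma_{\II}}\)-least element of \(\C^\gamma_{\II}\setminus\C^\gamma_{\I}\) (and symmetrically \(\delta_\I\)), whose \emph{definition} carries a universal quantifier and would push the complexity up. The point is that, on the relevant plays, the clause ``\(t_2\in L^{\M_{\II}}_{\delta_{\II}+1}\)'' — equivalently, that the level \(\beta_2\) of \(t_2\) lies below \(\delta_{\II}\) — may be replaced by the \emph{existential} clause ``\(\exists e_1\in\C^\gamma_\I\ D_\gamma(e_1,\beta_2)\)'', asserting that \(\beta_2\) has a \(D_\gamma\)-copy among the ordinal-codes of \(\M_\I\); by Lemmas~\ref{LemmaDummyInt}, \ref{LemmaEquivPC} and \ref{LemmaC1Formalization}, on these plays \(D_\gamma\) correctly decides the isomorphism relation on \(\C^\gamma_\I\times\C^\gamma_{\II}\), so this holds iff \(\beta_2\) belongs to the common wellfounded part, which is the intended content (up to an immaterial adjustment at the boundary ordinal \(\delta_{\II}\) itself), and the set \(\A^{\gamma+1}_2\) is unchanged; the cases \(i=3,4\) are identical after symmetric and combined book-keeping. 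Since \(\exists e_1\,D_\gamma(e_1,\beta_2)\in\Sigma^0_{1+\gamma}\) and the remaining clauses are \(\Delta^0_1\) together with \(D_{\gamma+1}\in\Pi^0_{1+\gamma}\), this reformulated matrix is \(\Sigma^0_{1+\gamma+1}\), and prefixing the existential block keeps \(\A^{\gamma+1}_i\in\Sigma^0_{1+\gamma+1}\).

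With \(\A^{\gamma+1}_i\in\Sigma^0_{1+\gamma+1}\) we bound ``\(\beta\in(W_{\M_\I,1})_i\)'': it is an existential number quantification, over the code of \((x_1,x_2)\in\A^{\gamma+1}_i\) and over \(\phi,z\), of a conjunction of the \(\Sigma^0_{1+\gamma+1}\) formula ``\((x_1,x_2)\in\A^{\gamma+1}_i\)'' with the \(\Sigma^0_1\) clause ``\(\exists z\in L^{\M_\I}_\beta\ \M_\I\models\phi(z,x_1)\)'' and the \(\Delta^0_1\) clause ``\(\M_{\II}\models\lnot\exists y\,\phi(y,x_2)\)'', hence is \(\Sigma^0_{1+\gamma+1}\). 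Therefore
\[
\text{``}(W_{\M_{::},1})_i\text{ has a least element or is empty''}\ \equiv\ \exists\beta\Big(\beta\in(W_{\M_{::},1})_i\wedge\forall\beta'\,\big(\beta'<_{\Ord^{\M_{::}}}\beta\rightarrow\beta'\notin(W_{\M_{::},1})_i\big)\Big)\ \vee\ \forall\beta\ \beta\notin(W_{\M_{::},1})_i,
\]
where ``\(\beta'<_{\Ord^{\M_{::}}}\beta\)'' is \(\Delta^0_1\); the first disjunct is \(\exists\beta(\Sigma^0_{1+\gamma+1}\wedge\Pi^0_{1+\gamma+1})\in\Sigma^0_{1+\gamma+2}\), the second is \(\Pi^0_{1+\gamma+1}\subseteq\Sigma^0_{1+\gamma+2}\), so the disjunction is \(\Sigma^0_{1+\gamma+2}\). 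Finally, by Definition~\ref{DefinitionCalA}, \(H_1\in\Pi^0_{1+\gamma+1}\), \(H_4\in\Sigma^0_{1+\gamma+1}\), and \(H_2,H_3\in\Delta^0_{1+\gamma+2}\), so each \(\lnot H_i\) is \(\Sigma^0_{1+\gamma+2}\), each implication \(H_i\to(\cdots)\) is \(\Sigma^0_{1+\gamma+2}\), and the conjunction over \(i\) stays \(\Sigma^0_{1+\gamma+2}\).

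The step I expect to be the main obstacle is the one in the second paragraph: showing that on the relevant plays the cutoff ordinals \(\delta_{::}\) in cases \(H_2\)--\(H_4\) can be traded for the existential ``\(D_\gamma\)-copied level'' clause without altering \(\A^{\gamma+1}_i\) as a set — this is precisely where the reliability of \(D_\gamma\) on ordinal-codes, secured in Lemmas~\ref{LemmaDummyInt} and \ref{LemmaC1Formalization} via conditions \((C_{::}1)\), is essential — and then, with the successor/limit split of Definition~\ref{dummy} in hand, checking that the reformulation genuinely stays at level \(\Sigma^0_{1+\gamma+1}\) rather than \(\Sigma^0_{1+\gamma+2}\), since the latter would make the ``is empty'' disjunct only \(\Pi^0_{1+\gamma+2}\) and break the bound.
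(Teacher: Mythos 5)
Your overall skeleton — splitting $(C_{::}2)$ according to the four hypotheses $H_i$ of Definition~\ref{DefinitionCalA}, noting that satisfaction statements in the term models are $\Delta^0_1$, counting $\A^{\gamma+1}_1$ and hence $(W_{\M_{::},1})_i$ as $\Sigma^0_{1+\gamma+1}$, and then paying one more quantifier for ``empty or has a least element'' — coincides with the paper's proof, and that part of the counting is correct. The genuine gap is in the step you yourself single out: in cases $H_2$--$H_4$ you replace the cutoff clause ``$t_2\in L^{\M_{\II}}_{\delta_{\II}+1}$'' by the existential clause that the level $\beta_2$ of $t_2$ has a $D_\gamma$-copy among the ordinal codes of the other model, and you justify that this leaves $\A^{\gamma+1}_i$ unchanged by asserting that on the relevant plays ``$D_\gamma$ correctly decides the isomorphism relation on $\C^\gamma_{\I}\times\C^\gamma_{\II}$.'' That assertion is not delivered by the lemmas you cite: Lemma~\ref{LemmaDummyInt}\eqref{LemmaDummyInt2}--\eqref{LemmaDummyInt3} are conditional on the relevant lower-level power sets being (isomorphically) matched, which is exactly what fails under $H_2$--$H_4$, where each model may contain objects of level $<\gamma+1$ outside the common segment; in that situation $D_\gamma$ only compares the ``visible'' parts of two codes and can in principle match a genuine wellorder code of one model with a code of an ordinal beyond the common part of the other (the very false-positive phenomenon the case distinction in Definition~\ref{DefinitionCalA} is designed to neutralize). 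Lemma~\ref{LemmaC1Formalization} excludes such a false positive only for the $<_L$-minimal element of $\C^\gamma_{\I}\setminus\C^\gamma_{\II}$, and its proof uses that minimality essentially (it cuts $x$ down to $x'$ below a cardinal whose $L$-level is known to lie in the common part). So the no-false-positive statement for \emph{all} ordinal codes, which your reformulation needs in order for the redefined $\A^{\gamma+1}_i$ to be the same set, is unproved, and without it the later lemmas that use $\A^{\gamma+1}$ semantically are no longer supported.

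The paper's proof avoids any such semantic claim: it keeps $\delta_{\II}$ (and $\delta_{\I}$) as existentially quantified ordinal variables and expresses their defining minimality syntactically in terms of $D_\gamma$, as in \eqref{CompCI2Case2a}, a conjunction of a $\Sigma^0_{1+\gamma+1}$ and a $\Pi^0_{1+\gamma+1}$ formula; with $\delta$ a free variable, the clause ``$t_2\in L^{\M_{\II}}_{\delta+1}$'' is just a $\Delta^0_1$ fact about the term model, so $\A^{\gamma+1}_i$ stays $\Sigma^0_{1+\gamma+1}$, and wrapping the outer $\exists\delta$ around ``minimality of $\delta$ $\wedge$ ($(W_{\M_{::},1})_i$ is empty or has a least element)'' yields $\Sigma^0_{1+\gamma+2}$ without ever deciding where the common part ends. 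To repair your argument you would either have to prove the full no-false-positive property of $D_\gamma$ on $\C^\gamma_{\I}\times\C^\gamma_{\II}$ (a substantive claim the paper never needs), or adopt the paper's device of quantifying the cutoff ordinal and asserting its minimality explicitly.
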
 
\proof
Let us consider condition $(C_{\I}2)$; we can treat the other one similarly. The idea is to write down $(C_{\I}2)$ as a disjunction of four formulas, each treating one of the possibilities laid out in Definition \ref{DefinitionCalA}.
We say that $(C_{\I}2)$ holds if one of the following holds:
\begin{enumerate}
\item $\Ps^{\gamma}(\mathbb{N})_{\M_{\I}} = \Ps^{\gamma}(\mathbb{N})_{\M_{\II}}$ and $(W_{\mathcal{M}_{\I},1})_1$ is empty or has a least element. 
\item $\Ps^{\gamma}(\mathbb{N})_{\M_{\I}} \subsetneq \Ps^{\gamma}(\mathbb{N})_{\M_{\II}}$ and there exists some $\delta = \delta_{\II}$ such that the following hold:
\begin{enumerate}
\item $\delta_{\II}$ is $<_L^{\mathcal{M}_{\II}}$-least such that there is an element of $\Ps^{\gamma}(\mathbb{N})_{\M_{\II}}\setminus \Ps^{\gamma}(\mathbb{N})_{\M_{\I}}$ definable over $L_{\delta_{\II}}^{\mathcal{M}_{\II}}$, and 
\item $(W_{\mathcal{M}_{\I},1})_2$ is empty or has a least element. 
\end{enumerate}
\item $\Ps^{\gamma}(\mathbb{N})_{\M_{\II}} \subsetneq \Ps^{\gamma}(\mathbb{N})_{\M_{\I}}$ and there exists some $\delta = \delta_{\I}$  such that the following hold:
\begin{enumerate}
\item $\delta_{\I}$ is $<_L^{\mathcal{M}_{\I}}$-least such that there is an element of $\Ps^{\gamma}(\mathbb{N})_{\M_{\I}}\setminus \Ps^{\gamma}(\mathbb{N})_{\M_{\II}}$ definable over $L_{\delta_{\I}}^{\mathcal{M}_{\I}}$,
\item $(W_{\mathcal{M}_{\I},1})_3$ is empty or has a least element. 
\end{enumerate}
\item Both $\Ps^{\gamma}(\mathbb{N})_{\M_{\I}} \not\subseteq \Ps^{\gamma}(\mathbb{N})_{\M_{\II}}$ and $\Ps^{\gamma}(\mathbb{N})_{\M_{\II}} \not\subseteq \Ps^{\gamma}(\mathbb{N})_{\M_{\I}}$, and there exist ordinals $\delta_{\I}$ and $\delta_{\II}$ for which the following hold:
\begin{enumerate}
\item $\delta_{\I}$ is $<_L^{\mathcal{M}_{\I}}$-least such that there is an element of $\Ps^{\gamma}(\mathbb{N})_{\M_{\I}}\setminus \Ps^{\gamma}(\mathbb{N})_{\M_{\II}}$ definable over $L_{\delta_{\I}}^{\mathcal{M}_{\I}}$,
\item $\delta_{\II}$ is $<_L^{\mathcal{M}_{\II}}$-least such that there is an element of $\Ps^{\gamma}(\mathbb{N})_{\M_{\II}}\setminus \Ps^{\gamma}(\mathbb{N})_{\M_{\I}}$ definable over $L_{\delta_{\II}}^{\mathcal{M}_{\II}}$,
\item $(W_{\mathcal{M}_{\I},1})_4$ is empty or has a least element. 
\end{enumerate}
\end{enumerate}
Here, recall that each $(W_{\mathcal{M}_{\I},1})_i$ is defined using $\A^{\gamma+1}_i$, and the ordinals $\delta_{\I}$ and $\delta_{\II}$ may appear in the definition. 

Now, observe that precisely one of the hypotheses in these four alternatives holds, and each of these is $\Delta^0_{1+\gamma+2}$, as pointed out in Definition \ref{DefinitionCalA}. Moreover, by directly inspecting the definitions, we see that each of the alternatives above has a $\Sigma^0_{1+\gamma+2}$ definition.  For instance, let us consider the second one, which begins with an existential quantification over $\delta = \delta_{\II}$. The first requirement of $\delta$ is that it is $<_L^{\mathcal{M}_{\II}}$-least such that there is an element of $\Ps^{\gamma}(\mathbb{N})_{\M_{\II}}\setminus \Ps^{\gamma}(\mathbb{N})_{\M_{\I}}$ definable over $L_{\delta_{\II}}^{\mathcal{M}_{\II}}$. This has the form
\begin{align} 
\exists x& \in L_{\delta_{\II}+1}^{\mathcal{M}_{\II}} \cap \Ps^{\gamma}(\mathbb{N})_{\M_{\II}}\, 
\forall y\in \Ps^{\gamma}(\mathbb{N})_{\M_{\I}}\, (x,y) \not \in D_\gamma \nonumber\\
& \wedge 
\forall \eta<\delta_{\II}\forall x \in L_{\eta+1}^{\mathcal{M}_{\II}} \cap \Ps^{\gamma}(\mathbb{N})_{\M_{\II}}\, \exists y\in \Ps^{\gamma}(\mathbb{N})_{\M_{\I}}\, (x,y)  \in D_\gamma. \label{CompCI2Case2a}
\end{align} 
Recalling that $D_\gamma$ is of complexity $\Sigma^0_\gamma$ if $\gamma$ is a limit, or $\Pi^0_{1 + \gamma -1}$ if $\gamma$ is a successor, in both cases we see that  \eqref{CompCI2Case2a} has complexity $\Sigma^0_{1+\gamma+2}$. The second requirement on $\delta_{\II}$ is that $(W_{\mathcal{M}_{\I},1})_2$ is empty or has a least element. Inspecting the definition of $\A^{\gamma+1}_2$in Definition \ref{DefinitionCalA}, we see that it is $\Sigma^0_{1+\gamma+1}$ (with $\delta$ as a free variable), and thus inspecting the definition of $(W_{\mathcal{M}_{\I},1})_2$ we see that it is also $\Sigma^0_{1+\gamma+1}$. Hence, the assertion that  $(W_{\mathcal{M}_{\I},1})_2$ is empty is $\Pi^0_{1+\gamma+1}$ and the assertion that $(W_{\mathcal{M}_{\I},1})_2$ has a least element is $\Sigma^0_{1+\gamma+2}$. We conclude that the second clause of $(C_{\I}2)$ is $\Sigma^0_{1+\gamma+2}$. The others are treated similarly.
Since $(C_{\I}2)$ is the disjunction of the four clauses, it is itself also $\Sigma^0_{1+\gamma+2}$.
\endproof 

From now on, we abuse notation by simply writing $\A^{\gamma+1}$, $W_{\M_{\II},k}$, and so on. The rest of the proof is closer to that in \cite{MS}.
Now comes a key fact that will start our inductive search for the illfounded model. 

\begin{lemma}
    Suppose conditions $C_{::}i$ are satisfied for $i=0,1,2$. \label{star1}
    Then there is a $\beta \in Ord^{\N} \setminus \A^{\gamma + 1}$ such that $\A^{\gamma + 1} \preceq_1 L^{\N}_{\beta}$. 
\end{lemma}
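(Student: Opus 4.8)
The plan is to exploit the fact that $\A^{\gamma+1}$ codes $L_\alpha$, the wellfounded part of $\N$, and that by Lemma~\ref{LemmaKPGammaFollowsFromC1} this $L_\alpha$ already satisfies $\KP^\gamma$; the point of condition $C_{::}2$ is to prevent $\Sigma_1$-statements over the two models from diverging in an illfounded way, which will force a $\Sigma_1$-elementary end-extension of $L_\alpha$ inside $\N$. First I would argue that since $\N$ is illfounded and $\A^{\gamma+1}$ is (an isomorphic copy of) exactly its wellfounded part, $L_\alpha$, there is a nonstandard ordinal $\beta_0\in\Ord^\N\setminus\A^{\gamma+1}$; this is immediate. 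The real content is to choose $\beta$ so that $\A^{\gamma+1}\preceq_1 L^\N_\beta$, i.e. that no new $\Sigma_1$ fact (with parameters coded in $\A^{\gamma+1}$) becomes true below $\beta$ that was not already true in $L_\alpha$.

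The key step is to use condition $C_{::}2$ applied to the illfounded model. Recall that $(W_{\M,1})_i$ collects the ordinals $\beta\in\Ord^{\M}$ witnessing, for some $\Delta_0$ formula $\phi$ and some parameter coded in $\A^{\gamma+1}$, that $\M$ thinks $\exists z\in L_\beta^{\M}\,\phi(z,x_1)$ while $\N$ thinks $\lnot\exists y\,\phi(y,x_2)$ — i.e. the ordinals at which $\M$ acquires a $\Sigma_1$-truth (over a parameter in the common part) that $\N$ lacks. Symmetrically $(W_{\N,1})_i$ collects the ordinals at which $\N$ acquires a $\Sigma_1$-truth that $\M$ lacks. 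Since $\M=L_{\alpha_n^\gamma}$ is wellfounded, the failure of condition $C::2$ (``$W$ has a least element or is empty'') cannot be witnessed on the $\M$ side; but $\N$ is illfounded, so if $(W_{\N,1})_i$ were nonempty it would, by Lemma~\ref{overspill}-type overspill, fail to have a least element, contradicting $(C_\N 1)$-style reasoning — more precisely, contradicting the relevant half of $C_{::}2$ imposed on $\N$. Hence $(W_{\N,1})_i$ is empty: \emph{no} $\Sigma_1$ statement with parameters from the common part $\A^{\gamma+1}$ holds somewhere in $\N$ without already holding in $\M$. Equivalently, for every $x$ coded in $\A^{\gamma+1}$ and every $\Delta_0$ formula $\phi$, if $L^\N_\beta\models\exists z\,\phi(z,x)$ for some ordinal $\beta$ of $\N$, then the witness already appears in $L_\alpha=\A^{\gamma+1}$, hence in every $L^\N_\beta$ with $\beta>\alpha$.

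Now I would pick any $\beta\in\Ord^\N$ that is nonstandard (so $\beta\notin\A^{\gamma+1}$, i.e. $\beta>\alpha$ in $\N$'s sense) and check $\A^{\gamma+1}\preceq_1 L^\N_\beta$. Upward persistence of $\Sigma_1$ formulas gives that anything true in $L_\alpha$ stays true in $L^\N_\beta$; for the reflection direction, suppose $L^\N_\beta\models\exists z\,\phi(z,\bar x)$ with $\bar x$ coded in $\A^{\gamma+1}$ and $\phi\in\Delta_0$. Since $L^\N_\beta\models\exists z\,\phi(z,\bar x)$ is itself a $\Sigma_1$ fact true somewhere in $\N$ over a common parameter, emptiness of $W_{\N,1}$ forces the witness $z$ to lie in $L_\alpha$, so $L_\alpha\models\exists z\,\phi(z,\bar x)$, i.e. $\A^{\gamma+1}\models\exists z\,\phi(z,\bar x)$. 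This is exactly $\Sigma_1$-elementarity. The main obstacle, and where I'd expect the bookkeeping to be heaviest, is making the ``overspill forces $(W_{\N,1})_i$ empty'' step precise: one has to check that a hypothetical element of $(W_{\N,1})_i$ generates, via the $<^\N_L$-ordering and the closure properties of $\N\models V=L$, a descending sequence or a witness set with no $<^\N_L$-least element of the kind ruled out by $C_{::}2$ (and to handle all four cases $H_i$ of the definition of $\A^{\gamma+1}$ uniformly, using the already-established complexity bookkeeping from Lemma~\ref{ComplexityC2}). Once that is in hand, the choice of $\beta$ and the verification of $\preceq_1$ are routine.
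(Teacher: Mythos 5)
The central difficulty of Lemma~\ref{star1} is not addressed by your argument, and the two steps you lean on are both problematic. First, the claim that condition $C_{::}2$ ``forces $W_{\N,1}$ to be empty by overspill'' is unjustified: $W_{\N,1}$ is a union, over $\Delta_0$ formulas and parameters from $\A^{\gamma+1}$, of upward-closed sets of $\N$-ordinals, each of which \emph{does} have a least element (namely the successor of the level of the $<^{\N}_{L}$-least witness of the relevant formula), so nonemptiness is perfectly compatible with having a least element; moreover $W_{\N,1}$ is not definable over $\N$, and Lemma~\ref{overspill} concerns the specific collection of non-standard codes, not sets of this kind. Indeed the paper's proof (following \cite{MS}) explicitly works in the case where $W_{\N,1}$ has a least element $\delta$ and uses the minimality of $\delta$ essentially.

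The fatal gap is the ``equivalently'' in your second paragraph: emptiness (or minimality) for $W_{\N,1}$ only tells you that a $\Sigma_1$ fact with parameters in $\A^{\gamma+1}$ witnessed in $\N$ is true in $\M$, i.e.\ witnessed somewhere in $\M \cong L_{\alpha^{\gamma}_{n}}$ --- \emph{not} that its witness appears inside $\A^{\gamma+1}\cong L_{\alpha}$, which is in general a proper initial segment of $\M$. Transferring truth from $\M$ down to $L_\alpha$ is exactly the hard part, and without it your conclusion that $\A^{\gamma+1}\preceq_1 L^{\N}_{\beta}$ for \emph{every} nonstandard $\beta$ is too strong and false in general (for instance, a $\Sigma_1$ sentence such as ``some level satisfies $\KP^{\gamma}_{n-1}$'' can hold in $L^{\N}_{\beta}$ for large nonstandard $\beta$ yet fail in $L_{\alpha}$). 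The paper closes this gap by a two-sided least-witness argument: assuming the conclusion fails for all nonstandard $\beta$, it chooses a descending sequence $\gamma_0>\gamma_1>\cdots$ below $\delta$ converging to the cut, formulas $\phi_i$ true in $\N$ but false in $\A^{\gamma+1}$ with $<^{\N}_{L}$-least witnesses $z_i$ forming a descending chain, uses minimality of $\delta$ (or emptiness) to see that each $\phi_i$ holds in $\M$, takes the $<^{\M}_{L}$-least witnesses $y_i$, and invokes the wellfoundedness of $\M$ to find $i<j$ with $z_j <^{\N}_{L} z_i$ while $y_j$ is not below $y_i$; combining $\phi_i$ and $\phi_j$ into a single $\Delta_0$ statement about a level containing a witness of $\phi_j$ but none of $\phi_i$ then produces an element of $W_{\N,1}$ below $\delta$, contradicting $C_{::}2$. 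Your outline contains no mechanism of this kind, so the key step of the lemma is missing.
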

\begin{proof}
This is proved by the same argument of \cite{MS}*{Claim 5.6}. We have included the proofs in the Appendix for the reader's convenience.
\end{proof}

The goal now is to define the remaining conditions such that if all the conditions $C_{::}i$ for 
$i = 0,1, \dots k$ hold, then $\A^{\gamma + 1}$ codes an initial segment satisfying $\KP^{\gamma}_k$. 
However, by definition of $T^{\gamma}_n$, such a (strict) initial segment cannot be a model of $\KP^{\gamma}_n$ and thus one of the conditions we are about to define is doomed to fail. We will prove this by 
induction and to that aim need the following induction hypothesis. We want $ \wedge_{i=0}^k C_{::}i$ 
to imply the existence of $\beta_1$ and $\beta_2$ such that \begin{align*}
    (\star_k) (\beta_1,\beta_2) : \qquad \beta_1 \in Ord^{\M_{\I}} \setminus \A_{\I}^{\gamma + 1} \land 
    \M_{\I} &\models L_{\beta_1} \text{ satisfies } \KP^{\gamma}_{k-1} \land
    \\\beta_2 \in Ord^{\M_{\II}} \setminus \A_{\II}^{\gamma + 1} \land 
    \M_{\II} &\models L_{\beta_2} \text{ satisfies } \KP^{\gamma}_{k-1} \land
    \\ L^{\M_{\I}}_{\beta_1} &\equiv_{k, \A} L^{\M_{\II}}_{\beta_2}, 
\end{align*}
where $\equiv_{k,\A^{\gamma + 1}}$ is written for $\Sigma_k$ elementary equivalence, with parameters from $\A^{\gamma + 1}$ and $z \in \A_{\I}^{\gamma + 1} \leftrightarrow \exists w \ (z,w) \in \A^{\gamma + 1}$, a $\Sigma^0_{1 + \gamma + 1}$ property.
By lemma~\ref{star1}, we know that so far, such a pair of ordinals of the respective models 
satisfying $(\star_1)$ exists. Indeed, we can take $\beta_1 = \alpha \in \M$ (from $L_{\alpha}$) and 
$\beta_2 = \beta \in \N$ (given by the lemma). For the sake of definiteness, we will always 
assume that Player \I\ is playing the wellfounded model; the situation in which Player \II\ is playing the wellfounded model is parallel. 

\begin{definition}[$S_k$ formulae]
    We say that a formula of set theory is $S_k$ if it is a Boolean combination of formulae of the form 
    $(\forall x \in z) \ \psi(x, \bar{y})$ where $\bar{y}$ are free variables and $\psi$ is $\Sigma_k$.
\end{definition}
By $\equiv_{k, \A^{\gamma + 1}}$, we mean the isomorphism relation with respect to $\Sigma_k$ formulae with parameters in $\A^{\gamma + 1}$ (we remind that we no longer specify about the $\A^{\gamma + 1}_i$, but simply write $\A^{\gamma + 1}$).
\begin{lemma}
    If $L^{\M_{\I}}_{\beta_1} \equiv_{k, \A^{\gamma + 1}} L^{\M_{\II}}_{\beta_2}$, then $L^{\M_{\I}}_{\beta_1}$ 
    and $L^{\M_{\II}}_{\beta_2}$ satisfies the same $S_k$-sentences with parameters from $\A^{\gamma + 1}$ substituted 
    for the free variables $z$ and $\bar{y}$. 
\end{lemma}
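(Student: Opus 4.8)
The plan is to reduce the claim to a single ``generalized bounded'' formula and then transfer the hypothesis $L^{\M_{\I}}_{\beta_1}\equiv_{k,\A^{\gamma+1}}L^{\M_{\II}}_{\beta_2}$ across the leading bounded quantifier, using that $\A^{\gamma+1}$ codes an \emph{honest} isomorphism of a transitive set.

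First I would observe that an $S_k$-sentence, with parameters from $\A^{\gamma+1}$ substituted for its free variables, is a Boolean combination of sentences of the shape $(\forall x\in c)\,\psi(x,\bar{d})$ with $\psi\in\Sigma_k$ and $c,\bar{d}$ parameters from $\A^{\gamma+1}$; since the truth value of a Boolean combination under a fixed assignment of parameters is determined by the truth values of the formulae it combines, it suffices to prove the equivalence for one such $\theta\equiv(\forall x\in z)\,\psi(x,\bar{y})$. Here a ``parameter from $\A^{\gamma+1}$'' is an element of the common wellfounded part $L_\alpha$, presented on the two sides by its $\M_{\I}$-copy and its $\M_{\II}$-copy, the pair lying in $\A^{\gamma+1}$; fix such copies $(c,\bar{d})$ and $(c',\bar{d}')$ for $z$ and $\bar{y}$.

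The key step is the transfer. By the (unlabelled) lemma following Definition~\ref{DefinitionCalA} --- which records that, in each of the cases $H_1,\dots,H_4$, the relevant $\A^{\gamma+1}_i$ codes exactly the isomorphism $\simeq$ on $L_\alpha$, and which was deduced from Lemmas~\ref{Dummy_intersect} and~\ref{LemmaC1Formalization} --- together with transitivity of $L_\alpha$, one gets: whenever $(c,c')\in\A^{\gamma+1}$, the $\M_{\I}$-elements of $c$ and the $\M_{\II}$-elements of $c'$ are matched bijectively by $\A^{\gamma+1}$, and each of them is again a parameter from $\A^{\gamma+1}$. Hence
\begin{align*}
L^{\M_{\I}}_{\beta_1}\models\theta(c,\bar{d})
&\leftrightarrow \forall x\,\bigl(\M_{\I}\models x\in c\ \rightarrow\ L^{\M_{\I}}_{\beta_1}\models\psi(x,\bar{d})\bigr)\\
&\leftrightarrow \forall (x,x')\in\A^{\gamma+1}\ \bigl(\M_{\I}\models x\in c\ \rightarrow\ L^{\M_{\I}}_{\beta_1}\models\psi(x,\bar{d})\bigr).
\end{align*}
For each fixed such pair $(x,x')$, the formula $\psi(x,\bar{d})$ is $\Sigma_k$ with all free variables filled by parameters from $\A^{\gamma+1}$, so the hypothesis $\equiv_{k,\A^{\gamma+1}}$ converts $L^{\M_{\I}}_{\beta_1}\models\psi(x,\bar{d})$ into $L^{\M_{\II}}_{\beta_2}\models\psi(x',\bar{d}')$; replaying the same chain of equivalences on the $\M_{\II}$ side then yields $L^{\M_{\II}}_{\beta_2}\models\theta(c',\bar{d}')$, and the reverse direction is symmetric. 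Assembling these over the Boolean combination gives the lemma.

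I expect the only delicate point to be precisely this bookkeeping: that bounded quantification over a parameter $c\in\A^{\gamma+1}$ is matched \emph{elementwise} across $\M_{\I}$ and $\M_{\II}$, with every element of $c$ itself a parameter from $\A^{\gamma+1}$, i.e.\ that $\A^{\gamma+1}$ is genuinely the graph of an isomorphism of the transitive set $L_\alpha$ rather than merely some relation. Once the earlier case analysis supplies this, the split into $H_1,\dots,H_4$ plays no further role and can be suppressed, exactly as the text does thereafter; no appeal to $\Pi_k$-preservation is needed, since the biconditional form of $\Sigma_k$-equivalence applied instance-by-instance already does the job.
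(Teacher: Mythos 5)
Your proposal is correct and follows essentially the same route as the paper: the paper's proof likewise rests on the transitivity of $\A^{\gamma+1}$ (so each element of a parameter is again a parameter from $\A^{\gamma+1}$, and the instance $\psi(x,\bar{y})$ is $\Sigma_k$ with parameters from $\A^{\gamma+1}$), after which the bounded universal quantifier is handled by the definition of satisfaction and instance-by-instance application of the $\equiv_{k,\A^{\gamma+1}}$ hypothesis. You merely spell out the elementwise matching across $\M_{\I}$ and $\M_{\II}$ and the reduction of Boolean combinations, both of which the paper leaves implicit.
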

\begin{proof}
    This is because $\A^{\gamma + 1}$ is transitive since then, given a formula of the form $(\forall x \in z) \ \psi(x, \bar{y})$ and $z, \bar{y} \in \A^{\gamma + 1}$ for any $x \in z$, the sentence $ \psi(x, \bar{y})$ is $\Sigma_k$ with parameters from $\A^{\gamma + 1}$. Then by definition of ``$\models$'' the claim follows easily.
\end{proof}

We are ready to move on and dive into the definitions of the generalized versions of 
the sets $W_{\M_{::},1}$. This time we search for non-standard $\Sigma_k$-definable subsets 
of each model ($1 < k$). Once again, either there are a lot of elements witnessing an infinite 
descending sequence and thus revealing the identity of the illfounded model, or their rarity implies 
the existence of ordinals satisfying $(\star_k)$.
\begin{align*}
    W^{ \beta_1, \beta_2}_{\M_{\I},k} = \qquad \Big\{ \beta \in \beta_1 \mid \exists (x_1,x_2) \in \A^{\gamma + 1},& \phi \in S_{k-1}, 
    \big[(\exists z \in L^{\M_{\I}}_{\beta} \ L^{\M_{\I}}_{\beta_1} \models \phi(z,x_1)) \land \\&(L^{\M_{\II}}_{\beta_1} \models \lnot\exists y \phi(y,x_2))\big]  \Big\},
\end{align*}
and $W^{ \beta_1, \beta_2}_{\M_{\II},k}$ is defined \emph{mutatis mutandis}.
Now let us define the remaining conditions involved in determining the 
winner of the game (as before we treat indexes $k > 1$). \begin{align*}
    (C_{\I}(1+k)) : \qquad \text{There exist }& \beta_1, \beta_2 \text{ such that } 
    (\star_{k-1})(\beta_1,\beta_2) 
    \\ &\land W^{ \beta_1, \beta_2}_{\M_{\I},k} \text{ has a least element or is empty}.  
    \\(C_{\II}(1+k)) : \qquad \text{There exist } \beta_1, \beta_2& \text{ such that } 
    (\star_{k-1})(\beta_1,\beta_2) 
    \\ &\land W^{ \beta_1, \beta_2}_{\M_{\II},k} \text{ has a least element or is empty}.
\end{align*}

\begin{lemma}
Conditions $(C_{\I}(1+k))$ and $(C_{\II}(1+k))$ are $\Sigma^0_{1 + \gamma + 2}$, for $1 < k$.
\end{lemma}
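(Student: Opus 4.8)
The plan is to run the same complexity analysis as in the proof of Lemma~\ref{ComplexityC2}, now additionally accounting for the predicate $(\star_{k-1})$ and, inside it, the $\Sigma_k$-elementary-equivalence clause. I would treat $(C_{\I}(1+k))$ (the case of $(C_{\II}(1+k))$ being symmetric) and split it into the disjunction of four clauses according to which of $H_1,\dots,H_4$ holds. Each $H_i$ is $\Delta^0_{1+\gamma+2}$ as recorded in Definition~\ref{DefinitionCalA}, and in the clauses for $H_2,H_3,H_4$ one prefixes an existential quantifier over the threshold ordinals $\delta_{\I},\delta_{\II}$ together with the assertion that they are $<_L^{\M_{::}}$-least with the relevant property --- an assertion shown to be $\Sigma^0_{1+\gamma+2}$ in the proof of Lemma~\ref{ComplexityC2}. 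After this reduction it suffices to bound a single clause, and throughout it we may treat $\A^{\gamma+1}$, hence its projections $\A^{\gamma+1}_{\I}$ and $\A^{\gamma+1}_{\II}$, as a fixed $\Sigma^0_{1+\gamma+1}$ relation on codes, as promised in the remark before Lemma~\ref{ComplexityC2}.

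The key observation is that every assertion \emph{internal} to one of the played term-models amounts to reading off whether a uniformly computable sentence belongs to the complete consistent theory that player is playing, and is therefore $\Delta^0_1$ in the play; this covers ``$\M_{::}\models L_{\beta_i}\text{ satisfies }\KP^{\gamma}_{k-1}$'', ``$\M_{\I}\models(L_{\beta_1}\models\psi(\bar p_1))$'' for any $\Sigma_k$ or $S_{k-1}$ formula $\psi$ and codes $\bar p_1$ of elements of $\M_{\I}$, and ``$\M_{\I}\models\exists z\in L_\beta\,(L_{\beta_1}\models\phi(z,x_1))$''. Granting this, $(\star_{k-1})(\beta_1,\beta_2)$ is a finite conjunction of $\Delta^0_1$-in-the-play assertions, of the clauses ``$\beta_i\in\Ord^{\M_{::}}\setminus\A^{\gamma+1}_{::}$'' --- each the complement of a $\Sigma^0_{1+\gamma+1}$ condition, hence $\Pi^0_{1+\gamma+1}$ --- and of ``$L^{\M_{\I}}_{\beta_1}\equiv_{k,\A^{\gamma+1}}L^{\M_{\II}}_{\beta_2}$'', which unwinds to a universal number-quantification $\forall(\bar p_1,\bar p_2)\,\forall\psi\in\Sigma_k$ over the implication $(\bar p_1,\bar p_2)\in\A^{\gamma+1}\to(\M_{\I}\models L_{\beta_1}\models\psi(\bar p_1)\leftrightarrow\M_{\II}\models L_{\beta_2}\models\psi(\bar p_2))$, i.e.\ a $\forall$-quantification over a $(\Sigma^0_{1+\gamma+1}\to\Delta^0_1)$-matrix, so it is $\Pi^0_{1+\gamma+1}$; hence $(\star_{k-1})$ is $\Pi^0_{1+\gamma+1}\subseteq\Delta^0_{1+\gamma+2}$. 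Likewise ``$\beta\in W^{\beta_1,\beta_2}_{\M_{\I},k}$'' is $\exists(x_1,x_2)\,\exists\phi\in S_{k-1}\,[(x_1,x_2)\in\A^{\gamma+1}\wedge(\Delta^0_1\text{-matrix})]$, hence $\Sigma^0_{1+\gamma+1}$.

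It follows that ``$W^{\beta_1,\beta_2}_{\M_{\I},k}$ is empty'' is $\Pi^0_{1+\gamma+1}$, and ``$W^{\beta_1,\beta_2}_{\M_{\I},k}$ has a $<^{\M_{\I}}$-least element'', namely $\exists\beta\,[\beta\in W^{\beta_1,\beta_2}_{\M_{\I},k}\wedge\forall\beta'(\beta'<^{\M_{\I}}\beta\to\beta'\notin W^{\beta_1,\beta_2}_{\M_{\I},k})]$, is of the form $\exists\beta\,[\Sigma^0_{1+\gamma+1}\wedge\Pi^0_{1+\gamma+1}]$ and so $\Sigma^0_{1+\gamma+2}$; thus the disjunction ``$W^{\beta_1,\beta_2}_{\M_{\I},k}$ has a least element or is empty'' is $\Sigma^0_{1+\gamma+2}$. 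Consequently each clause of $(C_{\I}(1+k))$ is $\exists\beta_1\,\exists\beta_2\,[(\star_{k-1})(\beta_1,\beta_2)\wedge(\text{that disjunction})]$, a number-quantified conjunction of a $\Delta^0_{1+\gamma+2}$ formula and a $\Sigma^0_{1+\gamma+2}$ formula, hence $\Sigma^0_{1+\gamma+2}$; prefixing the $\Sigma^0_{1+\gamma+2}$ quantifier over $\delta_{\I},\delta_{\II}$ and disjoining over $H_1,\dots,H_4$ keeps it $\Sigma^0_{1+\gamma+2}$, and by symmetry so is $(C_{\II}(1+k))$. I expect the only genuinely delicate point to be the bookkeeping around the four-case definition of $\A^{\gamma+1}$ together with the threshold ordinals, which is exactly why the argument should lean on Lemma~\ref{ComplexityC2} rather than reprove it; checking that none of the purely numerical quantifier alternations --- over $\beta'$, over $(\bar p_1,\bar p_2)\in\A^{\gamma+1}$, over $\psi$ --- escapes $\Sigma^0_{1+\gamma+2}$ is routine, using $\forall n\,\Pi^0_\xi=\Pi^0_\xi$ and the closure of $\Delta^0_{\xi+1}$ under finite Boolean combinations.
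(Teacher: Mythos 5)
Your proposal is correct and follows essentially the same route as the paper: the paper's proof is a direct inspection that defers to the four-case decomposition of Lemma~\ref{ComplexityC2} (treating $\A^{\gamma+1}$ as $\Sigma^0_{1+\gamma+1}$ with the threshold ordinals quantified existentially), and your write-up simply carries out that inspection explicitly, correctly computing $(\star_{k-1})$ as $\Pi^0_{1+\gamma+1}$, membership in $W^{\beta_1,\beta_2}_{\M_{\I},k}$ as $\Sigma^0_{1+\gamma+1}$, and hence the whole condition as $\Sigma^0_{1+\gamma+2}$.
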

\begin{proof}
This can be seen by a direct inspection of the definition. We state the precise definitions similarly to lemma~\ref{ComplexityC2}, distinguishing four cases according to whether $\A^{\gamma+1}$ is defined as $\A^{\gamma+1}_1$, $\A^{\gamma+1}_2$, $\A^{\gamma+1}_3$, or $\A^{\gamma+1}_4$. Each of these cases produces a $\Sigma^0_{1 + \gamma + 2}$ definition.
\end{proof}

\begin{lemma}
    Suppose that $\beta_1, \beta_2$ satisfy $(\star_k)$. Then 
    \begin{enumerate}
        \item $L_{\alpha} \preceq_{k} L_{\beta_1}$ and $\A^{\gamma + 1} \preceq_k L_{\beta_2}$;
        \item $L_{\alpha} \models \KP^{\gamma}_{k+1}$;
        \item There exists a descending sequence of $\N$-ordinals $(\gamma_i)_{i<\omega}$ converging 
        down to $\mathrm{Ord}^{\A^{\gamma + 1}}$ such that $L_{\gamma} \preceq_{k} L_{\beta_2}$.
    \end{enumerate}\label{star_kCons}
\end{lemma}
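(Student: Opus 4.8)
The plan is to follow the template of \cite{MS} for the analogous statement, keeping careful track of the four cases $H_1,\dots,H_4$ of Definition~\ref{DefinitionCalA} and of the extra axioms ``$\Ps^\gamma(\mathbb{N})$ exists'' and $\wo(\gamma)$. Recall that by our standing convention $\M_{\I}=\M$ is the wellfounded model and $\M_{\II}=\N$ the illfounded one, so that $L^{\M_{\I}}_{\beta_1}=L_{\beta_1}$ is a genuine level of the constructible hierarchy, $\A^{\gamma+1}$ codes $L_\alpha$ with $\alpha=\mathrm{wfo}(\N)$ and $\alpha\le\beta_1<\alpha_n^\gamma$, and $\beta_2$ is a nonstandard ordinal of $\N$. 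From $(\star_k)$ we have $L^{\M_{\I}}_{\beta_1},L^{\M_{\II}}_{\beta_2}\models\KP^\gamma_{k-1}$, so both are $(k-1)$--admissible and hence (Lemma~\ref{noParamSkolem}) carry a parameter-free $\Sigma_k$ Skolem function; this is the main technical device. The first observation is that $W^{\beta_1,\beta_2}_{\M_{\I},k}=W^{\beta_1,\beta_2}_{\M_{\II},k}=\emptyset$: membership of some $\beta$ would give an $S_{k-1}$ formula $\phi$ and a pair $(x_1,x_2)\in\A^{\gamma+1}$ with $L_{\beta_1}\models\exists z\,\phi(z,x_1)$ but $L^{\M_{\II}}_{\beta_2}\models\lnot\exists y\,\phi(y,x_2)$ (or the symmetric situation), and since over the $(k-1)$--admissible models an $S_{k-1}$ formula prefixed by an existential quantifier is $\Sigma_k$ (using $\Sigma_{k-1}$--collection), this contradicts the $\Sigma_k$--elementarity clause of $(\star_k)$.

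Granting this, I would establish part~(1), $L_\alpha\preceq_k L_{\beta_1}$ and $\A^{\gamma+1}\preceq_k L^{\M_{\II}}_{\beta_2}$, by the usual induction on $j\le k$ proving $L_\alpha\preceq_j L_{\beta_1}$: since $L_\alpha$ is a transitive initial segment of both models, the step $j\to j{+}1$ reduces to pulling a witness of a $\Sigma_{j+1}$ fact over $L_{\beta_1}$ with parameters in $L_\alpha$ down into $L_\alpha$. One transports the fact to $L^{\M_{\II}}_{\beta_2}$ via $\equiv_{k,\A^{\gamma+1}}$, takes the $\Sigma_k$ Skolem hull $H$ of $\A_{\II}^{\gamma+1}$ inside $L^{\M_{\II}}_{\beta_2}$, and checks that $H$ is wellfounded and collapses exactly onto $L_\alpha$ --- this last point is where the conditions enter, as they forbid any nonstandard ordinal of $\N$ from being $\Sigma_k$--definable over $L^{\M_{\II}}_{\beta_2}$ from parameters in $\A^{\gamma+1}$ (otherwise $(C_{\I}(1+k))$ or $(C_{\II}(1+k))$ fails), so that $H$, containing the transitive set $L_\alpha$ cofinally in its ordinals, collapses to it. This run-through is identical in each of the cases $H_1,\dots,H_4$, since $\A^{\gamma+1}$ codes the same transitive set $L_\alpha$ in all four; only the ambient level in which the Skolem hull is taken ($L_{\beta_1}$ versus $L^{\M_{\II}}_{\delta_{\II}+1}$, and so on) changes, and this does not affect the collapse.

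For part~(3) I would iterate an overspill argument in $\N$ in the spirit of Lemma~\ref{overspill}, Lemma~\ref{star1} being exactly the case $k=1$. From $\A^{\gamma+1}\cong L_\alpha\preceq_k L^{\M_{\II}}_{\beta_2}$ and the fact that $\N$ is an $\omega$--model of $V=L$ with wellfounded part $L_\alpha$, the collection of nonstandard $\delta<\beta_2$ with $L^{\M_{\II}}_\delta\preceq_k L^{\M_{\II}}_{\beta_2}$ has no $<^{\N}_L$--least element and is coinitial among the nonstandard ordinals of $L^{\M_{\II}}_{\beta_2}$; selecting through it a strictly decreasing $\omega$--sequence dropping below every prescribed nonstandard ordinal yields $(\gamma_i)_{i<\omega}$ converging down to $\mathrm{Ord}^{\A^{\gamma+1}}=\alpha$ with $L^{\M_{\II}}_{\gamma_i}\preceq_k L^{\M_{\II}}_{\beta_2}$ for all $i$. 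Part~(2) is then deduced from parts~(1) and~(3) as in \cite{MS}: from Lemma~\ref{LemmaKPGammaFollowsFromC1} we already know $L_\alpha\models\KP^\gamma$ (in particular $\wo(\gamma)$ and ``$\Ps^\gamma(\mathbb{N})$ exists''), so it remains to show that $\alpha$ is $\Sigma_{k+1}$--admissible (Lemma~\ref{charKPL}, Lemma~\ref{betterSepL}); but $L_\alpha\preceq_k L^{\M_{\II}}_{\beta_2}$ together with $L^{\M_{\II}}_{\gamma_i}\preceq_k L^{\M_{\II}}_{\beta_2}$ gives $L_\alpha\preceq_k L^{\M_{\II}}_{\gamma_i}$ for every $i$, and the fact that these $\gamma_i$ are cofinal from above at $\alpha=\mathrm{wfo}(\N)$ inside the illfounded model forces $\Sigma_{k+1}$ bounding for $L_\alpha$ (any $\Sigma_{k+1}$--definable unbounded map out of a bounded domain of $L_\alpha$ would, read along the $\gamma_i$, produce a descending chain of $\N$--ordinals below every nonstandard one, contradicting $\alpha=\mathrm{wfo}(\N)$). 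I expect the main obstacle to be precisely the bookkeeping in the cases $H_2,H_3,H_4$: when the two models disagree about $\Ps^\gamma(\mathbb{N})$ one must verify that the $\Sigma_k$ Skolem hulls of the common part do not accidentally absorb elements of $\Ps^\gamma(\mathbb{N})$ present in only one model, so that the collapse in part~(1) and the overspill in part~(3) still pin down $L_\alpha$; once this is checked, the admissibility and overspill machinery of \cite{MS} transfers verbatim with $\KP$ replaced by $\KP^\gamma$.
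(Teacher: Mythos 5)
Your proposal contains genuine gaps, and they sit exactly at the load-bearing points. In part (1), the collapse of your $\Sigma_k$ Skolem hull (taken inside $L^{\N}_{\beta_2}$) rests on the claim that no nonstandard ordinal of $\N$ is $\Sigma_k$--definable over $L^{\N}_{\beta_2}$ from parameters in $\A^{\gamma+1}$, ``otherwise $(C_{\I}(1+k))$ or $(C_{\II}(1+k))$ fails.'' This is not substantiated: those conditions only assert that certain $W$-sets have a least element or are empty for some pair satisfying $(\star_{k-1})$, and no argument is given (nor is one apparent) that your non-definability claim follows from them; in fact that claim is essentially equivalent to the conclusion $\A^{\gamma+1}\preceq_k L^{\N}_{\beta_2}$ you are trying to prove, so as used it is circular. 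The paper's proof takes the hull inside the \emph{wellfounded} side $L_{\beta_1}$, and the key fact is that $\alpha=\mathrm{Ord}^{\A^{\gamma+1}}$ is not $\Sigma_k$--definable over $L_{\beta_1}$ with parameters from $L_{\alpha}$. That fact is where the minimality clause of $T^{\gamma}_n$ and acceptability (Lemma~\ref{cardinalityKPL}) enter: since $L_{\alpha}\not\models\KP^{\gamma}_n$, there is over $L_{\alpha}$ a definable wellordering of $\Ps^{\gamma}(\mathbb{N})$ of order type $\alpha$; such an ordering cannot be an element of $\N$, yet a $\Sigma_k$ definition of $\alpha$ over $L_{\beta_1}$ would, via $\equiv_{k,\A^{\gamma+1}}$, make it definable over $L^{\N}_{\beta_2}$ and hence an element of $\N$. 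Your sketch never engages with this step (nor with why $\Ps^{\gamma}(\mathbb{N})^{L_\alpha}$ lies in the common part, i.e., Lemma~\ref{LemmaKPGammaFollowsFromC1}), and this is precisely the higher-order-specific content of the lemma; your opening observation that the $W$-sets are empty under $(\star_k)$ is correct but does no work towards it.

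The order in which you take (2) and (3) also breaks the argument. For (3) you assert that $\{\delta<\beta_2 \mid L^{\N}_{\delta}\preceq_k L^{\N}_{\beta_2}\}$ is coinitial in the nonstandard part ``from $\A^{\gamma+1}\preceq_k L^{\N}_{\beta_2}$ and $\alpha=\mathrm{wfo}(\N)$,'' but the overspill only works once you know this $\N$-definable set is unbounded in $\alpha$ on the standard side; in the paper that comes from (2): $(k{+}1)$--admissibility of $\alpha$ together with Lemma~\ref{charSepL} gives unboundedly many $\gamma<\alpha$ with $L_{\gamma}\preceq_k L_{\alpha}\preceq_k L^{\N}_{\beta_2}$, and then the absence of nonstandard members below some nonstandard $\delta$ would make $\alpha$ definable in $\N$. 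You instead derive (2) from (3), so the two parts feed on each other. Moreover your argument for (2) ends in a non-contradiction: an \emph{external} descending chain of $\N$-ordinals below every nonstandard ordinal always exists and contradicts nothing; what is needed (and what the paper does) is that a failure of $(k{+}1)$--admissibility yields, via Lemma~\ref{cardinalityKPL}, a map from $\Ps^{\gamma}(\mathbb{N})$ onto $\alpha$ that is $\Pi_k$--definable over $L_{\alpha}$, whose definition transfers through $\A^{\gamma+1}\preceq_k L^{\N}_{\beta_2}$ to produce an element of $\N$ from which $\N$ could recover its wellfounded part. The correct dependency is therefore: non-definability of $\alpha$ over $L_{\beta_1}$ first, then (1) by the hull-in-$L_{\beta_1}$ collapse, then (2), and only then (3).
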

\begin{proof}
This is proved by the argument of \cite{MS}*{Lemma 5.10}. We have included the proof in the Appendix for the reader's convenience. 
\end{proof}
\begin{lemma}
    For each a play of our game satisfying conditions $(C_{\I}i)$ and $(C_{\II}i)$ for all $0 \leq i < 1+k$, if $(C_{\I}(1+k))$ and $(C_{\II}(1+k))$ are satisfied as well, then there are $\beta_1$ and $\beta_2$ in $\M_{\I}$, $\M_{\II}$ satisfying ($\star_k$), for $k < n-1$. The proof also works for $k = n-1$, which leads to a contradiction showing that no play can satisfy all the conditions. \label{domino}
\end{lemma}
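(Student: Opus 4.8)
The plan is to prove Lemma~\ref{domino} by induction on $k$, chaining Lemmas~\ref{star1} and~\ref{star_kCons} in the manner of~\cite{MS}. We keep the standing assumption that exactly one of $\M_{\I},\M_{\II}$ is wellfounded, and --- as fixed before Definition~\ref{DefinitionCalA} --- assume Player~$\I$ plays the wellfounded model $\M=\M_{\I}=L_{\alpha^{\gamma}_n}$ and Player~$\II$ the illfounded model $\N=\M_{\II}$; write $\alpha=\mathrm{wfo}(\N)$, so that $\A^{\gamma+1}$ codes $L_\alpha$. The base case $k=1$ is exactly the observation recorded after Lemma~\ref{star1}: $(\star_1)$ holds with $\beta_1=\alpha$ (an ordinal of $\M_{\I}$ outside $\A^{\gamma+1}_{\I}$, with $L_\alpha\models\KP^{\gamma}$ by Lemma~\ref{LemmaKPGammaFollowsFromC1}) and $\beta_2=\beta$ the ordinal delivered by Lemma~\ref{star1}, where the $\Sigma_1$-equivalence clause unwinds to $\A^{\gamma+1}\preceq_1 L^{\N}_\beta$ and $\beta$ may be taken so that $L^{\N}_\beta\models\KP^{\gamma}$.

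For the inductive step, assume all $(C_{\I}i),(C_{\II}i)$ for $i<1+k$ together with $(C_{\I}(1+k)),(C_{\II}(1+k))$; these include the hypotheses of Lemma~\ref{domino} at stage $k-1$, so by induction $(\star_{k-1})$ holds via some $\beta_1,\beta_2$. Feeding this pair to Lemma~\ref{star_kCons} at index $k-1$ gives $L_\alpha\preceq_{k-1}L^{\M_{\I}}_{\beta_1}$, $\A^{\gamma+1}\preceq_{k-1}L^{\M_{\II}}_{\beta_2}$, $L_\alpha\models\KP^{\gamma}_{k}$, and a descending sequence of $\N$-ordinals $(\gamma_i)_{i<\omega}$ converging down to $\Ord^{\A^{\gamma+1}}$ with $L^{\N}_{\gamma_i}\preceq_{k-1}L^{\N}_{\beta_2}$. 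Now invoke $(C_{\I}(1+k))$ and $(C_{\II}(1+k))$: taking their witnessing pairs $<_L$-least (so a single pair $\beta_1,\beta_2$ serves both), we may assume $\beta_1,\beta_2$ witness $(\star_{k-1})$ and that $W^{\beta_1,\beta_2}_{\M_{\I},k}$ and $W^{\beta_1,\beta_2}_{\M_{\II},k}$ each have a least element or are empty. The crux is to upgrade this to: for a suitable such pair, \emph{both} $W$-sets are empty. On the illfounded side, if $W^{\beta_1,\beta_2}_{\M_{\II},k}$ is nonempty then by $(C_{\II}(1+k))$ it has a least element, which by an overspill argument (in the style of Lemma~\ref{overspill}) cannot be a nonstandard ordinal of $\N$ and hence codes an element of $\A^{\gamma+1}$; but then the $S_{k-1}$-discrepancy it witnesses has all its parameters, including the witness, in $\A^{\gamma+1}$, and the lemma relating $\equiv_{k-1,\A^{\gamma+1}}$ to agreement on $S_{k-1}$-sentences contradicts $L^{\M_{\I}}_{\beta_1}\equiv_{k-1,\A^{\gamma+1}}L^{\M_{\II}}_{\beta_2}$. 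The wellfounded side is handled by the bookkeeping of~\cite{MS}: $W^{\beta_1,\beta_2}_{\M_{\I},k}$ consists of genuine ordinals, and the freedom in choosing $\beta_1,\beta_2$ together with $L_\alpha\preceq_{k-1}L^{\M_{\I}}_{\beta_1}$ and the sequence $(\gamma_i)$ lets one also make this $W$-set empty while keeping $\beta_1\notin\A^{\gamma+1}_{\I}$.

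With both $W$-sets empty, $L^{\M_{\I}}_{\beta_1}$ and $L^{\M_{\II}}_{\beta_2}$ agree on all $\Sigma_k$-sentences with parameters from $\A^{\gamma+1}$ --- here the analysis of $S_k$-formulae and the acceptability of $L$ enter as in~\cite{MS} --- so $L^{\M_{\I}}_{\beta_1}\equiv_{k,\A^{\gamma+1}}L^{\M_{\II}}_{\beta_2}$; and $L_\alpha\models\KP^{\gamma}_{k}$ together with $L_\alpha\preceq_{k-1}L^{\M_{\I}}_{\beta_1}$ and Lemma~\ref{charKPL} give $L^{\M_{\I}}_{\beta_1},L^{\M_{\II}}_{\beta_2}\models\KP^{\gamma}_{k-1}$. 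This is $(\star_k)(\beta_1,\beta_2)$, completing the induction for $k<n-1$. The same argument at $k=n-1$ again produces $\beta_1,\beta_2$ with $(\star_{n-1})$, whence Lemma~\ref{star_kCons} yields $L_\alpha\models\KP^{\gamma}_{n}$; but if $\alpha=\mathrm{wfo}(\N)\geq\alpha^{\gamma}_n$ then $L_{\alpha^{\gamma}_n}$ is a proper initial segment of $\N$ which genuinely models $\KP^{\gamma}_n$, so $\N$ must recognize this, contradicting $\N\models T^{\gamma}_n$; hence $\alpha<\alpha^{\gamma}_n$ and $L_\alpha\not\models\KP^{\gamma}_n$, a contradiction. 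Thus no play can satisfy all of the conditions, which is the key fact behind Lemma~\ref{gamedef}.

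I expect the genuinely hard part to be the illfounded-side emptiness argument in the presence of $\Ps^{\gamma}(\mathbb{N})$: it must be run through each of the four cases $H_1$--$H_4$ of Definition~\ref{DefinitionCalA}, checking that the overspill lemma and the $S_{k-1}$-preservation remain available with parameters drawn from the case-dependent $\A^{\gamma+1}_i$; a secondary nuisance is arranging the $<_L$-least choices of $\beta_1,\beta_2$ so that they serve the $\I$- and $\II$-versions of $(C_{::}(1+k))$ simultaneously. The remaining structural steps are those of~\cite{MS}.
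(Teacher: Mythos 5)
Your induction scheme (base case from Lemma~\ref{star1}, inductive step combining $(\star_{k-1})$ with Lemma~\ref{star_kCons}) matches the paper's, but the crux of your inductive step is wrong. You claim that if $W^{\beta_1,\beta_2}_{\M_{\II},k}$ is nonempty then its least element must, ``by an overspill argument,'' be standard, i.e.\ lie in $\A^{\gamma+1}$, and from this you conclude that for a suitable pair both $W$-sets are empty. The truth is the opposite: the first step of the paper's proof shows, using exactly the $S_{k-1}$-preservation lemma and $L^{\M_{\I}}_{\beta_1}\equiv_{k-1,\A^{\gamma+1}}L^{\M_{\II}}_{\beta_2}$, that \emph{no} element of $W^{\beta_1,\beta_2}_{\N,k}$ lies in $\A^{\gamma+1}$, so a least element, when it exists, is necessarily nonstandard. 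Lemma~\ref{overspill} gives no contradiction here --- it concerns internal $<_L$-definable sets of nonstandard codes, not the externally defined class $W^{\beta_1,\beta_2}_{\N,k}$ --- and nothing forces this class to be empty: the condition $(C_{::}(1+k))$ is designed precisely to extract a least element from the illfounded player, not to rule the set out. Consequently your next step (``with both $W$-sets empty, $L_{\beta_1}$ and $L_{\beta_2}$ agree on all $\Sigma_k$-sentences over $\A^{\gamma+1}$, so the same pair witnesses $(\star_k)$'') has no support; in general the old pair is only $\Sigma_{k-1}$-equivalent over $\A^{\gamma+1}$ and will not witness $(\star_k)$, and the emptiness ``bookkeeping'' on the wellfounded side is likewise neither available nor needed (there the set trivially has a least element if nonempty).

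What the paper does instead is to use the least element $\delta$ of $W^{\beta_1,\beta_2}_{\N,k}$ (nonstandard, or else the set is empty) as the anchor of a minimal-witness argument: take the descending sequence $(\gamma_i)$ below $\delta$ given by clause~3 of Lemma~\ref{star_kCons} and rerun the wellfoundedness argument of Lemma~\ref{star1} one level up, comparing $<_L$-least witnesses in $\N$ with those in the wellfounded $\M$; if $L_\alpha\not\preceq_k L^{\N}_{\gamma_i}$ for every $i$, one manufactures an element of $W^{\beta_1,\beta_2}_{\N,k}$ below $\delta$ (or a nonempty set where it was empty), a contradiction. This yields some $i$ with $L_\alpha\preceq_k L^{\N}_{\gamma_i}\preceq_{k-1}L^{\N}_{\beta_2}$, and the pair witnessing $(\star_k)$ is the \emph{new} pair $(\alpha,\gamma_i)$, with the $\KP^{\gamma}_{k-1}$ clauses supplied by Lemma~\ref{star_kCons} (for $\alpha$) and by Lemma~\ref{boundingPrinciple} applied downward from $\beta_2$ (for $\gamma_i$) --- not, as you assert, by an upward transfer of admissibility to $\beta_1,\beta_2$ from $L_\alpha\preceq_{k-1}L_{\beta_1}$, which Lemma~\ref{charKPL} does not license. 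Your treatment of the final contradiction at $k=n-1$ is acceptable in spirit, but as written the inductive step does not go through.
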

\begin{proof}
   This is proved by the same argument as \cite{MS}*{Lemma 5.10}. We have included the proof in the Appendix, for the reader's convenience.
\end{proof}

\begin{proof}[Proof of lemma~\ref{gamedef}]
    Taking the model produced by the theory of $L_{\alpha_n^{\gamma}}$, since $\mathrm{Ord}^{\A} = \alpha \in \M \models Th(L_{\alpha_n^{\gamma}})$, $\alpha$ cannot be $n$-admissible (we can suppose $C_{\II}0,1$ has not failed). So by lemmata~\ref{domino} and~\ref{star_kCons}, there is a $k < n$ such that either $C_{\I}(1+k)$ or $C_{\II}(1+k)$ fails. Suppose $C_{\II}(1+k)$ is the first condition to fail and that $\I$ wins the game. Since $\forall i < 1+k$, all the conditions $C_{::}(i)$ are satisfied, this failure means that $\M_{\II}$ is illfounded. An analogous argument works when $C_{\I}(1+k)$ is the first condition to fail and $\II$ thus wins the game.
\end{proof}

Now we will modify the game, in order to still satisfy lemma~\ref{gamedef}, but with a $(\Sigma^0_{1 + \gamma + 2})_{n}$ 
game that we will call $G^{\gamma}_n$. First, we need to check both $(C_{\I}0), (C_{\I}1)$ and $(C_{\II}0), (C_{\II}1)$ before to begin with the rest of the conditions if we want our game to work as intended. We first check them for $\II$ so that she loses when the models are equal. In particular, this crucially allows us to define $\A^{\gamma+1}$.
Then we must test conditions $(C_{::}(k+2))$ 
after both conditions $(C_{::}(k+1))$ has been eventually verified to be satisfied, as unveiled in Lemmata~\ref{domino} and~\ref{star_kCons}. 
On the other hand, since only the player playing $\N$ can lose by such a condition, the order of $(C_{\I}(k+2))$ 
and $C_{\II}(k+2)$ does not matter. 

From these observations, the $(\Sigma^0_{1 + \gamma + 2})_{n}$ game $G^{\gamma}_{n}$ is defined by the condition depicted on figure~\ref{GameG1n}.

\begin{figure}[h]
    \begin{framed}
    \centering
    \begin{tabular}{llllllllll}
        Even:\ & $(C_{\II}0, 1)$ &       & $ (C_{\II}2, 3)$ &       &          &$ (C_{\II}n)$   &           & & \\
            &       &       &       &       & $\cdots$ &           \\
        Odd:\ &       & $(C_{\I}0, 1, 2)$ &       & $ (C_{\I}3, 4)$ &          &           \\
    \end{tabular}
    
\end{framed} 
    \caption{The game $G^{\gamma}_{n}$ for $n$ even.}\label{GameG1n}
\end{figure}

In contrast to this result, notice that Hachtman~\cite{Hachtman} result completes our proof for the case for $n=1$. He indeed analyzed the reverse-mathematical strength of $\Sigma^0_{1 + \gamma+ 3}$--$\Det$ which he showed to be equivalent to the existence of a wellfounded model of $\Pi_1$--$\mathsf{RAP}_{\gamma}$ ($\Pi_1$-Reflection to Admissible Principles). We have just shown that the strength of the principles $(\Sigma^0_{1 + \gamma+ 3})_n$--$\Det$ is growing fast between $\Pi_1$--$\mathsf{RAP}_{\gamma}$ and $\Pi_1$--$\mathsf{RAP}_{\gamma + 1}$. 
In \S\ref{SectionUpperBound} we shall show that $\KP^{\gamma} + \Sigma_n$--\textsc{separation} does prove these determinacy principles, establishing upper bounds for them.

In the next subsection, we show that as in the paper of Hachtman~\cite{Hachtman} and Montalbán and Shore~\cite{MSCons}, these determinacy principles, in addition to a weak base theory, prove the existence of wellfounded models of $\KP^{\gamma}_n$.

\subsection{Existence of $\beta$-models from determinacy}\label{SubsectionBetaModels}

Given a theory  $T$ in the language of second-order arithmetic or set theory,  we write $\beta(T)$ for the statement ``For every $X \subseteq \mathbb{N}$, there exists a countably coded $\beta$-model $M \ni X$ such that $M \models T$.'' 

In this section, we shall simultaneously generalize Theorem \ref{TheoremMSBeta} and strengthen Theorem \ref{notmPigammaDet} by showing that $(\Sigma^0_{1+\gamma+2})_{n+1}$, in addition to being unprovable in $\KP^{\gamma}_{n+1}$, proves the consistency of the latter, and indeed the existence of a $\beta$-model of that theory.  

To prove the theorem, we first need an appropriate version of theorem~\ref{notmPigammaDet}, namely the object of the second paper of Montalbán and Shore on the subject (\cite{MSCons}*{1.8, 3.1}), but with our adjustments towards its generalization. 

For the case $n=1$, we refer once again to the work of Hachtman~\cite{Hachtman}.

\begin{theorem}\label{beta_inequality}
Let $2 \leq n < \omega$, and let $\alpha^{\gamma,*}_{n}$ be the smallest limit of infinitely many ordinals $\alpha$ such that $L_{\alpha} \models \KP^{\gamma}_n$. Then,
\begin{enumerate}
\item\label{beta_inequality1} $(\Sigma^0_{1 + \gamma + 2})_n\text{--}\Det \vdash (\alpha^{\gamma}_{n}$ exists$)$; but
\item\label{beta_inequality2} $L_{\alpha^{\gamma,*}_n}\not\models (\Sigma^0_{1 + \gamma + 2})_n\text{--}\Det$.
\end{enumerate} 
\end{theorem}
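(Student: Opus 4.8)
\textbf{Proof plan for Theorem \ref{beta_inequality}.}

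The two parts of this theorem are the direct generalizations of the two "consistency strength" facts that Montalbán and Shore prove in \cite{MSCons}, adapted to the presence of the power-set axioms "$\Ps^\gamma(\mathbb{N})$ exists'' and the bookkeeping ordinal $\gamma$. I would handle them separately, each leaning on the machinery already built up in \S\ref{SubsectionHOA_LB}.

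\emph{Part \eqref{beta_inequality1}: determinacy proves $\alpha^\gamma_n$ exists.} The idea is to take a Friedman-style game whose payoff is $(\Sigma^0_{1+\gamma+2})_n$ and which forces one of the two players to produce (the theory of) $L_{\alpha^\gamma_n}$, except that now, rather than witnessing non-existence of a model, a winning strategy will be usable to \emph{build} $L_{\alpha^\gamma_n}$ as a genuine set (equivalently, to see that the ordinal $\alpha^\gamma_n$ exists). Concretely, I would re-use essentially the game $G^\gamma_n$ of the previous subsection — or a mild variant of it — and argue: Player $\II$ has no winning strategy (Player $\I$ can play $\mathrm{Th}(L_{\alpha^\gamma_n})$ whenever that object exists), so by the hypothesis $(\Sigma^0_{1+\gamma+2})_n\text{--}\Det$ Player $\I$ has one; then, as in the proof of Theorem \ref{notmPigammaDet}, running $\I$'s strategy against the copying strategy of $\II$ yields a computable-in-$\sigma$ presentation of $\mathrm{Th}(L_{\alpha^\gamma_n})$, and from that theory (being the theory of a wellfounded term-model, by the conditions $C_{::}i$) one extracts the ordinal $\alpha^\gamma_n$ itself. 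Some care is needed to see that "$\alpha^\gamma_n$ exists'' is exactly what can be read off: one wants the game to also force $\I$'s model to be $\omega$-founded and $\in$-founded up to the relevant level, which is precisely what conditions $(C_{::}0)$ and the iterated $(C_{::}(1+k))$ already guarantee. This step should go through by citing the analysis of \S\ref{SubsectionHOA_LB} and the argument of \cite{MSCons}*{1.8}, with the observation that the power-set axioms only change which minimal model $T^\gamma_n$ picks out, not the game-theoretic extraction.

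\emph{Part \eqref{beta_inequality2}: $L_{\alpha^{\gamma,*}_n}$ fails $(\Sigma^0_{1+\gamma+2})_n\text{--}\Det$.} Here I would argue by contradiction: inside $L_{\alpha^{\gamma,*}_n}$, the game $G^\gamma_n$ cannot be won by $\II$, so if determinacy held there then $\I$ would have a winning strategy $\sigma\in L_{\alpha^{\gamma,*}_n}$, which — since $L_{\alpha^{\gamma,*}_n}$ is a $\beta$-model — is genuinely winning. But then, as above, $\sigma$ would compute $\mathrm{Th}(L_{\alpha^\gamma_n})$, hence $\alpha^\gamma_n$ would be an ordinal below $\alpha^{\gamma,*}_n$ with $\sigma\in L_{\alpha^\gamma_n}$; Tarski undefinability then gives the contradiction exactly as in the proof of Theorem \ref{notmPigammaDet}. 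The point of taking $\alpha^{\gamma,*}_n$ to be the least \emph{limit of infinitely many} ordinals $\alpha$ with $L_\alpha\models\KP^\gamma_n$ (rather than just the least such $\alpha$) is that this is the minimal level at which enough copies of the relevant admissible structures are available to run the $n$-fold nested argument of Lemma \ref{domino}; this matches the role of the analogous ordinal in \cite{MSCons}*{3.1}, and I would spell out that $L_{\alpha^{\gamma,*}_n}$ still satisfies $\KP^\gamma_n$ (indeed $\KP^\gamma$ plus $\Sigma_n$--collection and $\Delta_n$--separation) so that the game $G^\gamma_n$ is available there with the correct complexity.

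\emph{Main obstacle.} The delicate point is \emph{not} the game construction — that is inherited wholesale from \S\ref{SubsectionHOA_LB} — but rather getting the two target ordinals exactly right and verifying that the nested-differences argument of Lemmata \ref{star_kCons} and \ref{domino} can be run \emph{inside} $L_{\alpha^{\gamma,*}_n}$ (for part \eqref{beta_inequality2}) and \emph{forced by} a winning strategy (for part \eqref{beta_inequality1}). In particular one must check that the descending sequences of $\N$-ordinals produced in Lemma \ref{star_kCons}(3) remain available, and that the "limit of infinitely many models'' clause is both necessary (otherwise $\I$ could not be forced all the way to $\alpha^\gamma_n$) and sufficient; this is where the power-set axioms require the extra verification, via Lemma \ref{cardinalityKPL}, that passing to $\Ps^\gamma(\mathbb{N})$ does not disturb the wellfoundedness bookkeeping. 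I expect this to be a matter of careful transcription of the $\Z_2$ arguments rather than a genuinely new idea, but it is where the proof must be written out in detail.
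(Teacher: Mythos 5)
There are genuine gaps in both parts, and in both cases the fix is not a transcription issue but a different game. For part \eqref{beta_inequality1}, your plan is circular: you rule out a winning strategy for Player $\II$ in $G^{\gamma}_n$ by letting $\I$ play $Th(L_{\alpha^{\gamma}_n})$ ``whenever that object exists,'' but existence of $\alpha^{\gamma}_n$ is exactly what is to be proved; under the hypothesis (for contradiction) that it fails, that counter-play is unavailable, and the extraction of $Th(L_{\alpha^{\gamma}_n})$ from a strategy for $\I$ also rests on Lemma \ref{gamedef}, whose proof presupposes $L_{\alpha^{\gamma}_n}$. Moreover, even granting that a winning strategy computes the \emph{theory} of $L_{\alpha^{\gamma}_n}$ as a real, over the weak base theory this does not yield the \emph{ordinal} $\alpha^{\gamma}_n$ (a wellfounded model): the paper never decodes a theory into a wellfounded set. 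Instead it builds a different game whose players extend the weaker theory $\bar{T}^{\gamma}$ (only $\KP^{\gamma}$, plus ``no level models $\KP^{\gamma}_{\infty}$''), with the additional rule $\M_{\I}\neq\M_{\II}$, the new elementary-end-extension conditions $(C_{::}1\text{new})$, and the $\infty$-versions $(C_{::}(1+k)\infty)$; determinacy then forces, via $\star_{n-1}$ and Lemma \ref{star_kCons} applied to the common wellfounded part $\A^{\gamma+1}$, an actual level $L_{\delta}\models\KP^{\gamma}_n$ (see Remark \ref{RemarkWS}). That wellfounded-part mechanism, not theory-decoding, is what produces the ordinal.

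For part \eqref{beta_inequality2}, reusing $G^{\gamma}_n$ cannot work: since $\alpha^{\gamma}_n<\alpha^{\gamma,*}_n$, the real $Th(L_{\alpha^{\gamma}_n})$ \emph{is} an element of $L_{\alpha^{\gamma,*}_n}$, so the constant strategy playing it is a winning strategy for $\I$ lying inside $L_{\alpha^{\gamma,*}_n}$ by Lemma \ref{gamedef}; hence $G^{\gamma}_n$ is determined there and no contradiction arises. Your Tarski step also does not close: knowing that $\sigma$ computes $Th(L_{\alpha^{\gamma}_n})$ only shows $\sigma\notin L_{\alpha^{\gamma}_n}$, which is perfectly consistent with $\sigma\in L_{\alpha^{\gamma,*}_n}$. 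The paper therefore changes the target theory to $T^{\gamma,*}_n$ (``$V=L$ plus the levels satisfying $\KP^{\gamma}_n$ are unbounded but finite below any ordinal''), whose unique wellfounded $\omega$-model is $L_{\alpha^{\gamma,*}_n}$ itself, and adds the modified condition $(C_{::}1\text{new}*)$ to account for the $\KP^{\gamma}_n$-levels sitting below the target; a winning strategy then computes $Th(L_{\alpha^{\gamma,*}_n})$, and Tarski plus the $\beta$-model property of $L_{\alpha^{\gamma,*}_n}$ gives the contradiction. Relatedly, your explanation of why $\alpha^{\gamma,*}_n$ is taken to be a limit of infinitely many $\KP^{\gamma}_n$-levels (``enough copies to run the nested argument'') is not the actual reason: the unboundedness clause is what pins $L_{\alpha^{\gamma,*}_n}$ down as the minimal wellfounded model of the new theory, which is what the undefinability argument needs.
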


Essentially both points of theorem~\ref{beta_inequality} derive from the construction of the proof of theorem~\ref{notmPigammaDet}.
Therefore, even though most of the useful rules have already been discussed, we need to manipulate them with care like in~\cite{MSCons}. However, we shall need to point out some additional facts about the construction which were not needed in  \cite{MSCons} but which will be needed for \S\ref{SectionReflection}.

For clarity, we first sketch the strategy and then sketch the proofs of this offspring of our previous section.

The initial technique was to build a game so that a winning strategy would compute the theory of $L_{\alpha^{\gamma}_n}$ ($n \geq 2$). See figure~\ref{tab original game}. Then by Tarski's undefinability of truth, such a game can't be determined in the model $L_{\alpha^{\gamma}_n}$. 

\begin{figure}
    \centering
    \begin{tabular}{ c c|c } 
        & winning condition & winning condition \\
        & for \II & for \I \\
        \hline

        & $\lnot C_{\I}0 \lor$ & \\ 
        $A_0$ & $[C_{\II}0 \land C_{\II}1]$ & \\

        &  & $C_{\I}0$ \\
        $A_1$ &  & $\land C_{\I}1$ \\ 
        &  & $\land C_{\I}2$ \\ 

        & $\vdots$ & \\
        & & $\vdots$ \\

        & $(C_{\II}1 + (2j - 1))$ & \\
        $A_{2j}$ & $\land (C_{\II}(1 + 2j))$ & \\

        & & $(C_{\I}(1 + 2j))$ \\
        $A_{2j + 1}$ & & $\land (C_{\I}1 + (2j + 1))$ \\

        & $\vdots$ & \\
        & & $\vdots$ \\

        $A_{n-1}$& $(C_{\II}n)$ & \\

        \hline
    \end{tabular}
    \caption{The game of theorem~\ref{notmPigammaDet} for even $n$}\label{tab original game}
\end{figure}

Now, we build a $(\Sigma^0_{1 + \gamma + 2})_n$ game so that its determinacy implies $\alpha^{\gamma}_n$ to exist.

We proceed by contradiction. 
We show that if $\alpha^{\gamma}_n$ does not exist, we can construct a $(\Sigma^0_{1 + \gamma + 2})_n$ game  that cannot be determined. 
By ``$\alpha^{\gamma}_n$ does not exist'', we mean there are no ordinal $\beta$ so that $L_{\beta} \models \KP^{\gamma}_n$. 
In particular, from a real-world perspective, every wellorder is embeddable in an initial segment of $\alpha^{\gamma}_n$. 

Here’s what we want to unfold: if a winning strategy belongs to some $L_{\alpha}$ ($\alpha \in Ord$ ``$< \alpha_n^{\gamma}$''), then we want to prove that $Th(L_{\alpha})$ is an appropriate counter strategy to this allegedly ``winning'' strategy. 
Hence, there can be no winning strategy. 
This will follow in particular because $Th(L_{\alpha})$ cannot be computable from such a winning strategy, echoing the insights of theorem~\ref{notmPigammaDet}.

Here, by supposing $\alpha^{\gamma}_n$ does not exist, we uncover two pivotal outcomes: first, that the set of $\alpha$ so that every element of $L_{\alpha}$ is definable from $\Ps^{\gamma}({\mathbb{N}})^{L_{\alpha}}$ is unbounded; second, that it is sufficient to use a $(\Sigma^0_{1 + \gamma + 2})_n$ winning condition to produce such an undetermined game. 
The conditions that this game should satisfy are as follows. 
\begin{enumerate}
    \item Each player will have to play complete and consistent theories extending some theory $\bar{T}^{\gamma}$, leading to $\omega$--term-models;
    \item If $\M_{\I}= \M_{\II}$, then \I\ wins;
    \item If $\M_{\I}$ is wellfounded, then \I\ wins if it is an extension of the wellfounded part of $\M_{\II}$;
    \item Vice-versa for $\II$.
\end{enumerate}
Thus, we will have to use the same techniques to compare such models as in the previous section.

\begin{proof}[Sketch of the proof of theorem~\ref{beta_inequality}.\ref{beta_inequality1}]
Let $\bar{T}^{\gamma}$ be the theory \begin{align*}
    \KP^{\gamma}+ V = L + \forall \beta \in\Ord (L_{\beta} \not \models \KP^{\gamma}_{\infty}). 
\end{align*}
The last condition is equivalent to asking for $L_{\delta}$ to be injectable in its highest cardinal $\omega_{\alpha}$ for $\alpha \leq \gamma$, provably in $L_{\delta + 1}$, for every ordinal $\delta$ (see lemma~\ref{cardinalityKPL}).

First let us prove that such a game cannot be determined if $\alpha^{\gamma}_n$ does not exist. 
Set \begin{align*}
    Y = \{\alpha \mid L_{\alpha} \models \bar{T}^{\gamma} \text{ and every member of $L_{\alpha}$ is definable in $L_{\alpha}$} \}.
\end{align*}
Observe that if $\alpha^{\gamma}_n$ does not exist, this set is unbounded. For otherwise, put $\delta = \sup Y$, $\alpha$ the least admissible ordinal greater than $\delta$ and $\M$ the elementary sub-model of $L_{\alpha}$ containing its definable elements. Since $\alpha^{\gamma}_n$ does not exist, by using lemma~\ref{cardinalityKPL}, we show that $\delta+ 1 \subseteq \M$ and so does its Mostowski collapse: Moreover it is admissible, so it is $L_{\alpha}$, showing that $\alpha \in Y$ for the desired contradiction.

Suppose $\I$ has a winning strategy $\sigma$. Consider $\alpha \in Y$ so that $\sigma \in L_{\alpha}$. We can check that if $\II$ plays $Th(L_{\alpha})$ against $\sigma$, she wins. The situation is symmetric when we suppose $\II$ has a winning strategy (cf.~also the proof in \cite{MSCons}).

We define the following $\Sigma^0_{1 + \gamma + 2}$ conditions.
\begin{align*}
    &(C_{\I}0): \quad \M_{\I} \models \bar{T}^{\gamma} \land \M_{\I} \text{ is an $\omega$--model } \\
    \\
    &(C_{\I}1): \quad (\Ps^{\gamma}(\mathbb{N})_{\M_{\I}} \setminus \Ps^{\gamma}(\mathbb{N})_{\M_{\II}} \neq \emptyset) 
    \\ & \qquad  \qquad \rightarrow \Ps^{\gamma}(\mathbb{N})_{\M_{\I}} \setminus \Ps^{\gamma}(\mathbb{N})_{\M_{\II}} \text{ has a $<^{\M_{\I}}_{L}$--least element};\\
    \\
    &(C_{\I}1\text{new}): (\Ps^{\gamma+1}(\omega)_{\M_{\II}} \subseteq \Ps^{\gamma+1}(\omega)_{\M_{\I}}) \rightarrow \exists \beta \in Ord^{\M_{\I}} \setminus \A^{\gamma + 1} \ (\M_{\II} \preceq_n L_{\beta}^{\M_{\I}});\\
    \\
    &(C_{\I}2) : \quad W_{\M_{\I},1} \text{ has a least element or is empty}.\\
    \\
    &(C_{\I}(1+k)\infty) : \text{There exist } \beta_1, \beta_2 \text{ such that } 
    (\star_{k-1})(\beta_1,\beta_2) 
    \\ &\qquad \qquad \qquad \land W^{ \beta_1, \beta_2}_{\M_{\I},k} \text{ has a least element or is empty},
\end{align*}
for $k > 1$. Here, the classes $\A^{\gamma+1}$ and $W_{\M_{\I},1}$ are defined as in the proof of theorem~\ref{notmPigammaDet} in a $\Sigma^0_{1 + \gamma + 1}$ way. The conditions $(C_{\I}(1+k)\infty)$ are the same as the ones of theorem~\ref{notmPigammaDet}, but we allow $\beta_1, \beta_2$ to take the value $\infty$, which we interpret as the class of the ordinals of the models and where $L_{\infty}$ is the entire model. This is necessary since at early stages of checking the winning conditions of the game, we can't assume yet that the models are incomparable (see figure~\ref{tab game beta}). The conditions $C_{\II}$ are defined dually. 

\begin{lemma}\label{LemmaBetaModelsComplexityCInew}
Each of conditions $(C_{\I}i)$, $(C_{\II}i)$, $(C_{\I}1\text{new})$, and $(C_{\II}1\text{new})$ is $\Sigma^0_{1+\gamma+2}$.
\end{lemma}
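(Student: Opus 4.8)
The plan is to prove the lemma by a direct inspection of the definitions, parallel to the complexity computations of Lemmas~\ref{C0}, \ref{LemmaC1Formalization} and~\ref{ComplexityC2}; the only condition requiring a \emph{genuinely new} argument is $(C_{\I}1\text{new})$, and its dual $(C_{\II}1\text{new})$.

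For $(C_{\I}0)$, $(C_{\I}1)$, $(C_{\I}2)$ and the $(C_{\I}(1+k)\infty)$ with $k>1$ (and symmetrically for $\II$) I would observe that the classes $\A^{\gamma+1}$ and $W^{\beta_1,\beta_2}_{\M_{::},k}$ are defined exactly as in the proof of Theorem~\ref{notmPigammaDet}, so that the bounds obtained there transfer verbatim. Concretely: $(C_{\I}0)$ is the conjunction of the $\Pi^0_1$ statement that the play is a complete, consistent, definably Henkin extension of $\bar{T}^{\gamma}$ with the $\Sigma^0_2$ statement that $\M_{\I}$ is an $\omega$-model, hence $\Sigma^0_3\subseteq\Sigma^0_{1+\gamma+2}$ since $\gamma\geq 1$; $(C_{\I}1)$ is handled by the formalization of Lemma~\ref{LemmaC1Formalization}, its premise being $\Sigma^0_{1+\gamma+1}$ by the count of Lemma~\ref{C0} and its conclusion (the existence of a $<^{\M_{\I}}_{L}$-least element of $\C^{\gamma}_{\I}\setminus\C^{\gamma}_{\II}$) being stated via $D_{\gamma}$ or $D_{\gamma+1}$ exactly as in~\eqref{eqDefC1}; and $(C_{\I}2)$ together with the $(C_{\I}(1+k)\infty)$ for $k>1$ are written, as in Lemma~\ref{ComplexityC2}, as a disjunction over the four alternatives $H_1,\dots,H_4$ of Definition~\ref{DefinitionCalA}, each alternative using the corresponding $\A^{\gamma+1}_i$ and producing a $\Sigma^0_{1+\gamma+2}$ formula. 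Allowing the value $\infty$ for $\beta_1,\beta_2$ in $(C_{\I}(1+k)\infty)$ — interpreted as the ordinals of the model, with $L_{\infty}$ the whole model — merely adds finitely many disjuncts to each clause and leaves the count unchanged.

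For $(C_{\I}1\text{new})$ I would write it as an implication $A\to B$. The premise $A$, namely $\Ps^{\gamma+1}(\omega)_{\M_{\II}}\subseteq\Ps^{\gamma+1}(\omega)_{\M_{\I}}$, is by the analysis of Lemma~\ref{C0} the negation of a $\Sigma^0_{1+\gamma+1}$ statement, hence $\Pi^0_{1+\gamma+1}$; at any rate its negation is $\Sigma^0_{1+\gamma+2}$. The conclusion $B$ reads $\exists\beta\in Ord^{\M_{\I}}\setminus\A^{\gamma+1}\ (\M_{\II}\preceq_n L^{\M_{\I}}_{\beta})$. Here $\beta\notin\A^{\gamma+1}$ is $\Pi^0_{1+\gamma+1}$, since $\beta\in\A^{\gamma+1}_{\I}$ is the $\Sigma^0_{1+\gamma+1}$ property $\exists w\,(\beta,w)\in\A^{\gamma+1}$ already isolated in the discussion of $(\star_k)$. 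The crux is to express $\M_{\II}\preceq_n L^{\M_{\I}}_{\beta}$ at complexity $\Pi^0_{1+\gamma+1}$: one quantifies universally over codes for $\Sigma_n$ formulas and over tuples of term-codes, matches the parameters of $\M_{\II}$ with those of $L^{\M_{\I}}_{\beta}$ by means of $D_{\gamma+1}$ — which, under $(C_{::}0)$, $(C_{::}1)$ and $A$, faithfully captures the isomorphism relation by Lemma~\ref{Dummy_intersect} — and asserts that the matched $\Sigma_n$ sentences are decided identically by the played theories $T_{\II}$ and $T_{\I}$ (the latter relativized to $L_{\beta}$). Since $\M_{\I},\M_{\II}$ are term-models, satisfaction of a sentence of fixed complexity in them is a lookup in the relevant complete played theory and so is $\Delta^0_1$ in the run; the only quantifier cost beyond a block of number quantifiers is therefore that of $D_{\gamma+1}$, which is $\Pi^0_{1+\gamma}$. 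Hence $\M_{\II}\preceq_n L^{\M_{\I}}_{\beta}$ is $\Pi^0_{1+\gamma+1}$, so $B=\exists\beta[\Pi^0_{1+\gamma+1}]$ is $\Sigma^0_{1+\gamma+2}$, and the implication $A\to B$, being a disjunction of two $\Sigma^0_{1+\gamma+2}$ formulas, is $\Sigma^0_{1+\gamma+2}$ (strictly speaking $B$ should again be split over $H_1,\dots,H_4$, each disjunct bounded the same way).

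The main obstacle I anticipate is making rigorous the claim that $\M_{\II}\preceq_n L^{\M_{\I}}_{\beta}$ is faithfully and economically captured by the played theories together with $D_{\gamma+1}$: one must verify that the substructure part of $\preceq_n$ — that every element of $\M_{\II}$ has an isomorphic copy in $L^{\M_{\I}}_{\beta}$ — reduces, using $\M_{\II}\models\bar{T}^{\gamma}$ (so that each element of $\M_{\II}$ is definable from $\Ps^{\gamma}(\mathbb{N})_{\M_{\II}}$) and the hypothesis $A$, to the corresponding statement about power-set objects, so that $D_{\gamma+1}$ alone suffices and the whole assertion stays $\Pi^0_{1+\gamma+1}$; and that the dependence on $n$ is genuinely harmless, $n$ being a fixed standard integer and $\Sigma_n$-satisfaction in a term-model costing nothing beyond membership in a complete theory. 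Granting this, the remaining bookkeeping merely duplicates computations already carried out for Theorem~\ref{notmPigammaDet} and introduces no new ideas.
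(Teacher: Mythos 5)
Your proposal is correct and follows essentially the same route as the paper: reuse the complexity computations from the proof of Theorem~\ref{notmPigammaDet} for the old conditions, and bound $(C_{\I}1\text{new})$ as the disjunction of the $\Sigma^0_{1+\gamma+2}$ statement $\Ps^{\gamma+1}(\omega)_{\M_{\II}} \not\subseteq \Ps^{\gamma+1}(\omega)_{\M_{\I}}$ with an existential-over-$\beta$ formula whose matrix ($\beta \notin \A^{\gamma+1}$ together with the $\Sigma_n$-elementarity read off the played theories via the $\Sigma^0_{1+\gamma+1}$ relation $\A^{\gamma+1}$, or equivalently your $D_{\gamma+1}$-matching) is $\Pi^0_{1+\gamma+1}$. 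The only quibble is your intermediate claim that the premise is $\Pi^0_{1+\gamma+1}$: since the inclusion concerns $\Ps^{\gamma+1}$ rather than $\Ps^{\gamma}$, it is the negation of a $\Sigma^0_{1+\gamma+2}$ formula, hence only $\Pi^0_{1+\gamma+2}$, but as you only use that its negation is $\Sigma^0_{1+\gamma+2}$, the count is unaffected.
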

\proof
This mostly follows from the proof of Theorem~\ref{notmPigammaDet}, as most of the conditions are the same. Let us verify condition $(C_{\I}1\text{new})$. 

As in Lemma \ref{C0}, the formula $\Ps^{\gamma+1}(\omega)_{\M_{\II}} \not\subseteq \Ps^{\gamma+1}(\omega)_{\M_{\I}}$ is $\Sigma^0_{1+\gamma+2}$. Since $\A^{\gamma+1}$ is $\Sigma^0_{1+\gamma+1}$, the consequent of the implication in $(C_{\I}1\text{new})$ is also $\Sigma^0_{1+\gamma+2}$, as desired.
\endproof

We organize the rules of the game as depicted in figure~\ref{tab game beta}. The proof that this game satisfies the four conditions above is a straightforward adaptation of the argument in~\cite{MSCons} using the methods in the proof of Theorem~\ref{notmPigammaDet}. That is, we break down the situation into four scenarios; $\M_{\I} = \M_{\II}$, $\M_{\I} \subsetneq \M_{\II}$, $\M_{\II} \subsetneq \M_{\I}$ and $\M_{\I}$ and $\M_{\II}$ are incomparable. We remind here how the game works. We emphasize that to satisfy the specifications of the game, we can restrict our focus in the cases were at least one of the models is wellfounded. Then:

\begin{enumerate}
    \item Condition $1$ is guaranteed by $C0$, and we add to it the requirement $\M_{\I} \neq \M_{\II}$ as a disqualifying rule for $\II$.
    \item Verifying $C1$ allows us to formalize $\A^{\gamma+1}$  appropriately and with the right descriptive complexity.
    \item Under the assumption that $\alpha^{\gamma}_n$ does not exist, we can show the following. In the scenario $\M_{\II} \subsetneq \M_{\I}$ and $\M_{\I}$ is wellfounded then $C_{\I}1$new fails if and only if $\M_{\II}$ is the wellfounded part of $\M_{\I}$, thus disqualifying Player \I\, and vice versa for the scenario $\M_{\I} \subsetneq \M_{\II}$ (and $\M_{\II}$ is wellfounded).
    \item As soon as we have ruled out each of these scenarios, we ensure that if $\M_{\II} \subsetneq \M_{\I}$  but $\M_{\II}$ is not the wellfounded part of $\M_{\I}$, then Player $\II$ loses, forcing alternatively $\I$ and $\II$ to produce incomparable models.
    \item Under the assumption that $\alpha^{\gamma}_n$ does not exist, the remaining conditions ensure to eliminate the ill-founded models, since this is the eliminating criterion in the case where they play incomparable models.
\end{enumerate}
This method makes sure we meet the criteria of our game.
\end{proof}

\begin{remark}\label{RemarkWS}
Let us make an observation concerning the game just constructed in the proof of Theorem \ref{beta_inequality}\eqref{beta_inequality1}.
    Suppose one of the players has a winning strategy $\sigma$. By Shoenfield's absoluteness theorem, there is such a $\sigma$ in $L$, so  there is an ordinal $\alpha$ such that $\sigma \in L_{\alpha}$. Consider $\alpha^*$, the smallest ordinal greater than $\alpha$ such that $L_{\alpha^*}$ is a model of $\mathsf{KP}^\gamma$, and consider a run of the game in which the winning player plays according to $\sigma$ and the other player $\Ps$ plays $Th(L_{\alpha^*})$ which exists by $\Pi^1_1\CA$. 
    First $\Ps$ cannot lose because of $C0$, $C1$ or any $C(1+k)$ since $\Ps$ plays a wellfounded $\omega$-model of $\bar{T}^{\gamma}$. Also, since it is the least ordinal greater than $\alpha$ which satisfies $\bar{T}^\gamma$ and in particular it is a successor admissible, so it cannot have $\Sigma_n$ elementary extensions, and so $\Ps$ does not lose because of $C1new$. 
    In case $\Ps$ is Player II, Player I also cannot win due to the two models being equal, as then Player II would simply be copying Player I's move, and the play would be recursive in $\sigma$, which is impossible, as the play computes $Th(L_{\alpha^*})$.
    The winning strategy cannot achieve a victory by producing a strictly bigger model $\M$ of $\KP$, since then its theory would be definable from $L_{\alpha^* + 2} \subsetneq \M$, so the two models must be incompatible. 
    
The only possibility is thus that $\Ps$ lose via the $n$th empty condition at the end of our normal form construction for the winning $(\Sigma^{0}_{1+\gamma + 2})_n$ payoff set. 
But then $\star_{n-1}$ holds (see the proof of theorem~\ref{notmPigammaDet}) and by lemma~\ref{star_kCons}, $\A^{\gamma+1}$ refers to a level $L_{\delta}$ so that $L_{\delta} \models \KP^{\gamma}_n$. 

We have just proved the following. Suppose the game constructed in the previous proof is determined and let $\alpha$ be such that a winning strategy belongs to $L_\alpha$. Then, $\alpha^\gamma_n$ exists and is $\leq\alpha$.

This observation also answers the following natural question: in the proof of the theorem, we reached a contradiction from the assumption that $\alpha^\gamma_n$ exists. In the real world, where $\alpha^\gamma_n$ exists and determinacy holds, one of the two players has a winning strategy. The proof of the theorem did not reveal which one it is. We see thus that the real winner of the game depends on the parity of $n$.

Finally, we remark that the argument given above can be carried out from $\mathsf{RCA}_0$ alone using the assumption of $(\Sigma^{0}_{1+\gamma + 2})_n$--$\Det$, the key being that the players can indeed play models of $\bar T^\gamma$. Indeed, by Hachtman \cite{Hachtman}, $\Sigma^{0}_{1+\gamma + 2}$--$\Det$ implies that every real -- in particular the winning strategy $\sigma$ -- belongs to a $\beta$-model of $\mathsf{KP}^\gamma$. Thus, $\sigma$ is not a winning strategy vacuously.
\end{remark}

\begin{figure}
    \centering
    \begin{tabular}{ c c|c } 
        & winning condition & winning condition \\
        & for \II & for \I \\
        \hline

        & $\lnot C_{\I}0$ & \\ 
        $A_0$ & $\lor [C_{\II}0 \land C_{\II}1 ]$ & \\
        & $\land \M_{\I} \neq \M_{\II}$ & \\

        &  & $C_{\I}0$ \\
        &  & $\land C_{\I}1$ \\ 
        $A_1$ & & $\land C_{\I}1$new \\
        & & $\land C_{\I}2$ \\
        
        & $\M_{\II} \not\subseteq \M_{\I}$ &\\
        & $\land C_{\II}1$new &\\
        $A_2$ & $\land C_{\I}2$ & \\
        & $\land C_{\I}3\infty$ & \\

        &  & $\M_{\I} \not\subseteq \M_{\II}$ \\
        $A_3$ & & $\land C_{\I}3\infty$  \\
        & & $\land C_{\I}4\infty$ \\

        & $\vdots$ & \\
        & & $\vdots$ \\

        & $(C_{\II}1 + (2j - 1)\infty)$ & \\
        $A_{2j}$ & $\land (C_{\II}(1 + 2j)\infty)$ & \\

        & & $(C_{\I}(1 + 2j)\infty)$ \\
        $A_{2j + 1}$ & & $\land (C_{\I}1 + (2j + 1)\infty)$ \\

        & $\vdots$ & \\
        & & $\vdots$ \\

        $A_{n-1}$& $(C_{\II}n)$ & \\

        \hline
    \end{tabular}
    \caption{An undetermined game when $\alpha^{\gamma}_{n}$ does not exist, for even $n$. The first player to fail one of the requirements loses the game.}\label{tab game beta}
\end{figure}

We now turn to the proof of Theorem \ref{beta_inequality}\eqref{beta_inequality2}. We begin with the same strategy of Theorem~\ref{notmPigammaDet}. That is, we want a game so that \begin{enumerate}
    \item If \I\ plays $Th(L_{\alpha_n^{\gamma,*}})$, she wins;
    \item If \I\ doesn't but $\II$ does, then he wins.
\end{enumerate}
We just have to show we can modify the rules so that it works for $\alpha_n^{\gamma,*}$. 

\begin{proof}[Sketch of the proof of theorem~\ref{beta_inequality}.\ref{beta_inequality2}]
    Let $T^{\gamma, *}_n$ be the theory \begin{align*}
        V = L + \text{ there are unboundedly many ordinals $\alpha$ such that $L_{\alpha} \models \KP^{\gamma}_{n}$,} 
        \\\text{but only finitely many such $\alpha$ under any ordinal.} 
    \end{align*}
    In other words, these $\alpha$ are cofinal in the class of ordinals of any model of this theory,  
    We define the following $\Sigma^0_{1 + \gamma + 2}$ conditions.
    \begin{align*}
        &(C_{\I}0): \quad \M_{\I} \models T^{\gamma, *}_n \land \M_{\I} \text{ is an $\omega$--model };\\
        \\
        &(C_{\I}1): \quad (\Ps^{\gamma}(\mathbb{N})_{\M_{\I}} \setminus \Ps^{\gamma}(\mathbb{N})_{\M_{\II}} \neq \emptyset) \\
        & \qquad \qquad \qquad \rightarrow \Ps^{\gamma}(\mathbb{N})_{\M_{\I}} \setminus \Ps^{\gamma}(\mathbb{N})_{\M_{\II}} \text{ has a $<^{\M_{\I}}_{L}$--least element};\\
        \\
        &(C_{\I}1\text{new}*): \exists \beta \in Ord^{\M_{\I}} \setminus \A^{\gamma + 1} \ \exists \langle \gamma_1, \dots, \gamma_m \rangle \text{ an increasing sequence}\\
        & \qquad \qquad \qquad \text{such that $(\forall (i \leq m) \ \M_{\I} \models (\gamma_i \leq \beta \land L_{\gamma_i} \models \KP^{\gamma}_n))$ and} \\
        & \qquad \qquad \qquad \{[\forall \gamma \in \beta \ (\M_{\I} \models (L_{\gamma} \models \KP^{\gamma}_n) ) \\
        & \qquad \qquad \qquad \qquad \rightarrow (\exists (i \leq m) \ \gamma = \gamma_i)] \land [\forall (i \leq m) \ L_{\gamma_i} \neq \A^{\gamma + 1}]\}\\
        \\
        &(C_{\I}2) : \quad W_{\M_{\I},1} \text{ has a least element or is empty}.\\
        \\
        &(C_{\I}(1+k)\infty) : \text{There exist } \beta_1, \beta_2 \text{ such that } 
        (\star_{k-1})(\beta_1,\beta_2) 
        \\ &\qquad \qquad \qquad \land W^{ \beta_1, \beta_2}_{\M_{\I},k} \text{ has a least element or is empty},
    \end{align*}
    for $k > 1$. We mostly reuse the same conditions as before. The conditions $C_{\II}$ are defined dually. They are organized as depicted in figure~\ref{tab game limit} to constitute the rules of the game. As in Lemma \ref{LemmaBetaModelsComplexityCInew}, one sees that the conditions belong to the right complexity class.  The proof that this game satisfies the two conditions needed is the same as in~\cite{MSCons}, once more adapted as in Theorem~\ref{notmPigammaDet}.
    \end{proof}
    
    \begin{figure}
        \centering
        \begin{tabular}{ c c|c } 
            & winning condition & winning condition \\
            & for \II & for \I \\
            \hline
    
            & $\lnot C_{\I}0$ & \\ 
            $A_0$ & $\lor [C_{\II}0 \land C_{\II}1 ]$ & \\
            &$\land \M_{\I} \neq \M_{\II}$ &\\
    
            &  & $C_{\I}0 \land C_{\I}1$ \\ 
            $A_1$ & & $\land C_{\I}1$new* \\
            & & $\land C_{\I}2$ \\
            
            & $\land C_{\II}1$new* &\\
            $A_2$ & $\land C_{\I}2$ & \\
            & $\land C_{\I}3\infty$ & \\
    
            & & $\land C_{\I}3\infty$  \\
            $A_3$ & & $\land C_{\I}4\infty$ \\
    
            & $\vdots$ & \\
            & & $\vdots$ \\
    
            & $(C_{\II}1 + (2j - 1)\infty)$ & \\
            $A_{2j}$ & $\land (C_{\II}(1 + 2j)\infty)$ & \\
    
            & & $(C_{\I}(1 + 2j)\infty)$ \\
            $A_{2j + 1}$ & & $\land (C_{\I}1 + (2j + 1)\infty)$ \\
    
            & $\vdots$ & \\
            & & $\vdots$ \\
    
            $A_{n-1}$& $(C_{\II}n)$ & \\
    
            \hline
        \end{tabular}
        \caption{An undetermined game in $L_{\alpha^{\gamma, *}_{n}}$, for even $n$. Here, the first player to fail one of the conditions loses.}\label{tab game limit}
    \end{figure}  

We also need the following result, which is standard and easily established in a mild strengthening of $\Pi^1_1{-}\mathsf{CA}_0$.
\begin{lemma}
    If $T$ is a true $\Pi^1_3$ sentence, then there is an ordinal $\delta$ such that \begin{align*}
        L_{\delta} \models T \land \forall \gamma \ \exists \beta > \gamma \ \beta \text{ is admissible}
    \end{align*} 
    but $L_{\delta}$ is not $\Sigma_1$-admissible (and so $\mathbb{R} \cap L_{\delta} \not \models \Delta^1_2\CA$ and in particular $L_{\delta} \not \models \KP^{\gamma}$).~\label{Pi13limit}
\end{lemma}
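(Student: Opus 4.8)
The plan is to obtain $\delta$ as the supremum of an increasing $\omega$-chain of admissible ordinals, built by a bookkeeping construction that simultaneously feeds in enough witnesses for $T$. Put $T$ in the normal form $\forall X\,\sigma(X)$, where $\sigma(X)=\exists Y\,\rho(X,Y)$ and $\rho$ is $\Pi^1_1$; over $\ACA$ the formula $\rho(X,Y)$ is equivalent to the wellfoundedness of a tree $S(X,Y)$ recursive in $(X,Y)$. Since $T$ is true, $\sigma(X)$ holds in $V$ for every real $X$; by Shoenfield absoluteness $\sigma(X)$ then holds in $L$ whenever $X\in L$, so there is a $<_L$-least $Y_X\in L$ with $\rho(X,Y_X)$, and we let $\eta_X$ be the least ordinal with $Y_X\in L_{\eta_X}$.

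Next, fix a pairing $p\colon\omega\to\omega\times\omega$ such that $p(n)=(i,j)$ implies $i\le n$, and define an increasing sequence of admissible ordinals $\langle\alpha_n:n<\omega\rangle$ recursively: let $\alpha_0$ be the least admissible ordinal; at stage $n$, writing $p(n)=(i,j)$, if $\mathcal{P}(\omega)\cap L_{\alpha_i}$ has a $j$-th element $X$ in the canonical $<_L$-ordering, let $\alpha_{n+1}$ be the least admissible ordinal above $\max(\alpha_n,\eta_X)$, and otherwise let $\alpha_{n+1}$ be the least admissible ordinal above $\alpha_n$. Put $\delta=\sup_n\alpha_n$ (in particular $\delta>\omega_1^{\mathsf{CK}}$).

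It remains to verify three things. (i) $\delta$ is a limit of admissible ordinals, and since $L_\delta$ is transitive it decides ``$L_\beta\models\KP$'' correctly for all $\beta<\delta$, whence $L_\delta\models\forall\gamma\,\exists\beta>\gamma\,(\beta\ \text{is admissible})$. (ii) The map $n\mapsto\alpha_n$ is total on $\omega\in L_\delta$ and, by a routine check, $\Sigma_1$-definable over $L_\delta$ — each finite stage of the recursion is carried out inside some admissible $L_{\alpha_m}$, which decides ``admissible'' correctly below $\alpha_m$, so the finite approximations to the sequence are uniformly $\Sigma_1$ over $L_\delta$ — and its range is cofinal in $\delta$, so $L_\delta$ witnesses the failure of $\Sigma_1$-collection (indeed of $\Sigma_1$-replacement); hence $L_\delta$ is not admissible, i.e., not $\Sigma_1$-admissible in the sense of Lemma~\ref{charKPL}, and the remaining clauses ($\mathbb{R}\cap L_\delta\not\models\Delta^1_2\CA$ and $L_\delta\not\models\KP^\gamma$) follow by the standard facts recalled in the statement. (iii) $L_\delta\models T$: given $X\in\mathcal{P}(\omega)\cap L_\delta=\bigcup_n(\mathcal{P}(\omega)\cap L_{\alpha_n})$, pick $i$ least with $X\in L_{\alpha_i}$ and let $j$ be the $<_L$-index of $X$ in $\mathcal{P}(\omega)\cap L_{\alpha_i}$; at the stage $n$ with $p(n)=(i,j)$ the construction placed $Y_X\in L_{\alpha_{n+1}}\subseteq L_\delta$. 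Since $\rho(X,Y_X)$ holds in $V$, the tree $S(X,Y_X)$ is wellfounded in $V$, hence also in $L_\delta$ (wellfoundedness is downward absolute to transitive $\in$-models); as the reals of $L_\delta$ form an $\omega$-model of $\ACA$, this yields $L_\delta\models\rho(X,Y_X)$, so $L_\delta\models\sigma(X)$, and as $X$ was arbitrary, $L_\delta\models T$.

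The whole argument is carried out in the mild strengthening of $\Pi^1_1\CA$ referred to in the statement: beyond $\Pi^1_1\CA$ itself one uses only Shoenfield absoluteness and the $\omega$-fold iteration of the ``next admissible'' operation together with the formation of its supremum, both available there. I expect the main obstacle to be the balancing act in the choice of the $\alpha_n$: one must insert enough material — the witnesses $Y_X$ — to force $L_\delta\models T$, yet keep the construction thin enough that $n\mapsto\alpha_n$ remains $\Sigma_1$-definable over $L_\delta$ and cofinal in $\delta$, so that $L_\delta$ is not admissible; the requirement $i\le n$ on the pairing is exactly what ensures that each $\eta_X$ used at stage $n$ has already been produced, keeping the definability argument in (ii) clean.
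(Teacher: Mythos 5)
Your overall plan -- realize $\delta$ as the supremum of an $\omega$-chain of admissible ordinals that is $\Sigma_1$-definable over $L_\delta$ and cofinal in $\delta$ (killing admissibility), while folding in along the way the Shoenfield witnesses for the $\Sigma^1_2$ matrix of $T$ (securing $L_\delta\models T$) -- is the standard route, and the paper itself states the lemma without proof. But as written your bookkeeping has a genuine gap. At stage $n$ with $p(n)=(i,j)$ you treat ``the $j$-th element of $\mathcal{P}(\omega)\cap L_{\alpha_i}$ in the canonical $<_L$-ordering'' with $j\in\omega$, and in step (iii) you assign to an arbitrary real $X\in L_\delta$ the pair $(i,j)$ where $j$ is ``the $<_L$-index of $X$ in $\mathcal{P}(\omega)\cap L_{\alpha_i}$''. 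That index is in general not a natural number: the reals of $L_{\alpha_i}$ under $<_L$ do not have order type $\omega$ (already any real of $L_{\alpha_i}$ lying $<_L$-above the arithmetical reals has infinitely many $<_L$-predecessors), so your pairing only ever treats the first $\omega$ many reals of each $L_{\alpha_i}$. For all remaining reals $X\in L_\delta$ nothing in the construction puts $Y_X$ into $L_\delta$, and since the witness for a given $X$ may well first appear above any prescribed level, step (iii) -- and with it $L_\delta\models T$ -- is not established.

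The repair is standard: at stage $n$ absorb witnesses for \emph{all} reals of $L_{\alpha_n}$ simultaneously, e.g., let $\alpha_{n+1}$ be the least admissible ordinal greater than $\alpha_n$ such that every $X\in\mathcal{P}(\omega)\cap L_{\alpha_n}$ has some $Y\in L_{\alpha_{n+1}}$ for which a rank function for the tree $S(X,Y)$ lies in $L_{\alpha_{n+1}}$ (such a level exists below $\omega_1$ because the true witnesses $Y_X$ exist in $L$ by Shoenfield and rank functions for truly wellfounded trees appear in any admissible level containing the tree). This also removes the second, quieter soft spot in your step (ii): as written, the $\Sigma_1$-definition of your recursion over $L_\delta$ has to identify the true $<_L$-least witness $Y_X$ internally, which requires knowing that $L_\delta$ is $\Pi^1_1$-correct (true, since a limit of admissibles is a $\beta$-model, but this is precisely what your ``routine check'' glosses over, and over the non-admissible $L_\delta$ one must also bound the relevant existential quantifiers by hand). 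Formulating the step via rank functions inside admissible levels makes the recursion $\Delta_1$ over $L_\delta$ with all auxiliary quantifiers bounded, after which your remaining arguments -- downward absoluteness of the $\Pi^1_1$ matrix to the $\omega$-model $L_\delta$ for (iii), and failure of $\Sigma_1$-collection from a total $\Sigma_1$-definable cofinal map on $\omega$ for (ii) -- go through as you describe.
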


We can now turn to the proof of Theorem \ref{generalised_inequalities} from the introduction.
\begin{proof}[Proof of theorem~\ref{generalised_inequalities}] Take $n \geq 2$ and $1 \leq \gamma < \omega_1^{\mathsf{CK}}$.
    First $$\beta(\KP^{\gamma} + \Sigma_{n}\text{--\textsc{separation}}) \to \beta((\Sigma^0_{1+\gamma+2})_{n}{-}\Det)$$ is immediate from theorem~\ref{mPi4Det}, applied inside $\beta$-models. Applying~\ref{Pi13limit} (valid in $\KP^{\gamma} + \Sigma_{n}\text{--\textsc{separation}}$), we can even prove $$\KP^{\gamma} + \Sigma_{n}\text{--\textsc{separation}} \to \beta((\Sigma^0_{1+\gamma+2})_{n}{-}\Det),$$ indeed if $\delta$ is a limit of admissible, $L_{\delta}$ is a $\beta$-model. Thus, the first application is not reversible since $\beta((\Sigma^0_{1+\gamma+2})_{n}{-}\Det) \not\to \beta(\beta((\Sigma^0_{1+\gamma+2})_{n}{-}\Det))$ by Gödel's incompleteness.
Clearly, $\KP^{\gamma}_{n+1} \vdash \beta((\Sigma^0_{1 + \gamma + 2})_n\text{-}\Det)$ is not reversible, as $\beta((\Sigma^0_{1 + \gamma + 2})_n\text{-}\Det)$ is $\Pi^1_3$. 

    Second, 
    $$\beta((\Sigma^0_{1+\gamma+2})_{n}\text{--}\Det) \to
        (\Sigma^0_{1 + \gamma+ 2})_{n}\text{--}\Det,$$
    is an application of $\Sigma^1_1$ correctness of $\beta$-models, and that this cannot be reversed is once more a consequence of Gödel's incompleteness theorem. 
    Finally, $$(\Sigma^0_{1 + \gamma+ 2})_{n}\text{--}\Det \to
    \beta(\KP^{\gamma}_{n})$$
    is proved using~Theorem \ref{beta_inequality}.\ref{beta_inequality1}. That it cannot be reversed follows from~Theorem \ref{beta_inequality}.\ref{beta_inequality2} since $$L_{\alpha_{n}^{\gamma,*}} \models \beta(\KP^{\gamma}_{n}) \land \lnot (\Sigma^0_{1 + \gamma+ 2})_{n}{-}\Det.$$
    Henceforth, we concluded the proof.
\end{proof}

\section{Provability of Determinacy Principles in Higher-Order Arithmetic} \label{SectionUpperBound}

In the preceding section, we showed that we need a stronger winning condition to devise the Friedman-style game compared to the case of second-order arithmetic. The following theorem shows that the preceding proof is optimal in terms of the separation scheme and that we can provide better upper bounds than the ones in $\Z_2$. This is our second main contribution.

\begin{lemma}
Suppose $1 \leq m < \omega$ and $1 \leq \gamma \leq \omega_{1}^{\mathsf{CK}}$.
    Then, the theory $\KP^{\gamma} +\ \Sigma_{m}$\text{--}\textsc{separation} proves that for every $X \subseteq \mathbb{N}$, \label{ShoenfieldAbsloutnessGamma}
    \begin{align*}
        L(X) \models \Sigma_{m}\text{--}\textsc{separation} +  \text{``}\Ps^{\gamma}(\omega)\text{ exists''}.
    \end{align*}
\end{lemma}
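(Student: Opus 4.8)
The plan is to check, axiom by axiom, that $L(X) \models \KP^\gamma + \Sigma_m\text{--}\textsc{separation}$, working throughout inside $V \models \KP^\gamma + \Sigma_m\text{--}\textsc{separation}$. That $L(X)$ is a transitive model of $\KP$ is the classical relativization argument (Barwise), using only that the hierarchy $\langle L_\alpha(X) : \alpha \in \Ord\rangle$ is $\Sigma_1$-definable, continuous and transitive, and that it enjoys condensation and the acceptability properties recorded in Lemma~\ref{cardinalityKPL}; likewise $\wo(\gamma)$ is absolute to $L(X)$, since the (recursive) ordinal $\gamma$ lies in $L \subseteq L(X)$ and wellfoundedness is downward absolute. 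So all the content is in the two remaining axioms.

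For ``$\Ps^\gamma(\omega)$ exists'' in $L(X)$: let $\langle P_\delta : \delta \le \gamma\rangle \in V$ witness it in $V$, with $P_0 = \omega$, $P_{\delta+1} = \Ps(P_\delta)$, $P_\lambda = \bigcup_{\delta<\lambda} P_\delta$. I would show by induction on $\delta$ that $P_\delta \cap L(X)$ is a set of $L(X)$ and that $\langle P_\delta \cap L(X) : \delta \le \gamma\rangle$ witnesses ``$\Ps^\gamma(\omega)$ exists'' in $L(X)$. The key points are: (i) by $\Sigma_1$-replacement in $\KP$ applied to the partial $\Sigma_1$ function $z \mapsto (\text{least }\rho : z \in L_\rho(X))$ with the set $P_\delta$ in its domain, there is a single $\rho^*$ with $P_\delta \cap L(X) \subseteq L_{\rho^*}(X)$; (ii) since $L(X)$ is transitive, any $z \in L(X)$ with $z \subseteq P_\delta$ already satisfies $z \subseteq P_\delta \cap L(X)$, so at successor stages $P_{\delta+1}\cap L(X) = \{z \in L_{\rho^*}(X) : z \subseteq P_\delta \cap L(X)\} \in L(X)$ by $\Delta_0$-separation in $L(X)$, and this set is exactly $\Ps(P_\delta \cap L(X))^{L(X)}$; (iii) at limit stages one first bounds all relevant $L(X)$-ranks uniformly (again by $\Sigma_1$-replacement over the set $\lambda$) and then runs the recursion inside $L(X)$, which is legitimate because all its values lie in a single fixed level $L_{\rho^{**}}(X)$.

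For $L(X) \models \Sigma_m\text{--}\textsc{separation}$, the crux: fix $u, \bar p \in L(X)$ and $\phi \in \Sigma_m$, and set $S = \{z \in u : L(X) \models \phi(z,\bar p)\}$. A routine induction on formula complexity — using that ``$w \in L(X)$'' is $\Sigma_1$ over $V$, hence ``$w \notin L(X)$'' is $\Pi_1$ — shows that for $\psi \in \Sigma_k$ the predicate ``$L(X) \models \psi$'' is $\Sigma_k$ over $V$ (uniformly in the parameters), and dually for $\Pi_k$. Hence $S$ has a $\Sigma_m$ definition over $V$, so $S \in V$ by $\Sigma_m$-separation in $V$. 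The real work is to place $S$ in $L(X)$, and for this I would reflect $\phi$ to a level of the hierarchy: produce an ordinal $\alpha$ with $u, \bar p \in L_\alpha(X)$ and $L_\alpha(X) \prec_{\Sigma_m} L(X)$ for $\phi$ and its subformulas, so that $S = \{z \in u : L_\alpha(X) \models \phi(z,\bar p)\}$ is definable over $(L_\alpha(X),\in)$ and therefore $S \in L_{\alpha+1}(X) \subseteq L(X)$.

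The main obstacle is the existence of this reflecting level. The Montague–Lévy argument builds $\alpha$ by iteratively closing a starting ordinal under ``$<_{L(X)}$-least witness'' for the existential subformulas of $\phi$, where $<_{L(X)}$ is the $\Sigma_1$-definable wellordering furnished by condensation; this requires the relevant $\Sigma_{\le m}$-definable images of sets to be sets — an amount of collection over $V$ that is not immediate from $\KP$ (this is precisely what makes the passage nontrivial, in contrast to the trivial step from full separation in $V$ to full separation in $L$). Here one exploits the structure of $L(X)$: since it carries a definable wellordering and is acceptable, the characterization of $\Sigma_n$-admissibility via $\Sigma_n$-bounding in Lemma~\ref{charKPL}, together with the rank-bounding techniques behind Lemmas~\ref{betterSepL} and~\ref{cardinalityKPL}, lets one carry out exactly the transfinite recursions along the $L(X)$-hierarchy needed to bound the witnesses, so that $\Sigma_m$-separation over $\KP$ suffices after all. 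Getting this interaction between the available level of separation in $V$ and the collection needed to reflect $\Sigma_m$-truth about $L(X)$ down to one level is the delicate part of the proof.
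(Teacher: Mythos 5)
Your proposal diverges from the paper on the ``$\Ps^{\gamma}(\omega)$ exists'' half, and the key step there is not valid as written. Your item (i) applies ``$\Sigma_1$-replacement in $\KP$'' to the partial function $z \mapsto$ least $\rho$ with $z \in L_\rho(X)$ ``with the set $P_\delta$ in its domain''; but $\KP$'s \textsc{replacement}/\textsc{collection} schemes require totality on the given set, and your function is only defined on $P_\delta \cap L(X)$. Forming $P_\delta \cap L(X)$ is itself an instance of $\Sigma_1$--\textsc{separation} (``$z \in L(X)$'' is $\Sigma_1$), not of anything available in $\KP$ alone. This is exactly the pressure point: as the paper notes right after its proof, by Friedman's theorem $\KP^1$ does not prove $L \models$ ``$\mathbb{R}$ exists,'' so no argument can extract $P_\delta \cap L(X) \in L(X)$ from $\KP$ by itself. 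Your route is repairable, since the hypothesis supplies $\Sigma_m$--\textsc{separation} with $m \geq 1$: first form $P_\delta \cap L(X)$ by $\Sigma_1$--\textsc{separation}, then apply $\KP$'s $\Sigma_1$--\textsc{collection} to that set to bound constructibility ranks, and finally assemble the witnessing sequence inside $L(X)$ by a $\Sigma$-recursion whose values all lie in one level $L_\rho(X)$. With those repairs your hierarchy-intersection argument works, but note that the paper's actual proof is different and shorter: it uses $\Sigma_1$--\textsc{separation} to get Axiom-$\beta$, collects the order types of wellorders $W \subseteq \Ps^{\xi}(\omega)\times\Ps^{\xi}(\omega)$ for $\xi < \gamma$ into an ordinal $\kappa \geq \omega_\gamma$, and then concludes ``$\Ps^\gamma(\omega)$ exists'' inside $L(X)$ via GCH/condensation there, never importing the $V$-side power-set hierarchy into $L(X)$.

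For $L(X) \models \Sigma_m$--\textsc{separation}, you correctly isolate the structure of the standard argument (relativized $\Sigma_k$-truth is $\Sigma_k$ over $V$, so the desired set exists in $V$ by $\Sigma_m$--\textsc{separation}; the issue is placing it in $L(X)$ via a level that reflects the relevant $\Sigma_m$ facts), and you correctly observe that the naive Montague--L\'evy closure needs collection that $V$ is not assumed to have. But you do not close this gap: the final appeal to Lemmas~\ref{charKPL}, \ref{betterSepL} and \ref{cardinalityKPL} is not a proof, since those lemmas concern set-sized levels $L_\alpha$ under admissibility hypotheses, not the class-length $L(X)$ over a base theory with only $\Sigma_m$--\textsc{separation}; saying that they ``let one carry out exactly the transfinite recursions needed'' asserts the conclusion of the delicate step rather than establishing it. (The paper itself dismisses this half in one sentence as standard, so you are not behind the text here, but the step you yourself flag as the delicate part is precisely the one still missing an argument; for $m=1$ the pattern ``separate first, then apply $\Sigma_1$--\textsc{collection} to the resulting set'' does it, while the inductive step for higher $m$ genuinely requires the definable wellordering and Skolem-function machinery of $L(X)$ and should be spelled out, not cited.)
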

\begin{proof}
    The fact that $L(X) \models \Sigma_{m}\text{--}\textsc{separation}$ is standard and can be shown directly using the fact that the function $\alpha\mapsto L_\alpha(X)$ is a total, uniformly $\Sigma_1(X)$ function, provably in $\KP$. Now by $\Sigma_1$--\textsc{separation}, we have Axiom--$\beta$ (see e.g., Barwise~\cites{Barwise} or Simpson \cite{Simpson}), i.e., every wellorder $W$ is isomorphic to an ordinal $\mathrm{otp}(W)$. Let 
\[K = \sup\{\mathrm{otp}(W) \subseteq \Ps^{\xi}(\omega) \times \Ps^{\xi}(\omega) \mid W \text{ is a well order and } \xi<\gamma\},\] 
It follows from $\Sigma_1$--\textsc{separation} that $K$ exists and from $\Sigma_1$--\textsc{collection} that $\kappa :=\sup K$ exists. By definition, $\kappa \geq \omega_{\gamma}$, thus $L(X) \models |\kappa| \geq \omega_{\gamma}$, so $L(X) \models$``$\Ps^{\gamma}(\omega)$ exists'' by GCH.
\end{proof}
We remark that we used $\Sigma_1$--\textsc{separation} crucially in the preceding proof. This is necessary, as H. Friedman has shown that $\KP^1$ alone does not prove $L \models $ ``$\mathbb{R}$ exists.'' For strengthening of Friedman's result, we refer the reader to Mathias \cite{Ma01}. The main result of this section is:

\begin{theorem}
    For all $1 \leq m < \omega$ and $1 \leq \gamma < \omega_1^{\mathsf{CK}}$ \label{mPi4Det} 
    \begin{align*}
        \KP^{\gamma} + \textsc{$\Sigma_{m}$--separation} \vdash {(\Pi^0_{1 + \gamma + 2})}_m\text{--}\Det.
    \end{align*}
\end{theorem}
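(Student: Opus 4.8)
The plan is to carry out Martin's proof of Borel determinacy \cite{BorelDetMartin}, in the stratified form used by Montalb\'an and Shore \cite{MS}, at the transfinite level $1 + \gamma + 2$ of the Borel hierarchy, taking advantage of the extra room provided by the hypothesis ``$\Ps^{\gamma}(\mathbb{N})$ exists.'' By the lemma comparing Baire and Cantor space (and the interchangeability of $(\Pi^0_{1+\gamma+2})_m$-- and $(\Sigma^0_{1+\gamma+2})_m$--determinacy), we fix a game in $2^{<\omega}$ whose payoff $A$ is a $(\Pi^0_{1+\gamma+2})_m$ subset of $2^{\omega}$ with effective Borel code $X$. Working in $\KP^{\gamma} + \Sigma_m$--\textsc{separation}, the statement ``$A$ is determined'' -- i.e.\ ``there is a winning strategy for $\I$ or for $\II$'' -- lies at the level $\Sigma^1_2(X)$, and Shoenfield absoluteness relative to $X$ is available here (since $\gamma\geq 1$ gives ``$\Ps(\mathbb{N})$ exists,'' hence $\omega_1$, and $\Sigma_1$--\textsc{separation} gives Axiom $\beta$). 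This reduces the claim to showing that $L(X) \models$ ``$A$ is determined.'' By Lemma \ref{ShoenfieldAbsloutnessGamma}, $L(X)\models \Sigma_m$--\textsc{separation} $+$ ``$\Ps^{\gamma}(\mathbb{N})$ exists,'' and of course $L(X)\models V=L(X)$, so it carries a definable global wellordering; in particular each $\Ps^{\xi}(\mathbb{N})$, $\xi\leq\gamma$, is wellordered there. We henceforth argue inside $L(X)$.

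The core is Martin's unravelling. Using the effective code $X$, one builds a \emph{covering} $(\tilde T,\pi,\varphi)$ of the game on $2^{<\omega}$: a tree $\tilde T$ whose moves are objects hereditarily coded by subsets of $\Ps^{\gamma}(\mathbb{N})$ -- so that the auxiliary game is, in effect, played on $\Ps^{\gamma}(\mathbb{N})$ -- together with a length-preserving projection $\pi\colon\tilde T\to 2^{<\omega}$ and a map $\varphi$ lifting strategies on $\tilde T$ to strategies on $2^{<\omega}$, such that $\pi^{-1}(A)$ is clopen in $[\tilde T]$. This covering is assembled by iterating the elementary covering operations -- unravelling of closed sets and the inverse-limit construction for countable intersections -- through the Borel hierarchy up to level $1+\gamma+1$, the layer treating the $\Pi^0_{1+\delta+1}$ building blocks introducing auxiliary moves coded within $\Ps^{\delta}(\mathbb{N})$; the iteration has length a fixed ordinal depending only on $\gamma$ and is carried out by transfinite recursion along a set wellorder, available in $\KP$. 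To unravel the $m$ nested differences one iterates the whole process $m$-fold: each application must refer to a lift, under the previous covering, of a strategy in the previously obtained clopen (hence determined) game, which raises the definitional complexity by one; it is here that $\Sigma_m$--\textsc{separation} is invoked to see that $\tilde T$, $\pi$ and $\varphi$ are sets of $L(X)$.

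With $A$ unravelled to a clopen set $\tilde A\subseteq[\tilde T]$, the auxiliary game on $\tilde T$ with payoff $\tilde A$ has a well-founded game tree -- every play is decided after finitely many moves -- so it is determined by backward induction along its rank, provably in $\KP$. Pushing the resulting winning strategy forward along $\varphi$ produces a winning strategy for the corresponding player in the original game (this is exactly the covering property of $(\tilde T,\pi,\varphi)$), and $\Sigma_m$--\textsc{separation} again guarantees it is a set. Hence $L(X) \models$ ``$A$ is determined,'' and by the reduction of the first paragraph so does $V$, which is what we wanted.

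The main obstacle is the bookkeeping of the second step: one must verify that the iterated covering construction never mentions more than $\Ps^{\gamma}(\mathbb{N})$ (so that $\KP^{\gamma}$, and not $\KP^{\gamma+1}$, suffices), and that, once the difference hierarchy has been absorbed, everything in sight is $\Sigma_m$--definable over the iteration (so that $\Sigma_m$--, and not $\Sigma_{m+1}$--, \textsc{separation} suffices). This ``index drop'' relative to the second-order picture of \cite{MS} is the whole content of the theorem, and it is precisely what forces the use of Lemma \ref{ShoenfieldAbsloutnessGamma} rather than a plain absoluteness argument over $L$: the passage to $L(X)$ simultaneously supplies the power-set object on which the auxiliary games are played and the global wellordering that keeps the covering construction definable at the economical level of complexity. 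The degenerate case $\gamma=1$, $m=1$ -- $\Sigma^0_4$--determinacy from $\KP + $ ``$\mathbb{R}$ exists'' $ + \Sigma_1$--\textsc{separation} -- is due to Hachtman \cite{Hachtman}, and the argument above specializes to his there.
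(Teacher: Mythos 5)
Your opening reduction (Shoenfield absoluteness plus Lemma~\ref{ShoenfieldAbsloutnessGamma}, then working in $L(X)$ with a global wellordering) coincides with the paper's, and the decision to use Martin's unravelling is also the right starting point. But the core of your argument --- unravel the full $(\Pi^0_{1+\gamma+2})_m$ payoff to a \emph{clopen} set on a tree whose moves are (hereditarily) subsets of $\Ps^{\gamma}(\mathbb{N})$, determine the clopen game by backward induction, and push the strategy down --- does not go through with the resources of $\KP^{\gamma}+\Sigma_m$--\textsc{separation}. Martin's unravelling consumes roughly one application of \textsc{power set} per level of the Borel hierarchy it removes: unravelling a single closed set already requires the power set of the game tree (the auxiliary moves are quasistrategies), so with only ``$\Ps^{\gamma}(\mathbb{N})$ exists'' one can bring a $\Pi^0_{1+\gamma+2}$ set down to $\Pi^0_3$ on a tree on $\Ps^{\gamma}(T)$ --- this is exactly Theorem~\ref{TheoremMartin} --- but pushing on to a clopen lift would require trees on $\Ps^{\gamma+1}$, $\Ps^{\gamma+2},\dots$, objects whose existence $\KP^{\gamma}$ does not provide (and which Lemma~\ref{ShoenfieldAbsloutnessGamma} does not conjure in $L(X)$ either). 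Your ``$m$-fold iteration'' for the differences also cannot play the role you assign it: no amount of separation substitutes for the missing power sets, and conversely, if a clopen unravelling were available at all it would yield $(\Pi^0_{1+\gamma+2})_m$--$\Det$ uniformly in $m$ from a fixed theory such as $\KP^{\gamma}+\Sigma_1$--\textsc{separation}, contradicting the paper's lower bounds (Theorem~\ref{notmPigammaDet} together with Theorem~\ref{beta_inequality}, which show the strength of these principles grows strictly with $m$). So the step ``with $A$ unravelled to a clopen set'' is a genuine gap, not a bookkeeping issue.

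What the paper does instead, after the same reduction, is stop the unravelling at $(\Pi^0_3)_m$ on a tree on $\Ps^{\gamma}(T)$ and then run Martin's difference-hierarchy analysis in the style of Montalb\'an--Shore: the relations $P^s(T)$ of Definition~\ref{P}, the failure-everywhere and strong-witness lemmas (Lemmas~\ref{failure}--\ref{binary}), and the final construction of a winning quasistrategy. The ``index drop'' you correctly identify as the content of the theorem is obtained there, not in the unravelling: whether Player \I\ wins the auxiliary game $G(T,A)$ is equivalent, via the covering, to whether she wins the original countable game $G(T^*,A^*)$, and a winning strategy for the latter is a real, so $P^{\langle\,\rangle}(T)$ is $\Delta_0$ in the parameters $T$, $A$, $\mathbb{R}$ rather than $\Sigma_1$; inductively $P^s(T)$ is $\Sigma_{|s|}$, and $\Sigma_m$--\textsc{separation} (with the choice schemes) is what the combinatorial lemmas actually consume. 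If you want to salvage your write-up, replace the ``unravel to clopen'' step by this two-stage argument: Theorem~\ref{TheoremMartin} to reach $(\Pi^0_3)_m$ within $\Ps^{\gamma}$, then the $P^s$ machinery with the quantifier-elimination observation above.
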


From now on we fix $1 \leq m < \omega$ and $1 \leq \gamma < \omega_1^{\mathsf{CK}}$ and reason in $\KP^{\gamma} + \Sigma_{m}$\text{--}\textsc{separation}. 
We shall also assume $V = L$. 
According to Lemma~\ref{ShoenfieldAbsloutnessGamma},
we still have access to $\KP^{\gamma} + \Sigma_{m}$\text{--}\textsc{separation}. Moreover, it is enough to prove determinacy under this assumption by Shoenfield absoluteness, since (lightface) determinacy for a Borel class such as $(\Pi^0_{1+\gamma+2})_m$ is $\Sigma^1_2$ (the proofs will relativize to arbitrary real parameters).
Hence, we shall henceforth assume $V = L$, and we shall thus have access to $\Sigma_m$--Collection, the Axiom of Choice, and the Generalized Continuum Hypothesis.

Firstly, we aim to simulate $(\Pi^0_{1+ \gamma + 2})_m$ games with moves in the natural numbers as a $(\Pi^0_3)_m$ game with moves of higher order. We use Martin's method of \textit{unravelling} from his proof of Borel determinacy (see~\cite{Kechris}, for example). Next, we will carefully implement the mechanism of Martin for the difference hierarchy to deal with the specific form of the payoff set. We will use the information that the unravelled game gives us about the original one to show that $\Sigma_m$--\textsc{separation} is enough to prove the existence of a winning strategy for it.

Let us first present Martin's notion of unravelling.
\begin{definition}[Covering of a tree]
    Let $T$ non-empty pruned tree on a set $A$. A \emph{covering} of $T$ is a triple $(\tilde{T}, \pi, \phi)$, where \begin{enumerate}
        \item $\tilde{T}$ is a non-empty pruned tree (on some $\tilde{A}$);
        \item $\pi: \tilde{T} \to T$ is monotone with $|\pi(s)| = |s|$, giving rise to a continuous function $\pi : [\tilde{T}] \to [T]$;
        \item $\phi$ maps strategies for Player \I\ (resp. \II) in $\tilde{T}$ to strategies for Player \I\ (resp. \II) in $T$, in such a way that $\phi(\tilde{\sigma})$ restricted to positions of length $\leq n$ depends only on $\tilde{\sigma}$ restricted to positions of length $\leq n$, for all $n$;
        \item If $\tilde{\sigma}$ is a strategy for \I\ (resp. \II) in $\tilde{T}$ and $x \in [\phi(\tilde{\sigma})] \subseteq [T]$, then there is an $\tilde{x} \in [\tilde{\sigma}]$ such that $\pi(\tilde{x}) = x$.
    \end{enumerate}
Moreover, for $k < \omega$,  we say that $(\tilde{T}, \pi, \phi)$ is a \emph{$k$-covering} if $T_{\mid 2k} = \tilde{T}_{\mid 2k}$ and $\pi_{\mid \tilde{T}_{\mid 2k}} = \mathrm{id}$. Finally, we will call $\pi^{-1}(X) \subseteq [\tilde{T}]$ the \emph{lift} of $X \subset [T]$.~\label{covering}
\end{definition}

\begin{remark}
    In particular, if $\tilde{U} \subseteq \tilde{T}$ is a subtree of $\tilde{T}$, then $\pi(\tilde{U}) \subseteq T$ is also a subtree of $T$ because of condition $2$.\label{locality of unravelling}
\end{remark}

Under the current hypotheses, we show the following.

\begin{theorem}[Martin \cite{BorelDetMartin}]\label{TheoremMartin}
    If $T$ is a non-empty and pruned tree on $\omega$ and $X \subseteq [T]$ is $\Pi^0_{1+ \gamma + 2}$, then for each $k < \omega$ there is a $k$-covering of $T$ with a $\Pi^0_3$ lift of $X$ and a tree $\tilde{T}$ on $\Ps^{\gamma}(T)$.
\end{theorem}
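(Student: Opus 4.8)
The plan is to run Martin's proof of Borel determinacy (as in \cite{Kechris}), reorganised so that (a) the unravelling process is \emph{stopped at the pointclass $\Pi^0_3$} rather than being pushed all the way down to the clopen sets, and (b) the cardinality of the auxiliary tree is tracked level by level. Two ingredients will be quoted rather than reproved. First, \emph{Martin's closed-unravelling lemma} (the basic step of \cite{BorelDetMartin}; see also \cite{Kechris}): if $S$ is a non-empty pruned tree on a set $B$ and $C\subseteq[S]$ is closed, then for each $k$ there is a $k$-covering $(\bar S,\bar\pi,\bar\phi)$ of $S$ with $\bar\pi^{-1}(C)$ clopen and with $\bar S$ obtained from $S$ by adjoining, as auxiliary moves, certain subtrees of $S$; in particular $\bar S$ lives on a set of cardinality $\le|\mathcal{P}(B)|$ when $B$ is infinite, and the construction is arithmetically uniform. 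Second, the standard calculus of coverings: coverings compose (and $k$-coverings compose to $k$-coverings), and a tower of coverings $\cdots\to S_2\to S_1\to S$ whose $j$-th bonding map is an $m_j$-covering with $m_j\to\infty$ has an inverse limit which is again a covering of $S$, a $k$-covering if the first bonding map is, and whose tree lives on a set of size $\le\sup_j|\text{base of }S_j|$ --- note there is no $\aleph_0$-th-power blow-up, since every node of the inverse limit is a \emph{finite} sequence and hence already determined by a single node of a single $S_j$. Both ingredients go through under the standing hypotheses: $V=L$ supplies the choices of auxiliary strategies needed in the closed-unravelling lemma, and all verifications are $\Delta_0$ over the data.

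With these in hand, one proves by induction on the Borel rank $\beta\ge1$, for each $k$, a package of unravelling statements for $\Pi^0_\beta$ (and $\Sigma^0_\beta$) subsets of $[S]$, $S$ a tree on a set $B$: the set $k$-unravels to a \emph{clopen} set on a tree on a set of size $\le|\mathcal{P}^{u(\beta)}(B)|$; to a \emph{closed} (resp.\ open) set at cost $\mathcal{P}^{z(\beta)}$; to a $\Pi^0_2$ (resp.\ $\Sigma^0_2$) set at cost $\mathcal{P}^{w(\beta)}$; and to a $\Pi^0_3$ (resp.\ $\Sigma^0_3$) set at cost $\mathcal{P}^{v(\beta)}$. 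The successor step at $\beta'+1$ writes the set as a countable intersection of sets of the preceding level, unravels those simultaneously (by the inductive hypothesis and the covering calculus) to the next target down, and reassembles --- giving $u(\beta'+1)=u(\beta')+1$, $z(\beta'+1)\le u(\beta')$, $w(\beta'+1)\le z(\beta')$, $v(\beta'+1)\le w(\beta')$; the limit step at $\lambda$ does the same with countably many pieces of unbounded rank $<\lambda$, giving $u(\lambda)\le\sup_{\xi<\lambda}u(\xi)+1$ but $z(\lambda)\le\sup_{\xi<\lambda}u(\xi)$, $w(\lambda)\le\sup_{\xi<\lambda}z(\xi)$, $v(\lambda)\le\sup_{\xi<\lambda}w(\xi)$. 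The crucial point is that only $u$ --- unravelling all the way to clopen --- picks up an extra power set at limit stages, because clopen sets are not closed under countable intersection, whereas the targets feeding $v$ (closed, $\Pi^0_2$, $\Pi^0_3$) are; hence $v$ stays essentially linear, and a short ordinal computation gives $v(1+\gamma+2)\le\gamma$ for every $\gamma$ (e.g.\ $v(\gamma+3)=\gamma$ for finite $\gamma$, and $v(\lambda+2)\le\lambda$ for a limit $\lambda$, using $\sup_{\xi<\lambda}u(\xi)=\lambda$). Taking $S=T$ on $\omega$ and recoding the resulting tree, of cardinality $\le|\mathcal{P}^\gamma(\omega)|=|\mathcal{P}^\gamma(T)|$, as a tree literally on $\mathcal{P}^\gamma(T)$ --- a bijection exists in $L$ --- yields the statement; this is exactly where ``$\mathcal{P}^\gamma(\mathbb{N})$ exists'' is used, and since the recursion has fixed length $\le1+\gamma+2<\omega_1^{\mathsf{CK}}$ it is carried out within $\mathsf{KP}^\gamma$ together with the available separation and collection.

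The substance is Martin's; the obstacle here is the bookkeeping rather than a new idea. Concretely one must: (i) quote the closed-unravelling lemma in exactly the right form, with the $|\mathcal{P}(B)|$ estimate on the auxiliary-move set; (ii) be disciplined about intermediate targets --- stopping globally at $\Pi^0_3$ and, at limit ranks, unravelling the pieces only down to $\Pi^0_2$ --- since always unravelling to clopen would cost $\gamma+3$ rather than $\gamma$ power sets, while always unravelling to $\Pi^0_3$ would fail at successor steps, as countable intersections of $\Sigma^0_3$ sets are merely $\Pi^0_4$; and (iii) check that composition and inverse limits preserve every clause of Definition~\ref{covering} (prunedness, the length-preserving monotonicity of $\pi$, the locality of $\phi$, the lifting property, and the $k$-covering condition), and that the lift, built by arithmetically uniform steps, is a genuine \emph{effective} $\Pi^0_3$ set over the new tree (its code read off from the Kleene--Brouwer rank as in the effective Borel hierarchy). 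Since $V=L$ is assumed, $\mathsf{AC}$ and $\mathsf{GCH}$ are available, so $|\mathcal{P}^{u(\beta)}(\omega)|=\aleph_{u(\beta)}$ throughout and the cardinal arithmetic in the limit case is unproblematic.
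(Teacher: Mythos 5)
Your overall architecture---quote Martin's closed-unravelling lemma and the inverse-limit calculus, stop the unravelling at $\Pi^0_3$ rather than at the clopen sets, and track power-set levels by a bookkeeping induction---is essentially the same reorganization of Martin's argument that the paper carries out (the paper inducts on $\gamma$ rather than on the Borel rank, with the intermediate targets $\Pi^0_1,\Pi^0_2,\Pi^0_3$ appearing at limit stages), and your recurrences for $u,z,w,v$ do encode the correct final count. The gap is in how you derive them. Each recurrence of the form $z(\beta'+1)\le u(\beta')$, $w(\beta'+1)\le z(\beta')$, $v(\beta'+1)\le w(\beta')$---and already $u(2)=u(1)+1$---requires that \emph{countably many} sets of the lower level be unravelled over a \emph{single} covering at the cost of one application of the inductive hypothesis. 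But your stated ingredients (a per-set inductive hypothesis, the closed-unravelling lemma with the cardinality estimate $\le|\mathcal{P}(B)|$, composition of coverings, and inverse limits whose base is bounded by the sup of the bases) only support the depth-first stacking: unravel $B_0$ over $T$, then the pullback of $B_1$ over the result, and so on. In such a tower the bases climb at every stage, because the auxiliary quasistrategies adjoined at stage $i+1$ are subsets of the stage-$i$ tree, whose base has already been raised; so the sup controlled by your ``no $\aleph_0$-th-power blow-up'' remark is itself far too big. Already for an $\omega$-tower of closed unravellings your quoted estimate yields a base of size $\beth_\omega(|B|)$ rather than $|\mathcal{P}(B)|$, i.e.\ $z(2)$ comes out $\omega$ instead of $1$, and the bound $v(1+\gamma+2)\le\gamma$ is then unreachable even at finite levels.

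What is missing is the structural refinement of Martin's construction that makes the count linear: in a tower of $(k+n)$-coverings, the auxiliary moves adjoined at stage $n$ are quasistrategies of \emph{residual} trees, and since all high-type moves adjoined at earlier stages of the tower sit at coordinates below $2(k+n)$ (hence are frozen parameters of the position, not members of the residual tree), these quasistrategies lie only one power set above the \emph{original} base. This is precisely the observation the paper isolates in its proof of Theorem~\ref{TheoremMartin} (``the strategies played in the auxiliary game concern the future moves in the tree,'' together with how the towers are stacked via Lemma~\ref{existence of inverse limits}), and it is what lets an entire $\omega$-tower---equivalently, the simultaneous unravelling of all countably many constituents at a given depth---cost a single power set. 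To repair your argument you must either build this locality property into the statement of the closed-unravelling lemma and verify that it is preserved along the towers, or strengthen the induction hypothesis to a simultaneous form (``every countable family of $\Pi^0_\beta$ subsets of $[S]$ admits one $k$-covering unravelling all of them at cost $\mathcal{P}^{v(\beta)}$''), organizing the successor and limit steps breadth-first; with either fix your recurrences, and hence $v(1+\gamma+2)\le\gamma$, go through, and the remaining points (recoding the tree onto $\mathcal{P}^\gamma(T)$ in $L$, effectivity of the lift, formalizability in $\mathsf{KP}^\gamma$) are handled as you indicate.
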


We need to review Martin's unravelling technique to prove it holds with our hypothesis and in our way of stating it. In his paper, he unravelled to clopen sets only, which is why we will repeat the proof.

A particular case of the theorem is when $T$ is countable, in which case $\tilde{T}$ can be encoded as a tree on $\Ps^{\gamma}(\omega)$.

Let $T$ be any non-empty pruned tree and let $X \subseteq [T]$ be closed and $k < \omega$. The game $G(X, T)$ has the form depicted in figure~\ref{OriginalGame}.

\begin{figure}
    \centering
    \begin{tabular}{lllllllllll}
        \I & $a_0$ &       & $a_2$ &       &          &$a_{2k-2}$   &           & $a_{2k}$ & & \\
            &       &       &       &       & $\cdots$ &           &           &  & & $\cdots$ \\
        \II &       & $a_1$ &       & $a_3$ &          &           &$a_{2k-1}$ & &$a_{2k+1}$ &\\
    
    \end{tabular}
    \caption{A closed countable game.}\label{OriginalGame}
\end{figure}

We define our auxiliary tree $\tilde{T}$ by showing how to play the game of the covering. We denote by $\SI(T)$ and $\QSI(T)$ the set of strategies and quasistrategies for $\I$ in the game tree $T$ and vice-versa for $\II$. First, $T_{\mid 2k} = \tilde{T}_{\mid 2k}$. After the move $a_{2k-1}$, Player \I\ has to play a pair $(a_{2k}, \Sigma_{\I})$, with $(a_n)_{n \leq 2k} \in T$ and $\Sigma_{\I} \in \QSI(T_{(a_n)_{n \leq 2k}})$ (Player $\II$ starts playing first in $\Sigma_{\I}$). Next Player $\II$ has two options. 

\paragraph*{Option $1$:}

Player II plays $(x_{2k+1},u)$, where $u \in T_{(a_n)_{n \leq 2k+1}}$ and $u \in (\Sigma_{\I})_{x_{2k+1}} \setminus (T_X)_{(a_n)_{n \leq 2k+1}}$. That is, a position consistent with the quasistrategy played by $I$, where she was the last one to play and so that all the possible infinite sequences to be formed from it while still playing consistently according to the quasistrategy will remain out of $X$.

If so, all the following moves $a_{2k+2}, a_{2k+3}, a_{2k+4}, \dots$ have to be consistent with $u$. We then depict the auxiliary game as in figure~\ref{AuxGame1}.

\paragraph*{Option $2$:}

Player II plays $(x_{2k+1}, \Sigma_{\II})$, where $\Sigma_{\II} \in \QSII((\Sigma_{\I})_{(x_{2k+1})})$ and $\Sigma_{\II} \subseteq (T_X)_{(a_n)_{n \leq 2k+1}}$. In other words, $\Sigma_{\II}$ only envisages moves in $\Sigma_{\I}$ that always lead to sequences in $X$.

If so, all the following moves $a_{2k+2}, a_{2k+3}, a_{2k+4}, \dots$ have to be consistent with $\Sigma_{\II}$. We then depict the auxiliary game as in figure~\ref{AuxGame2}.

In both cases $(a_n)_{n < \omega} \subset T$

\begin{figure}
    \centering
    \begin{tabular}{lllllllllll}
        \I & $a_0$ &       & $a_2$ &       &          &$a_{2k-2}$   &           & $(a_{2k}, \Sigma_{\I})$ & & \\
            &       &       &       &       & $\cdots$ &           &           &  & & $\cdots$ \\
        \II &       & $a_1$ &       & $a_3$ &          &           &$a_{2k-1}$ & &$(a_{2k+1},u)$ &\\
    
    \end{tabular}
    \caption{A clopen uncountable game (Option $1$).}\label{AuxGame1}
\end{figure}

\begin{figure}
    \centering
    \begin{tabular}{lllllllllll}
        \I & $a_0$ &       & $a_2$ &       &          &$a_{2k-2}$   &           & $(a_{2k}, \Sigma_{\I})$ & & \\
            &       &       &       &       & $\cdots$ &           &           &  & & $\cdots$ \\
        \II &       & $a_1$ &       & $a_3$ &          &           &$a_{2k-1}$ & &$(a_{2k+1}, \Sigma_{\II})$ &\\
    
    \end{tabular}
    \caption{A clopen uncountable game (Option $2$).}\label{AuxGame2}
\end{figure}

\begin{lemma}[unravelling lemma]\label{unravelling lemma}
    Let $T$ be a non-empty and pruned tree and let $X \subseteq [T]$ be closed. Suppose that $\Ps(T)$ exists. Then, for each $k < \omega$ there exists a $k$-covering of $T$ with a clopen lift of $X$.
\end{lemma}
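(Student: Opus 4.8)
The plan is to promote the informal description of the auxiliary game above (Figures \ref{AuxGame1} and \ref{AuxGame2}) into an honest covering $(\tilde T,\pi,\phi)$ and then to verify the four clauses of Definition \ref{covering} together with the clopenness of $\pi^{-1}(X)$, reproducing Martin's argument for closed payoff (which in his paper is only carried out for clopen targets, whence the need to repeat it). As in the rest of this section we may assume $V=L$, so a definable wellordering $<_L$ is available for making canonical choices; the only substantive hypothesis actually used below is that $\Ps(T)$ exists. Fix once and for all a pruned subtree $T_X\subseteq T$ with $[T_X]=X$ (possible since $X$ is closed), so that for $u\in T$ one has $u\notin T_X$ exactly when no branch of $T$ through $u$ lies in $X$.

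First I would write down $\tilde T$ precisely according to the recipe above: $\tilde T_{\mid 2k}=T_{\mid 2k}$; a move of \I\ at a node $p$ of length $2k$ is a pair $(a_{2k},\Sigma_{\I})$ with $(a_n)_{n\le 2k}\in T$ and $\Sigma_{\I}\in\QSI(T_{(a_n)_{n\le 2k}})$; a move of \II\ at the resulting node is an Option-$1$ pair $(x_{2k+1},u)$ or an Option-$2$ pair $(x_{2k+1},\Sigma_{\II})$ exactly as specified; and below these two levels every further move must stay inside $\Sigma_{\I}$ and be compatible with the object chosen by \II\ (extending $u$, respectively lying in $\Sigma_{\II}$). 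Since every (quasi)strategy is an element of $\Ps(T)$ and $u\in T$, the set of admissible moves is a subset of $\omega\cup(\omega\times\Ps(T))$, so $\tilde T$ is carved out by $\Delta_0$-separation from $(\omega\cup(\omega\times\Ps(T)))^{<\omega}$; this is where the hypothesis that $\Ps(T)$ exists is used. That $\tilde T$ is non-empty and pruned reduces to the observation that \II\ always has a legal reply: given any $\Sigma_{\I}$ and any \II-move $x_{2k+1}$, either $(\Sigma_{\I})_{x_{2k+1}}\subseteq T_X$, in which case $\Sigma_{\II}:=(\Sigma_{\I})_{x_{2k+1}}$ is a legal Option-$2$ reply, or some node of $(\Sigma_{\I})_{x_{2k+1}}$ lies off $T_X$, which is a legal Option-$1$ reply. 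I would then let $\pi:\tilde T\to T$ erase the second coordinates at levels $2k$ and $2k+1$ and be the identity elsewhere; it is monotone, length-preserving, sends $\tilde T$ into $T$, and is the identity on $\tilde T_{\mid 2k}=T_{\mid 2k}$, so clauses (1)--(2) of Definition \ref{covering} and the $k$-covering requirement on $\pi$ hold, and $\pi$ induces a continuous map $[\tilde T]\to[T]$.

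The heart of the proof is the definition of $\phi$ and the verification of clauses (3)--(4), which I would carry out following Martin. For a strategy $\tilde\sigma$ of the player who commits the quasistrategy (say \I; the other player is symmetric), $\phi(\tilde\sigma)$ copies $\tilde\sigma$ below level $2k$, plays the first coordinate $a_{2k}$ of $\tilde\sigma$'s move at level $2k$, and remembers the committed $\Sigma_{\I}$; once \II\ has made a real reply $b$, it splits into cases according to whether $(\Sigma_{\I})_b\subseteq T_X$. If so, \II's imaginary move is taken to be Option $2$ with $\Sigma_{\II}=(\Sigma_{\I})_b$, and $\phi(\tilde\sigma)$ continues to consult $\tilde\sigma$ on the corresponding imaginary run. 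If not, then \I\ has no $T_X$-confined quasistrategy below $b$ inside $\Sigma_{\I}$, so by open (Gale--Stewart) determinacy of the auxiliary subgame ``leave $T_X$'' played on $(\Sigma_{\I})_b$ (open for \I, and available in the present theory), \I\ has a winning quasistrategy there; $\phi(\tilde\sigma)$ follows it until a node $u\notin T_X$ is produced, and thereafter resumes consulting $\tilde\sigma$ on the imaginary run in which \II\ had played Option $1$ with this $u$. For a strategy of the other player one instead feeds $\tilde\sigma$ a canonical imaginary $\Sigma_{\I}$ (the $<_L$-least $T_X$-confined quasistrategy at the current position if one exists, otherwise the full tree), plays the first coordinate of the reply, and then follows the $\Sigma_{\II}$ of an Option-$2$ reply, or, in the Option-$1$ case, plays the Gale--Stewart winning quasistrategy witnessing that ``leave $T_X$'' is won for that player. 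Clause (3) is then immediate by inspection, since past level $2k$ the only data read off $\tilde\sigma$ are the committed $\Sigma_{\I}$ (available at length $2k$) and the values of $\tilde\sigma$ on imaginary positions whose $\pi$-image has the current length.

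For clause (4), given $\tilde\sigma$ and $x\in[\phi(\tilde\sigma)]$, I would reconstruct $\tilde x\in[\tilde\sigma]$ with $\pi(\tilde x)=x$: below level $2k$ and at the commitment move $\tilde x$ is forced and agrees with $x$; at level $2k+1$ one attributes to the non-committing player exactly the Option (and the accompanying $u$ or $\Sigma_{\II}$) that $\phi$ used in its case split — legal by the analysis above — and one checks that $x$ stays inside $\Sigma_{\I}$ (the committing player cannot be driven out of it, and the other is being steered within it) and, in the Option-$1$ case, actually reaches the node $u$ towards which $\phi$ steered; since from level $2k+1$ on the moves of $\phi(\tilde\sigma)$ were exactly $\tilde\sigma$'s prescriptions on this imaginary run (after the finitely many forced moves that realize $u$), the resulting run is legal in $\tilde T$, projects onto $x$, and is consistent with $\tilde\sigma$. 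Finally, $\pi^{-1}(X)$ is clopen: a play projects into $X$ precisely when \II\ chose an Option-$2$ move at level $2k+1$ (an Option-$1$ choice forces the projection through a node off $T_X$, hence out of the closed set $X$; an Option-$2$ choice confines it to $\Sigma_{\II}\subseteq T_X$, hence into $X=[T_X]$), so membership in $\pi^{-1}(X)$ is decided already by $\tilde x_{\mid 2k+2}$. The step I expect to be the main obstacle is precisely this coordination between the definition of $\phi$ and the reconstruction in clause (4): one must arrange the case split in $\phi$ — and the choice of the imaginary $\Sigma_{\I}$ for the non-committing player — so that every $\phi$-consistent real play is the $\pi$-image of a legal $\tilde\sigma$-run, and this is exactly where the determinacy of the auxiliary open/closed subgames is indispensable and where Martin's clopen-target argument must be re-run rather than quoted.
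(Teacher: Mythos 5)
There is a genuine gap, and it sits exactly where you predicted it would: the coordination between $\phi$ and clause (4). First, in your translation of a strategy $\tilde\sigma$ for the committing player, your case split is on whether $(\Sigma_{\I})_b\subseteq T_X$, and in the negative case you claim that \I\ has a winning quasistrategy in the ``leave $T_X$'' game on $(\Sigma_{\I})_b$. That inference fails: $(\Sigma_{\I})_b\not\subseteq T_X$ only says some node of the tree lies off $T_X$, not that \I\ can force the play to reach one; \II\ may well be able to confine the play to $T_X$ while avoiding those nodes. The correct dichotomy (the one in the paper) is by determinacy of the open escape game itself: either \I\ wins it (then steer to the least $u\notin T_X$ and plug the Option-$1$ move), or \II's canonical non-losing quasistrategy $\Sigma_{\II}$ is the Option-$2$ object — and, since the real \II\ is not bound by $\Sigma_{\II}$, one also needs the switching rule that if the real \II\ ever leaves $\Sigma_{\II}$, then \I\ is now winning in the escape game from that position and reverts to the Option-$1$ simulation. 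Your choice $\Sigma_{\II}=(\Sigma_{\I})_b$ only covers the easy subcase and leaves the mixed case (off-$T_X$ nodes present but \II\ able to confine) with no legal imaginary move at all.

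Second, and more seriously, your recipe for the non-committing player — feed $\tilde\sigma$ a canonical commitment ($<_L$-least $T_X$-confined quasistrategy, else the full tree) chosen independently of $\tilde\sigma$ — cannot satisfy clause (4). If the chosen $\Sigma_{\I}$ is a proper \I-quasistrategy, the real Player \I\ can simply leave it, and then no legal run of $\tilde T$ consistent with that commitment projects onto the real play; and if $\tilde\sigma$ answers the commitment with an Option-$1$ move $(x_{2k+1},u)$, the simulation must drive the real play through that \emph{specific} node $u$, which \II\ alone cannot do (your ``winning quasistrategy for leaving $T_X$'' is neither guaranteed to exist for \II\ nor sufficient, since reaching \emph{some} node off $T_X$ is not reaching $u$). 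The missing idea is Martin's auxiliary open game used in the paper's Case II: form the set $U$ of all positions $\langle x_{2k+1}\rangle^{\smallfrown}u$ that occur as Option-$1$ replies of $\tilde\sigma$ to \emph{some} commitment $\Sigma_{\I}\in\QSI(T_{(a_n)_{n\le 2k}})$, and play the open game $G(T_{\langle x_0,\dots,x_{2k}\rangle},\mathcal U)$ on the corresponding open set. If \II\ wins it, he forces the real play into $U$ and only \emph{retroactively} selects the witnessing commitment and the Option-$1$ thread matching the $u$ actually reached; if \I\ wins it, her canonical non-losing quasistrategy is taken as the commitment, which guarantees $\tilde\sigma$ answers with Option $2$, and \II\ copies that thread, switching to the first mode should the real \I\ leave her quasistrategy. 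Without this retroactive selection mechanism the reconstruction of $\tilde x\in[\tilde\sigma]$ with $\pi(\tilde x)=x$ breaks down, so the proposal as written does not prove the lemma. (Your description of $\tilde T$, $\pi$, the clopenness of the lift, and the use of $\Ps(T)$ via $\Delta_0$-separation is fine and matches the paper.)
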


\begin{proof}
    From Martin, see Appendix \ref{appendix unravelling}.
\end{proof}

Note that we crucially use open determinacy for sets of uncountable cardinality in the proof of Lemma \ref{unravelling lemma}, and this requires $\Sigma_1$--\textsc{separation}.

From the proof of Lemma~\ref{unravelling lemma} notice that there exists a winning strategy for one Player in $\tilde{T}$ if and only if there exists a winning strategy for that Player in $T$. This would be an element of $\Ps(T)$, while the former was a subset of it.

\begin{lemma}[Existence of inverse limits]\label{existence of inverse limits}
    Let $k < \omega$. Let $(T_{i+1}, \pi_{i+1}, \phi_{i+1})$ be a $(k+i)$-covering of $T_i$, $i = 0,1,2, \dots$ Then there is a pruned tree $T_{\infty}$ and $\pi_{\infty, i}, \phi_{\infty, i}$ such that this triple is a $(k+i)$-covering of $T_i$ and $\pi_{i+1} \circ \pi_{\infty, i+1} = \pi_{\infty, i}$, $\phi_{i+1} \circ \phi_{\infty, i+1} = \phi_{\infty, i}$.
\end{lemma}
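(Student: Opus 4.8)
The plan is to construct $T_\infty$ as a tree of \emph{threads} through the coverings, in the manner of an inverse limit of topological spaces, using crucially the locality condition on the maps $\phi_i$ and the $k$-covering requirement $T_{i+1}\!\restriction\!2(k+i) = T_i\!\restriction\!2(k+i)$. First I would observe that the stabilisation built into the hypotheses means that, at any fixed finite level $\ell$, the trees $T_i$ and the projections $\pi_i$ eventually agree: once $k+i \geq \ell/2$, we have $T_{i+1}\!\restriction\!\ell = T_i\!\restriction\!\ell$ and $\pi_{i+1}\!\restriction\!(T_{i+1}\!\restriction\!\ell) = \mathrm{id}$. Hence I would define $T_\infty$ level by level: a node of length $\ell$ in $T_\infty$ is a node of length $\ell$ in $T_{i}$ for $i$ large enough (independent of which large $i$, by the stabilisation), and $\pi_{\infty,j}$ is the composite $\pi_{j+1}\circ\cdots\circ\pi_{i}$ restricted to level $\ell$, which again stabilises. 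One checks $T_\infty$ is nonempty (it contains the stem of length $2k$, on which everything is the identity) and pruned (using that each $T_i$ is pruned and the $\pi_i$ are length-preserving and monotone, so one extends a node in some $T_i$, then pulls the extension back consistently). The commutativity $\pi_{i+1}\circ\pi_{\infty,i+1}=\pi_{\infty,i}$ is then immediate from the definition as a composite.

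The substantive point is the construction of $\phi_{\infty,i}$ and verification of covering condition (4): given a strategy $\tilde\sigma$ for a player in $T_\infty$, I must produce a strategy in $T_i$ so that every play consistent with the image lifts all the way back to a play consistent with $\tilde\sigma$ in $T_\infty$. The idea is to set $\phi_{\infty,i}(\tilde\sigma)$ to be the ``limit'' of $\phi_i\circ\phi_{i+1}\circ\cdots\circ\phi_{i+j}(\tilde\sigma\!\restriction\!?)$ — more precisely, to define $\phi_{\infty,i}(\tilde\sigma)$ at positions of length $\leq 2n$ by first noting that $\tilde\sigma$ restricted to positions of length $\leq 2n$ is (by stabilisation) a partial strategy living in some fixed $T_{i+j}$, then applying the composite $\phi_i\circ\cdots\circ\phi_{i+j-1}$ and using the locality clause of Definition \ref{covering}(3), which guarantees that the value of this composite on positions of length $\leq 2n$ does not depend on the choice of large $j$. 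This forces $\phi_{\infty,i}$ to be well defined and itself local, hence a legitimate strategy map; and $\phi_{i+1}\circ\phi_{\infty,i+1}=\phi_{\infty,i}$ follows by construction. For the lifting property (4): given $x\in[\phi_{\infty,i}(\tilde\sigma)]\subseteq[T_i]$, I would build the lift $\tilde x\in[T_\infty]$ by a back-and-forth / König's-lemma argument — for each $j$, condition (4) for the covering $(T_{i+j+1},\pi_{i+j+1},\phi_{i+j+1})$ applied to the relevant image strategy yields a lift $x^{(j)}\in[T_{i+j}]$ of $x^{(j-1)}$ consistent with the appropriate intermediate strategy; the locality of the $\phi$'s ensures these lifts cohere level by level, so their ``diagonal'' limit is the desired $\tilde x\in[\tilde\sigma]$ projecting to $x$.

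The main obstacle I expect is bookkeeping the \emph{coherence} of the successive lifts $x^{(j)}$: a priori the lift of $x$ into $T_{i+j+1}$ guaranteed by covering condition (4) need not extend the one into $T_{i+j}$, so one cannot simply take a union. The fix is to run the argument at each finite level separately: by the $k$-covering stabilisation, the first $\ell$ coordinates of every $T_{i+j}$ and of every intermediate strategy agree for $j$ large, so the lifts agree on the first $\ell$ coordinates for $j$ large, and one defines $\tilde x\!\restriction\!\ell$ as this eventual common value. Verifying that the resulting infinite sequence is genuinely a branch of $T_\infty$ consistent with $\tilde\sigma$ then uses prunedness and the locality clause once more. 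A secondary (but routine) point is to make sure the whole construction goes through in the ambient theory $\KP^\gamma + \Sigma_m\text{--}\textsc{separation}$ with $V=L$: the trees $T_{i+j}$ are trees on iterated power sets of $T$, and since $\Ps^\gamma(\omega)$ exists and we have enough separation and collection, the sequence $\langle (T_i,\pi_i,\phi_i) : i<\omega\rangle$ and the limit object $T_\infty$ together with $\pi_{\infty,i},\phi_{\infty,i}$ can be formed as genuine sets — this is where $\Sigma_m$--\textsc{collection} is invoked to gather the level-by-level data into a single sequence.
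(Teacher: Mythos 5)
Your proposal is correct and takes essentially the same route as the paper's proof: you define $T_\infty$, $\pi_{\infty,i}$ and $\phi_{\infty,i}$ level by level from the stabilisation given by the $(k+i)$-covering condition together with the locality clause of Definition~\ref{covering}, exactly as in the paper. Your verification of covering condition (4) by successive lifts, which cohere on longer and longer initial segments (each $\pi_{i+j}$ being the identity there) and hence converge to a branch of $T_\infty$ consistent with the given strategy, is precisely the paper's argument.
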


\begin{proof}
From Martin, see Appendix \ref{appendix unravelling}.
\end{proof}

Figure~\ref{InvLimit} is sketching the situation in the last lemma.

\begin{proof}[Proof of Theorem \ref{TheoremMartin}]
    We prove the theorem by induction on $\gamma$. 
    First, suppose $\gamma= \alpha + 1$. 
    Observe that, for any natural number $k$, if a $k$-covering has some $\Pi^0_3$ lift of a set $X$, then it has a $\Sigma^0_3$ lift for its complementary set. 
    Let $(B_i)_{i < \omega}$ be an enumeration of all the $\Sigma^0_{1 + \alpha + 1}$ sets such that the $\Pi^0_{1 + \gamma + 2}$ set $X$ is formed by the intersection $\bigcap_{i < \omega} B_i$. 
    Let $\pi_0$ be the $k$ covering given by the induction hypothesis for the game $G(T, B_0)$ with a $\Pi^0_3$ lift in a game tree on $\Ps^{\alpha}(T)$, $T_0$. Note that we can apply the induction hypothesis because $\gamma = \alpha+1$ is a successor.
    We can write $T_0 \supseteq \pi_0^{-1}(B_0) = \bigcap_j \bigcup_k C^0_{\langle j,k \rangle}$, with each  $C^0_{\langle j,k \rangle}$ closed.
    
    Using Lemma~\ref{unravelling lemma}, we then inductively define $(k+n)$-coverings $\pi_{0;n+1}$ of $T_0^n$; $T_0^0 = T_0$ and $\pi_{0;n+1}: T_0^{n+1} \to T_0^{n}$ is defined as in lemma~\ref{unravelling lemma} to unravel $\pi^{-1}_{1;n}\circ \cdots \circ \pi^{-1}_{1;1}(C^0_n)$ into a clopen set, in $T_0^{n+1}$, a game tree on $\Ps^{\alpha + 1}(T)$. 
    Invoking Lemma~\ref{existence of inverse limits}, we then obtain a $k$-covering $(T^{\infty}_0, \pi_{0;\infty}, \phi_{0; \infty})$ of $T_0$ with a lift of $\pi_1^{-1}(B_0)$ that is $\Pi^0_{2}$. We rename $T_1 = T^{\infty}_0$.

    Now, let one inspect two properties of the covering from Lemma~\ref{unravelling lemma}. Firstly, the fact that the strategies played in the auxiliary game concern the future moves in the tree and secondly, the way they are stacked by Lemma~\ref{existence of inverse limits}. We thus may observe that $T_1$ is a tree on $\Ps^{\alpha+1}(T)$.

    To summarize we have a $k$-covering $\pi_1 = \pi_{0;\infty} \circ \pi_0: T_1 \to T$ that has a $\Sigma^0_{2}$ lift of $B_0$.

    By induction, we define through the same process $\pi_{i+1}: T_{i+1} \to T_i$ to be the $(k+i)$-covering with a $\Sigma^0_{2}$ lift of $\pi^{-1}_i \circ \cdots \circ \pi^{-1}_1(B_i)$ in a game tree on $\Ps^{\alpha+1}(T)$. 

    Invoking Lemma~\ref{existence of inverse limits}, we then obtain  a $k$-covering $(T_{\infty}, \pi_{\infty}, \phi_{\infty})$ of $T$ with a lift of $X$ that is $\Pi^0_{3}$. 
    Moreover, as before, we observe that $T_{\infty}$ is a tree on $\Ps^{\alpha+ 1}(T)$.
    
    Suppose that for $\gamma$ a limit ordinal, we proved the theorem for any $\alpha < \gamma$. 
    Since by hypothesis we enjoy the existence of $\bigcup_{\alpha< \gamma} \Ps^\alpha(\omega)$ we can thus unravel any $\Pi^0_{1 + \alpha + 2}$ game into a $\Pi^0_3$ set and actually into a clopen game since $\Ps^{1 + \alpha + 5}(\omega)$ exists. 
    Thus, by the method of inverse limits, we get covering of $\Pi^0_{\gamma}, \Pi^0_{\gamma+ 1}$ and $\Pi^0_{\gamma + 2}$ games with respectively $\Pi^0_{1}, \Pi^0_{2}$ and $\Pi^0_{3}$ lifts in game trees on $\Ps^\gamma(\omega)$. 
    In particular, the conclusion follows.
\end{proof}

\begin{figure}
    \centering
    \begin{tikzcd}[sep = large]
    T_0 & \arrow[l, "\pi_1", swap] T_1 & \arrow[l, "\pi_2", swap] T_2 & \cdots \arrow[l, "\pi_3", swap] & \arrow[l, "\pi_i", swap] T_i & T_{i+1} \arrow[l, "\pi_{i+1}", swap] & \arrow[l, "\pi_{i+2}", swap] \cdots
    \\
    \\ & & & T_{\infty} \arrow[llluu, "\pi_{\infty,0}", pos=0.7] \arrow[lluu, "\pi_{\infty,1}", pos=0.7, swap] \arrow[luu, "\pi_{\infty,2}", pos=0.7, swap] \arrow[ruu, "\pi_{\infty,i}", pos=0.7] \arrow[rruu, "\pi_{\infty, i+1}", pos=0.7, swap]
    \end{tikzcd}
    \caption{Composing Martin's unravelling.}\label{InvLimit}
\end{figure}
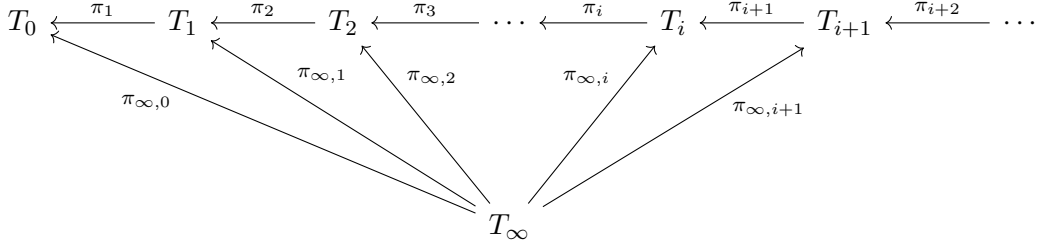

The following lemma follows immediately from the definition.
\begin{lemma}
    Take $\pi: T_0 \to T_1$, a $k$-covering of $T_1$ and $A_0, A_1, \dots A_{m-1} \subseteq [T_1]$ and $A_m = \emptyset$. Suppose $j = \mu \{i: x_1 \not\in A_i\}$ is odd. Then if $x_0 \in [T_0]$ is such that $\pi(x_0) = x_1$ we have that $k = \mu \{i: x_0 \not\in \pi^{-1}(A_i)\}$ is odd, indeed $k=j$. The result is the same if we change odd by even and $\not\in$ by $\in$ (and then $A_m = [T_1]$).
\end{lemma}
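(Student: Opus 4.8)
The plan is to reduce the whole statement to the single pointwise observation that, for every index $i\le m$ and every $x_0\in[T_0]$, one has $x_0\in\pi^{-1}(A_i)$ if and only if $\pi(x_0)\in A_i$ — this is nothing more than the definition of preimage, together with the fact (Definition~\ref{covering}) that a $k$-covering supplies a genuine continuous map $\pi\colon[\tilde T]\to[T]$ and that the lift of a set is its $\pi$-preimage. So first I would fix $x_0\in[T_0]$ with $\pi(x_0)=x_1$ and apply this equivalence uniformly over all $i\le m$. Since $A_m=\emptyset$ gives $\pi^{-1}(A_m)=\emptyset$, the representing sequence $\pi^{-1}(A_0),\dots,\pi^{-1}(A_{m-1}),\pi^{-1}(A_m)$ is again of the form demanded by Definition~\ref{Pidif}, so the statement ``$k=\mu\{i:x_0\notin\pi^{-1}(A_i)\}$'' is meaningful.

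The equivalence above yields the set identity
\[\{\,i\le m : x_0\notin\pi^{-1}(A_i)\,\}=\{\,i\le m : x_1\notin A_i\,\}\]
as subsets of $\{0,1,\dots,m\}$. Two equal sets of ordinals have the same least element, so $k:=\mu\{i:x_0\notin\pi^{-1}(A_i)\}$ equals $j:=\mu\{i:x_1\notin A_i\}$; in particular $k$ is odd precisely when $j$ is odd, which is the assertion. For the parenthetical variant I would run the identical argument with $\notin$ replaced by $\in$ everywhere and with the endpoint convention $A_m=[T_1]$, noting that $\pi^{-1}([T_1])=[T_0]$ so that $\pi^{-1}(A_0),\dots,\pi^{-1}(A_m)$ is of the form required by Definition~\ref{Sigmadif}, and concluding $k=j$ exactly as before.

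I do not anticipate a real obstacle: this is a bookkeeping lemma recording that taking preimages under a covering map commutes with the difference-hierarchy construction, and the only point needing any attention is that the boundary values ($\emptyset$ in the $\Pi$-case, $[T_1]$ in the $\Sigma$-case) are preserved by $\pi^{-1}$, which is immediate. The lemma will then be used to transport determinacy of the $\Pi^0_3$ (eventually clopen) lift $\pi^{-1}(X)$ in $\tilde T$ back to determinacy of the original $(\Pi^0_{1+\gamma+2})_m$ payoff set in $T$, via the strategy map $\phi$ of the covering.
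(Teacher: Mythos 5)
Your proposal is correct and matches the paper's treatment: the paper simply states that the lemma ``follows immediately from the definition,'' and your pointwise argument --- that $x_0\in\pi^{-1}(A_i)$ iff $\pi(x_0)\in A_i$, so the two index sets coincide and hence have the same least element and parity --- is exactly that immediate verification, with the boundary cases $\pi^{-1}(\emptyset)=\emptyset$ and $\pi^{-1}([T_1])=[T_0]$ noted as needed.
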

In other words, the lifting of the difference is the difference of the lifting.

We now begin working towards the proof of Theorem \ref{mPi4Det}.
In the following, we  consider sequences $s \in \omega^{\leq m}$ and a tree $T \subseteq [\Ps^{\gamma}(\omega)]^{< \omega}$, the covering of some tree $T^* \subseteq 2^{< \omega}$ with a $(\Pi^0_{1+ \gamma + 2})_m$ set $A^* \subseteq [T^*]$ and a $(\Pi^0_3)_m$ set $\pi^{-1}(A^*) \eqqcolon A \subseteq [T]$. 
We set up the construction of the $A_i$ for $0 \leq i < m$ as 
\begin{align*}
    A_i = \bigcap_{k < \omega} A_{i,k} \qquad \text{ and } \qquad A_{i, k} = \bigcup_{j < \omega} A_{i,k,j},
\end{align*} 
with $\Sigma^0_2$ sets $A_{i}$ and $\Pi^0_1$ sets $A_{i,k}$, with $A_{i,k} \subset [T] \subseteq [\Ps^{\gamma}(\omega)]^{\omega}$.

In the following definition, we will state the equivalent of Martin's property inside our current setting of $(2 + \gamma)$-th arithmetic. 
The goal of this property $P^{s}(T)$ is to give an approximate answer to the question ``Which Player wins the game $G(T, A)$'', by taking into account the specific structure of the set $A$. 
Here comes the difference in the case of second-order arithmetic. 
As before, asking whether Player \I\ has a winning strategy in $G(T, A)$ or not is a $\Sigma_1$ statement. However, by unravelling, this is equivalent to the existence of a winning strategy in $G(T^*, A^*)$. Since such a strategy is a countable object, its existence can be expressed in a $\Delta_0(\mathbb{R})$ way, and this will allow eliminating one quantifier.

Given $s$ we put $l \coloneqq m - |s|$.

\begin{definition}
    We define relations $P^s(T)$ by induction on $|s| \leq m$:
    \begin{enumerate}
        \item When $|s|=0$, $P^{\langle \ \rangle}(T)$ if and only if $\I$ (respectively, $\II$) has a winning strategy in $G(T^*,A^*)$ if  $l$ is even (respectively, odd).
        \item For $|s|=n+1$ and $l$ even, $P^{s}(T)$ if and only if there is a quasistrategy $U$ for Player \I\ in $T$ such 
        that \begin{align}
            [U] \subseteq A \cup A_{l, s(n)} \qquad \text{and} \qquad 
            P^{s\upharpoonright n}(U) \text{ fails}.\label{even}
        \end{align}
        \item For $|s|=n+1$ and $l$ odd, $P^{s}(T)$ if and only if there is a quasistrategy $U$ for Player \II\ in $T$ such 
        that \begin{align}
            [U] \subseteq \Bar{A} \cup A_{l, s(n)} \qquad \text{and} \qquad
            P^{s\upharpoonright n}(U) \text{ fails}.\label{odd}
        \end{align}  
    \end{enumerate}
    A quasistrategy $U$ \emph{witnesses} $P^{s}(T)$ if $U$ is as required in the appropriate clause.
    \label{P}
\end{definition}

\begin{figure}
\begin{center}
 \begin{tikzcd}
U \arrow[r, "\subseteq"] \arrow[d, "\pi", dashed] & T \arrow[d, "\pi"] \\
U^* \arrow[r, "\subseteq"]                        & T^*               
\end{tikzcd}
\end{center}
\caption{The tree $U^*$ in the definition of $P^{\langle\rangle}(T)$.}
\label{FigureTreeTU}
\end{figure}

We now show that the relation $P^s(T)$ just defined is of complexity $\Sigma_{|s|}$. This is one of the key steps which differs from the case of subsystems of $\mathsf{Z}_2$.
\begin{lemma}
Suppose $|s| \leq m$. Then, there is a $\Sigma_{|s|}$ formula $\phi(x_0, x_1, z_{i=0, \dots, |s|} ,y)$ such that for all $T$, $P^s(T)$ is equivalent to $\phi(T, A,(A_{s(i)})_{0 \leq i < |s|}, \mathbb{R})$.
\end{lemma}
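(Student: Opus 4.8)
The plan is to prove this by induction on $|s|$, reading off from Definition \ref{P} how quantifier complexity accumulates. The parameters of $\phi$ will be the tree $T$ itself (the variable $x_0$), the lifted payoff set $A=\pi^{-1}(A^*)$ (the variable $x_1$), the finitely many closed pieces $A_{l,s(i)}$ that the recursion peels off — one per level, with the first index determined by the recursion depth, so $|s|$ of them in the variables $z_i$ — and a single second-order parameter $y$, which we instantiate as $\mathbb{R}=\Ps^{1}(\omega)$; the fixed unravelling data $T^*$, $\pi$ and $A^*$ (equivalently a Borel code for $A^*$ and its evaluation function, which exists by $\Sigma_1$--\textsc{separation}) are further fixed parameters absorbed into the context. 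The whole point is that, unlike in the second-order setting, $\mathbb{R}$ is genuinely a \emph{set} here (since $\Ps^{\gamma}(\omega)$ exists), so quantifiers over reals can be \emph{bounded}.

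\textbf{Base case $|s|=0$.} By clause~(1) of Definition \ref{P}, $P^{\langle\rangle}(T)$ asserts that the appropriate player has a winning strategy in the game $G(\pi[T],A^*\cap[\pi[T]])$, played on the countable tree $\pi[T]\subseteq T^*\subseteq 2^{<\omega}$ (Remark \ref{locality of unravelling} guarantees $\pi[T]$ is again a subtree of $T^*$). A strategy in such a game is coded by an element of $\mathbb{R}$, and ``$\sigma$ is winning for $\I$'' (resp.\ $\II$) unwinds to ``for every play $x\in\mathbb{R}$ consistent with $\sigma$ and compatible with $\pi[T]$ we have $x\in A^*$'' (resp.\ $x\notin A^*$), a statement whose only quantifier is $\forall x\in\mathbb{R}$ and whose matrix is $\Delta_0$ once $\mathbb{R}$, $\pi$, $T^*$ and $A^*$ are taken as parameters. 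Hence $P^{\langle\rangle}(T)$ has the form $\exists\sigma\in\mathbb{R}\,(\Delta_0)$, i.e.\ it is $\Sigma_0$. This is exactly the quantifier saved by unravelling: the naive rendering ``some player wins $G(T,A)$'' is $\Sigma_1$, because a strategy in the higher-order tree $T$ on $\Ps^{\gamma}(\omega)$ is not coded by a real.

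\textbf{Inductive step.} Assume the statement for $s\upharpoonright n$, where $|s|=n+1$ and, say, $l=m-|s|$ is even (the odd case is symmetric). By clause~(2) of Definition \ref{P}, $P^s(T)$ holds iff there is a quasistrategy $U$ for $\I$ in $T$ with $[U]\subseteq A\cup A_{l,s(n)}$ and $\neg P^{s\upharpoonright n}(U)$. Now ``$U$ is a quasistrategy for $\I$ in $T$'' is $\Delta_0$ in $U,T$; ``$\neg P^{s\upharpoonright n}(U)$'' is $\Pi_n$ by the induction hypothesis applied with $U$ in place of $T$ (the remaining parameters are unchanged); and ``$[U]\subseteq A\cup A_{l,s(n)}$'' is, after expressing $A=\pi^{-1}(A^*)$ and $A_{l,s(n)}$ through their tree/code presentations, at worst $\Pi_1$. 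Therefore $P^s(T)$ is of the form $\exists U\,\bigl(\Delta_0\wedge(\le\!\Pi_n)\wedge\Pi_n\bigr)$, i.e.\ $\exists U\,\Pi_n$, which is $\Sigma_{n+1}=\Sigma_{|s|}$; at the level $|s|=1$ one instead uses that, at the bottom of the recursion, the set $A\cup A_{l,s(0)}$ together with the residual clause about $G(\pi[U]^*,A^*)$ is presented by a tree on $\Ps^{\gamma}(\omega)$, so the containment is a $\Delta_0$ inclusion of trees and one obtains $\exists U\,(\Delta_0)$, i.e.\ $\Sigma_1$.

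\textbf{The main obstacle.} The delicate part is not the quantifier count per se but verifying that the auxiliary conditions — ``$U$ is a quasistrategy in $T$'' and especially ``$[U]\subseteq A\cup A_{l,s(n)}$'' — can be expressed at complexity $\le\Pi_{|s|-1}$ (and $\Delta_0$ at the bottom level), so that they are absorbed under the single new existential over quasistrategies. In second-order arithmetic every object in sight is a real and every such condition is bounded, but here $U$, $[U]$ and the $A_i$'s live over $\Ps^{\gamma}(\omega)$, and keeping control of ``$[U]\subseteq A\cup A_{l,s(n)}$'' requires exploiting the concrete presentation of $A$ as a $(\Pi^0_3)_m$ set and of the $A_{i,k}$ as (unions of) closed subtrees of the covering tree $T$, exactly mirroring Martin's use of $T^*$ and $A^*$ in the clopen case. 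Together with the fact that the base case genuinely drops to $\Sigma_0$ — the sole reason this argument improves on the $\mathsf{Z}_2$ picture — this is where essentially all the work is; once it is in place the induction closes mechanically.
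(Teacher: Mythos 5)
Your proposal follows essentially the same route as the paper's own proof: induction on $|s|$, with the base case becoming $\Delta_0$ because the unravelled game $G(T^*,A^*)$ lives on a countable tree, so a winning strategy is coded by a real and the relevant quantifier is bounded by the parameter $\mathbb{R}$, and each inductive step contributing exactly one new existential over quasistrategies $U$ with the negated induction hypothesis of complexity $\Pi_n$, using that $\pi(U)=U^*$ is again a subtree of $T^*$. The auxiliary conditions you single out (``$U$ is a quasistrategy in $T$'' and ``$[U]\subseteq A\cup A_{l,s(n)}$'') are passed over in silence in the paper's proof, so your discussion of their complexity --- including the extra care needed at $|s|=1$ --- is, if anything, more explicit than the original.
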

\proof
Suppose without loss of generality that $m$ is even. The proof is induction on $|s|$. 
When $|s| = 0$, the formula $P^{\langle \ \rangle}(T)$ asserts that Player I has a winning strategy in $G(T,A)$. However, by Theorem \ref{TheoremMartin}, Player I has a winning strategy in $G(T,A)$ if and only if she has one in $G(T^*, A^*)$. Since $A^*\subseteq\mathbb{R}$ and $T^*$ is a tree on $\mathbb{N}$, such a strategy can be coded by a real number, if it exists. 
Now, we claim that the mapping 
\[(T,A) \mapsto (T^*,A^*)\]
is $\Delta_0$. To see this, observe that $T^*$ is obtained from $T$ by removing the auxiliary moves from the plays, and $A^*$ is obtained  as the set of all branches through $T^*$ induced by a branch in $A$ via the natural projection. Thus,  $P^{\langle \ \rangle}(T)$ can be stated as a $\Delta_0$ formula involving $T$, $A$ and $\mathbb{R}$ as parameters. This proves the claim for $|s| = 0$. 

When $|s|=n+1 \leq m$, $P^{s}(T)$ holds if and only if there exists some $U \subseteq T$ such that $[U]\subset A\cup A_{l,s(n)}$ (or $[U]\subset \bar A\cup A_{l,s(n)}$, according to the parity of $n$) and such that $P^{s\upharpoonright n}(U)$ fails. By induction hypothesis $P^{s\upharpoonright n}(U)$ is defined by a $\Sigma_{n}(U,A, (A_{s(i)})_{0 \leq i < |s|}, \mathbb{R})$ formula.  Noting that, by remark~\ref{locality of unravelling}, if $U \subseteq T$ is a subtree of $T$, then $\pi(U) = U^*$ is a subtree of $T^*$ (see figure~\ref{FigureTreeTU}), we see that $P^{s}(T)$ is $\Sigma_{n+1}$, as desired.

\endproof

\begin{definition}[Failure everywhere]
    We say that $P^s(T)$ \emph{fails everywhere} if $P^s(T_p)$ fails for every $p \in S$. This is a $\Pi_{|s|}$ 
    sentence.
\end{definition}

We now state 
Lemma \ref{failure} and Lemma \ref{binary} below, which are due to Martin. The fact that $\Sigma_m$-\textsc{separation} suffices for the proof is due to Montalb\'an and Shore. For the reader's convenience, we have included the proofs in Appendix~\ref{SectAppendixUpperBound}.

\begin{lemma}
If $P^s(T)$ fails, then there is a quasistrategy $W$ in $S$ such that $P^s(W)$ fails everywhere.\label{failure}
\end{lemma}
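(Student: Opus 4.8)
The plan is to run Martin's argument --- in the streamlined form due to Montalb\'an and Shore, so that only $\Sigma_m$--\textsc{separation} is used --- by induction on $|s|$. Using the symmetry between the two clauses of Definition~\ref{P} I may assume $l = m - |s|$ is even, so that $P^s$ speaks about quasistrategies for Player~$\I$ (in the base case $|s|=0$, about a winning strategy for $\I$ in the unravelled game). Put
\[
  B = \{\, p\in T : P^s(T_p)\ \text{holds}\,\},
\]
so $\langle\rangle\notin B$ since $P^s(T)=P^s(T_{\langle\rangle})$ fails, and take as the natural candidate
\[
  W = \{\, p\in T : \text{no initial segment of }p\text{ lies in }B\,\},
\]
a downward--closed subtree of $T$ containing $\langle\rangle$. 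The first task is to check that $W$ is a (pruned) quasistrategy for Player~$\II$, the opponent of the $P^s$--player; this reduces to the dichotomy that (a) at an $\I$--node $p\notin B$ \emph{every} immediate successor is again outside $B$, and (b) at a $\II$--node $p\notin B$ \emph{some} immediate successor is outside $B$. For (a): a witness $V$ for $P^s(T_{p^\frown a})$ turns into a witness for $P^s(T_p)$ by adjoining $\I$'s move $a$ at $p$, which uses the \emph{forced--move} sublemma that $P^t$ is unchanged when a single forced root move is prepended (a one--line induction on $|t|$, the base case being that a forced opening move does not change who wins).

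For (b) --- the contrapositive --- one would, assuming every successor of a $\II$--node $p\notin B$ lies in $B$, amalgamate the witnesses $V_a$ into one $\I$--quasistrategy $V=\{p\}\cup\bigcup_a V_a$ in $T_p$ with $[V]\subseteq A\cup A_{l,s(n)}$ and show $P^{s\upharpoonright n}(V)$ fails, contradicting $p\notin B$; this already leans on the inductive hypothesis for the strictly shorter string $s\upharpoonright n$. The second and crucial task is the transfer $\neg P^s(T_p)\Rightarrow\neg P^s(W_p)$ for $p\in W$ (since $W_p$ is again a $\II$--quasistrategy in $T_p$, this finishes the lemma). In the base case this is the Gale--Stewart escape trick: a winning strategy for $\I$ in $G(W_p,A)$ would give one in $G(T_p,A)$ by following it while $\II$ stays inside $W$ and switching to a local winning strategy the instant $\II$ steps out --- which is possible precisely because the positions $\II$ can step out to lie in $B$ --- contradicting $p\notin B$; Theorem~\ref{TheoremMartin} lets one pass between $G(T^*,A^*)$ and $G(T,A)$, so ``$\I$ wins'' is arithmetic and $B$ is a genuine set. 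The inductive step repeats this one level up: a witnessing quasistrategy for $P^s(W_p)$ is spliced, at each $\II$--move that leaves $W$, with a local witness for $P^s$ at the corresponding $B$--position, the nested recursive clause being handled by the inductive hypothesis on $s\upharpoonright n$.

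Finally, the set--theoretic bookkeeping: by the preceding lemma $P^s(T_q)$ is $\Sigma_{|s|}$ and $|s|\le m$, so $B$ and $\{p:\exists q\subseteq p\ P^s(T_q)\}$ are $\Sigma_m$; hence $W$, the relative complement of the latter inside $T$, exists by $\Sigma_m$--\textsc{separation} (separation for a $\Sigma_m$ class inside a set also yields it for the complementary $\Pi_m$ class), and the amalgamated and spliced quasistrategies occurring above are defined by formulas of complexity at most $\Sigma_m$, so they too exist. The main obstacle is exactly the transfer step together with (b): $P^s$ is \emph{not} monotone under passing to sub--quasistrategies, so one cannot argue directly from $W_p\subseteq T_p$ and $\neg P^s(T_p)$; the escape--and--splice construction is what repairs this, and threading it through the nested difference--hierarchy clauses while keeping every auxiliary object within reach of $\Sigma_m$--\textsc{separation} is where the real work lies.
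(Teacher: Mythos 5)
Your argument is correct in substance, but it is organized differently from the paper's proof, which follows Martin/Montalb\'an--Shore. In the paper, $W$ is taken to be the opponent's \emph{non-losing quasistrategy} in an auxiliary open game in which the $P^s$-player tries to reach the set $D$ of minimal positions $d$ with $P^s(T_d)$, and all the splicing work is packaged once and for all in the notion of \emph{local witness} together with Lemma~\ref{nolocal} (proved via Lemma~\ref{induc}); the failure of $P^s$ at the root and at positions of $W$ is then obtained by exhibiting local witnesses and invoking Lemma~\ref{nolocal}, with $\Sigma_{|s|}$--\textsc{separation} and $\Sigma_{|s|}$--$\AC$ doing the bookkeeping. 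You instead take the naive tree $W$ of positions with no initial segment in $B=\{p: P^s(T_p)\}$, prove directly that it is a quasistrategy for the opponent (your claims (a) and (b), via the forced-move invariance and the amalgamation of witnesses over the opponent's moves), and then prove the transfer $\neg P^s(T_p)\Rightarrow\neg P^s(W_p)$ by splicing a hypothetical witness in $W_p$ with witnesses of $P^s(T_q)$ at the exit positions $q\in B$. Granting (a) and (b), your $W$ coincides with the paper's non-losing quasistrategy, so the two constructions meet; what your route buys is that it avoids the local-witness machinery altogether for this lemma, at the price of proving (a)/(b) (which the paper never needs) and of re-deriving inline the content of Lemma~\ref{nolocal}. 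Two points that you only gesture at must be made explicit in a full write-up. First, in both claim (b) and the transfer step, localizing a putative witness $U'$ of $P^{s\upharpoonright n}(\cdot)$ below a single move of the opponent runs into exactly the non-monotonicity you identify: knowing $P^{s\upharpoonright(n-1)}(U')$ fails does not give failure for the localization. You must first upgrade $U'$ to a \emph{strong} witness (failure everywhere) by applying the present lemma at the shorter string $s\upharpoonright(n-1)$ — i.e., Lemma~\ref{enpowerment} at $s\upharpoonright n$ — inside your induction on $|s|$; your phrase ``leans on the inductive hypothesis'' should be expanded to exactly this, and the mutual recursion between the failure lemma and the enpowerment step set up cleanly. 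Second, selecting the families of witnesses $V_b$, $V_q$ over possibly $\Ps^{\gamma}(\omega)$-many nodes needs more than definability of each witness: one uses the $<_L$-least choices together with $\Sigma_m$--\textsc{collection} (the paper's $\Sigma_{|s|}$--$\AC$), which is available in the ambient $\KP^{\gamma}+\Sigma_m$--\textsc{separation}$+V=L$ setting but should be cited rather than absorbed into ``$\Sigma_m$-definable, so it exists.'' With these two points spelled out, your proof goes through and stays within $\Sigma_m$--\textsc{separation}, matching the paper's complexity bounds; note, however, that the paper's local-witness formulation is reused in the proof of Lemma~\ref{binary}, so the machinery you bypass here is not dead weight in the article as a whole.
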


\begin{definition}[Strong witness]
    For $|s| = n+1$, $W$ \emph{strongly witnesses} $P^s(T)$ if, for all $p \in W$, $W_p$ witnesses $P^s(T_p)$, that 
    is, $W$ witnesses $P^s(T)$ and $P^{s\upharpoonright n}(W)$ fails everywhere. This is a $\Pi_{|s|-1}$ sentence.
\end{definition}

Lemma \ref{enpowerment} below is an immediate consequence of Lemma \ref{failure}.

\begin{lemma}
    If $P^s(T)$, then there is a $W$ that strongly witnesses it.\label{enpowerment}
\end{lemma}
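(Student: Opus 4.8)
The plan is to derive this immediately from Lemma \ref{failure} together with the definitions of \emph{witness} and \emph{strong witness}. First I would unpack what $P^s(T)$ gives us: writing $|s| = n+1$ and $l = m - |s|$, by Definition \ref{P} there is a quasistrategy $U$ for the appropriate player (\I\ if $l$ is even, \II\ if $l$ is odd) in $T$ with $[U] \subseteq A \cup A_{l,s(n)}$ (respectively $[U] \subseteq \bar A \cup A_{l,s(n)}$) such that $P^{s\upharpoonright n}(U)$ fails. In other words, $U$ witnesses $P^s(T)$ but need not witness it \emph{at every node}; the point of $P^{s\upharpoonright n}(U)$ failing \emph{everywhere} (i.e.\ $P^{s\upharpoonright n}(U_p)$ failing for every $p \in U$) is exactly what ``strongly witnesses'' demands, and that is what is still missing.

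The key step is then to apply Lemma \ref{failure} to the failure of $P^{s\upharpoonright n}(U)$, but \emph{inside the tree $U$} rather than $T$: since $P^{s\upharpoonright n}(U)$ fails, Lemma \ref{failure} produces a quasistrategy $W \subseteq U$ (for the player appropriate to $|s\upharpoonright n| = n$) such that $P^{s\upharpoonright n}(W)$ fails everywhere. It remains to check that this $W$ still witnesses $P^s(T)$. Since $W \subseteq U$ we have $[W] \subseteq [U] \subseteq A \cup A_{l,s(n)}$ (resp.\ $\subseteq \bar A \cup A_{l,s(n)}$), which is the payoff containment required by Definition \ref{P}; and $W$ is a quasistrategy for the same player as $U$ because the player associated to $P^s$ is the one associated to $l$ even/odd, which is the \emph{opposite} parity to the player of $P^{s\upharpoonright n}$ — one must be slightly careful here, but the quasistrategy obtained from Lemma \ref{failure} for $P^{s\upharpoonright n}(U)$ is for the player of $P^{s\upharpoonright n}$, and shrinking a quasistrategy for one player to a subtree still leaves the relevant quantifier over the \emph{other} player's quasistrategies available, so $W$ indeed witnesses $P^s(T)$. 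Combining, $W$ witnesses $P^s(T)$ and $P^{s\upharpoonright n}(W)$ fails everywhere, i.e.\ $W$ strongly witnesses $P^s(T)$, as required. Moreover $P^{s\upharpoonright n}(W_p)$ failing for all $p\in W$ says precisely that $W_p$ witnesses $P^s(T_p)$ for every $p$, matching the alternative formulation in Definition \ref{P}.

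The only genuine subtlety — hardly an obstacle — is bookkeeping of which player's quasistrategies are being quantified over at each stage, and the fact that Lemma \ref{failure} is being invoked relative to the tree $U$ (a subtree of $T$) rather than to $T$ itself; this is legitimate because Lemma \ref{failure} is stated for an arbitrary tree $S$, and because $P^{s\upharpoonright n}$ applied to $U$ and its subtrees is well-defined. There is no new use of \textsc{separation} beyond what Lemma \ref{failure} already consumes.
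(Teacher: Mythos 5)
Your proof is correct and is exactly the argument the paper intends: Lemma \ref{enpowerment} is presented there as an immediate consequence of Lemma \ref{failure}, obtained precisely as you do by applying Lemma \ref{failure} to the failure of $P^{s\upharpoonright n}(U)$ inside a witness $U$ of $P^s(T)$ and noting that the resulting $W\subseteq U$ inherits the payoff inclusion, so that $W$ witnesses $P^s(T)$ while $P^{s\upharpoonright n}(W)$ fails everywhere. One small caution on your bookkeeping: the quasistrategy produced by (the proof of) Lemma \ref{failure} is for the \emph{opponent} of the player whose quasistrategies witness $P^{s\upharpoonright n}$ --- that is, for the same player as $P^s$ --- and it is this fact (together with the observation that a quasistrategy for that player inside $U$ is still one inside $T$) that makes $W$ a legitimate witness of $P^s(T)$; your phrase ``for the player of $P^{s\upharpoonright n}$'' should be read in that sense.
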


\begin{lemma}
    If $|s| = n+1$, then at least one of $P^s(T)$ and $P^{s\upharpoonright n}(T)$ holds.\label{binary}
\end{lemma}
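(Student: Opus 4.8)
The plan is to run Martin's inductive argument, in the form used by Montalb\'an and Shore \cite{MS}, by induction on $|s|$. Write $l=m-|s|$; interchanging the roles of $\I$ and $\II$ (and of $A$ and $\bar A$) throughout, I may assume $l$ is even, so that $P^{s}(T)$ is the clause of Definition~\ref{P} asking for a quasistrategy $U$ for $\I$ with $[U]\subseteq A\cup A_{l,s(n)}$ and $P^{s\upharpoonright n}(U)$ failing, while $P^{s\upharpoonright n}(T)$ — whose associated index is $l+1$, odd — is the clause favouring $\II$. If $P^{s\upharpoonright n}(T)$ holds we are done, so assume it fails; I must produce such a $U$.

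First I would apply Lemma~\ref{failure} to $P^{s\upharpoonright n}$: since $P^{s\upharpoonright n}(T)$ fails, there is a quasistrategy $W$ in $T$ — by the construction in that lemma, a quasistrategy for the player \emph{not} favoured by $P^{s\upharpoonright n}$, i.e.\ for $\I$ — with $P^{s\upharpoonright n}(W_p)$ failing for every $p\in W$. All the remaining work takes place inside $W$. Inside $W$, I would build $U\subseteq W$, a quasistrategy for $\I$ driving every branch into $A\cup A_{l,s(n)}$, by interleaving the analysis of the closed ($\Pi^0_1$) set $A_{l,s(n)}$ with the inductive hypothesis (Lemma~\ref{binary} for sequences of length $|s|-1$) and a further use of Lemma~\ref{failure}: $\I$ plays within $W$ so as to stay inside $A_{l,s(n)}$ as long as possible; if at some $p\in W$ no extension of $p$ meets the closed set $A_{l,s(n)}$, then, because $A_{l,s(n)}\subseteq A_l$ and $l=m-|s|$ is even (so that off $A_l$ membership in $A$ is governed by the $l$ lower components $A_0\supseteq\cdots\supseteq A_{l-1}$), $\I$ aims instead for $A$ from $p$. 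The failure of this last step would yield a quasistrategy $V\subseteq W_p$ for $\II$ with $[V]\subseteq\bar A\cup A_{l+1,s(n-1)}$ (when $n\geq 1$; when $n=0$ simply $[V]\subseteq\bar A$, so that $\II$ wins $G(W_p^{*},A^{*})$) and, after another appeal to Lemma~\ref{failure} together with the inductive hypothesis to control $P^{s\upharpoonright(n-1)}$, with $P^{s\upharpoonright(n-1)}(V)$ failing — that is, $V$ would witness $P^{s\upharpoonright n}(W_p)$, contradicting that $P^{s\upharpoonright n}$ fails everywhere on $W$. Hence $\I$ succeeds, the resulting $U$ satisfies $[U]\subseteq A\cup A_{l,s(n)}$, and $P^{s\upharpoonright n}(U)$ fails because $U$ is kept throughout inside the part of $W$ on which $P^{s\upharpoonright n}$ fails everywhere (this uses the monotonicity of the $P$-relations under passing to quasistrategic subtrees, as in \cite{MS}). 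Thus $U$ witnesses $P^{s}(T)$, which completes the induction.

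The main obstacle is the middle step: making precise the construction of $U$, and in particular verifying that the \emph{only} way for $\I$ to fail to drive the play into $A\cup A_{l,s(n)}$ inside $W$ is by handing $\II$ a genuine witness for $P^{s\upharpoonright n}$ at some position of $W$. This is exactly Martin's combinatorial core, transplanted to the difference hierarchy, and it is where the decreasing representation $A_0\supseteq A_1\supseteq\cdots$, the parity of $l$, and the interplay between the closed set $A_{l,s(n)}$ and the lower components must be handled carefully; the routine-but-delicate monotonicity facts about $P$ under quasistrategic subtrees are the other point requiring attention.
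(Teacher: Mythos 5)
Your reduction to the disjunct favouring Player \I\ (assume $P^{s\upharpoonright n}(T)$ fails, build a witness for $P^{s}(T)$) is the contrapositive of the implication the paper actually proves, which is fine in principle; but the two steps your construction rests on are not available in the form you use them. First, the claim that $P^{s\upharpoonright n}(U)$ fails ``because $U$ is kept inside the part of $W$ on which $P^{s\upharpoonright n}$ fails everywhere'' is not a routine monotonicity fact: a witness for $P^{s\upharpoonright n}(U)$ is a quasistrategy for \II\ \emph{inside} $U$, and since $U$ restricts \I's moves relative to $W$, such a quasistrategy need not be a quasistrategy for \II\ in any $W_p$ (it does not allow all of \I's moves there), so ``fails everywhere on $W$'' does not transfer to sub-quasistrategies of the opposite player. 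Shrinking the tree on \I's side in fact makes it easier, not harder, for \II\ to possess a confining quasistrategy; controlling exactly this is the purpose of the local-witness apparatus (Lemmas \ref{induc} and \ref{nolocal}) that the paper's proof goes through and your sketch bypasses.

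Second, the middle dichotomy is not justified. The set $A_{l,s(n)}$ is $\Sigma^0_2$, not closed; only the pieces $A_{l,s(n),j}$ are closed, and the paper's argument plays an $\omega$-sequence of closed games $G(W^q,A_{l,s(n),j})$ indexed by markers $j$, interleaving each escape with a \emph{strong} witness $W^{q'}$ for $P^{s^{\smallfrown}(j+1)}$ obtained from the lemma for the \emph{longer} sequence --- which is why the proof is by reverse (downward) induction on $|s|$ with $\Sigma_m$-dependent choice, and why a branch escaping every closed piece is placed correctly by the parity computation for the difference hierarchy. Your version --- ``stay in $A_{l,s(n)}$ as long as possible, otherwise aim for $A$, and if that fails extract a \II-quasistrategy $V$ with $[V]\subseteq\bar A\cup A_{l+1,s(n-1)}$ and $P^{s\upharpoonright(n-1)}(V)$ failing'' --- has no determinacy principle behind it: $A$ is neither open nor closed, and producing such a $V$ from the failure of \I's attempt is precisely a witness for $P^{s\upharpoonright n}(W_p)$, i.e.\ essentially the statement being proved, so the argument is circular at the very point you flag as ``Martin's combinatorial core.'' The paper fills this in with the marker construction, Lemma \ref{enpowerment}, and the passage from local witness to witness (Lemma \ref{nolocal}); without those ingredients the proposal remains a plan rather than a proof.
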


We are now ready to proceed to the proof of the main result of this section. 
\begin{proof}[Proof of Theorem\text{~}\ref{mPi4Det}]
Without loss of generality, we suppose that $m$ is odd and Player \II\ has no winning strategy in $G(T, A)$, that is $P^{\langle \ \rangle}(T)$ 
    fails. By Lemma\text{~}\ref{failure}, there is a quasistrategy $W^{\langle \ \rangle}$ that Player \I\ can follow such that 
    $P^{\langle \ \rangle}(W^{\langle \ \rangle})$ fails everywhere. We define a quasistrategy $U$ for Player \I\ in $W^{\langle \ \rangle}$ 
    by induction on $|p|$ for $p \in U$.
    
    To $\langle \ \rangle \in U$, we associate a quasistrategy $W^{\langle \ \rangle}$ such that
    $P^{\langle \ \rangle}(W^{\langle \ \rangle})$ fails everywhere. Suppose then $p \in U$, $|p| = j+1$ and $W^{p}$ have been 
    defined with $P^{\langle \ \rangle}(W^p)$ failing everywhere. The children $q$ of $p$ in $U$ are the same as those of $p$ in $W^p$. 
    Since $P^{\langle \ \rangle}(W^p)$ fails everywhere, so does $P^{\langle \ \rangle}(W^p_q)$, which by Lemma\text{~}\ref{binary} implies 
    $P^{\langle j \rangle}(W^p_q)$. Now we use of Lemma\text{~}\ref{enpowerment} to get the existence of a $W^q$ 
    that strongly witnesses it. To continue our induction, we have to choose such a $W^q$, which depends on the previously 
    chosen ones, we can do it the same way as exposed in the proof of the preceding lemma (with in principle lower complexity needed).

    Now we show that $U$ is a winning quasi-strategy, giving rise to a winning strategy for Player \I\ using the Axiom of Choice. Consider any play $x \in [U]$. By construction, for every $j$ 
    \begin{align*}
        x \in [W^{x[j+1]}] \qquad \text{and} \qquad W^{x[j+1]} \text{ witnesses } P^{\langle j \rangle}
        (W^{x[j] }_{x[j+1]}).
    \end{align*}
    By the first property of the witness, for every $j$ 
    \begin{align*}
        x \in A \cup A_{m-1, j} \quad (\text{resp. } \Bar{A} \cup A_{m-1, j}).
    \end{align*}
    As $\bigcap_{j < \omega} A_{m-1, j} = A_{m-1} \subseteq A$ 
    by definition, it follows that $U$ is winning for Player \I\ in $G(A,T)$, as desired. 
\end{proof}

\section{Reflection Principles in Higher-Order Arithmetic} \label{SectionReflection}

In this last section, our goal is to generalize the results of {Pacheco} and Yokoyama~\cite{pacheco} and answer the question posed at the end of \cite{pacheco}. 
Pacheco and Yokoyama \cite{pacheco} prove the equivalence between the assertion that \textit{all} (finite) Boolean combinations of $\Pi^0_3$ games are determined and a reflection principle for second-order arithmetic. Below, given a class a formulae $\Gamma$ and a theory $T$, the reflection principle $\Gamma\Rfc(T)$ is the sentence \begin{align*}
    \forall \phi \in \Gamma \ (\mathrm{Pr}_T(\lceil \phi \rceil) \rightarrow \mathrm{Tr}_{\Gamma}(\lceil \phi \rceil)). 
\end{align*} 
Using a partial truth-predicate, we let the models of the ambient theory decide the interpretation of non-standard formulae $\phi \in \Pi^1_n$ for possibly non-standard $n$, if such a truth-predicate exists. 
Over $\ACA$, this is equivalent to replacing $\mathrm{Tr}_{\Gamma}(\lceil \phi \rceil)$ by $\phi$ and thus only considering standard sentences when $\ACA \subseteq T$ is a finitely axiomatisable theory. However, we will show that these technicalities can be avoided.

\begin{theorem}[Pacheco and Yokoyama \cite{pacheco}]\label{TheoremPY}
Over $\RCA$,  ``$\forall n \ {(\mathbf{\Pi}^0_3)_n}\text{-}\Det$'' is \emph{equivalent} to $\Pi^1_3\Rfc(\Z_2)$.
\end{theorem}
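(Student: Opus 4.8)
The plan is to exploit the Montalb\'an and Shore theorem that finite-level determinacy over $\mathbf{\Pi}^0_3$ has, cumulatively, exactly the $\beta$-model strength of $\Z_2$, and to trade this against $\Pi^1_3$ reflection. Two inputs will be used, each in its \emph{uniform} (recursive-in-$n$) form: (a) $\Pi^1_{n+2}\CA \vdash (\mathbf{\Sigma}^0_3)_n\text{-}\Det$, so that a $\Z_2$-proof of $(\mathbf{\Pi}^0_3)_n\text{-}\Det$ can be produced primitive-recursively from $n$; and (b) $(\mathbf{\Sigma}^0_3)_n\text{-}\Det \to \beta(\KP_{n+1})$, hence, via the standard translation between fragments of set theory and of $\Z_2$, $(\mathbf{\Sigma}^0_3)_n\text{-}\Det \to \beta(\Pi^1_{n+1}\CA)$, with $\bigcup_n \Pi^1_{n+1}\CA$ exhausting $\Z_2$. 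I will also use that for determinacy the $\mathbf{\Pi}^0_3$- and $\mathbf{\Sigma}^0_3$-difference hierarchies are interchangeable; that $(\mathbf{\Pi}^0_3)_n\text{-}\Det$ is, uniformly in $n$, a $\Pi^1_3$ sentence (the payoff set ranges over a $\Sigma^1_2$-definable family, determinacy of a fixed Borel set is $\Sigma^1_2$, and the outer universal quantifier over payoffs raises this to $\Pi^1_3$); and that both $\Pi^1_3\Rfc(\Z_2)$ and $(\mathbf{\Pi}^0_3)_1\text{-}\Det$ prove $\ACA$, so that the ambient theory carries a compositional partial truth predicate $\mathrm{Tr}_{\Pi^1_3}$ obeying the usual Tarski clauses.

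For $\Pi^1_3\Rfc(\Z_2) \Rightarrow \forall n\,(\mathbf{\Pi}^0_3)_n\text{-}\Det$, I would reason in $\RCA$ extended by the reflection principle. By (a), $\RCA$ already proves $\forall n\, \mathrm{Pr}_{\Z_2}(\lceil (\mathbf{\Pi}^0_3)_n\text{-}\Det\rceil)$, with $(\mathbf{\Pi}^0_3)_n\text{-}\Det$ presented uniformly as a $\Pi^1_3$ sentence. Feeding each instance to $\Pi^1_3\Rfc(\Z_2)$ yields $\forall n\, \mathrm{Tr}_{\Pi^1_3}(\lceil (\mathbf{\Pi}^0_3)_n\text{-}\Det\rceil)$, and the Tarski clauses for $\mathrm{Tr}_{\Pi^1_3}$ collapse this to ``$\forall n\,(\mathbf{\Pi}^0_3)_n\text{-}\Det$''.

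For the converse, fix (a code of) a $\Pi^1_3$ sentence $\phi$, say $\phi \equiv \forall X\,\psi(X)$ with $\psi \in \Sigma^1_2$, and assume $\mathrm{Pr}_{\Z_2}(\lceil\phi\rceil)$; the goal is $\mathrm{Tr}_{\Pi^1_3}(\lceil\phi\rceil)$. A $\Z_2$-derivation uses only finitely many comprehension instances, so $\mathrm{Pr}_{\Pi^1_k\CA}(\lceil\phi\rceil)$ for some $k$. By the uniform form of (b), the hypothesis $\forall n\,(\mathbf{\Pi}^0_3)_n\text{-}\Det$ supplies $(\mathbf{\Pi}^0_3)_{f(k)}\text{-}\Det$ for a suitable recursive $f$, hence $\beta(\Pi^1_k\CA)$: every real lies in a countably coded $\beta$-model of $\Pi^1_k\CA$. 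Given an arbitrary real $X$, pick such an $M \ni X$; as $M$ is an $\omega$-model of $\Pi^1_k\CA$ and $\Pi^1_k\CA \vdash \phi$, we get $M \models \phi$, whence $M \models \psi(X)$; and since $\psi \in \Sigma^1_2$ and $M$ is a $\beta$-model, $\psi(X)$ transfers upward to $V$. As $X$ was arbitrary, $\phi$ holds. Running the same computation inside $\mathrm{Tr}_{\Pi^1_3}$, and invoking the uniformity of (a) and (b) in the index, handles non-standard $\phi$ and non-standard $k$.

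I expect the main obstacle to be exactly this last point: checking that the Montalb\'an--Shore upper-bound arguments and $\beta$-model constructions are genuinely uniform, i.e., recursive in the level $n$, so that the $\mathrm{Pr}$- and $\beta$-statements above are available for \emph{all} $n$ (standard or not) in a model of $\RCA$, together with setting up the partial $\Pi^1_3$ truth predicate so that it is compositional over the base theory. The structural content --- that finite-level $\mathbf{\Pi}^0_3$-difference determinacy has the same $\beta$-model strength as $\Z_2$ --- is immediate from the quoted theorems; it is the uniform-in-$n$ bookkeeping, and the analogous analysis for higher-order arithmetic, that the later sections must carry out afresh.
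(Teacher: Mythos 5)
Your proposal is correct and is essentially the approach the paper relies on: the paper cites this theorem from Pacheco--Yokoyama (noting their proof rests on the chain $\Pi^1_{n+2}\CA \to (\mathbf{\Pi}^0_3)_n\text{-}\Det \to \beta(\Delta^1_{n+2}\CA)$), and its own proof of the higher-order analogue (Theorem~\ref{leonardoQ} via Lemma~\ref{preliminary to thm}) runs exactly your two directions --- uniform-in-$n$ provability of the determinacy instances fed into $\Pi^1_3$ reflection, and, conversely, a $\beta$-model of sufficiently strong comprehension containing a putative counterexample real together with $\Sigma^1_2$-upward correctness of $\beta$-models. The uniformity bookkeeping you flag as the main obstacle is handled there just as you suggest, by a primitive recursive proof-producing function verified with $\Sigma_1$-induction.
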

Notice that here we use the boldface class $\mathbf{\Pi}^0_3$,  allowing any real parameters.
Their proof makes use of Theorem \ref{TheoremMSBeta}; specifically, of the fact that over $\RCA$ we have
\begin{align*}
\Pi^1_{n+2}\CA  \to  (\mathbf{\Pi}^0_3)_n\text{-}\Det \to \beta(\Delta^1_{n+2}\CA),
\end{align*}
for $n \geq 1$. Pacheco and Yokoyama \cite{pacheco} ask whether there are analogues of Theorem \ref{TheoremPY} for third-order arithmetic and beyond. In this section, we answer these questions.
\begin{theorem}
Let $\Z_{2 + \gamma}$ denote the theory \begin{center}{$Z^- + \Sigma_1-$\textsc{replacement} $+$ $\wo(\gamma)$ $+$ ``$\Ps^{\gamma}(\omega)$  exists'', } \end{center} for $1 \leq \gamma < \omega_1^{\mathsf{CK}}$ a recursive ordinal.
Then, we have:
\[\RCA \vdash \forall n \ (\mathbf{{\Pi}}^0_{1 + \gamma + 2})_{(n + 1)}\text{-}\Det \leftrightarrow \Pi^1_3\Rfc(\KP^{\gamma}_{\infty}) \leftrightarrow \Pi^1_3\Rfc(\Z_{2 + \gamma}),\]
\label{leonardoQ}
and indeed
\begin{align*}
    \RCA &\vdash \forall \delta < \gamma \ \forall n \ (\mathbf{{\Pi}}^0_{1 + \delta+ 2})_{(n + 1)}\text{-}\Det 
\\ &\leftrightarrow \Pi^1_3\Rfc(\{\KP^{\delta}_{\infty} : \delta < \gamma \}) 
\\ &\leftrightarrow \Pi^1_3\Rfc(\Z_{2 + \delta} : \delta < \gamma).
\end{align*}
\end{theorem}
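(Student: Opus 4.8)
The plan is to prove the chain of equivalences in Theorem~\ref{leonardoQ} by first establishing the two-sided implications between the determinacy statement and $\Pi^1_3\Rfc(\KP^\gamma_\infty)$, and then observing that $\KP^\gamma_\infty$ and $\Z_{2+\gamma}$ are mutually interpretable over $\RCA$ in a way that preserves $\Pi^1_3$ consequences. For the core equivalence, I would work entirely in $\RCA$ (boldface, so relativized to an arbitrary parameter, as the Remark after Theorem~\ref{generalised_inequalities} licenses). The forward direction --- that $\forall n\,(\mathbf{\Pi}^0_{1+\gamma+2})_{n+1}\text{-}\Det$ implies $\Pi^1_3\Rfc(\KP^\gamma_\infty)$ --- uses Theorem~\ref{generalised_inequalities}: from $(\Sigma^0_{1+\gamma+2})_{n+1}\text{-}\Det$ we get $\beta(\KP^\gamma_{n+1})$, hence a $\beta$-model of $\KP^\gamma_{n+1}$ containing any given real. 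Given a true-in-$V$ hypothesis ``$\mathrm{Pr}_{\KP^\gamma_\infty}(\lceil\phi\rceil)$'' with $\phi\in\Pi^1_3$, one extracts from the proof the finite fragment $\KP^\gamma_{n+1}$ actually used, builds a countably coded $\beta$-model $M\models\KP^\gamma_{n+1}$, notes $M\models\phi$ by soundness, and then transfers $\phi$ to $V$ using $\Sigma^1_3$-correctness (downward $\Pi^1_3$, upward $\Sigma^1_3$) of $\beta$-models --- this is exactly the mechanism in the proof of Theorem~\ref{generalised_inequalities} where $\beta((\Sigma^0_{1+\gamma+2})_n\text{-}\Det)\to(\Sigma^0_{1+\gamma+2})_n\text{-}\Det$ was invoked.

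For the reverse direction --- $\Pi^1_3\Rfc(\KP^\gamma_\infty)$ implies $\forall n\,(\mathbf{\Pi}^0_{1+\gamma+2})_{n+1}\text{-}\Det$ --- I would fix $n$ and use Theorem~\ref{mPi4Det}, which is provable in $\KP^\gamma + \Sigma_{n}\text{--}\textsc{separation}$, hence a fortiori in $\KP^\gamma_\infty$. Thus $\KP^\gamma_\infty\vdash(\Pi^0_{1+\gamma+2})_n\text{-}\Det$ for the lightface version; but (lightface, i.e.\ parameter-free) $(\Pi^0_{1+\gamma+2})_n\text{-}\Det$ is a $\Pi^1_3$ sentence (it asserts: for every code of a $(\Pi^0_{1+\gamma+2})_n$ set there exists a strategy that is winning, and ``winning'' is arithmetic in the strategy and the code, so the whole statement is $\forall\exists\forall$ over reals). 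Applying $\Pi^1_3\Rfc(\KP^\gamma_\infty)$ yields the lightface determinacy in $\RCA$, and the boldface version follows by relativizing --- here one must actually re-run the whole argument over an arbitrary real parameter, which is why the paper's Remark insisting everything relativizes is used. A subtlety: $\Rfc$ is stated with a partial truth predicate to handle non-standard $n$; I would either invoke the observation in the paragraph before Theorem~\ref{TheoremPY} that over $\ACA$ (available since $\RCA$ plus the reflection instances suffices, or one simply works with the standard formulation) these technicalities wash out, or handle the non-standard case directly by noting $(\mathbf{\Pi}^0_{1+\gamma+2})_{n+1}\text{-}\Det$ for non-standard $n$ is a genuine consequence via the same uniform proof.

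For the equivalence of $\Pi^1_3\Rfc(\KP^\gamma_\infty)$ and $\Pi^1_3\Rfc(\Z_{2+\gamma})$, the point is that $\KP^\gamma_\infty$ and $\Z_{2+\gamma}$ prove the same $\Pi^1_3$ sentences over a weak base theory, indeed they have the same well-founded models up to the relevant level: $\Z_{2+\gamma}$ is a theory of sets of rank $<\omega+\gamma$-ish, and by the standard translation (Simpson VII.5, as cited) plus Lemma~\ref{cardinalityKPL} and Lemma~\ref{ShoenfieldAbsloutnessGamma}, any model of one canonically yields a model of the other agreeing on $\mathcal{P}^\gamma(\omega)$ and its subsets, hence on all $\Pi^1_3$ statements. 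So $\mathrm{Pr}_{\KP^\gamma_\infty}(\lceil\phi\rceil)\leftrightarrow\mathrm{Pr}_{\Z_{2+\gamma}}(\lceil\phi\rceil)$ for $\phi\in\Pi^1_3$, provably in a mild extension of $\RCA$ (really just needs the formalized deduction theorem and the syntactic interpretation), giving the equivalence of the two reflection principles. The generalized version with the family $\{\KP^\delta_\infty:\delta<\gamma\}$ (equivalently $\{\Z_{2+\delta}:\delta<\gamma\}$) follows by the same argument applied uniformly in $\delta<\gamma$, since a proof from the union of the family uses only finitely many $\delta$'s and finitely much of each, reducing to a single $\KP^\delta_\infty$ with $\delta<\gamma$ fixed, where the above applies.

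The main obstacle I anticipate is the careful bookkeeping of complexity and parameters in both directions: verifying precisely that lightface $(\Pi^0_{1+\gamma+2})_n\text{-}\Det$ is genuinely $\Pi^1_3$ and not merely $\Sigma^1_4$ (this hinges on the effective Borel hierarchy, so that ``$c$ is a code for a $(\Pi^0_{1+\gamma+2})_n$ set'' is arithmetic, and on $\wo(\gamma)$ being available so the coding is unambiguous), and dually confirming that the $\beta$-model extracted in the forward direction really is $\Sigma^1_3$-correct for the sentence at hand. The non-standard-$n$ issue in the reflection schema and the need to make the entire construction uniform in a real parameter for the boldface conclusion are the other delicate points, but these are handled by the same relativization remarks the paper already relies on, and by the $\ACA$-reduction of the truth-predicate technicalities mentioned before Theorem~\ref{TheoremPY}.
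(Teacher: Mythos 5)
Your overall architecture is the same as the paper's: uniform provability of the determinacy principles in the fragments $\KP^{\gamma}_{n+1}$ (the paper's Lemma~\ref{preliminary to thm}.\ref{preliminary to thm1}) for the direction from reflection to determinacy, and $\beta$-models of $\KP^{\gamma}_n$ obtained from determinacy (Lemma~\ref{preliminary to thm}.\ref{preliminary to thm2}) for the converse. But there is a genuine gap in your converse direction: you transfer the $\Pi^1_3$ sentence $\phi$ from the countably coded $\beta$-model $M$ to the real world by invoking ``$\Sigma^1_3$-correctness (downward $\Pi^1_3$, upward $\Sigma^1_3$) of $\beta$-models.'' $\beta$-models have no such correctness property; they are only $\Pi^1_1/\Sigma^1_1$-absolute, hence downward correct for true $\Pi^1_2$ sentences and upward correct for $\Sigma^1_2$ ones, and a $\Pi^1_3$ truth of $M$ need not lift to $V$. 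The paper's proof of Theorem~\ref{leonardoQ} avoids this by arguing contrapositively: if $\phi$ fails, the true $\Sigma^1_3$ sentence $\lnot\phi = \exists X\forall Y\exists Z\,\theta$ has a witness $X_0$, one chooses the $\beta$-model $M\models\KP^{\gamma}_n$ so that $X_0\in M$ (this is exactly why Lemma~\ref{preliminary to thm}.\ref{preliminary to thm2} produces a $\beta$-model containing any prescribed real), and then only downward $\Pi^1_2$-correctness is needed to see $M\models\lnot\phi$, contradicting $M\models\KP^{\gamma}_n$ and $\mathrm{Pr}_{\KP^{\gamma}_n}(\lceil\phi\rceil)$. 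You do note that the $\beta$-model can contain any given real, but you never use this; as written, your transfer step fails.

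Two further points. First, in the direction from reflection to determinacy you reflect the lightface statement and propose to ``re-run the argument over an arbitrary parameter'' afterwards; reflection applies to sentences, so once only the lightface sentence has been reflected you cannot relativize inside $\RCA$. The paper instead reflects the boldface statement $(\mathbf{\Pi}^0_{1+\gamma+2})_n$-$\Det$ itself, which is a single $\Pi^1_3$ sentence provable (uniformly and $\Sigma_1$-verifiably in $n$, including nonstandard $n$ --- this uniformity is where the paper does real work) in $\KP^{\gamma}_{n+1}$. Second, your treatment of the equivalence between $\Pi^1_3\Rfc(\KP^{\gamma}_{\infty})$ and $\Pi^1_3\Rfc(\Z_{2+\gamma})$ via mutual interpretability is shakier than you suggest: in the direction from a model of $\Z_{2+\gamma}$ to one of $\KP^{\gamma}_{\infty}$ one needs the collection schemes, which Zermelo-style theories do not prove (the paper itself cites Mathias for this kind of failure). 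The paper sidesteps this by closing the cycle of implications through the determinacy statement rather than comparing the two theories directly.
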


\begin{lemma}
The following hold:  \label{preliminary to thm}
 \begin{enumerate}
        \item $\RCA \vdash \forall n > 1 \ \mathrm{Pr}_{\KP^{\gamma}_{n+1}}(\lceil (\Pi^0_{1 + \gamma + 2})_n\text{-}\Det \rceil)$; and\label{preliminary to thm1}
      \item $\RCA \vdash \forall n > 1 \ (\mathbf{{\Pi}}^0_{1 + \gamma + 2})_n\text{-}\Det \to  \beta(\KP^{\gamma}_n) $.
       \label{preliminary to thm2}
    \end{enumerate}
\end{lemma}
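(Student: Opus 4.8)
The plan is to prove the two parts of Lemma~\ref{preliminary to thm} as essentially formal corollaries of the work already assembled in Sections~\ref{SubsectionHOA_LB} and~\ref{SubsectionBetaModels}, the point being that the constructions there are uniform in $n$ and carried out at a complexity level that $\RCA$ can verify.

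For part~\eqref{preliminary to thm1}, I would argue that Theorem~\ref{mPi4Det} ($\KP^{\gamma} + \Sigma_{n}\text{--}\textsc{separation} \vdash (\Pi^0_{1+\gamma+2})_n\text{--}\Det$) is a \emph{schema} whose proof is uniform in $n$: fixing $\gamma$, the unravelling construction (Theorem~\ref{TheoremMartin}), the definition of the predicates $P^s(T)$, and the inductive construction of the winning quasistrategy in the proof of Theorem~\ref{mPi4Det} all go through with $n$ as a parameter, using exactly $n$ applications of $\Sigma_n$--\textsc{separation}. Hence there is a primitive recursive function $n \mapsto \lceil \pi_n\rceil$ producing, for each standard $n>1$, a proof $\pi_n$ of $(\Pi^0_{1+\gamma+2})_n\text{--}\Det$ from $\KP^{\gamma}_{n+1}$ (noting $\KP^\gamma+\Sigma_n\text{--}\textsc{separation}$ is a subtheory of $\KP^\gamma_{n+1}$, which also has $\Sigma_{n+1}$--\textsc{collection} and $\Delta_{n+1}$--\textsc{separation}). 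Verifying that this map really does output Hilbert-style proofs is a $\Delta^0_1$ (indeed primitive recursive) fact, provable in $\RCA$; so $\RCA \vdash \forall n>1\ \mathrm{Pr}_{\KP^{\gamma}_{n+1}}(\lceil (\Pi^0_{1+\gamma+2})_n\text{--}\Det\rceil)$. The one thing to check carefully is the bookkeeping of which separation instances are consumed where — in particular that the $n$-fold nesting in Definition~\ref{P} and Lemma~\ref{enpowerment} does not secretly require $\Sigma_{n+1}$--\textsc{separation}; the complexity lemma for $P^s(T)$ ($\Sigma_{|s|}$) is exactly what rules this out.

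For part~\eqref{preliminary to thm2}, I would invoke Theorem~\ref{beta_inequality}\eqref{beta_inequality1}: $(\Sigma^0_{1+\gamma+2})_n\text{--}\Det \vdash \alpha^\gamma_n$ exists, together with Remark~\ref{RemarkWS}, which already observes that this implication can be carried out from $\RCA$ alone using $(\Sigma^0_{1+\gamma+2})_n\text{--}\Det$ — the key point being that a winning strategy for the game of that proof automatically lies in a $\beta$-model of $\KP^\gamma$ (by Hachtman~\cite{Hachtman}) and the play against $Th(L_{\alpha^*})$ then computes $Th(L_{\alpha^\gamma_n})$ and bounds $\alpha^\gamma_n$. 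Since $(\mathbf{\Pi}^0_{1+\gamma+2})_n\text{--}\Det \leftrightarrow (\mathbf{\Sigma}^0_{1+\gamma+2})_n\text{--}\Det$ as far as existence of winning strategies is concerned, and since the whole argument relativizes to an arbitrary real parameter $X$, we obtain: for every $X$, there is an ordinal $\alpha$ with $L_\alpha[X]\models \KP^\gamma_n$ and $X\in L_\alpha[X]$, and such an $L_\alpha[X]$ (or a limit of such, as in the discussion before Theorem~\ref{beta_inequality}) is a $\beta$-model. Coding this $L_\alpha[X]$ as a countably coded $\omega$-model and checking it is wellfounded (a $\Pi^1_1$ fact witnessed by the strategy) gives $\beta(\KP^\gamma_n)$ over $\RCA$. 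Here too the uniformity in $n$ is the crucial observation, so that the statement can be quantified $\forall n>1$ inside $\RCA$.

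The main obstacle is the first part: one must be genuinely confident that every lemma feeding into Theorem~\ref{mPi4Det} — the unravelling (Theorem~\ref{TheoremMartin}, proved by induction on $\gamma$, hence with $\gamma$ fixed it is a finite iteration), the inverse-limit construction (Lemma~\ref{existence of inverse limits}), and the Martin/Montalbán--Shore quasistrategy argument — is formalizable with a uniform bound on the proof-theoretic resources as $n$ varies, so that the $\Sigma^0_1$ predicate ``$\pi_n$ is a correct proof'' is provably total and correct in $\RCA$. This is a matter of inspecting the proofs rather than a new idea, but it is where all the care is required; the second part is comparatively soft, being a direct appeal to Remark~\ref{RemarkWS}.
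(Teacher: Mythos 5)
Your proposal is correct and follows essentially the same route as the paper: part (1) by formalizing the \S\ref{SectionUpperBound} proofs uniformly in $n$ so that a provably recursive function outputs the proofs (the paper just spells out, via $\Sigma_1$-induction with primitive recursive length bounds, why ``$f_1(n)$ is a proof'' holds for all, possibly nonstandard, $n$), and part (2) by running the game of Theorem~\ref{beta_inequality}\eqref{beta_inequality1} as in Remark~\ref{RemarkWS}, using Hachtman's result to place the strategy in a $\beta$-model of $\KP^\gamma$ and relativizing to an arbitrary real. The only detail the paper adds that you gloss over is the internal induction on $m\leq n$ (for possibly nonstandard $n$) showing the wellfounded part models $\KP^\gamma_n$, which works because the models carry truth predicates and full \textsc{Foundation}.
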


\begin{proof}[Proof of Lemma~\ref{preliminary to thm}]
To prove item \ref{preliminary to thm1}, we formalize the proofs given in \S\ref{SectionUpperBound} in $\RCA$. We can construct these uniformly in the natural numbers occurring as parameters. We then see that the function $f_1$ mapping each $n$ to the G\"odel number of the formalization of the proof of the implication
\[\KP^{\gamma}_{n+1} \to (\Pi^0_{1 + \gamma + 2})_n{-}\Det\]
given in \S\ref{SectionUpperBound} is recursive, provably in $\RCA$. 
 Still reasoning in $\RCA$, we show using $\Sigma_1$-induction that $f_1(n)$ is indeed a proof for all $n\in\mathbb{N}$, for suppose this is false. Then there is a least $n$ such that $f_1(n+1)$ is not a proof. This means that the formalization of one of the lemmas from \S\ref{SectionUpperBound} fails to be a proof for $n + 1$. However, for each of these lemmas, by analysing the proofs given in \S\ref{SectionUpperBound} and in Appendix \ref{SectAppendixUpperBound} one can extract a primitive recursive function which bounds the length of the proof for the case $n +1$ in terms of the length of the proof for the case $n$. Indeed, in most of these proofs, which were by induction on $|s| < n+1$, each step of the induction is essentially what is written down in the proof in \S\ref{SectionUpperBound} and in Appendix \ref{SectAppendixUpperBound}. Thus, we see by $\Sigma_1$-induction that the formalization of the proofs given here are indeed proofs for all $n$, provably in $\RCA$, which is a contradiction.

To prove item \ref{preliminary to thm2}, we work in $(\mathbf{{\Pi}}^0_{1 + \gamma + 2})_n\text{-}\Det$. Note that by this assumption we may freely assume $\Pi^1_1{-}\mathsf{CA}_0$ rather than merely $\RCA$. Indeed by appealing to Hachtman \cite{Hachtman} we may assume that every real belongs to a $\beta$-model of $\KP^\gamma$.
Letting $X$ be a real parameter, we claim that there is some $\alpha$ such that $L_\alpha(X) \models \KP^\gamma_n$. We assume for simplicity that $X = 0$ and relativize afterwards. The idea now is to carry out the proof in \S\ref{SectionLowerBound}. 

Note first that there is a primitive recursive function mapping each $m\in\mathbb{N}$ to the definition of the $(\mathbf{{\Pi}}^0_{1 + \gamma + 2})_n$ game constructed in the proof of Theorem \ref{beta_inequality}.\ref{beta_inequality1}, so within $\RCA$ we have access to this definition for $n$. 

It remains to verify that the proof of Theorem \ref{beta_inequality}.\ref{beta_inequality1} goes through under the current hypothesis (for possibly non-standard $n$). We now observe that the argument in Remark \ref{RemarkWS} can be carried out under the current assumptions: letting $\xi$ be such that a winning strategy for, say Player I, belongs to $L_\xi$, we consider the run of the game in which Player I plays according to the winning strategy and Player II plays a model of $\KP^\gamma$ containing $\sigma$, which exists by Hachtman \cite{Hachtman}. By $\Pi^1_1{-}\mathsf{CA}_0$, we can construct the wellfounded part of Player I's model and see that it is of the form $L_\alpha$. Now by carrying out the proof in \S\ref{SectionLowerBound} for $n$, we see by induction on $m\leq n$ that $L_\alpha$ is a model of $\KP^\gamma_n$. This induction is possible because both $L_\alpha$ and Player I's model have truth predicates and satisfy the full schema of \textsc{Foundation}.
\end{proof}

We may now proceed to the proof of the theorem, answering Pacheco and Yokoyama's question.

\begin{proof}[Proof of Theorem~\ref{leonardoQ}]
    First we assume $\Pi^1_3\Rfc(\Z_{2+ \gamma})$. 
    Let $n \geq 1$ be a natural number. 
    By Lemma~\ref{preliminary to thm}.\ref{preliminary to thm1}, we have $\mathrm{Pr}_{\Z_{2+ \gamma}}(\lceil (\mathbf{\Pi}^0_{1 + \gamma + 2})_n\text{-}\Det \rceil)$. 
    Since the latter is a $\Pi^1_3$ sentence, by reflection we get $(\mathbf{\Pi}^0_{1 + \gamma + 2})_n\text{-}\Det$. 
    Hence, we have proved $\forall n \ (\mathbf{\Pi}^0_{1 + \gamma + 2})_{n+1}\text{-}\Det$. 

    We now assume $\forall n \ (\mathbf{\Pi}^0_{1 + \gamma + 2})_{(n + 1)}\text{-}\Det$. Suppose towards a contradiction that $\phi$ is a $\Pi^1_3$ sentence such that $\mathrm{Pr}_{\KP^{\gamma}_{\infty}}(\lceil \phi \rceil)$ holds but $\phi$ does not. 
    Let $n > 1$ be sufficiently large so that $\mathrm{Pr}_{\KP^{\gamma}_n}(\lceil \phi \rceil)$. 
    Write the true $\Sigma^1_3$ sentence $\lnot \phi$ as ``$\exists X \forall Y \exists Z \ \theta$'' with set quantifiers ranging over $\Ps(\mathbb{N})$ and $\theta$ arithmetical. 
    Since $\phi$ is supposedly false, let us consider a counterexample $X_0 \subseteq \mathbb{N}$. By Lemma~\ref{preliminary to thm}.\ref{preliminary to thm2}, we can find a $\beta$-model $\M$ of $\KP^{\gamma}_n$ containing $X_0$. Since $\M$ is a $\beta$-model, it satisfies all true $\Pi^1_2$ sentences with parameters in $\M$ and since $X_0 \in \M$, it thus satisfies $\lnot\phi$, contradicting  
$\mathrm{Pr}_{\KP^{\gamma}_{\infty}}(\lceil \phi \rceil)$.
    We conclude that $\Pi^1_3\Rfc(\KP^{\gamma}_{\infty})$ holds.
\end{proof}

Concerning the case of transfinite length differences of such sets from the Borel hierarchy, it is shown in Martin~\cite{Martin} that $\mathrm{Rec}(\Z_2)$ is enough to prove $\Delta^0_4\text{-}\Det$. That is, determinacy holds for any stage $\gamma < \omega_1$ of the difference hierarchy for $\Pi^0_3$ sets. Adding a satisfaction predicate to the language of $L_{\mathrm{Set}}$ similarly allows us to generalize the proof of theorem~\ref{mPi4Det} to transfinite differences of $\Pi^0_4$ sets. Nevertheless, it is unknown how we can generalize our results to characterize each level $\gamma$ of this extended hierarchy. In particular, we have not investigated whether the results in Chapter 6 of \cite{AgWe} can be extended to the difference hierarchy over $\Pi^0_\gamma$ sets.

\begin{bibdiv}

    \begin{biblist}
        
        \bib{AgMSD}{article}{
            author={Aguilera, J. P.},
            title={The Metamathematics of Separated Determinacy},
            note={preprint},
            date={2023},
        }
        
       \bib{AgWe}{article}{
            author={Aguilera, J. P.},
            author={Welch, P. D.},
            title={Determinacy on the Edge of Second-Order Arithmetic},
            note={preprint},
            date={2023},
        }
        
        \bib{Barwise}{book}{
            author={Barwise, Jon},
            title={Admissible Sets and Structures: An Approach to Definability Theory},
            series={Perspectives in Logic},
            publisher={Springer},
            place={Durham},
            date={2009},
        }

        \bib{Devlin}{book}{
            author={Devlin K.},
            title={Constructibility},
            series={Perspectives in Mathematical Logic},
            publisher={Springer Verlag},
            place={Berlin, Heidelberg},
            date={1984},
        }

        \bib{Sigma5Det}{article}{
            author={Friedman, Harvey M.},
            title={Higher Set Theory and Mathematical Practice},
            journal={Annals of Mathematical Logic},
            volume={2},
            date={January 1971},
            number={3},
            pages={325--357},
        }

        \bib{Hachtman}{article}{
            author={Hachtman, Sherwood J.},
            title={Calibrating Determinacy Strength Levels of the Borel Hierarchy},
            journal={The Journal of Symbolic Logic},
            volume={82},
            date={June 2017},
            number={2},
            pages={510--548},
            doi={https://doi.org/10.1017/jsl.2017.15}
        }

        \bib{Z3Det}{article}{
            author={Hachtman, Sherwood J.},
            title={Determinacy in third order arithmetic},
            journal={Annals of Pure and Applied Logic},
            volume={168},
            date={November 2017},
            number={11},
            pages={2008--2021},
            doi={https://doi.org/10.1016/j.apal.2017.05.004}
        }   
        
      \bib{Ma01}{article}{
            author={Mathias, A. R. D},
            title={The strength of Mac Lane set theory},
            journal={Annals of Pure and Applied Logic},
            volume={110},
            date={2001},
            pages={107-234},
        }           

        \bib{Jensen}{article}{
            author={Jensen R.},
            title={The fine structure of the constructible hierarchy},
            journal={Annals of Mathematical Logic},
            volume={4},
            date={1972},
            pages={229--308},
            doi={https://doi.org/10.1016/0003-4843(72)90001-0},
        }

        \bib{Kechris}{book}{
            author={Kechris, Alexander S.},
            title={Classical Descriptive Set Theory},
            series={Graduate Texts in Mathematics},
            publisher={Springer-Verlag},
            place={Budapest},
            date={1995}
        }

        \bib{KouptchinskyMSc}{book}{
            author={Kouptchinsky, Thibaut},
            title={On the Limits of Determinacy in Third-Order Arithmetic and Extensions of
Kripke-Platek Set Theory},
            note={MSc Thesis, UCLouvain},
            date={2023},
        }
        
        \bib{Kuratowski}{book}{
            author={Kuratowski, Kasimir},
            title={Topologie I},
            series={Pa\'nstwowe Wydawnidwo Naukowe},
            place={Warsaw},
            date={1958},
        }

        \bib{History}{article}{
            author={B. Larson, Paul},
            title={A Brief History of Determinacy},
            journal={Large Cardinals, Determinacy and Other Topics, The Cabal Seminar},
            volume={IV},
            date={November 2020},
            pages={3--60},
            doi={https://doi.org/10.1017/9781316863534.002}
        }
        
        \bib{BorelDetMartin}{article}{
            author={Martin, Donald A.},
            title={Borel determinacy},
            journal={Annals of Mathematics},
            volume={102},
            date={September 1975},
            number={2},
            pages={363--371},
            doi={https://doi.org/10.2307/1971035},
        }

        \bib{Martin}{book}{
            author={Martin, Donald A.},
            title={Determinacy of Infinitely Long Games},
            status={Draft (September 2018)},
            note={\url{https://www.math.ucla.edu/~dam/booketc/D.A._Martin,_Determinacy_of_Infinitely_Long_Games.pdf}}
            }

        \bib{MS}{article}{
            author={Montalbán, Antonio},
            author={Shore, Richard A.},
            title={The limits of determinacy in second order arithmetic},
            journal={Proceedings of the London Mathematical Society},
            volume={104},
            date={July 2011},
            number={2},
            pages={223--252},
            doi={https://doi.org/10.1112/plms/pdr022}
            }

        \bib{MSCons}{article}{
            author={Montalbán, Antonio},
            author={Shore, Richard A.},
            title={The limits of determinacy in second order arithmetic: consistency and complexity strength},
            journal={Israel Journal of Mathematics},
            volume={204},
            date={July 2014},
            pages={477--508},
            doi={https://doi.org/10.1007/s11856--014--1117--9}
            }

        \bib{CantorDet}{article}{
            author={Nemoto, {Ta}kako},
            author={{Ta}naka, {Ka}zuyuki},
            author={Ould MedSalem, Med Yahya},
            title={Infinite games in the Cantor space and subsystems of second order arithmetic},
            journal={Mathematical Logic Quarterly},
            volume={53},
            date={June 2007},
            number={3},
            pages={226--236},
            doi={https://doi.org/10.1002/malq.200610041},
        }
        
        \bib{Schw15}{article}{
author={Schweber, Noah},
pages={940--969},
journal={Journal of Symbolic Logic},
volume={80},
title={Transfinite Recursion in Higher Reverse Mathematics},
date={2015},
}

        \bib{pacheco}{article}{
            author={Pacheco, Leonardo},
            author={{Yo}koyama, {Ke}ita},
            title={Determinacy and reflection principles in second-order arithmetic},
            journal={arXiv},
            date={September 2022},
            doi={https://doi.org/10.48550/arXiv.2209.04082}
        }

        \bib{Simpson}{book}{
            author={Simpson, Stephen G.},
            title={Subsystems of Second Order Arithmetic},
            edition={2nd ed.},
            series={Perspectives in Logic},
            publisher={Cambridge University Press},
            place={Delhi},
            date={2009}
        }

        \bib{Pi03CA>>Pi03Det}{article}{
            author={Welch, Philip D.},
            title={Weak Systems of Determinacy and Arithmetical Quasi-Inductive Definitions},
            journal={The Journal of Symbolic Logic},
            volume={76},
            date={June 2011},
            number={2},
            pages={418--436},
            doi={https://doi.org/10.2178/JSL/1305810756},
        }

    \end{biblist}
    
\end{bibdiv}

\appendix
\section{Appendix: Proofs of some results of Martin, Montalb\'an and Shore}
\subsection{Proofs of some lemmata from \S\ref{SectionLowerBound}}\label{AppendixProofsLB}
In this appendix, we include the proofs of lemmata \ref{star1}, \ref{star_kCons}, and \ref{domino} from \S \ref{SectionLowerBound}. The arguments are due to Montalb\'an and Shore and largely following \cite{MS}.
\begin{proof}[Proof of Lemma \ref{star1} on p.\pageref{star1}]
   Suppose for a contradiction that for every ordinal $\gamma \in Ord^{\N} \setminus \A^{\gamma + 1}$, there is a $\Sigma_1$ formula with parameters in $\A^{\gamma + 1}$ true in $L_{\gamma}$ but not in $\A^{\gamma + 1}$. By hypothesis, $W_{\N, 1}$ has a least element $\delta$. By definition of $W_{\N, 1}$, we have $\delta \not\in \A^{\gamma + 1}$. Since from this point we suppose $\N$ to be ill-founded, let \begin{align*}
        \delta > \gamma_0 > \gamma_1 > \gamma_2 > \cdots > Ord^{\A^{\gamma + 1}}
    \end{align*}
    be a descending sequence converging down to the cut $(Ord^{\A^{\gamma + 1}}, Ord^{\N} \setminus \A^{\gamma + 1})$. By our absurd assumption, for each $i$, there is a $\Delta_0$ formula $\phi_i$ with parameters in $\A^{\gamma + 1}$ and a $<_L$-least witness $z_i \in L_{\gamma_i}$ such that 
    \begin{align*}
        \N \models \phi_i(z_i) \quad \text{but} \quad \A^{\gamma + 1} \models \lnot \exists y \phi_i(y).
    \end{align*}
    By thinning out our sequence if necessary, we may assume that $z_i \not\in L_{\gamma_i+1}$ so that ${z_i : i < \mathbb{N}}$ is a $<^{\N}_{L}$-descending sequence. Since we assumed by our absurd hypothesis that $\delta$ was the least element of $W_{\N, 1}$, for all $i$, \begin{align*}
        \M \models \exists y \phi_i(y).
    \end{align*}
    Let $y_i$ be the $<^{\M}_{L}$-least such witnesses. Since $\M$ is well-founded, the sequence $\{y_i : i < \mathbb{N}\}$ cannot be a $<^{\M}_{L}$-descending sequence. So there exist two indices $i < j$ such that \begin{align*}
        z_j <^{\N}_{L} z_i \quad \text{but} \quad y_i <^{\M}_{L} y_j.
    \end{align*}
    Therefore, $L_{\gamma_{j + 1}}$ is a witness in $L_{\gamma_j}$ for the $\Delta_0$ formula 
    \begin{align*}
        \psi(x) \equiv \exists (z \in x) \ \phi_j(z) \land \forall (z \in x) \  \lnot\phi_i(z),
    \end{align*} 
    that is true in $\N$ but not in $\M$ where we have $y_i <^{\M}_{L} y_j$. This however shows that $\gamma_{j+1}$ is an element of $W_{\N, 1}$, a contradiction.
    \end{proof}

\begin{proof}[Proof of Lemma 
 \ref{star_kCons} on p.\pageref{star_kCons}]   First, we claim that ${\alpha}$ is not $\Sigma_k$ definable in $L_{\beta_1}$, with parameters from $L_{\alpha}$. Since $\alpha \in \M \models T^{\gamma}_n$, it follows that $\alpha$ is not $n$-admissible and by lemma~\ref{cardinalityKPL}, every $\beta \in L_{\alpha}$ is of cardinality at most $\mathcal{P}^{\gamma}(\mathbb{N})$ (here we mean $\Ps^{\gamma}_{L_{\alpha}}(\mathbb{N})$). Thus, there is a $\Pi_{n-1}$ definable map on $L_{\alpha}$ from $\Ps^{\gamma}(\mathbb{N})$ onto $\alpha$, eventually with parameters. This defines in $L_{\alpha}$ a $\Sigma_n$ well ordering of $\Ps^{\gamma}(\mathbb{N})$ of order type $\alpha$. This ordering cannot belong to $\N$ as it would define its well-ordered part. By our absurd hypothesis, in $L_{\beta_1}$, we have a $\Sigma_k$ definition of this ordering using the $\Sigma_k$ definition of $\alpha$ and bounded quantification over $L_{\alpha}$. However then, since $L^{\M_{\I}}_{\beta_1} \equiv_{k, \A^{\gamma + 1}} L^{\M_{\II}}_{\beta_2}$, this ordering is now definable in $L_{\beta_2}$ and hence belongs to $\N$, a contradiction.

 For point $1$ it is of course sufficient to show $L_{\alpha} \preceq_{k} L_{\beta_1}$ since the other is $\Sigma_k$ elementary equivalent to it, over $\A^{\gamma + 1}$. Since $\beta_1$ is $(k-1)$-admissible, from lemma~\ref{noParamSkolem} we know that $L_{\beta_1}$ has a parameterless $\Sigma_k$ Skolem function. Let thus be $H$, the $\Sigma_k$-Skolem hull of $L_{\alpha}$ in $L_{\beta_1}$. We show that $H = L_{\alpha}$, which will prove our claim. Suppose otherwise towards a contradiction and consider $L_{\gamma}$, the Mostowski collapse of $H$, with $\alpha < \gamma \leq \beta_1$. Let $\alpha'$ be the ordinal of $H$ being sent to $\alpha \in L_{\gamma}$ by the collapse. By construction of $H$, we would have a $\Sigma_k$ definition of ${\alpha}$ in $L_{\gamma}$, with still parameters from $L_{\alpha}$, since the collapse is the identity over $L_{\alpha}$. However, since \begin{align*}
        L^{\M}_{\gamma} \equiv_{k, \A^{\gamma + 1}} H \equiv_{k, \A^{\gamma + 1}} L^{\M}_{\beta_1},
    \end{align*}
    ${\alpha}$ would be $\Sigma_k$ definable in $L^{\M}_{\beta_1}$, a contradiction.

    Next concerning point $2$, let us, as usual, suppose that our claim is false and thus $\alpha$ is not $k+1$-admissible and so there is a $\Pi_{k}$ definable map on $L_{\alpha}$ from $\Ps^{\gamma}(\mathbb{N})$ onto $\alpha$. Since $\A \preceq_{k} L_{\beta_2}$, as for our first observation, we would have that $\N$ would be able to define its well-founded part and a contradiction would occur.

    Finally, about point $3$, we use theorem~\ref{charSepL} to get from the freshly proved $(k+1)$-admissibility of $\alpha$, the existence of an unbounded infinity of $\gamma < \alpha$ such that $L_{\gamma} \preceq_{k} L_{\alpha} \preceq_{k} L_{\beta_2}$. The set of the $\gamma < \beta_2$ such that $L_{\gamma} \preceq_{k} L_{\beta_2}$ is on the other hand definable in $\N$. So now if for some $\delta \in Ord^{\N} \setminus \A^{\gamma + 1}$ this set had supremum $\alpha$, then $\alpha$ would be definable in $\N$, and we know it is not. So for every $\delta \in Ord^{\N} \setminus \A^{\gamma + 1}$, there exists $\delta > \gamma \in Ord^{\N} \setminus \A^{\gamma + 1}$ such that $L_{\delta} \preceq_{k} L_{\beta_2}$.
\end{proof}

\begin{proof}[Proof of Lemma \ref{domino} on p.\pageref{domino}]
 Let us prove our claim by induction, lemma~\ref{star1} giving us the base step. Assume there exist some fixed $\beta_1$ and $\beta_2$ satisfying $\star_{k-1}$.
    
    Firstly we claim that no ordinal $\delta \in W^{ \beta_1, \beta_2}_{\N,k}$ is in $\A^{\gamma + 1}$. Let $\delta \in \A^{\gamma + 1}$, any $S_{k-1}$ formula $\forall (x \in z) \phi(z, \bar{y})$ and $z_2, \bar{y_2} \in L^{\N}_{\delta} \subseteq A$ such that $L_{\beta_2}^{\N} \models \forall (x \in z_2) \phi(z_2, \bar{y_2})$. By induction hypothesis, it follows that $L_{\beta_2}^{\N} \models \forall (x \in z_1) \phi(z_1, \bar{y_1})$ too, with $z_1$ and $\bar{y_1}$ the images of $z_2$ and $\bar{y_2}$ in $\M$ (via the isomorphism $\A^{\gamma + 1}$). Thus, $\delta \not\in W^{ \beta_1, \beta_2}_{\N,k}$.

    Now, by hypothesis, $W^{ \beta_1, \beta_2}_{\N,k}$ has a least element $\delta$, necessarily not in $\A^{\gamma + 1}$. Also, by clause $3$ of lemma~\ref{star_kCons}, there is a descending sequence \begin{align*}
        \delta > \gamma_0 > \gamma_1 > \gamma_2 > \dots
    \end{align*}
    in $Ord^{\N}$ converging down to $\alpha = Ord^{\N}$, such that, for each $i< \mathbb{N}$, $L^{\N}_{\gamma_i} \preceq_{k-1} L^{\N}_{\beta_2}$.
    Now we argue exactly like lemma~\ref{star1} (where we had $k=1$ and $\preceq_0$ is absoluteness for $\Delta_0$ formula, which follows from the transitivity of the structures) to get that for some $i < \mathbb{N}$, \begin{align*}
        L_{\alpha} \preceq_k L^{\N}_{\gamma_i} \preceq_{k-1} L^{\N}_{\beta_2}.
    \end{align*} 
    Finally, we conclude by lemma~\ref{boundingPrinciple} that $L^{\N}_{\gamma_i}$ is $(k-1)$-admissible as $L^{\N}_{\beta_2}$ is $(k-2)$-admissible by induction hypothesis while $\alpha$ is (even) $\Sigma_k$ admissible by lemma~\ref{star_kCons}, so that $\star_k(\alpha, \gamma_i)$, as required. 
    \end{proof}
    
\subsection{Proofs of some lemmata in \S\ref{SectionUpperBound}}\label{SectAppendixUpperBound}
In this appendix, we include the proofs of some lemmata needed for \S\ref{SectionUpperBound}. The arguments are due to Martin and the complexity computations  are due to Montalb\'an and Shore.

\begin{definition}[local witness]
    A quasistrategy $U$ \emph{locally witnesses} $P^s(T)$ if $|s| = n+1$ and $U$ is a quasistrategy for Player \I\ (resp. \II) if 
        $l$ is even (resp. odd) and there is $D \subseteq T$ such that, for every $d \in D$, there is a quasistrategy 
        $R^d$ for Player \II\ (resp. \I) if $l$ is even (resp. odd) in $T_d$ such that the following conditions are satisfied:
        \begin{enumerate}
            \item $\forall d \in D\cap U$, \ $U_d \cap R^d$ witnesses $P^s(R^d)$.
            \item $[U] \setminus \bigcup_{d \in D} [R^d] \subseteq A \ (\text{resp. }\Bar{A})$.
            \item $\forall p \in T \ \exists^{\leq 1} d \in D, \ d \subseteq p \land p \in R^d$.
        \end{enumerate}
        We observe that ``$U$ locally witnesses $P^s(T)$'' is a $\Sigma_{|s|}$ sentence.
    \end{definition}

    The following lemma will be useful in a recursion in lemma{~}\ref{nolocal} and will make us more familiar with 
    the clauses of the preceding definition. It tells us that if a local witness is not a witness for the 
    second reason, then we can construct a local witness for a preceding relation.

    \begin{lemma}
        Let $|s|=n+1 > 1$, if $U$ locally witnesses $P^s(T)$ and $P^{s[n]}(T)$ is witnessed by some $\hat{T}$, 
        then there is a local witness $\hat{U}$ of $P^{s[n-1]}(\hat{T})$ if $n>1$. 
        When $n=1$, $P^{s[n]}(U)$ fails.\label{induc}
    \end{lemma}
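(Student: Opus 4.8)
The plan is to unwind the two definitions involved --- ``$U$ locally witnesses $P^s(T)$'' and ``$\hat T$ witnesses $P^{s\upharpoonright n}(T)$'' --- and then intersect the relevant trees and quasistrategies to produce $\hat U$. Write $|s|=n+1$, so $l=m-|s|$; suppose for definiteness that $l$ is even, so $U$ is a quasistrategy for Player~\I, the auxiliary $R^d$ are quasistrategies for Player~\II, and $[U]\setminus\bigcup_{d\in D}[R^d]\subseteq A$. By definition of $U$ locally witnessing $P^s(T)$, there is $D\subseteq T$ and quasistrategies $R^d$ (for $d\in D$) satisfying clauses (1)--(3); and by definition of $\hat T$ witnessing $P^{s\upharpoonright n}(T)$, $\hat T$ is a quasistrategy for Player~\II\ with $[\hat T]\subseteq \bar A\cup A_{l+1,s(n-1)}$ (the parity having flipped because $|s\upharpoonright n|=n$) and with $P^{s\upharpoonright(n-1)}(\hat T)$ failing. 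I would first record the parity bookkeeping carefully, since this is exactly the place where an off-by-one in even/odd would break everything.

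First I would treat the case $n>1$. The natural candidate is to set $\hat U$ to be the quasistrategy obtained by restricting the data witnessing the local-witness property of $U$ to the subtree $\hat T$: concretely, take $\hat D = D\cap \hat T$ and, for $d\in\hat D$, take $\hat R^d = R^d\cap \hat T$, and let $\hat U$ be the quasistrategy for Player~\II\ inside $\hat T$ ``generated'' by the $U_d\cap R^d$'s on the cones below elements of $\hat D$ together with $\hat T$ itself off those cones. Then I would verify the three clauses of ``locally witnesses $P^{s\upharpoonright(n-1)}(\hat T)$'' in turn: clause (3) (the disjointness/uniqueness of the $d\subseteq p$ with $p\in R^d$) is inherited from clause (3) for $U$ by intersecting with $\hat T$; clause (1) uses clause (1) for $U$ together with the fact that intersecting a witnessing quasistrategy with a subtree still witnesses the corresponding $P$-relation on that subtree (this is the standard ``locality'' of these witnesses, used repeatedly by Martin and by Montalbán--Shore); and clause (2), namely $[\hat U]\setminus\bigcup_{d\in\hat D}[\hat R^d]\subseteq \bar A$, is the heart of the matter: a branch of $\hat U$ not caught by any $\hat R^d$ is, by construction, a branch of $\hat T$, and since $[\hat T]\subseteq\bar A\cup A_{l+1,s(n-1)}$, one has to argue that such a branch actually lies in $\bar A$. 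This is exactly where the hypothesis that $\hat T$ witnesses (rather than merely locally witnesses) $P^{s\upharpoonright n}(T)$ is used, combined with clause (2) for $U$ and the fact that $A_{l}\subseteq A$ (equivalently $\bar A\cap A_{l,\cdot}$ absorbed correctly); one chases a branch through both the $U$-data and $\hat T$ and uses that it is not in any $R^d$-cone to conclude it is in $A\cap(\bar A\cup\cdots)$ --- wait, rather one must be careful: the correct conclusion is obtained by noting that the branch, being in $[\hat T]$ and in $[U]$ and in no $R^d$, forces membership in $\bar A$ via the definition of $\hat T$ being a witness. I expect this branch-chasing step --- making sure the two layers of ``$\bar A$ up to an $A_{l,k}$ correction'' compose correctly so that the residual branches land in $\bar A$ on the nose --- to be the main obstacle, and the place where the proof must invoke that $\hat T$ is a genuine (global) witness.

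For the case $n=1$: here $s\upharpoonright n = s\upharpoonright 1$ has length $1$, so $P^{s\upharpoonright n}(U)$ is the relation at level~$1$, and the claim to be proved is that $P^{s\upharpoonright n}(U)$ fails everywhere-in-the-relevant-sense (the statement just says ``$P^{s\upharpoonright n}(U)$ fails''). I would argue this directly from clause (1) in the definition of $U$ locally witnessing $P^s(T)$: for $n=1$, $U_d\cap R^d$ witnessing $P^s(R^d)$ forces, by clause (1)/(2) unwound at the bottom level, that $P^{s\upharpoonright 1}(U)$ cannot hold, since a witness for $P^{s\upharpoonright 1}(U)$ would be a quasistrategy for the opposing player contradicting the way $U$ was assembled from the $R^d$; more precisely one shows that any purported witness $V$ for $P^{s\upharpoonright1}(U)$ would, intersected against the $R^d$'s, produce a play in $U$ outside all $R^d$-cones yet outside $A$ (resp. $\bar A$), contradicting clause (2). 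This is the routine base case and I do not expect difficulty there beyond the same parity care. Throughout, I would lean on the already-established complexity remarks (``locally witnesses $P^s(T)$'' is $\Sigma_{|s|}$, ``witnesses'' and ``strongly witnesses'' have the stated complexities) so that no new definability issue arises; the content is entirely the tree-surgery and the $\bar A$-composition in the $n>1$ case.
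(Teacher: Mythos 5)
Your construction of $\hat U$ by restriction --- $\hat D = D\cap\hat T$, $\hat R^d = R^d\cap\hat T$, and $\hat U$ generated by the old data inside $\hat T$ --- does not deliver clause (1) of the local-witness definition, and that clause is precisely the nontrivial content of the lemma. Clause (1) for $\hat U$ requires, for each relevant $d$, a quasistrategy witnessing $P^{s[n-1]}(\hat R^d)$, a statement one level \emph{down} the hierarchy; what the hypothesis gives you (old clause (1)) is that $U_d\cap R^d$ witnesses $P^{s}(R^d)$, a different relation with a different difference set $A_{l,s(n)}$ attached, and there is no ``standard locality'' fact that intersecting a witness with a subtree preserves witnessing (the only locality remark in the paper, Remark~\ref{locality of unravelling}, concerns coverings, not witnesses; also $R^d\cap\hat T$ need not even be a quasistrategy of the required kind). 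The paper manufactures the one-level-down witnesses by a genuinely different mechanism: it defines $\hat D$ and $\hat R^d$ through the auxiliary open games $G(\hat T_d,\overline{[R^d]})$ (``can the play be confined to $R^d$ inside $\hat T$?''), with $\hat R^d$ the non-losing confinement quasistrategy; it observes that $[\hat R^d]\subseteq[\hat T]\subseteq\bar A\cup A_{m-n,s(n-1)}$ because $\hat T$ is a \emph{global} witness of $P^{s[n]}(T)$, so $\hat R^d$ satisfies the inclusion half of witnessing $P^{s[n]}(U_d\cap R^d)$; and since $P^{s[n]}(U_d\cap R^d)$ fails by old clause (1), the other half must fail, i.e.\ $P^{s[n-1]}(\hat R^d)$ \emph{holds}, whereupon witnesses $\hat U^d$ are chosen by ($\Sigma$-)choice. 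It also needs the escape strategies $\sigma_{p,d}$, winning from positions in $R^d\setminus\hat R^d$, so that a branch of $\hat U$ avoiding every $[\hat R^d]$ really escapes every $[R^d]$ and hence lies in $[U]\setminus\bigcup_d[R^d]\subseteq A$ by old clause (2). None of these ingredients appears in your sketch, and the ``branch-chasing'' step you flag as the main obstacle cannot be closed without them.

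Two further slips. First, parity: with your own convention ($l$ even), the local witness of $P^{s[n-1]}(\hat T)$ must be a quasistrategy for Player~\I\ (the length drops by two, so the parity is the same as for $U$), and its clause (2) reads $[\hat U]\setminus\bigcup_d[\hat R^d]\subseteq A$, not $\subseteq\bar A$; your attempt to land residual branches in $\bar A$ via $[\hat T]\subseteq\bar A\cup A_{l+1,s(n-1)}$ aims at the wrong target, whereas the correct argument routes residual branches through old clause (2). Second, the case $n=1$ is not a routine unwinding of clause (1): the paper proves it by running the \emph{same} construction under the contradiction hypothesis that some $\hat T$ witnesses $P^{s[1]}(U)$, now choosing winning strategies for Player~\I\ in $G(\hat R^d,A)$ (available because $P^{\langle\ \rangle}(\hat R^d)$ must hold, by the same inclusion-half/other-half argument), and showing $[\hat U]\subseteq A$, contradicting the requirement that $P^{\langle\ \rangle}(\hat T)$ fail.
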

    \begin{proof}
        Without loss of generality, let $m-n (= m - |s[n]|)$ being odd. So $\hat{T}$ is a  
        Player \II's quasistrategy (we can suppose $\hat{T} \subseteq U$). Suppose $n>1$. We reason in $\KP^2_{m+1} + \AC$, for $m \geq 2$. 
        
        The main goal for $\hat{U}$ is to escape from the range of each $R^d$. Let $d \in \Hat{D}$ iff 
        $d \in \hat{T} \cap D$ and Player \II\ has a winning strategy in $G(\hat{T}_d, \Bar{[R^d]})$, an open game. 
        For $d \in \Hat{D}$ we let $\Hat{R}^d$ be Player \II's non-losing quasistrategy in this game and $R^d = \emptyset$ 
        for $d \in D \setminus \hat{D}$. 
        The quasistrategy of Player \II\ is a $\Pi_2$ set and so is the collection $\{(r,d) : d \in \hat{D} \land r \in \hat{R}^d \}$. The idea is now that either Player \I\ can get out of a given $R^d$, or he has to get 
        out of $\hat{R}^d$, in such a way that, by definition, Player \I\ gets a strategy to go out of $R^d$. 
        
        By hypothesis $[\Hat{R}^d] \subseteq [\hat{T}] \subseteq \Bar{A} \cup A_{m-n, s(n-1)}$, so $\Hat{R}^d$ 
        satisfies the first condition to witness $P^{s[n]}(U_d \cap R^d)$.
        However, by property $(1)$ of the local witness, $U_d \cap R^d$ witnesses $P^s(R^d)$ and so, in particular,
        $P^{s[n]}(U_d \cap R^d)$ fails and then $\Hat{R}^d$ is not a witness for it.
        As a consequence, the second condition must fail with $\Hat{R}^d$, that is, there is a witness $\Hat{U}^d$ 
        for $P^{s[n-1]}(\Hat{R}^d)$. 
        We then define a sequence ${(\Hat{U}^n)}_{n < \mathbb{N}}$ such that
            $\Hat{U}^n$ \text{ witnesses } $P^{s[n-1]}(\Hat{R}^d)$,
        using ($\Sigma_{n-2}$) axiom of choice. Finally, we similarly choose strategies for Player \I, 
        ${\sigma_{p,d}}$, winning in $G(\hat{T}_p, \Bar{[R^d]})$ for $d \in \hat{D}$ when some $p \not\in \hat{R}^d$ 
        is reached. 
        
        We now (arithmetically in the above parameters) define by the following a quasistrategy $\Hat{U}$ 
        for Player \I\ in $\hat{T}$.
        \begin{enumerate}[(i)]
            \item If $p \in \Hat{U}$ and there is no $d \in D$ such that $d \subseteq p$ and $p \in R^d$, 
            then the child of $p$ in $\Bar{U}$ are the same as those in $\hat{T}$, otherwise;
            \item If $p \in \Hat{U}$ is a minimal extension of some $d \in D$ such that  
            $p \in R^d \setminus \Hat{R}^d$, then we escaped Player \II's non-losing strategy, which means that Player \I\ can play 
            $\sigma_{p,d}$ until she reaches a $p \not\in R^d$, otherwise;
            \item If $p \in \Hat{U} \cap \hat{D}$, let $\Hat{U}_p = \Hat{U}^p$ as long as we stay in $\Hat{R}_d$.
        \end{enumerate}
    
        We now prove that the three conditions of a local witness hold.
        \begin{enumerate}
            \item Take $p \in \Hat{U} \cap \hat{D}$, by $(iii)$ $\Hat{U}_p \cap \Hat{R}_d = \Hat{U}^p$, which 
            is witnessing $P^{s[n-1]}(\Hat{R}^d)$.
            \item Any play $x \in [\hat{U}] \setminus [\hat{R}^d]$ would have escaped $R^d$ in some finite 
            position by $(ii)$. Thus, \begin{align*}
                [\Hat{U}] \setminus \bigcup_{d \in D} [\Hat{R}^d] \subseteq [U] \setminus \bigcup_{d \in D} 
                [R^d] \subseteq A,
            \end{align*}
            by hypothesis.
            \item This condition is immediate from the corresponding hypothesis since we have just restrained 
            $D$ and the $R^d$'s.
        \end{enumerate}
        
        Finally, when $n=1$, we suppose for a contradiction that there exists a $\hat{T}$ such as for the case 
        $n>1$. We can then keep the same construction with the following differences. 
        When we choose a witness $\Hat{U}^d$ for $P^{s[n-1](\Hat{R}^d)}$, we must take a 
        winning strategy for Player \I\ in $G(\Hat{R}^d, A)$, and we need to show that $[\Hat{U}] \subseteq A$ 
        to see that it witnesses $P^{\langle \ \rangle}(\hat{T})$ for the desired contradiction.
        The point here is that if we stay in some $R^d$, then we follow $U^d$, which is a winning strategy 
        for Player \I\ in $G(\Hat{R}^d, A)$. If we leave $\Hat{R}^d$, then we leave $R^d$ by $(ii)$ in the definition 
        of $\Hat{U}$. If we leave every $R^d$, then we follow $\hat{T}$ and then stay in $U$ and also 
        end up in $A$ by clause $(2)$ of the definition of $U$ being a local witness for $P^s(T)$.
    \end{proof}
    
    Now that the above construction has been done we can prove that there is no ``local-only'' witness.
    
    \begin{lemma}
        If $U$ locally witnesses $P^s(T)$, then $U$ witnesses $P^s(T)$.\label{nolocal}
    \end{lemma}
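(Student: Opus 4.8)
The plan is to prove the contrapositive-style statement by showing that local witnessing already gives genuine witnessing, arguing by downward induction on $|s|$ and invoking Lemma~\ref{induc}. Concretely, suppose $U$ locally witnesses $P^s(T)$ with $|s| = n+1$, and suppose toward a contradiction that $U$ does \emph{not} witness $P^s(T)$. By Definition~\ref{P}, the failure of ``$U$ witnesses $P^s(T)$'' must come from the second clause: since $[U] \subseteq A \cup A_{l,s(n)}$ (resp.\ $\bar A \cup A_{l,s(n)}$) is part of the definition of a local witness (it follows from conditions~(2) and the fact that each $[R^d]$ lies inside the relevant set, exactly as one checks), the only way $U$ can fail to witness $P^s(T)$ is that $P^{s\upharpoonright n}(U)$ holds. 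So we may assume $P^{s\upharpoonright n}(U)$ holds, hence by Lemma~\ref{enpowerment} there is a tree $\hat T \subseteq U$ strongly witnessing $P^{s\upharpoonright n}(U)$; in particular $\hat T$ witnesses $P^{s\upharpoonright n}(T)$.

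Now I would split on whether $n > 1$ or $n = 1$, mirroring the two cases of Lemma~\ref{induc}. If $n > 1$: apply Lemma~\ref{induc} to $U$, $\hat T$, and $s$ to obtain a local witness $\hat U$ of $P^{s\upharpoonright(n-1)}(\hat T)$. By the induction hypothesis of the present lemma (applied at $|s\upharpoonright(n-1)| = n < n+1$), $\hat U$ actually \emph{witnesses} $P^{s\upharpoonright(n-1)}(\hat T)$. But $\hat T$ strongly witnesses $P^{s\upharpoonright n}(U)$, which by the definition of a strong witness means $P^{s\upharpoonright(n-1)}(\hat T_p)$ fails for every $p \in \hat T$ — in particular $P^{s\upharpoonright(n-1)}(\hat T)$ fails. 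This contradicts the fact that $\hat U$ witnesses it. Hence $U$ witnesses $P^s(T)$ after all. If $n = 1$: Lemma~\ref{induc} directly tells us that $P^{s\upharpoonright n}(U) = P^{s\upharpoonright 1}(U)$ fails, contradicting our assumption that it holds. In both cases the contradiction is reached, completing the induction.

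The base case of the induction is $|s| = 1$ (i.e.\ $n = 0$), where the notion ``locally witnesses'' is only defined for $|s| = n+1 \geq 1$ with $n \geq 1$ according to Lemma~\ref{induc}'s hypotheses, but in fact the local-witness definition as stated requires $|s| = n+1$, so the smallest relevant case is $|s| = 1$; there, if $U$ locally witnesses $P^s(T)$, then directly from the definition the third clause and $[U]\setminus\bigcup_d[R^d] \subseteq A$ together with each $R^d = \emptyset$ (forced when $n = 0$, since there is no preceding relation) force $[U] \subseteq A$, which is exactly what it means for $U$ to witness $P^{\langle \ \rangle}(T_p)$ for the appropriate player — so the base case holds trivially. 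The main obstacle I expect is purely bookkeeping: carefully verifying that the ``first condition'' part of local-witnessing (the inclusion $[U] \subseteq A \cup A_{l,s(n)}$ or its bar-version) genuinely follows from conditions~(2) and~(3) of the local-witness definition, so that the \emph{only} obstruction to $U$ being a true witness is the failure of $P^{s\upharpoonright n}(U)$; once that is pinned down, the rest is a clean induction feeding off Lemma~\ref{induc}.
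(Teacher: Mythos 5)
Your inductive step for $|s|\geq 2$ is essentially the paper's argument (Lemma~\ref{induc} combined with the induction hypothesis, reaching a contradiction with the strong-witness property of $\hat T$), but the base case $|s|=1$ is where the lemma's real content lies, and your treatment of it is wrong. Nothing forces $R^d=\emptyset$ when $n=0$: the local-witness definition still demands a set $D$ and quasistrategies $R^d$ for the opposing player in $T_d$ such that $U_d\cap R^d$ witnesses $P^s(R^d)$, which for $|s|=1$ means $[U_d\cap R^d]\subseteq A\cup A_{l,s(0)}$ (resp.\ the $\bar A$ version) \emph{and} $P^{\langle \ \rangle}(U_d\cap R^d)$ fails; the ``preceding relation'' $P^{\langle \ \rangle}$ is perfectly well defined (it asserts the existence of a winning strategy in the unravelled game), so there is no vacuity. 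Moreover, what has to be proved in the base case is not $[U]\subseteq A$ but that $P^{\langle \ \rangle}(U)$ \emph{fails}, and this is exactly the nontrivial step: the paper supposes the opponent has a winning strategy $\tau$ in $G(U,A)$, uses conditions (2) and (3) of local witnessing in an iterative argument to find a position $d\in D$ on $\tau$ such that every play of $\tau$ extending $d$ stays inside $[R^d]$, and then observes that $\tau_d$ is a winning strategy in $G(U_d\cap R^d,A)$, contradicting condition (1). None of this appears in your proposal, so the core of the proof is missing.

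Two smaller points. First, the inclusion $[U]\subseteq A\cup A_{l,s(n)}$ does hold, but not for the reason you sketch: condition (2) only covers plays of $U$ escaping every $[R^d]$; for a play $x\in[U]\cap[R^d]\setminus A$ you must invoke condition (1), which gives $x\in[U_d\cap R^d]\subseteq A\cup A_{l,s(n)}$ — it is false in general that $[R^d]$ itself lies in that set. Second, your step ``$\hat T$ witnesses $P^{s\upharpoonright n}(T)$'' silently identifies a quasistrategy for the opponent in $U$ with one in $T$; the paper has the same $T$-versus-$U$ informality around Lemma~\ref{induc} (``we can suppose $\hat T\subseteq U$''), so this is tolerable, but it should be flagged. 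Neither of these remarks repairs the missing base case.
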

    \begin{proof}
        Without loss of generality, we suppose that $l$ is even. Let us show the first property of the witness. 
        Consider $x \in [U]$. If $x \in A$, there is nothing to prove. If not, by property $(2)$ of the local witness, 
        $ x \in [R^d]$ for some $d \in D$. Then, by $(1)$, $U_d \cap R^d$ witnesses $P^s(R^d)$ and so by 
        the first property of the latter witness, $x \in A_{l, s(n)}$ as required.
    
        We now show the second part of the definition by induction on $|s| = n+1 \leq m$. Without loss of generality, we suppose $m$ odd.
        We begin with $n = 0$. Suppose for a contradiction $P^{\langle \ \rangle}(U)$, that is there is a winning 
        strategy $\tau$ for Player \II\ in $G(U, A)$.
    
        We claim that there is a $d \in D$ belonging to $\tau$ such that every $x \supseteq d$ in $[\tau]$ is 
        also in $[R^d]$. Suppose the contrary: $\forall d \in D \ \exists d \subset x \in [\tau] \setminus [R^d]$. 
        Now note that every position \begin{align*}
            e \in \tau \setminus \bigcup_{d \in D, d \subset e} R^d,
        \end{align*}
        has a minimal extension $\Hat{d} \in D \cap \tau$. Otherwise, for any $e \subset x \in [\tau]$ 
        we would have $ x \not\in \bigcup_{d \in D} [R^d]$. By property $(2)$ of the local witness it would then follow 
        that $x \in A$, a contradiction with our choice of $\tau$. Next note that, by our assumption, any such $\Hat{d}$ 
        has a minimal extension $\Hat{e} \in \tau \setminus R^{\Hat{d}}$. By property $(3)$ of the local witness, no 
        $\Hat{d} \subset e' \subset \Hat{e}$ is in $D$ and so $\Hat{e}$ has the same property as $e$. 
        We can iterate this process to create a sequence $e_j \subseteq \tau$ such that 
        $\bigcup e_j = x \not\in \bigcup_{d \subset x, d \in D} R^d$, which leads, as above, to a contradiction.
    
        So we have such $d$. Thus, $\tau_d$ is a winning strategy for Player \II\ in $G(U_d \cap R^d,A)$, that is, 
        $P^{\langle \ \rangle}(U_d \cap R^d)$ contradicting property $(1)$ of the local witness and so 
        establishing the desired property.
    
        Now suppose $s = n+1 > 1$. If $n = 1$, lemma{~}\ref{induc} gives the conclusion. If $n > 1$, suppose for a 
        contradiction that $P^{s[n]}(T)$ is witnessed by some $\hat{T}$. By applying lemma{~}\ref{induc} we get a 
        local witness $\hat{U}$ of $P^{s[n-1]}(\hat{T})$. Then the induction hypothesis implies that $\hat{U}$ is 
        a witness for the same property, contradicting the existence of $\hat{T}$ and concluding our induction.
    \end{proof}

    \begin{proof}[Proof of lemma~\ref{failure}]
        Without loss of generality, we suppose $l$ to be odd. First, if $|s| = 0$, then Player \II\ does not have a winning strategy. 
        Then as we are used to, we define $W$ to be Player \I's non-losing quasistrategy and verify that $P^s(W)$ 
        fails everywhere.
    
        Now suppose $|s| = n+1$ and, without loss of generality that $l$ is even. Invoking $\Sigma_{|s|}$ \textsc{separation}, we define the set $D$ by\begin{align*}
            d \in D \leftrightarrow d \in S \land P^s(T_d) \land \lnot P^s(T_{d[|d|-1]}), 
        \end{align*}
        an intersection of a $\Pi_{|s|}$ and a $\Sigma_{|s|}$ set. 
        We suppose $D$ to be non-empty and, as we often do now, by $\Sigma_{|s|}$-$\AC$ we 
        pick a sequence of witnesses $U^d$ of $P^s(T_d)$ for each $d \in D$.
    
        Consider now the game $G(T,B)$ where $B = \{x\in [T] \mid \exists d \in D \ d \subseteq x\}$.
        We claim that Player \I\ has no winning strategy in this game. If there was such one, $\sigma$, then we could define 
        a quasistrategy $U$ for Player \I\ in $T$ by following $\sigma$ until a position $d \in D$ is reached, at 
        which point we move into $U^d$. With $D$ and $R^d = T_d$ we can easily verify that three clauses 
        of $U$ locally witnessing $P^s(T)$ are satisfied:
        \begin{enumerate}
            \item Take $d \in D \cap U$, $U_d \cap R^d = U^d$ witness $P^s(T_d)$;
            \item Since $[U] \subseteq \bigcup_{d \in D} [U^d]$, $[U] \setminus \bigcup_{d \in D} [T_d] = \emptyset$;
            \item Taking any $p \in T$ if there exists $d \subseteq p$ such that $p \in R^d$ the unicity follows 
            from the minimality of $d$;
        \end{enumerate}
        which by lemma{~}\ref{nolocal} contradicts the fact that $P^s(T)$ fails.
    
        Thus, we let $W$ be Player \II's non-losing quasistrategy in $G(T, B)$ and $\sigma_p$ be a chosen winning strategy 
        for Player \I\ if $p \in S \setminus W$ is reached. Suppose for a contradiction that $W$ is not as required. 
        Then for some $q \in W$ we can find a witness $\Hat{U}$ of $P^s(W_q)$. 
        Consequently, we define a quasistrategy $U$ for Player \I\ in $T_q$: 
        \begin{enumerate}[(i)]
            \item We begin to set up $U \cap W_q = \Hat{U}$;
            \item If $p \in U \setminus W$, Player \I\ plays $\sigma_p$ until she reaches a position $d \in D$ from 
            where she plays $U^d$, witnessing $P^s(T_d)$.
        \end{enumerate}
        If we now consider $U$, $\Hat{D} \coloneqq D \cup \{q\}$, $R^{d} \coloneqq T_d$ and $R^q = W_q$,
        we verify that $U$ locally witnesses $P^s(T_q)$: 
        \begin{enumerate}
            \item Take $d \in \hat{D} \cap U$, $U_d \cap T_d$ witnesses $P^s(T_d)$ and $U \cap W_q = \hat{U}$, $P^s(W_q)$; 
            \item As before, $[U] \setminus \bigcup_{d \in D} [R^d] \subset \emptyset$;
            \item Again it follows from minimality and the fact that $q \in W$, which is non-losing. 
        \end{enumerate}
        Using lemma{~}\ref{nolocal} we know that $P^s(T_q)$ holds, but then by 
        definition of $D$ there is a minimal $d \subseteq q$ in $D$, which contradicts the choice of $W$. 
        Thus, $P^s(W)$ fails everywhere.  
    \end{proof}

    \begin{proof}[Proof of lemma~\ref{binary}]
        We prove the lemma by reverse induction on $n<m$. Suppose without loss of generality that $m-n$ is odd and $P^s(T)$ fails. 
        We use $\Sigma_{m}$-Dependent-$\AC$ to define by induction on the length of positions a quasistrategy 
        $U$ for Player \II\ in $S$ along with $D \subseteq S$ and $R^d$ for $d \in D$ showing that \begin{align*}
            U \text{ locally witnesses } P^{s[n]}(T) \text{ if } n > 0 \quad \text{and} \quad 
            U \text{ witnesses } P^{s[n]}(T) \text{ if } n = 0.
        \end{align*}
        It suffices then to use lemma{~}\ref{nolocal} to have the desired property in every case. 
    
        Initiation: $\langle \ \rangle \in U$, we say that it marks $0$. 
        \begin{enumerate}[(i)]
            \item If $n = m-1$, by lemma{~}\ref{failure} we set $W^{\langle \ \rangle}$ be a quasistrategy for Player \II\ in $S$ such that 
            $P^s(W^{\langle \ \rangle})$ fails everywhere.
            \item If $n<m-1$, then we know by reverse induction that $P^{s^{\smallfrown}0}(T)$ holds. Applying lemma{~}\ref{enpowerment} 
            there exist a $W^{\langle \ \rangle}$ strongly witnessing this fact and so $P^s(W^{\langle \ \rangle})$ fails everywhere.
        \end{enumerate}
        
        Recursion step: Take $q \in U$ marking $j < \mathbb{N}$, with $P^s(W^q)$ failing everywhere. Consider the closed game 
        \begin{align*}
            G(W^q, A_{m-n-1,s(n),j}).
        \end{align*}
        If it is not a win for II, we put $q \in D$ and define $\Hat{R}^q$ to be Player \I's non-losing quasistrategy in this game. 
        We also define $R^q$ to be $\Hat{R}^q$ on $W^q$ and to simply $T_q$ elsewhere. 
        Thus, $[\Hat{R}^q] \subseteq A_{m-n-1,s(n),j} \subseteq A^{m-n-1, s(n)}$ by definition and since $\hat{R}^q$ is 
        a non-losing quasistrategy for a closed set. 
        Thus, if $P^{s[n]}(\Hat{R}^q)$, the two properties of $\Hat{R}^q$ witnessing $P^s(W^q)$ would be satisfied, 
        contrary to our assumption that $P^s(W^q)$ fails everywhere. 
        So we may take $U^q$ to be a witness for $P^{s[n]}(\Hat{R}^q)$ (a $\Pi_{|s|-2}$ relation for $n \geq 1$). 
        We now continue to define $U$:
        \begin{enumerate}
            \item On $\Hat{R}^q$, $U = U^q$;
            \item If $p \not \in \Hat{R}^q$ ($p = q$ if the game is not a win for I), Player \II\ can follow a winning strategy 
            $\tau_p$ until he reaches a $q'$ with $[W^q_{q'}] \cap A_{m-n-1,s(n),j} = \emptyset$,
        \end{enumerate}
        which one exists since Player \II\ is playing an open game. As a consequence, we say that $q'$ marks $j+1$. 
        Now $P^s(W^q_{q'})$ fails everywhere since $P^s(W^q)$ does. 
        \begin{enumerate}[(i)]
            \item If $n = m-1$, we define $W^{q'} = W^q_{q'}$.
            \item  If $n < m-1$, then by our reverse induction on $n$, $P^{s^{\smallfrown}j + 1}(W^q_{q'})$
            and there exists $W^{q'}$ strongly witnessing this fact, as well as $P^{s^{\smallfrown}j + 1}(T_{q'})$.
        \end{enumerate}
        
        In the cases $(ii)$, with $n < m-1$, we use $\Sigma_{m}$-Strong-Dependent-$\AC$. 
    
        If $n>0$ we show the properties for $U$, together with $D$ and $R^d$ locally witnessing $P^{s[n]}$. 
        \begin{enumerate}
            \item Take $d \in D \cap U$, by construction $U_d \cap R^d = U_d \cap \Hat{R}^d = U^d$ and 
            $U^d$ witnesses $P^{s[n]}(\Hat{R}^d)$;
            \item We prove it here under;
            \item It follows from the fact we put a new $d \in D$ only once we have left $\Hat{R}^d$. 
        \end{enumerate}
        Let $x \in [U]$ and \begin{align*}
            \emptyset = q_0 \subset q_1 \subset \dots \subset q_i \subset \dots 
        \end{align*}
        be the strictly increasing sequence of the initial segments $q$ of $x$ such that $q_j$ marks $j$.
        By construction, each $q_j \in D$. If the sequence terminates at some $q = q_k$, then, by definition, 
        $x$ never leaves $\Hat{R}^d$ and so $x \in R^d$. So if $x$ is out of the $R^d$'s, the sequence is infinite and \begin{align*}
            x \not\in A_{m-n-1} \subset A_{m-n-1, s(n)} = \bigcup_{j < \mathbb{N}} A_{m-n-1, s(n), j}.
        \end{align*} 
        If $n+1 = m$, $x \not\in A_0$ implies $x \not\in A$, and we are done.
        If $n+1 < m$, as $W^{q_j}$ witnesses $P^{s^{\smallfrown}j}(T_{q_{j+1}})$ and $m-|s^{\smallfrown}j|$ is odd, 
        \begin{align*} 
            x \in \Bar{A} \cup \bigcup_{j<\mathbb{N}} A_{m-n-2, j} = \Bar{A} \cup A_{m-n-2}. 
        \end{align*}  
        As $m-n-1$ is even by our case assumptions, it follows that $ x \in A_{m-n-2} \setminus A_{m-n-1} \subseteq \Bar{A}$. 
        By lemma{~}\ref{nolocal}, $U$ witnesses $P^{s[n]}$.
    
        Finally, if $n=0$, then we argue that $U$ is a winning quasistrategy for Player \II\ in $G(T, A)$. Consider any 
        $x \in [U]$. If there is a $d \in D$ such that $x \in [\Hat{R}^d]$, then $x \in U^d$ by construction. 
        Now $U^d$ is a witness for $P^{\langle \ \rangle}(\Hat{R}^d)$ (as $n=0$, $s[n] = \langle \ \rangle$), 
        that is, $U^d$ is a winning strategy for Player \II\ in $G(\Hat{R}^d, A)$. Thus, $x \in \Bar{A}$, as required. 
        On the other hand, if $x$ leaves every $\Hat{R}^d$, then, by the argument above, $x \in \Bar{A}$ as well. 
    \end{proof}

\subsection{Unravelling Borel games}\label{appendix unravelling}
In this appendix, we present the proof of the unravelling lemma and the existence of inverse limits needed for \S\ref{SectionUpperBound}. The proofs are due to Martin \cite{BorelDetMartin}. 

\begin{proof}[Proof of the unravelling lemma (Lemma \ref{unravelling lemma})]
    Let $G(T, X)$ with the tree $T$ and the payoff set $X$ as in the lemma. Since $\Ps^{\gamma}(T)$ exists, we can define $\tilde{T}$ as above for a given $k$. To prove the lemma it suffices to show that we can construct $\pi$ and $\phi$ that satisfy definition~\ref{covering}. 

    The definition of $\pi$ is straightforward. Given a sequence \begin{align*}
        \langle a_0, \dots, a_{2k-1}, (a_{2k}, \Sigma_{\I}), (a_{2k+1}, \cdot), \dots, a_n \rangle = s \in \tilde{T},
    \end{align*}
    we define $\pi(s) = \langle a_0, \dots, a_{2k-1}, a_{2k}, a_{2k+1}, \dots, a_n \rangle \in T$.

    For the rest of the proof, we define somewhat informally $\phi$ by prescribing each Player how to play in $T$ according to one of their strategies in $\tilde{T}$. We will perform the construction so that the function $\phi$ defined that way satisfies definition~\ref{covering}.

    \paragraph*{Case I. } Let $\tilde{\sigma}$ be a strategy for \I\ in $\tilde{T}$. 

    During the first $2k$ moves, she just follows $\tilde{\sigma}$ until $\tilde{\sigma}$ asks her to play some $(x_{2k}, \Sigma_{\I})$, where she has to play $x_{2k}$ in $\sigma$. Then Player $\II$ plays some $x_{2k+1}$. Let us discuss the two different subcases.
    
    \paragraph*{Subcase $1$. } Player \I\ has a winning strategy $\sigma$ in the open game \begin{align*}
        G((\Sigma_{\I})_{(x_{2k+1})}, [(\Sigma_{\I})_{(x_{2k+1})}] \setminus X_{\langle x_0, \dots, x_{2k+1} \rangle}).
    \end{align*}
    
    Then $\sigma$ requires $\I$ to play $\sigma$. After finitely many steps, the shortest position $u \not\in (T_{X})_{\langle x_0, \dots, x_{2k+1} \rangle}$. Writing $u = \langle x_{2k+2}, \dots, x_{2l-1} \rangle$, the position 
    $$\langle a_0, \dots, a_{2k-1}, (a_{2k}, \Sigma_{\I}), (a_{2k+1}, u), x_{2k+2}, \dots, a_{2l-1} \rangle,$$ 
    can legally be plugged in $\tilde{\sigma}$ to get the remaining of Player \I's strategy. 
    
    \paragraph*{Subcase $2$. } Player \II\ has a winning strategy in the open game \begin{align*}
        G((\Sigma_{\I})_{(x_{2k+1})}, [(\Sigma_{\I})_{(x_{2k+1})}] \setminus X_{\langle x_0, \dots, x_{2k+1} \rangle}).
    \end{align*}
    
    Let $\Sigma_{\II}$ be his canonical quasistrategy in this game (avoiding his losing position in this closed game for him). In particular, $\Sigma_{\II} \subseteq (\Sigma_{\I})_{x_{2k+1}}$. Player \I\ can then legally plug the position $\langle a_0, \dots, a_{2k-1}, (a_{2k}, \Sigma_{\I}), (a_{2k+1}, \Sigma_{\II}) \rangle$ in $\tilde{\sigma}$ and just copy the answer given by this strategy as long as Player \II\ keep playing consistently with $\Sigma_{\II}$ in $T$. If at some point (for some $l$ with $2l-1 > 2k+2$), \II\ plays so that $\langle x_{2k+2}, \dots, x_{2l-1} \rangle \not\in (\Sigma_{\II})_{(x_{2k+1})}$, which is no longer a legal position following some play of $(a_{2k+1}, \Sigma_{\II})$ in $\tilde{T}$, then by definition of $\Sigma_{\II}$, \I\ would now have a winning strategy in \begin{align*}
        G((\Sigma_{\I})_{ (x_{2k+1}, \dots, x_{2l-1}) }, [(\Sigma_{\I})_{(x_{2k+1}, \dots, x_{2l-1})}] \setminus X_{(x_0, \dots, x_{2k+1}, \dots, x_{2l-1})}).
    \end{align*}
    If this is the case, we proceed similarly to subcase 1.

    \paragraph*{Case II. } Let $\tilde{\sigma}$ be a strategy for \II\ in $\tilde{T}$. 

    During the first $2k$ moves, he just follows $\tilde{\sigma}$ until \I plays some $x_{2k}$ in $T$. We define \begin{align*}
        U &= \{ \langle x_{2k+1} \rangle^{\smallfrown} u \in T_{\langle x_0, \dots, x_{2k} \rangle} \mid \ u \text{ has even length and } 
        \\
        &\qquad \exists \Sigma_{\I} \in \QSI(T_{\langle x_0, \dots, x_{2k} \rangle}) \ \tilde{\sigma}(\langle x_0, \dots, (x_{2k}, \Sigma_{\I}) \rangle) = \langle x_0, \dots, (x_{2k+1}, u) \rangle \}; 
        \\
        \mathcal{U} &= \{ x \in [T_{\langle x_0, \dots, x_{2k} \rangle}] \mid \ \exists \langle x_{2k+1} \rangle^{\smallfrown}u \in U \ \langle x_{2k+1} \rangle^{\smallfrown}u \subset x \}.
    \end{align*}

    We consider the open game $G(T_{\langle x_0, \dots, x_{2k} \rangle}, \mathcal{U})$ with the convention that \II\ plays first.

    \paragraph*{Subcase $1$. } Player \II\ has a winning strategy $\tau$ in this game. 

    Then he follows $\tau$ until he reaches a position $u$ witnessing his victory in $G$. Let $\Sigma_{I}$ witnessing that $\langle x_{2k+1} \rangle^{\smallfrown}u \in U$. Writing $u = \langle x_{2k+2}, \dots, x_{2l-1} \rangle$, the position $\langle x_0, \dots, x_{2k-1}, (x_{2k}, \Sigma_{\I}), (x_{2k+1}, u), x_{2k+2}, \dots, x_{2l-1} \rangle$, can legally be plugged in $\tilde{\sigma}$ to get the remaining of Player \II's strategy. 

    \paragraph*{Subcase $2$. } Player \I\ has a winning strategy in this game.

    Let $\Sigma_{I}$ be her canonical winning quasistrategy. Observe that if we plug the position $\langle x_0, \dots, x_{2k-1}, (x_{2k}, \Sigma_{\I})\rangle$ in $\tilde{\sigma}$, it cannot ask Player \II\ to play a $u$ as in option $2$ of the rules of our auxiliary tree, since no sequence of $\Sigma_{\I}$ can be in $U$.

    So \II\ can follow $\tilde{\sigma}$ with this position given to it, as long as \I\ plays in $T$ consistently with $\Sigma_{\I}$. If \I\ does no longer do, then Player $\II$ can play as in subcase $1$ since he is now in a winning position in the game $G$.  

\end{proof}

\begin{proof}[Proof of existence of inverse limits (Lemma \ref{existence of inverse limits})]
    Put \begin{align*}
        &s \in T_{\infty} \leftrightarrow \forall i \ ( \bar{s} \leq 2(k + i) \rightarrow s \in T_i ); 
        \\
        &\pi_{\infty, i}(s) = \left\{\begin{aligned}
            s \qquad &\text{ if } \bar{s} \leq 2(k + i),
            \\\pi_{i+1} \circ \cdots \circ \pi_{j}(s) &\text{ otherwise, for some } \bar{s} \leq 2(k + j);    
        \end{aligned}\right.
        \\
        &\phi_{\infty,i}(\sigma_{\infty}) = \sigma \leftrightarrow \left\{\begin{aligned}
            (\sigma_{\infty})_{\mid 2(k+i)} = (\sigma)_{\mid 2(k+i)} \quad \land
            \\
            \phi_{i+1} \circ \cdots \circ \phi_{j}(\sigma_{\infty})_{\mid 2(k+j)} = (\sigma)_{\mid 2(k+j)} &\text{ for } j > i.    
        \end{aligned}\right.
    \end{align*}

    It is clear that these are well-defined and satisfy the first three conditions of definition~\ref{covering}. It remains to prove to condition $4$ is satisfied. 

    Let $x_{i+k} \in [\phi_{\infty, i+k}(\sigma_{\infty})]$ be the sequences witnessing condition $4$ for the coverings $(T_{i+1}, \pi_{i+1}, \phi_{i+1})$, together with the commutativity of the diagram of figure~\ref{InvLimit}. By construction, $(x_{i + k})_{k < \mathbb{N}}$ converges to some $x_{\infty}$. By definition of the $\phi_{\infty,i}$'s, we have thus $x_{\infty} \in [\sigma_{\infty}]$ and clearly, $\pi_{\infty, i}(x_{\infty}) = x_i$.
\end{proof}

\end{document}